\DeclareFontFamily{U}{matha}{\hyphenchar\font45}
\DeclareFontShape{U}{matha}{m}{n}{
<-6> matha5 <6-7> matha6 <7-8> matha7
<8-9> matha8 <9-10> matha9
<10-12> matha10 <12-> matha12
}{}
\DeclareSymbolFont{matha}{U}{matha}{m}{n}
\DeclareFontFamily{U}{mathx}{\hyphenchar\font45}
\DeclareFontShape{U}{mathx}{m}{n}{
<-6> mathx5 <6-7> mathx6 <7-8> mathx7
<8-9> mathx8 <9-10> mathx9
<10-12> mathx10 <12-> mathx12
}{}
\DeclareSymbolFont{mathx}{U}{mathx}{m}{n}
\DeclareMathDelimiter{\vvvert} {0}{matha}{"7E}{mathx}{"17}%
\newcommand{\diffto}{\xrightarrow{\raisebox{-0.2 em}[0pt][0pt]{\smash{\ensuremath{\sim}}}}}
\newcommand{\out}{\mathfrak{out}(\spl,\pi)}
\DeclareMathOperator{\N}{\mathbb{N}}
\DeclareMathOperator{\Z}{\mathbb{Z}}
\DeclareMathOperator{\R}{\mathbb{R}} 
\DeclareMathOperator{\C}{\mathbb{C}}
\DeclareMathOperator{\Cas}{Cas}
\DeclareMathOperator{\spl}{\mathfrak{sl}_2(\C)}
\DeclareMathOperator{\so}{\mathfrak{so}(3,1)}
\DeclareMathOperator{\su}{\mathfrak{su}_2}
\DeclareMathOperator{\herm}{Herm_2^+}
\DeclareMathOperator{\im}{Im}
\DeclareMathOperator{\g}{\mathfrak{g}}
\DeclareMathOperator{\kk}{\mathfrak{k}}
\DeclareMathOperator{\F}{\mathcal{F}}
\DeclareMathOperator{\Sk}{\mathfrak{S}}
\DeclareMathOperator{\id}{Id}
\DeclareMathOperator{\sign}{sign}
\DeclareMathOperator{\Ad}{Ad}
\DeclareMathOperator{\tr}{tr}
\DeclareMathOperator{\dif}{\mathrm{d}}
\DeclareMathOperator{\CC}{\mathscr{C}}
\DeclareMathOperator{\DD}{\mathscr{D}}
\DeclareMathOperator{\Lie}{\mathcal{L}}
\DeclareMathOperator{\nf}{\lvert \mathit{f}\rvert }
\newtheorem{theorem}{Theorem}
\newtheorem{proposition}{Proposition}[section]
\newtheorem{definition}[proposition]{Definition}
\newtheorem*{definition*}{Definition}
\newtheorem{lemma}[proposition]{Lemma}
\newtheorem{corollary}[proposition]{Corollary}
\newtheorem{remark}[proposition]{Remark}
\newtheorem*{remark*}{Remark}
\newtheorem{example}[proposition]{Example}
\newtheorem*{conjecture}{Conjecture}
\DeclareDocumentCommand{\norm}{ s m }{%
	\IfBooleanTF{#1}
	{#2}
	{\lVert{#2}\rVert}%
}
\DeclareDocumentCommand{\fnorm}{ s m }{%
	\IfBooleanTF{#1}
	{#2}
	{[]{#2}[]}%
}
\newcommand{\Addresses}{{
  \bigskip
  \footnotesize

Ioan M\u{a}rcu\cb{t},\par\nopagebreak
\textsc{Radboud University Nijmegen, 6500 GL Nijmegen, The Netherlands}\par\nopagebreak
  \textit{E-mail address}: \texttt{i.marcut@math.ru.nl}

\medskip

 Florian Zeiser,\par\nopagebreak
\textsc{University of Illinois at Urbana-Champaign, 61801 Urbana, United States}\par\nopagebreak
\textit{E-mail address}: \texttt{fzeiser@illinois.edu}
}}
\title{The Poisson linearization problem for $\mathfrak{sl}_2(\C)$\\ [1ex] \Large  Part I: Poisson cohomology}
\author{Ioan M\u{a}rcu\cb{t} and Florian Zeiser}
\pgfplotsset{compat=1.16}
\begin{document}
\maketitle

\begin{abstract}
This is the first of two papers, in which we prove a version of Conn's linearization theorem for the Lie algebra $\mathfrak{sl}_2(\C)\simeq \so$. Namely, we show that any Poisson structure whose linear approximation at a zero is isomorphic to the Poisson structure associated to $\mathfrak{sl}_2(\C)$ is linearizable. 

In this first part, we calculate the Poisson cohomology associated to $\mathfrak{sl}_2(\C)$, and we construct bounded homotopy operators for the Poisson complex of multivector fields that are flat at the origin.

In the second part, we will obtain the linearization result, which works for a more general class of Lie algebras. For the proof, we will develop a Nash-Moser method for functions that are flat at a point.
\end{abstract}

\section*{Introduction to Part I}

The space of $C^{\infty}$-multivector fields on a smooth manifold $M$:
 \begin{align*}
  \mathfrak{X}^{\bullet}(M)&:= \Gamma (\wedge ^{\bullet} TM)
 \end{align*}
carries a natural extension of the Lie bracket, called the Schouten-Nijenhuis bracket (see e.g.\ \cite{Laurent2013}), which makes $\mathfrak{X}^{\bullet}(M)$ into a graded Lie algebra:  
 \begin{equation*}
  \begin{array}{cccc}
   [\cdot,\cdot]:&\mathfrak{X}^{p+1}(M)\times \mathfrak{X}^{q+1}(M)&\to &\mathfrak{X}^{p+q+1}(M).
  \end{array}
 \end{equation*}
A Poisson structure on $M$ is a bivector field $\pi \in \mathfrak{X}^2(M)$ satisfying
\[[\pi,\pi]=0,\]
and the pair $(M,\pi)$ is called a Poisson manifold. These structures originate in Hamiltonian mechanics and were introduced in an abstract, geometric setting by Lichnerowicz \cite{Lich77}. 

Weinstein initiated the local study of Poisson manifolds in \cite{Wein83}, and the results and questions therein were the basis of a lot of research in Poisson geometry, including our paper. The Splitting Theorem \cite{Wein83} states that every Poisson structure is locally isomorphic to the product of a symplectic structure and a Poisson structure that vanishes at the point. Using Darboux coordinates on the symplectic part, it suffices to understand the behavior of Poisson structures around zeros. This led Weinstein to formulate:

\vspace*{0.2cm}

\noindent{\underline{\textbf{The question of linearization}}}\vspace*{0.2cm}

Let $x\in M$ be a zero of $\pi$, i.e., $\pi_{x}=0$. Then $\pi$ induces a Lie algebra structure on $\g_{x}:= T^*_{x}M$, called the \textbf{isotropy Lie algebra} at $x$, with bracket
\[ [\dif_{x}g,\dif_{x}h ]:=\dif_{x}\pi(\dif g,\dif h),\quad \quad g,h\in C^{\infty}(M).\] 

Any Lie algebra $(\g ,[\cdot,\cdot])$ has a corresponding linear Poisson structure $\pi_{\g}$ on the dual vector space $\mathfrak{g}^*$, defined by:
\begin{align}\label{eq: linear Poisson}
      \pi_{\g,\xi}(X,Y):= \xi([X,Y]) \quad \text{where} \ \xi \in \mathfrak{g}^*, \ X,Y\in \g. 
\end{align}

Applying this construction to the isotropy Lie algebra $\g_x$ of $\pi$ at a zero $x\in M$, one obtains the linear Poisson structure $\pi_{\g_{x}}$ on $\mathfrak{g}^*_{x}=T_{x}M$, which plays the role of the first order approximation of $\pi$ at $x$.

\begin{definition*}
    A Poisson structure $\pi$ is called \textbf{linearizable} around a zero $x\in M$ if there exists a Poisson diffeomorphism
\[ \Phi: (U,\pi)\diffto (V,\pi_{\g_{x}}),\quad \textrm{with}\quad \Phi(x)=0,\]
where $x\in U\subset M$ and $0\in V\subset  \mathfrak{g}^*_{x}$ are open sets.
\end{definition*}

Sometimes, being linearizable depends only on the isotropy Lie algebra.
\begin{definition*}
A Lie algebra $\g$ is called \textbf{Poisson non-degenerate} if any Poisson structure $\pi$ is linearizable around any zero $x$ satisfying $\g_x\simeq \g$. Otherwise, $\mathfrak{g}$ is called \textbf{Poisson degenerate}.
\end{definition*} 

Also in \cite{Wein83}, Weinstein showed that every semisimple Lie algebra is non-degenerate in the category of formal power series (i.e., the power series of the Poisson diffeomorphism $\Phi$ exists) and he conjectured that the same was true in the analytic category. This was proven shortly thereafter by Conn in \cite{Conn84}. Weinstein also showed that $\mathfrak{sl}_2(\R)$ is Poisson degenerate in the smooth category. 

In order to give an overview of the current state of the linearization problem for semisimple Lie algebras $\g$ (in the smooth category), recall that the \textbf{real rank} of $\g$ is the dimension of $\mathfrak{a}$ in its Iwasawa decomposition
\[ \g =\kk\ \oplus\ \mathfrak{a}\ \oplus\ \mathfrak{n}\]
(see e.g.\ \cite{Knapp13}). The following results are known:

\begin{enumerate}[leftmargin=!]
\item[$\mathrm{dim}(\mathfrak{a})=0$\phantom{12}] I.e., if $\g $ is semisimple of compact type, then $\g$ is Poisson non-degenerate. This was proven by Conn in \cite{Conn85} using the Nash-Moser technique. More recently, Crainic and Fernandes provided a geometric proof in \cite{CF11}. For the lowest dimensional case, i.e.\ $\g \simeq \mathfrak{so}(3)$, an earlier proof was obtained by Dazord in \cite{Daz}, and in fact, using the language of completely integrable 1-forms, a proof can be traced back to Reeb's thesis \cite{Reeb}.
\item[$\mathrm{dim}(\mathfrak{a})\geq 2$\phantom{12}] In this case, it was shown by Weinstein in \cite{Wein87} that $\g$ is Poisson degenerate.
\item[$\mathrm{dim}(\mathfrak{a})=1$\phantom{12}] If the compact part $\kk$ is not semisimple then $\g$ is also Poisson degenerate as shown in \cite[Theorem 4.3.2]{DZ} by Monnier and Zung. The example of $\mathfrak{sl}_2(\R)$ given by Weinstein is the lowest dimensional Lie algebra of this type.
\end{enumerate}

The remaining cases for semisimple Lie algebras are still open:
\begin{conjecture}[Dufour $\&$ Zung]
If $\g$ is semisimple of real rank one, i.e.\ $\mathrm{dim}(\mathfrak{a})=1$, and $\kk$ is semisimple, then $\g$ is Poisson non-degenerate.
\end{conjecture}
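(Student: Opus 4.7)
The plan is to adapt Conn's Nash-Moser iteration scheme from the compact case to real rank one. The scheme constructs the linearizing diffeomorphism as a limit of successive coordinate changes, each obtained by integrating a vector field $X$ that solves a cohomological equation of the form $[\pi_{\g},X]=\pi-\pi_{\g}+R$, where $R$ is a higher-order remainder. For the iteration to converge in the $C^{\infty}$-topology, one needs the second Poisson cohomology $H^2_{\pi_{\g}}(\dg)$ to vanish in an appropriate sense, together with tame estimates for a homotopy inverse on coboundaries.

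First, I would compute $H^{\bullet}_{\pi_{\g}}(\dg)$ for the linear Poisson structure on $\dg$. In the compact case, an averaging argument reduces this to the Chevalley-Eilenberg cohomology of $\g$ with trivial coefficients, and $H^2$ vanishes by Whitehead's lemma. In the rank-one case the open coadjoint orbits are unbounded along the Iwasawa direction $\mathfrak{a}$, so averaging over a compact group no longer suffices and the global structure of the cohomology becomes genuinely subtle. The hypothesis that $\kk$ is semisimple should ensure that the small coadjoint orbits remain rigid enough to control the cohomology. This is exactly the task that Part I of the paper carries out for $\spl$.

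Next, I would construct bounded homotopy operators $h$ with $[\pi_{\g},h(\alpha)]=\alpha$ on the subcomplex of multivector fields that are flat at the origin, satisfying Sobolev-type tame estimates of the form $\norm{h(\alpha)}_s\lesssim\norm{\alpha}_{s+r}$ for some fixed loss of derivatives $r$. The restriction to flat sections is essential: all obstructions living in the formal Taylor expansion of $\pi-\pi_{\g}$ at the origin are killed by the formal linearization theorem of Weinstein, so only a flat remainder has to be cancelled analytically. Working with flat multivector fields also allows one to decouple estimates near the singular origin from the behaviour on the open symplectic leaves.

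Finally, I would run a Nash-Moser iteration, combining the tame homotopy operators with smoothing operators on $\dg$. The standard quadratic-convergence estimates should then force the sequence of corrections to converge in $C^{\infty}$ to a Poisson diffeomorphism onto $(V,\pi_{\g})$. The main obstacle is the second step: in real rank $\geq 1$, the natural homotopy operators inherited from the Chevalley-Eilenberg picture lose arbitrarily many derivatives as one approaches the singular stratum at infinity or at the origin, and confining the problem to flat sections is what is expected to keep the loss bounded by a fixed $r$. Establishing such tame homotopies on the flat complex is the technical heart of the program; the cohomological computation of Part I is the algebraic input that makes their construction possible, and the Nash-Moser machinery for flat functions announced for Part II is what converts these analytic estimates into a genuine smooth linearization.
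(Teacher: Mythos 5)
The statement you were asked to prove is stated in the paper as a \emph{conjecture}, not as a theorem: the authors explicitly leave the general rank-one case open and only establish the lowest-dimensional instance, $\g=\spl$ (Theorem~\ref{theorem: nondegenerate}). Your proposal is therefore not a proof of the conjecture; it is a program, and at best a program that has so far only been carried out for a single Lie algebra. Nothing in what you write addresses the central missing ingredient for a genuinely general argument: you would have to compute $H^2_0(\dg,\pi_\g)$ and build controlled homotopy operators on the flat complex for \emph{every} rank-one $\g$ with $\kk$ semisimple, and the paper's own method for $\spl$ (averaging over $SU(2)$, the ``cohomological skeleton'' of normal matrices, the explicit flow of the commutator vector field $W_A=\tfrac14[A,[A,A^*]]$, etc.) is visibly tailored to the concrete algebraic structure of $2\times2$ complex matrices. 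The claim that semisimplicity of $\kk$ ``should ensure'' the cohomology can be controlled is exactly the conjectural step, restated rather than proved.

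Two more specific inaccuracies in the sketch, which matter if one actually tries to close the gap. First, you describe the homotopy estimates as ``Sobolev-type tame estimates of the form $\norm{h(\alpha)}_s\lesssim\norm{\alpha}_{s+r}$ for some fixed loss of derivatives $r$.'' This is not what the paper obtains, and the authors stress that it is not. Their property SLB (Definition~\ref{definition: property H}) uses the weighted norms $\fnorm{\cdot}_{n,k,r}$ on flat sections over closed balls, and the bound $\fnorm{\ell_r(\alpha)}_{n,k,r}\le C\,\fnorm{\alpha}_{n+a,\,k+bn+c,\,r}$ carries a loss in the weight index $k$ that \emph{grows linearly in $n$}; this is precisely why they announce a separate Nash–Moser machinery for flat functions in Part~II rather than invoking the classical tame scheme. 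Asserting a fixed-$r$ tame estimate glosses over the main analytic difficulty. Second, the reduction ``only a flat remainder has to be cancelled'' is correct in spirit, but it relies on more than Weinstein's formal linearization: one needs the precise exact-sequence decomposition of Poisson cohomology into flat and formal parts (Proposition~\ref{ses smooth formal}), and then an explicit deformation retraction of the flat Poisson complex (Theorem~\ref{main theorem}) whose construction — via the spectral sequence for regular abelian Lie algebroids and the Perturbation Lemma — is where the real work lies and where the general case is unproven.
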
 
The lowest dimensional Lie algebra of real rank one with semisimple compact part is the real Lie algebra $\spl$, with Iwasawa decomposition
\begin{equation*}
    \begin{array}{rcl}
         \spl& = &  \su \oplus \ \R \cdot \begin{pmatrix}
         1&0\\
         0&-1
         \end{pmatrix}
         \oplus \C \cdot
         \begin{pmatrix}
         0&1\\
         0&0
         \end{pmatrix}
    \end{array}
\end{equation*}

We prove the conjecture for this case:
\begin{theorem}\label{theorem: nondegenerate}
The real Lie algebra $\spl$ is Poisson non-degenerate.
\end{theorem}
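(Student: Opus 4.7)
The strategy is a Nash--Moser iteration adapted to flat perturbations of the linear Poisson structure $\pi_0 := \pi_{\spl}$ on $\spl^*$. First, because $\spl$ is semisimple, Weinstein's formal linearization theorem applies: any Poisson structure $\pi$ on $M$ with $\g_x\simeq \spl$ is formally linearizable at $x$. Using Borel's lemma together with the formal diffeomorphism, one can replace $\pi$ by a Poisson structure defined near $0\in \spl^*$ such that $R:=\pi-\pi_0$ is a bivector that is flat at the origin. Theorem \ref{theorem: nondegenerate} is then reduced to the following flat linearization statement: every Poisson structure of the form $\pi=\pi_0+R$, with $R$ flat at $0$ and $[\pi,\pi]=0$, is smoothly equivalent to $\pi_0$ in a neighborhood of $0$ by a diffeomorphism tangent to the identity to infinite order.

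To prove this reduced statement, I would set up a Newton-type iteration: at step $n$, given $\pi_n=\pi_0+R_n$ with $R_n$ flat, solve the cohomological equation
\[
    [\pi_0,X_n]=-R_n
\]
for a flat vector field $X_n$, and set $\pi_{n+1}:=(\phi_n)^*\pi_n$ where $\phi_n$ is the time-$1$ flow of $X_n$. The Maurer--Cartan identity $[\pi_n,\pi_n]=0$ ensures that the new remainder $R_{n+1}$ is quadratic in $R_n$ at the infinitesimal level, which is the engine driving fast convergence. This is precisely where the results of Part I are used: the explicit computation of the Poisson cohomology of $\spl$ shows that in degree $2$ all obstructions are carried by formal/polynomial classes, so the equation is solvable on flat bivectors, and the bounded homotopy operator $h$ constructed in Part I produces the solution $X_n=h(R_n)$ together with quantitative estimates in fixed Sobolev or $C^k$ norms.

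The main obstacle, and the reason Conn's compact-case argument cannot be imported, is that $\spl$ is non-compact: the symmetry group $SL_2(\C)$ acting on $\spl^*$ is not proper, so there is no averaging procedure producing a tame right inverse of $\df$ on all smooth multivector fields. Consequently $h$ necessarily loses derivatives, and on general smooth bivectors this loss is not tame enough for Nash--Moser. The way around this is exactly the restriction to the ideal of flat multivector fields: on flat data the homotopy operators of Part I come with estimates in which the derivative loss is uniformly bounded, because the infinite order of vanishing at the origin compensates for the blow-up of $h$ along the singular locus of $\pi_0$.

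With such bounded homotopy operators in hand, I would combine them with standard smoothing operators on $\spl^*$ and run the Nash--Moser scheme in the Fréchet space of flat bivectors. Provided $R_0$ is sufficiently small in a fixed norm, the iteration $\pi_{n+1}=(\phi_n)^*\pi_n$ converges in the $C^\infty$ topology on a (possibly shrinking) neighborhood of the origin, and the composition $\Phi:=\lim_{n\to\infty}\phi_n\circ \cdots \circ \phi_0$ is a local diffeomorphism fixing $0$ with $\Phi^*\pi=\pi_0$. This yields the required Poisson isomorphism and establishes Theorem \ref{theorem: nondegenerate}. The hardest ingredient is the quantitative part of Part I, namely producing homotopy operators whose norms on flat multivector fields satisfy tame estimates strong enough to close the Nash--Moser loop; once those bounds are in place, the iteration itself follows a by-now standard template.
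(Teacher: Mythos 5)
Your proposal correctly captures the two-step skeleton that the paper announces in its introduction and defers to Part II: reduce to the case where the perturbation $R=\pi-\pi_0$ is flat at $0$ by Weinstein's formal linearization plus Borel's lemma, then run a Newton iteration $\pi_{n+1}=(\phi_n)^*\pi_n$ with $X_n$ produced by the flat homotopy operator $h_{\spl}$ from Theorem~\ref{main theorem}, and close it with Nash--Moser. Since the actual proof of Theorem~\ref{theorem: nondegenerate} is not contained in Part~I, this is the right comparison to make.

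There is, however, a substantive gap in how you characterize the analysis. You assert that the homotopy operators of Part~I come with ``tame estimates strong enough to close the Nash--Moser loop'' and that, with ``standard smoothing operators,'' the iteration ``follows a by-now standard template.'' The paper says the opposite and makes a point of it. The estimates achieved are the SLB bounds of Definition~\ref{definition: property H},
\[
\fnorm{\ell_r(\alpha)}_{n,k,r}\le C\,\fnorm{\alpha}_{n+a,\,k+bn+c,\,r},
\]
in which the weight exponent $k$ in the flat norm $\fnorm{\cdot}_{n,k,r}=\sup_{x}\sup_{|a|\le n}|x|^{-k}|D^a\cdot|$ is lost at a rate proportional to the number of derivatives $n$. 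This $bn$-term is not a bounded loss of derivatives in a fixed scale of Banach spaces; it couples derivative count to weight in a way that standard Nash--Moser smoothing cannot absorb. The introduction states explicitly that the estimates ``are not the classical tameness estimates used in the Nash--Moser method'' and that Part~II develops new Nash--Moser techniques for flat functions precisely to handle this. By invoking a standard template you have elided the central analytic difficulty created by the non-compactness of $SL_2(\C)$: averaging over $SU(2)$ handles only the compact part, and the residual retraction $p_{\Sk}$ onto the cohomological skeleton forces both the restriction to flat data and the non-tame weight growth. A smaller inaccuracy: you say the degree-$2$ obstructions ``are carried by formal/polynomial classes.'' In fact Proposition~\ref{formal cohomology sl2} gives $H^2_F(\spl,\pi)=0$, so there are no formal obstructions either; the entire $H^2$ vanishes, and this is what makes the cohomological equation $[\pi_0,X]=-R$ solvable for every flat closed $R$, with $X=h_{\spl}(R)$.
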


The two parts of the proof correspond to the two parts of this paper:

\begin{enumerate}
\item [I.] $2^{\textrm{nd}}$ Poisson cohomology\ $=0$ $\quad +\quad $   ``nice'' cochain homotopies.
\item[II.] The Nash-Moser method.
\end{enumerate}

The same scheme was applied by Conn in \cite{Conn85} for compact semisimple Lie algebras. However, the lack of compactness makes both parts of the problem much more involved in our case. In particular, the homotopy operators we obtain in the first part are defined only on multivector fields that are flat at the origin, i.e., they vanish to infinite order, and the estimates they satisfy are not the classical tameness estimates used in the Nash-Moser method. To deal with this, in the second part, we develop Nash-Moser techniques for flat functions, which should be useful in many other geometric problems. Also, the linearization result from the second part can be applied to any Lie algebra for which one can solve the first part.

\vspace*{0.2cm}

\noindent{\underline{\textbf{Poisson cohomology}}}\vspace*{0.2cm}

Lichnerowicz realized in \cite{Lich77} that any Poisson structure $\pi$ on a manifold $M$ induces a differential $\dif_{\pi}$ on the complex of multivector fields:
\[ \dif_{\pi}:= [\pi,\cdot]: \mathfrak{X}^{\bullet}(M) \to \mathfrak{X}^{\bullet+1}(M) \ \ \ \text{ with } \ \ \ \dif_{\pi}^2=0.\]
The cohomology groups, denoted by $H^{\bullet}(M,\pi)$, are  called the \textbf{Poisson cohomology} of $(M,\pi)$. In low degrees the cohomology groups have geometric interpretations. Of special interest to us is the second Poisson cohomology group $H^2(M,\pi)$ which governs infinitesimal deformations modulo trivial deformations. Despite its importance in questions such as the linearization or deformation problems, the calculation of Poisson cohomology is by no means standard at this point, due to a lack of general methods. 

Conn obtained the vanishing of the second Poisson cohomology group using methods from representation theory for compact Lie groups. A more effective way to prove this is by using the symplectic Lie groupoid and regarding Poisson cohomology as invariant foliated cohomology on the fibers of the groupoid -- a method which is now standard in the literature \cite{Wein,GW92,Xu92,Marcut,PMCT}. However, none of these methods apply for Poisson structures associated to non-compact Lie algebras. In \cite{MZ}, we have calculated the Poisson cohomology of the Lie algebra $\mathfrak{sl}_2(\R)$ by first dividing it into the \emph{formal} part and the \emph{flat} part. The calculation of formal cohomology is an algebraic problem, which is solved using classical representation theory. The computation of flat cohomology is an analytic problem, for which we developed methods for the calculation of foliated cohomology and a spectral sequence for regular Lie algebroids, and adapted these to the singular setting using flat functions. In this paper we develop these methods further and adapt them to calculate the Poisson cohomology groups of $\mathfrak{sl}_2(\C)$, which are detailed in Theorem \ref{theorem: PC}. In particular, we obtain that
\[H^2(\mathfrak{sl}_2(\C)^*,\pi_{\spl})=0.\]
In order to apply the Nash-Moser method, we prove Theorem \ref{main theorem}, which is a quantitative version of Theorem \ref{theorem: PC} for flat Poisson cohomology. In particular, Theorem \ref{main theorem} gives cochain homotopies in degree two for the subcomplex of multivector fields that are flat at the origin. These operators restrict to closed balls centered at the origin and satisfy certain estimates (as in Definition \ref{definition: property H}). These properties will be used in Part II of the paper to apply the Nash-Moser method.

\begin{remark*}\rm
For the state of the linearization problem in low dimension see \cite{DZ}, for a discussion of the linearization question in the wider context see \cite{FM04}, and for the historical context see \cite[Appendix 2]{CF11}.
\end{remark*}

\subsection*{Acknowledgments} We would like to thank Marius Crainic, Camille Laurent-Gengoux, Rui Loja Fernandes, Philippe Monnier and Alan Weinstein for their feedback and encouragements, and for serving on the defense committee of the second author's PhD thesis, which contains an early version of this paper. 

We would like to thank also Pietro Baldi, whose advice made us consider atypical norms of spaces of flat functions. This interaction happened during the winter school \emph{Implicit Function Theorems in Geometry and Dynamics}, in February 2020 -- our gratitude goes to the organizers as well. 

Finally, the second author thanks Instituto de
Matem\'atica Pura e Aplicada (IMPA) for its hospitality during the final stages of the project.
\tableofcontents

\section{The Poisson cohomology of $\mathfrak{sl}_2(\C)$}\label{section: main result}

In Subsection \ref{section: linear Poisson} we recall the interpretation of Poisson cohomology of a linear Poisson structure in terms of Lie algebra cohomology. We will see that for a semisimple Lie algebra $\g$ the Poisson cohomology of its associated linear Poisson structure can be described by its flat and formal Poisson cohomology. In Subsection \ref{section: setting} we fix some notation for the Lie algebra $\spl$, and then describe the symplectic foliation and the algebra of Casimirs. In Subsection \ref{section: formal poisson} we compute the formal Poisson cohomology groups of the linear Poisson structure associated with the real Lie algebra $\spl$. Finally, in Subsection \ref{section: PC statement}, we give the description of the Poisson cohomology groups of $\spl$, which will be proven in the following three sections.


\subsection{Poisson cohomology of linear Poisson structures}\label{section: linear Poisson}

A Poisson structure on a vector space is called linear if the set of linear functions is closed under the Poisson bracket. Such Poisson structures are in one-to-one correspondence with Lie algebra structures on the dual vector space via \eqref{eq: linear Poisson}. Explicitly, for a Lie algebra $(\g,[\cdot,\cdot])$, the Poisson structure $\pi$ on $\mathfrak{g}^*$ is determined by the condition that the map $l:\mathfrak{g}\to C^{\infty}(\mathfrak{g}^*)$, which identifies $\g$ with $(\mathfrak{g}^*)^*$, 
is a Lie algebra homomorphism:
\[\{l_X,l_Y\}_{\pi}=l_{[X,Y]},\]
where $\{\cdot,\cdot\}_{\pi}$ is the Poisson bracket on $C^{\infty}(\mathfrak{g}^*)$ corresponding to $\pi$. In particular, $C^{\infty}(\mathfrak{g}^*)$ becomes a $\mathfrak{g}$-representation, with $X\cdot h:=\{l_X,h\}$. Moreover, the Poisson complex of $(\mathfrak{g}^*,\pi)$ is isomorphic to the Chevalley-Eilenberg complex of $\g$ with coefficients in $C^{\infty}(\mathfrak{g}^*)$ \cite[Prop 7.14]{Laurent2013}
\begin{equation}\label{iso_complexes}
(\mathfrak{X}^{\bullet}(\mathfrak{g}^*),\dif_{\pi})\simeq (\wedge^{\bullet}\mathfrak{g}^*\otimes C^{\infty}(\mathfrak{g}^*),\dif_{EC}).
\end{equation}
This identification allows for the use of techniques from Lie theory in the calculation of Poisson cohomology. 

Under the isomorphism \eqref{iso_complexes}, the subcomplex of multivector fields on $\mathfrak{g}^*$ that are flat at $0$ corresponds to the Eilenberg-Chevalley complex of $\mathfrak{g}$ with coefficients in the subrepresentation $C_0^{\infty}(\mathfrak{g}^*)\subset C^{\infty}(\mathfrak{g}^*)$ consisting of smooth functions that are flat at zero:
\begin{align}\label{eq: multivector fields identification}
    \mathfrak{X}^{\bullet}_0(\mathfrak{g}^*)\simeq \wedge^{\bullet}\mathfrak{g}^*\otimes C^{\infty}_0(\mathfrak{g}^*).
\end{align}
We call the cohomology of this complex the \textbf{flat Poisson cohomology} and denote it by $H^{\bullet}_0(\mathfrak{g}^*,\pi)$.
The quotient complex is naturally identified with 
\[(\wedge^{\bullet} \mathfrak{g}^*\otimes \R[[\mathfrak{g}]],\dif_{EC}),\]
where $\R[[\mathfrak{g}]]:=C^{\infty}(\mathfrak{g}^*)/C^{\infty}_0(\mathfrak{g}^*)$ is the ring of formal power series of functions on $\mathfrak{g}^*$. The resulting cohomology will be called the \textbf{formal Poisson cohomology at} $0\in \mathfrak{g}^*$ denoted by $H^{\bullet}_{F}(\mathfrak{g}^* ,\pi )$. By the above, this is isomorphic to the cohomology of $\mathfrak{g}$ with coefficients in $\R[[\mathfrak{g}]]$, i.e\ 
\[H^{\bullet}_{F}(\mathfrak{g}^*,\pi)\simeq H^{\bullet}(\mathfrak{g},\R[[\mathfrak{g}]]).\]
Moreover, the short exact sequence
 \begin{align*}
  0\to C^{\infty}_0(\mathfrak{g}^*)\to C^{\infty}(\mathfrak{g}^*)\stackrel{j^{\infty}_0}{\longrightarrow} \R[[\g]]\to 0,
 \end{align*}
 induces a long exact sequence in cohomology: 
\begin{equation}\label{jet}
\ldots \stackrel{j^{\infty}_0}{\to} H^{\bullet-1}_{F}(\mathfrak{g}^*,\pi)\stackrel{\partial}{\to} 
H^{\bullet}_{0}(\mathfrak{g}^*,\pi)\to H^{\bullet}(\mathfrak{g}^*,\pi)\stackrel{j^{\infty}_0}{\to}H^{\bullet}_{F}(\mathfrak{g}^*,\pi )\stackrel{\partial}{\to}\ldots
 \end{equation}

If the Lie algebra $\g$ is semisimple, we can say more:
\begin{proposition}\cite[Corollary 4.5]{MZ}\label{ses smooth formal}
For a semisimple Lie algebra $\g$, the Poisson cohomology of $(\mathfrak{g}^*,\pi)$ fits into the short exact sequence 
\[0\to H^{\bullet}_0(\mathfrak{g}^*,\pi)\to H^{\bullet}(\mathfrak{g}^*,\pi)\stackrel{j^{\infty}_0}{\to} H^{\bullet}_F(\mathfrak{g}^*,\pi)\to 0.\]
\end{proposition}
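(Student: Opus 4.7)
The plan is to deduce the short exact sequence directly from the long exact sequence \eqref{jet}, by showing that the connecting homomorphism $\partial:H^{\bullet-1}_F(\mathfrak{g}^*,\pi)\to H^{\bullet}_0(\mathfrak{g}^*,\pi)$ vanishes. Equivalently, I want to show that the jet map $j^{\infty}_0:H^{\bullet}(\mathfrak{g}^*,\pi)\to H^{\bullet}_F(\mathfrak{g}^*,\pi)$ is surjective: every formal Poisson cohomology class at the origin admits a globally defined smooth representative. Through the isomorphism \eqref{iso_complexes}, this translates into surjectivity of $H^{\bullet}(\mathfrak{g},C^{\infty}(\mathfrak{g}^*))\to H^{\bullet}(\mathfrak{g},\R[[\mathfrak{g}]])$ at the level of Chevalley--Eilenberg cohomology.

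Next I would exploit semisimplicity of $\mathfrak{g}$ via a Whitehead-type decomposition. The module $\R[[\mathfrak{g}]]$ is an inverse limit of finite-dimensional $\mathfrak{g}$-representations (polynomials of bounded degree), each of which decomposes into isotypic components. Since $\mathfrak{g}$ is semisimple, the Chevalley--Eilenberg cohomology of each such finite-dimensional module splits as $H^{\bullet}(\mathfrak{g},\R)\otimes V^{\mathfrak{g}}$, with classes represented by $\mathfrak{g}$-invariant cochains. Passing to the limit (which must be justified carefully, see below) yields
\[
H^{\bullet}(\mathfrak{g},\R[[\mathfrak{g}]])\ \simeq\ H^{\bullet}(\mathfrak{g},\R)\otimes \R[[\mathfrak{g}]]^{\mathfrak{g}},
\]
with representatives of the form $\sum_i \omega_i\otimes \phi_i$ where $\omega_i\in(\wedge^{\bullet}\mathfrak{g}^*)^{\mathfrak{g}}$ and $\phi_i\in\R[[\mathfrak{g}]]^{\mathfrak{g}}$ is a formal Casimir.

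With this in hand, the lifting step is algebraic: by Chevalley's restriction theorem the ring $\R[[\mathfrak{g}]]^{\mathfrak{g}}$ is the completion at $0$ of the finitely generated polynomial ring $\mathrm{Sym}(\mathfrak{g})^{\mathfrak{g}}$, which injects into $C^{\infty}(\mathfrak{g}^*)^{\mathfrak{g}}$. Hence each formal Casimir $\phi_i$ has a polynomial lift $\widetilde{\phi}_i\in C^{\infty}(\mathfrak{g}^*)^{\mathfrak{g}}$. Viewing an invariant element $\omega_i\in(\wedge^{\bullet}\mathfrak{g}^*)^{\mathfrak{g}}$ as a constant (and automatically $\dif_\pi$-closed) multivector field on $\mathfrak{g}^*$, the sum $\sum_i \omega_i\otimes \widetilde{\phi}_i$ is a smooth Poisson cocycle whose $\infty$-jet at $0$ is the given formal class. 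This gives the required surjectivity of $j^{\infty}_0$, and the long exact sequence \eqref{jet} collapses into the claimed short exact sequence.

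The main obstacle I would expect is the transition from finite-dimensional Whitehead splittings to the formal completion: because $\R[[\mathfrak{g}]]$ is not a direct sum of its isotypic components, one needs to organize the Hodge-style contracting homotopies (e.g.\ via averaging over a compact form of $\mathfrak{g}$ or via the Casimir operator) to act continuously with respect to the $\mathfrak{m}$-adic topology, so that they preserve formal power series. The rest — invariance of $\omega_i$, polynomiality of Casimir lifts, and closedness of $\omega_i\otimes\widetilde{\phi}_i$ — is either classical Lie theory or a direct check.
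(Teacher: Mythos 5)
Your overall strategy is the natural one given the paper's setup and almost certainly matches the cited argument: collapse the long exact sequence \eqref{jet} by showing the connecting maps vanish, i.e.\ that $j^\infty_0$ is surjective on cohomology, and obtain surjectivity by exhibiting a smooth preimage for each formal class. The Whitehead-type identification $H^{\bullet}_F(\mathfrak{g}^*,\pi)\simeq H^{\bullet}(\mathfrak{g})\otimes\Cas_F(\mathfrak{g}^*,\pi)$ that you re-derive is exactly Proposition \ref{formal cohomology} in the paper (cited from \cite{MZ}), and your concern about passing Whitehead splittings through the formal completion is legitimate but is the content of that cited result, so you are re-proving something the paper already assumes.

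There is, however, a genuine gap in the lifting step. You write that since $\R[[\mathfrak{g}]]^{\mathfrak{g}}$ is the formal completion at $0$ of $\mathrm{Sym}(\mathfrak{g})^{\mathfrak{g}}$, and the latter injects into $C^\infty(\mathfrak{g}^*)^{\mathfrak{g}}$, ``each formal Casimir $\phi_i$ has a polynomial lift.'' This is false: the inclusion $\mathrm{Sym}(\mathfrak{g})^{\mathfrak{g}}\hookrightarrow C^\infty(\mathfrak{g}^*)^{\mathfrak{g}}$ does \emph{not} extend over the formal completion, and a formal series $P(f_1,\ldots,f_n)\in\R[[f_1,\ldots,f_n]]$ that is not actually a polynomial neither admits a polynomial lift nor automatically defines a convergent smooth function. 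The correct ingredient is Borel's lemma: choose $g\in C^\infty(\R^n)$ with $j^\infty_0(g)=P$, and set $\widetilde{\phi}:=g(f_1,\ldots,f_n)$. Since the $f_i$ are Casimirs, $\widetilde{\phi}$ is a genuine smooth Casimir; since the $f_i$ are homogeneous of positive degree, the $\infty$-jet at $0$ of $\widetilde{\phi}$ is the formal composition $P(f_1,\ldots,f_n)=\phi$. With $\omega\in(\wedge^{\bullet}\mathfrak{g}^*)^{\mathfrak{g}}$ invariant (hence automatically closed, by \cite[Theorem 19.1]{Chev1948}) and $\widetilde{\phi}$ a Casimir, $\omega\otimes\widetilde{\phi}$ is a smooth $\dif_\pi$-cocycle mapping to $[\omega\otimes\phi]$ under $j^\infty_0$, and the argument closes.
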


Elements in the $0$-th Poisson cohomology are called \emph{Casimir functions} 
\[\Cas(\mathfrak{g}^*,\pi):=H^0(\mathfrak{g}^*,\pi).\]
These are the same as functions on $\mathfrak{g}^*$ that are invariant under the coadjoint action. The flat and the formal Casimir functions will be denoted:
\[\Cas_0(\mathfrak{g}^*,\pi) \quad \textrm{and}\quad \Cas_F(\mathfrak{g}^*,\pi).\]

The formal Poisson cohomology of semisimple Lie algebras is well-known:   
\begin{proposition}\label{formal cohomology}
\cite[Proposition 4.2 \& 4.3]{MZ} Let $(\mathfrak{g}^*,\pi)$ be the linear Poisson structure associated to a semisimple Lie algebra $\mathfrak{g}$.
\begin{enumerate}
    \item There exist $n=\dim \mathfrak{g} - \max \mathrm{rank}(\pi)$ algebraically independent homogeneous polynomials $f_1 ,\dots , f_n$ such that
    \begin{align*}
        \Cas_F(\mathfrak{g}^*,\pi)=\R[[f_1 ,\dots ,f_n ]] \subset \R[[\mathfrak{g}]].
    \end{align*}
    Here $\R[[f_1,\dots ,f_n]]$ denotes formal power series in the polynomials $f_j$.
    \item For the formal Poisson cohomology of $\g$ at $0$ we have that: 
    \begin{align*}
        H^{\bullet}_{F}(\mathfrak{g}^*,\pi)&\simeq H^{\bullet}(\g)\otimes \Cas _{F}(\mathfrak{g}^*,\pi).
    \end{align*}
\end{enumerate}
\end{proposition}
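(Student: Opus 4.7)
The plan is to reduce the computation of the formal Poisson cohomology to classical Lie-theoretic results by exploiting the fact that the linear Poisson structure preserves polynomial degree. The coadjoint action, and hence the Chevalley-Eilenberg differential on $\wedge^\bullet\mathfrak{g}^* \otimes \R[[\mathfrak{g}]]$, respects the grading $\R[[\mathfrak{g}]] = \prod_k S^k\mathfrak{g}^*$, so the formal Poisson complex splits as an infinite product of finite-dimensional subcomplexes indexed by polynomial degree.

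For part (1), I would invoke Chevalley's restriction theorem for the semisimple Lie algebra $\mathfrak{g}$: the polynomial invariants $S(\mathfrak{g}^*)^{\mathfrak{g}}$ under the coadjoint action form a polynomial ring $\R[f_1,\dots,f_n]$ on $n=\mathrm{rank}(\mathfrak{g})$ algebraically independent homogeneous generators. The identity $n = \dim\mathfrak{g} - \max\mathrm{rank}(\pi)$ holds because the generic symplectic leaf of $\pi$ is a regular coadjoint orbit, whose codimension in $\mathfrak{g}^*$ equals $\mathrm{rank}(\mathfrak{g})$. Passing to formal power series is immediate: since the coadjoint action preserves homogeneous degree, a series $\sum_k p_k$ is invariant iff each homogeneous component $p_k$ is, so $\Cas_F(\mathfrak{g}^*,\pi) = \R[[\mathfrak{g}]]^{\mathfrak{g}} = \R[[f_1,\dots,f_n]]$.

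For part (2), I would use the product decomposition
\[
(\wedge^\bullet\mathfrak{g}^* \otimes \R[[\mathfrak{g}]],\, \dif_{EC}) \simeq \prod_{k \geq 0}(\wedge^\bullet\mathfrak{g}^* \otimes S^k\mathfrak{g}^*,\, \dif_{EC}).
\]
For each $k$, complete reducibility decomposes the finite-dimensional representation $S^k\mathfrak{g}^*$ of the semisimple $\mathfrak{g}$ as its invariant part $(S^k\mathfrak{g}^*)^{\mathfrak{g}}$ plus a direct sum of nontrivial irreducibles. By the classical vanishing theorem -- the quadratic Casimir acts as a nonzero scalar on any nontrivial finite-dimensional irreducible, yielding a contracting homotopy of the Chevalley-Eilenberg complex -- one has $H^\bullet(\mathfrak{g},V) = 0$ for nontrivial irreducible $V$. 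Combined with $H^\bullet(\mathfrak{g}, V^{\mathfrak{g}}) = H^\bullet(\mathfrak{g}) \otimes V^{\mathfrak{g}}$ on the invariant part (trivial coefficients), this gives $H^\bullet(\mathfrak{g}, S^k\mathfrak{g}^*) \simeq H^\bullet(\mathfrak{g}) \otimes (S^k\mathfrak{g}^*)^{\mathfrak{g}}$. Taking the product over $k$ and invoking part (1) delivers the desired isomorphism $H^\bullet_F(\mathfrak{g}^*,\pi) \simeq H^\bullet(\mathfrak{g}) \otimes \Cas_F(\mathfrak{g}^*,\pi)$.

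The only subtlety I anticipate is the commutation of cohomology with the infinite product over $k$; since products of complexes of vector spaces are exact (no $\varprojlim^1$ obstruction appears for products indexed by a set), this step is automatic. The substantive inputs -- Chevalley's restriction theorem and the Whitehead-type vanishing for semisimple Lie algebras -- are classical and function as black boxes; no new analysis is needed for this proposition, in contrast to the flat cohomology calculations that will occupy the bulk of the paper.
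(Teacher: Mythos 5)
Your proposal is correct and matches the standard approach that the cited reference \cite{MZ} uses: decomposing by polynomial degree, complete reducibility plus the Casimir-operator vanishing argument (Whitehead's lemma in all degrees) to kill cohomology with nontrivial irreducible coefficients, and Chevalley's restriction theorem to identify the polynomial invariants. The paper itself supplies no new proof here—it quotes Propositions 4.2 and 4.3 from \cite{MZ}—and your reconstruction, including the observation that the infinite product over degree commutes with taking cohomology since products of vector-space complexes are exact, is exactly what is needed.
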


In the next subsections we will use these results for $\spl$.

\subsection{The foliation and the Casimirs}\label{section: setting}
In this subsection we describe the linear Poisson structure associated with the real Lie algebra $\spl$ in terms of its complex counterpart, and we discuss its symplectic foliation and the algebra of Casimir functions.


We consider the following complex coordinates on $\spl$:
\begin{equation}\label{eq: identification}
     \C^3\simeq \mathfrak{sl}_2(\C)\quad
     (z_1,z_2,z_3)\mapsto A:=
     \begin{pmatrix}
     iz_1 &-z_2+iz_3 \\
     z_2+iz_3&-iz_1
     \end{pmatrix}
 \end{equation}
Throughout the paper we identify $\spl\simeq \spl^*$ via the trace form:
\[\spl\diffto \spl^*, \quad A\mapsto \big(B\mapsto \mathrm{tr}(A\cdot B)\big).\]
We use this identification to regard the linear Poisson structure associated with the \emph{complex} Lie algebra $\spl$ as a complex Poisson structure on $\spl$. In the coordinates from above, this becomes: 
\begin{align}\label{eq:complex basis}
  \pi_{\C} &:=z_1\partial_{z_2} \wedge \partial_{z_3} +z_2 \partial_{z_3} \wedge \partial_{z_1} +z_3\partial_{z_1}\wedge \partial_{z_2}
 \end{align}
The real and the imaginary part of $\pi_{\C}$ yield two Poisson structures 
 \begin{align}\label{eq: real identification}
      \pi=\pi_{1}:=4\cdot \Re (\pi_{\C}), \ \ \ \ \pi_{2} =4\cdot \Im(\pi_{\C})
\end{align}
The constant $4$ ensures that the linear Poisson structure associated with the \emph{real} Lie algebra $\spl$ is:
\[(\spl,\pi).\]

We denote the corresponding \emph{singular symplectic foliation} by:
\[(\mathcal{F},\omega_{1}).\]
From general properties of complex Poisson structures (see \cite{LGStXu}), it follows that the leaves of $\mathcal{\F}$ coincide with the adjoint orbits on $\mathfrak{sl}_2(\C)$, and agree also with the leaves of $\pi_2$ and of $\pi_{\C}$. The leaves can be described using the level sets of the basic, complex-valued Casimir function:
\begin{align}\label{eq:casimir}
 &f=f_1+ i\, f_2 :\spl\to \C,\nonumber \\
 &f(A):= \det (A)=z_1^2+z_2^2+z_3^2.
\end{align}
More precisely, the leaves are the following families of submanifolds:
 \[S_{z}:=f^{-1}(z), \ \ z\in \C \setminus\{ 0\},\]
and the singular cone $S_0:=f^{-1}(0)$ decomposes into two leaves:
\[\{0\} \ \ \ \ \ \textrm{ and }\ \ \ \ \  S^{\text{reg}}_{0}:=f^{-1}(0)\setminus \{0\}. \]

The map $f$ generates all Casimir functions on $\spl$:

\begin{proposition}\label{prop:Casimirs}
The algebra of Casimir functions of $(\spl,\pi)$ is isomorphic to the algebra of smooth functions on $\C$, via the map: 
\begin{equation*}
f^*:C^{\infty}(\C)\diffto \Cas(\mathfrak{sl}_2(\C),\pi),\quad g\mapsto g\circ f.
\end{equation*}
\end{proposition}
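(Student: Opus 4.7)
The plan is to establish three claims: $f^*$ takes values in $\Cas(\spl,\pi)$, is injective, and is surjective. Injectivity is immediate because $f$ is surjective (every $w \in \C$ has a square root $v$, giving $f(v,0,0) = w$), so $g \circ f \equiv 0$ forces $g \equiv 0$. For well-definedness, a direct computation shows $f = \det$ is a Casimir for the complex Poisson structure $\pi_{\C}$; since $\pi_{\C}$ has holomorphic coefficients and $\pi = 4\Re\pi_{\C}$, both $f_1 = \Re f$ and $f_2 = \Im f$ are then Casimirs for $\pi$, and by the chain rule so is $g(f_1,f_2) = g\circ f$ for any $g \in C^{\infty}(\C)$.

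The core of the argument is surjectivity. Given a Casimir $h$, I would produce $g \in C^{\infty}(\C)$ with $g \circ f = h$ by exhibiting an explicit smooth global section of $f$ whose image misses the singular origin in $\spl$, namely
\[ s : \C \to \spl \setminus \{0\}, \qquad s(w) := \bigl(\tfrac{w+1}{2},\ 0,\ \tfrac{i(w-1)}{2}\bigr), \]
which satisfies $f \circ s = \id_{\C}$ (via $\bigl(\tfrac{w+1}{2}\bigr)^2 - \bigl(\tfrac{w-1}{2}\bigr)^2 = w$) and is nowhere zero (the conditions $w=1$ and $w=-1$ cannot both hold). Setting $g := h \circ s$ yields a smooth function on all of $\C$, and the problem reduces to checking $g \circ f = h$ on $\spl$.

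To verify this identity I use that $h$ is constant on leaves of $\mathcal{F}$. For $A$ with $w := f(A) \neq 0$, both $A$ and $s(w)$ lie in the regular leaf $S_w = f^{-1}(w)$, giving $h(A) = h(s(w)) = g(w)$. For $f(A) = 0$, the fiber $f^{-1}(0)$ splits into the two leaves $\{0\}$ and $S_0^{\text{reg}}$, and $s(0) = (\tfrac{1}{2},0,-\tfrac{i}{2}) \in S_0^{\text{reg}}$. If $A \in S_0^{\text{reg}}$ the conclusion is immediate; if $A = 0$, continuity of $h$, together with $0 \in \overline{S_0^{\text{reg}}}$ (for instance $(\tfrac{1}{n},0,\tfrac{i}{n}) \to 0$), forces $h(0)$ to equal the constant value of $h$ on $S_0^{\text{reg}}$, which is $g(0)$. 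The main obstacle is precisely this last case: $f^{-1}(0)$ contains two distinct symplectic leaves on which a Casimir could a priori disagree, and it is continuity of $h$ (a consequence of smoothness) that bridges them so that a single smooth $g$ on $\C$ captures $h$ globally.
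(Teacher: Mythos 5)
Your proof is correct and follows essentially the same route as the paper's: surjectivity comes down to the fact that $f$ has connected fibers equal to leaves on the regular part $\spl\setminus\{0\}$, plus continuity at the origin. Where the paper invokes the submersion structure of $f$ abstractly to get smoothness of the descended function, you instead exhibit the explicit global section $s$, which is a nice way to make the same argument concrete.
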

\begin{proof}
Clearly, the assignment is well-defined and injective. Surjectivity follows because $f$ restricts to a surjective submersion on the open dense set $\mathfrak{sl}_2(\C)\setminus \{0\}$, whose fibers are precisely the leaves. 
\end{proof}

\begin{remark}
The \textbf{leaf-space} of $\F$, denoted $\spl/\mathcal{F}$, is obtained by identifying points in $\spl$ that belong to the same leaf. The \textbf{regular leaf-space} $(\spl\backslash\{0\})/\mathcal{F}$ has a smooth structure such that the quotient map is a submersion. In fact, $f|_{\mathfrak{sl}_2(\C)\setminus \{0\}}$ descends to a diffeomorphism: 
\[ (\spl\backslash\{0\})/\mathcal{F}\simeq \C.\] 
Therefore, Proposition \ref{prop:Casimirs} implies that the restriction map
\[\Cas(\mathfrak{sl}_2(\C),\pi)\to C^{\infty}\big((\spl\backslash\{0\})/\F\big)\]
is an isomorphism from the algebra of Casimirs to the algebra of smooth functions on the regular leaf-space.
 \end{remark}


 \subsection{Formal Poisson cohomology of $\mathfrak{sl}_2(\C)$}\label{section: formal poisson}

We compute the formal Poisson cohomology of $(\mathfrak{sl}_2(\C),\pi)$. For this, we introduce the real coordinates $\{x_j,y_j\}_{j=1}^3$ on 
$\mathfrak{sl}_2(\C)$:
 \[ z_j:= x_j+i\cdot y_{j}, \quad \text{ for } j\in \{1,2,3\}.\]
By Proposition \ref{formal cohomology}, the invariant polynomials on $\mathfrak{sl}_2(\C)$ are generated by the functions $f_{1}$ and $f_{2}$ from $f=f_1+i\cdot f_2$ in \eqref{eq:casimir}. Therefore, in order to determine the formal Poisson cohomology of $(\mathfrak{sl}_2(\C),\pi)$, we need to calculate the Lie algebra cohomology of $\spl$. We obtain the following result.

\begin{proposition}\label{formal cohomology sl2} 
The formal Poisson cohomology groups of $(\mathfrak{sl}_2(\C),\pi)$ at the origin are given by
\[H^k_F(\mathfrak{sl}_2(\C),\pi)=
\begin{cases}
\R[[f_1,f_2]]& \text{ if }k=0\\
\R[[f_1,f_2]]\cdot C_{\mathcal{R}} \oplus \R[[f_1,f_2]]\cdot C_{\mathcal{I}}& \text{ if } k=3\\
\R[[f_1,f_2]]\cdot C_{\mathcal{R}}\wedge C_{\mathcal{I}}& \text{ if } k=6\\
0& \text{ else}
\end{cases}\]
where $C_{\mathcal{R}}$ and $C_{\mathcal{I}}$ are $\frac{4}{3}$ of the real and imaginary part of the Cartan three-form, respectively. Explicitly:
\begin{align*}
    C_{\mathcal{R}}=&\ \frac{1}{2}( \partial_{x_1}\partial_{x_2}\partial_{x_3}-\partial_{y_1}\partial_{y_2}\partial_{x_3}-\partial_{x_1}\partial_{y_2}\partial_{y_3}-\partial_{y_1}\partial_{x_2}\partial_{y_3})\\
    C_{\mathcal{I}}=&\ \frac{1}{2} ( \partial_{y_1}\partial_{y_2}\partial_{y_3}-\partial_{y_1}\partial_{x_2}\partial_{x_3}-\partial_{x_1}\partial_{y_2}\partial_{x_3}-\partial_{x_1}\partial_{x_2}\partial_{y_3})
\end{align*}
 \end{proposition}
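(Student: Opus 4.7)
The plan is to apply Proposition \ref{formal cohomology}, which reduces the problem to computing the Casimir algebra $\Cas_F$ and the real Lie algebra cohomology $H^\bullet(\spl)$, and then to combine the two via the tensor-product formula.

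First, for the Casimirs, note that $\dim_\R \spl = 6$ and the generic adjoint orbits of $\mathrm{SL}_2(\C)$ are four-dimensional (as complex conjugacy classes of traceless matrices), so the maximal rank of $\pi$ equals $4$. Proposition \ref{formal cohomology}(1) then guarantees exactly $n = 2$ algebraically independent homogeneous polynomial generators. The real functions $f_1, f_2$ arising from $f = f_1 + i f_2 = \det$ in \eqref{eq:casimir} are themselves Casimirs (as real and imaginary parts of a complex Casimir for $\pi_{\C}$), and are algebraically independent — as can be seen, for instance, by checking that their gradients are linearly independent at a generic point off the cone $\{f=0\}$. Hence $\Cas_F(\spl,\pi) = \R[[f_1, f_2]]$, and Proposition \ref{formal cohomology}(2) yields
\[ H^\bullet_F(\spl,\pi) \;\simeq\; H^\bullet(\spl) \otimes \R[[f_1,f_2]]. \]

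For the Lie algebra cohomology of $\spl$ with trivial real coefficients (viewed as a real Lie algebra), I would invoke the Chevalley-Eilenberg theorem, which for a semisimple $\g$ in characteristic zero identifies $H^\bullet(\g) = (\wedge^\bullet \g^*)^\g$. Complexifying the underlying real Lie algebra gives a splitting $\spl \otimes_\R \C \simeq \spl \oplus \overline{\spl}$ as complex Lie algebras, and the K\"unneth formula reduces the complex cohomology to a tensor product of two copies of the complex cohomology of $\spl$ as a complex Lie algebra. Each factor is the exterior algebra on a single primitive degree-$3$ generator — the Cartan $3$-form $c(X,Y,Z) = \kappa(X,[Y,Z])$ built from the Killing form $\kappa$. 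Consequently, $H^\bullet(\spl)$ is the exterior algebra on two real degree-$3$ generators, obtained as the real and imaginary parts of the complex Cartan form; it vanishes outside degrees $0,3,6$ with dimensions $1, 2, 1$.

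The final step is to identify these two abstract primitive classes with the explicit trivectors $C_{\mathcal{R}}, C_{\mathcal{I}}$. Under \eqref{iso_complexes}, constant-coefficient Lie algebra cochains correspond to $\spl$-invariant constant-coefficient multivector fields on $\spl^* \simeq \spl$. I would express the complex Cartan $3$-form in the basis $\{\partial_{x_j}, \partial_{y_j}\}$ using \eqref{eq: identification} and the trace pairing used to identify $\spl \simeq \spl^*$, compute its real and imaginary parts, and fix the scalar $\tfrac{4}{3}$ so that the resulting formulas match those in the statement. This coordinate computation is the main obstacle, in the sense that it is the most tedious step, though it reduces to routine bookkeeping once the bracket is written out in the real basis. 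Combining the resulting description of $H^\bullet(\spl)$ with the Casimir algebra $\R[[f_1,f_2]]$ via the tensor-product formula then yields the stated description of $H^\bullet_F(\spl,\pi)$.
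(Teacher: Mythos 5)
Your proposal is correct and follows essentially the same approach as the paper: reduce via Proposition \ref{formal cohomology} to computing $\Cas_F$ and $H^{\bullet}(\spl)$, identify the former with $\R[[f_1,f_2]]$, and identify the latter as a free exterior algebra on two degree-$3$ generators realized by the real and imaginary parts of the Cartan form. The only stylistic difference is that where the paper simply cites the structure of $H^{\bullet}(\mathfrak{sl}_2(\C))$ (from Chevalley and S{\o}lvberg), you rederive it via $\spl\otimes_\R\C\simeq\spl\oplus\overline{\spl}$ and K\"unneth, and you also spell out the rank count and gradient-independence argument for $f_1,f_2$ that the paper leaves implicit in its appeal to Proposition \ref{formal cohomology}; these are modest elaborations rather than a different proof.
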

 \begin{proof}
 By Propositions \ref{formal cohomology} we have 
\[H^{\bullet}_F(\mathfrak{sl}_2(\C),\pi)= H^{\bullet}(\spl)\otimes \R[[f_1,f_2]].\]
From standard Lie theory we know that $H^{\bullet}(\mathfrak{sl}_2(\C))$ is isomorphic to a free exterior algebra with two generators in degree three (see \cite{Chev50} or also \cite[Corollary 3.7]{Sol02}). 
The complex-valued Cartan three-form is given by
\begin{align*}
    3\, \partial_{z_1}\partial_{z_2}\partial_{z_3}
\end{align*}
Hence taking $\frac{4}{3}$ of the real and imaginary part yields $C_{\mathcal{R}}$ and $C_{\mathcal{I}}$, respectively. Note that $C_{\mathcal{R}},C_{\mathcal{I}}\in(\wedge^{3}\spl)^{\spl}$ and from \cite[Theorem 19.1]{Chev1948} we know that every element in $H^{\bullet}(\spl)$ has a unique $\spl$-invariant representative. 
As a result we obtain for the Lie algebra cohomology $H^{\bullet}(\spl)$:
\begin{align*}
         H^0(\spl) &=\R,\\ 
         H^{3}(\spl)&=\R\cdot 
         C_{\mathcal{R}} \oplus \R\cdot C_{\mathcal{I}}, \\
         H^{6}(\spl)&=\R\cdot C_{\mathcal{R}}\wedge C_{\mathcal{I}}
\end{align*}
and $H^{k}(\spl)=0$ in all other degrees.
\end{proof}

\subsection{Poisson cohomology of $\spl$}\label{section: PC statement}

Recall that Poisson cohomology is a module over the space of Casimir functions, which, for $\spl$, is isomorphic to $C^{\infty}(\C)$ (see Proposition \ref{prop:Casimirs}). We will describe the Poisson cohomology groups as modules over this algebra, and we will also describe the induced Schouten Nijenhuis bracket.

The following two vector fields on $\C\setminus \{0\}$ will play an important role:
\begin{align*}
        Y_1:=-y\partial_x+ (|z|+x)\partial_y  \quad \text{ and }\quad     Y_2:= (|z|-x)\partial_x -y\partial_y,
\end{align*}
where we used the coordinate $z=x+i\cdot y$ on $\C$. 
These satisfy, the relations
 \begin{align*}
    [Y_1,Y_2]=Y_2 \qquad \textrm{and} \qquad 
    (|z|-x)\cdot Y_1 = -y\cdot Y_2.
\end{align*}

The mild singularities of these vector fields imply that, when multiplied with flat functions, they became smooth, flat vector fields on $\C$. Motivated by the theorem below, we denote the resulting $C^{\infty}(\C)$-module by:
\begin{equation}\label{eq:flat:module:of:vector:fields}
\out:=\{g_1Y_1+g_2Y_2\, |\, g_1,g_2\in C^{\infty}_0(\C)\} \subset \mathfrak{X}^1_0(\C).
\end{equation}
The bracket relation above shows that $\out$ is a Lie subalgebra of the algebra of flat vector fields. The second relation above shows that we have an isomorphism of $C^{\infty}(\C)$-modules: 
\[
\out\simeq 
\frac{C_0^{\infty}(\C)\oplus C^{\infty}_0(\C)}{\big\{(g_1,g_2)\ |\  yg_1=(|z|-x) g_2\big\}}.
\]
This module will appear more frequently in the paper (see Subsection \ref{subsection: twisted module}).

We are ready now to describe the Poisson cohomology.

\begin{theorem}\label{theorem: PC} The Poisson cohomology groups of $(\spl,\pi)$ are
\begin{itemize}
    \item $H^0(\spl,\pi)\simeq C^{\infty}(\C)$, via the isomorphism:
    \[g\circ f\in H^0(\spl,\pi) \qquad \leftrightarrow \qquad   g\in C^{\infty}(\C);\]
    \item 
$H^1(\spl,\pi)\simeq \out$, as $C^{\infty}(\C)$-modules and as Lie algebras, via the natural action of $H^1(\spl,\pi)$ on Casi\-mirs; i.e.,
\[[X]\in H^1(\spl,\pi)   \qquad \leftrightarrow \qquad Y\in \out\]
if and only if, for all $g\in C^{\infty}(\C)$:
\[\Lie_X(g\circ f)=\Lie_Y(g)\circ f;\]
\item $H^2(\spl,\pi)=0$;
    \item 
$H^3(\spl,\pi)\simeq 
C^{\infty}(\C)\oplus C^{\infty}(\C)/C^{\infty}_0(\C)
$ as $C^{\infty}(\C)$-modules, where to $(g_1,[g_2])\in C^{\infty}(\C)\oplus C^{\infty}(\C)/C^{\infty}_0(\C)$ we associate: 
    \[[g_1\circ f\cdot C_{\mathcal{R}}+g_2\circ f \cdot C_{\mathcal{I}}] \in H^3(\spl,\pi);\]
\item $H^4(\spl,\pi)\simeq H^1(\spl,\pi)$, via the isomorphism: 
\[[X]\in H^1(\spl,\pi)\quad \leftrightarrow \quad [X\wedge C_\mathcal{R}]\in H^4(\spl,\pi);\]
\item $H^5(\spl,\pi)=0$;
\item $H^6(\spl,\pi)\simeq C^{\infty}(\C)/C^{\infty}_0(\C)$, via the isomorphism: 
\[[g\circ f\cdot C_{\mathcal{R}}\wedge C_{\mathcal{I}}]\in H^6(\spl,\pi) \qquad \leftrightarrow \qquad [g]\in C^{\infty}(\C)/C^{\infty}_0(\C).\]
\end{itemize}
\end{theorem}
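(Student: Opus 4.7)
The strategy is to combine the short exact sequence of Proposition \ref{ses smooth formal}
$$0 \to H^{\bullet}_0(\spl,\pi) \to H^{\bullet}(\spl,\pi) \to H^{\bullet}_F(\spl,\pi) \to 0$$
with the already computed formal cohomology from Proposition \ref{formal cohomology sl2}, so that the theorem reduces to computing the flat Poisson cohomology $H^{\bullet}_0(\spl,\pi)$ and then splitting the sequence. Reading off the target groups in Theorem \ref{theorem: PC} against the known formal part dictates what the flat cohomology must be: degree zero gives $C^{\infty}_0(\C)$ via $f^*$ by Proposition \ref{prop:Casimirs}; degrees one and four should both give $\out$; degree three should yield a copy of $C^{\infty}_0(\C)$ attached to $C_{\mathcal{R}}$; and degrees two, five, and six should vanish.

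The bulk of the work is the computation of $H^{\bullet}_0(\spl,\pi)$. I would use the identification \eqref{eq: multivector fields identification} to recast the flat Poisson complex as the Chevalley–Eilenberg complex of $\spl$ with coefficients in $C^{\infty}_0(\spl)$, and then follow the approach of \cite{MZ}: exploit the regular symplectic foliation $\F$ on $\spl\setminus\{0\}$, whose leaves are the affine quadrics $S_z=f^{-1}(z)$, to build a Hochschild–Serre-type spectral sequence for the fibration $f:\spl\setminus\{0\}\to\C\setminus\{0\}$. The $E_1$ page records leafwise Poisson cohomology with coefficients in the sheaf of smooth functions on $\C$ flat at the origin, and the $E_2$ page assembles these into the $C^{\infty}_0(\C)$-modules appearing in the statement. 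Flatness at the origin plays a double role: it allows the mildly singular fields $Y_1,Y_2$ to define genuine smooth vector fields on $\C$ after multiplication, producing the module $\out$ of \eqref{eq:flat:module:of:vector:fields}, and it removes obstructions that would otherwise appear at the singular leaf $S_0^{\mathrm{reg}}$ and at the origin in a non-flat computation.

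Once the flat part is in hand, I split the short exact sequence by producing polynomial cocycles for each formal generator: $f_1^{a}f_2^{b}$ in degree zero, $f_1^{a}f_2^{b}\cdot C_{\mathcal{R}}$ and $f_1^{a}f_2^{b}\cdot C_{\mathcal{I}}$ in degree three, and $f_1^{a}f_2^{b}\cdot C_{\mathcal{R}}\wedge C_{\mathcal{I}}$ in degree six; these are automatically Poisson cocycles because the Cartan trivectors are $\spl$-invariant and the $f_j$ are Casimirs. The $C^{\infty}(\C)$-module and Lie-algebra isomorphism $H^1\simeq\out$ is then pinned down by the derivation action on Casimirs through $f^*$, and the identification $H^4\simeq H^1$ is checked by verifying that wedging with $C_{\mathcal{R}}$ is a well-defined isomorphism at the level of cohomology.

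The main obstacle is the vanishing $H^2_0(\spl,\pi)=0$, which is the cornerstone of the whole linearization program. In contrast to Conn's compact setting, the leaves $S_z$ are generically non-compact (spheres for some values of $z$ and one-sheeted hyperboloids for others), so leafwise $H^2$ does not vanish for purely topological reasons and one must carefully analyze the transverse dynamics of $Y_1,Y_2$ to kill all $E_2$ contributions. The same delicate analysis explains the asymmetry in degree three between $C_{\mathcal{R}}$ (paired with full $C^{\infty}(\C)$) and $C_{\mathcal{I}}$ (paired only with the formal quotient), as well as the vanishing in degrees five and six. This step is the technical heart of Part I and is precisely what enables the Nash–Moser argument of Part II via the quantitative version announced in Theorem \ref{main theorem}.
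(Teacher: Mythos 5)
Your outline correctly identifies the organizing scheme: reduce via the short exact sequence of Proposition \ref{ses smooth formal} to the formal cohomology (Proposition \ref{formal cohomology sl2}) and the flat cohomology $H^{\bullet}_0(\spl,\pi)$, split the sequence by polynomial cocycles, and then read off the $C^{\infty}(\C)$-module description. That bookkeeping is indeed how the paper concludes the proof in Subsection \ref{section: algebra}. However, your sketch for the flat part has a genuine gap: invoking ``a Hochschild--Serre-type spectral sequence for the fibration $f:\spl\setminus\{0\}\to\C^{\times}$'' addresses only the regular locus away from $S_0$, and the assertion that ``flatness removes obstructions at $S_0^{\mathrm{reg}}$ and at the origin'' is the conclusion to be proved, not an argument. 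The paper never runs a spectral sequence for the fibration $f$; the leafwise fibration picture appears only as motivation in Subsection \ref{subsection:away from S_0}.

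What the paper actually does, and what is missing from your proposal, is a two-stage construction with explicit homotopy operators. First, flat foliated cohomology $H^{\bullet}_0(\F)$ is computed (Section \ref{section flat foli}) not by a fibration spectral sequence but by retracting onto the \emph{cohomological skeleton} $\Sk$ of normal matrices: one averages over $SU(2)$ and then retracts along the gradient-like flow of $W_A=\tfrac14[A,[A,A^*]]$, proving (Section \ref{subsection: homotopy operators}, via the detailed estimates of Proposition \ref{lemma:bounds on the flow} and Lemma \ref{lemma: finite integrals}) that $\lim_{t\to\infty}\phi_t^*$ defines a smooth, flat, bounded cochain projection; the resulting cohomology is identified through the $\Z_2$-equivariant desingularization $\rho:S^2\times\C\to\Sk$. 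Second, flat Poisson cohomology is related to flat foliated cohomology through the regular Lie algebroid spectral sequence of Proposition \ref{proposition:PCH as forms} (with differential the cup product with the extension class $[\gamma]$ coming from the variation of the leafwise symplectic form), and the Perturbation Lemma assembles explicit homotopy operators (Corollary \ref{corollary: homotopy}, Corollary \ref{proposition: homotopy}). The cornerstone vanishing $H^2_0=0$ comes out of the explicit form of the differential $\tilde\delta$ on the skeleton (Lemma \ref{lemma: second differential degree one}) together with the module decomposition $\mathcal{M}\oplus\mathcal{K}=C^{\infty}_0(\C)^2$, and the vector fields $Y_1,Y_2$ appear as the \emph{output} describing $H^1$, not as tools whose dynamics drive the vanishing. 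None of this is present in your sketch, so as written it does not constitute a proof of the hard part of the theorem.
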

Note that the wedge product on $H^{\bullet}(\spl,\pi)$ is determined from the descriptions in the theorem. The Schouten bracket is determined by the isomorphism of Lie algebras from $H^1(\spl,\pi)\simeq \out$, the usual Leibniz rule, and the following relations:
\begin{proposition}\label{Proposition:action:H3}
For any $[X]\in H^1(\spl,\pi)$, we have:
\begin{align*}
[[X,C_{\mathcal{R}}]]&=-\Lie_X(\log|f|)\cdot [C_{\mathcal{R}}]\in H^3(\spl,\pi),\\
[[X,C_{\mathcal{I}}]]&=0\in H^3(\spl,\pi).
 \end{align*}
\end{proposition}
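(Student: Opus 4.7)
The plan is to reduce both assertions to the single complex identity modulo $\dif_{\pi}$-coboundaries
\begin{equation*}
  [X, H] \equiv -\Lie_X(\log\lvert f\rvert)\cdot H,
\end{equation*}
where $H := \partial_{z_1}\wedge \partial_{z_2}\wedge \partial_{z_3}$, so that $C_{\mathcal{R}} + iC_{\mathcal{I}} = 4H$ (by expanding $\partial_{z_j} = \tfrac12(\partial_{x_j}-i\partial_{y_j})$). Taking real and imaginary parts, and using that $\Lie_X(\log\lvert f\rvert)$ is real-valued, one gets $[X, C_{\mathcal{R}}] \equiv -\Lie_X(\log\lvert f\rvert)\, C_{\mathcal{R}}$ and $[X, C_{\mathcal{I}}] \equiv -\Lie_X(\log\lvert f\rvert)\, C_{\mathcal{I}}$ modulo $\dif_{\pi}$-exact terms. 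Under the isomorphism $H^1(\spl, \pi)\simeq \out$ from Theorem~\ref{theorem: PC}, the Casimir $\Lie_X(\log\lvert f\rvert)$ corresponds to the function $Y(\log\lvert z\rvert)\in C^{\infty}(\C)$ for the associated $Y \in \out$; a direct calculation gives $Y_1(\log\lvert z\rvert) = y/\lvert z\rvert$ and $Y_2(\log\lvert z\rvert) = (x-\lvert z\rvert)/\lvert z\rvert$, both bounded, so $Y(\log\lvert z\rvert)$ is flat at $0$ when $Y = g_1Y_1+g_2Y_2$ with $g_j \in C^\infty_0(\C)$. Since Theorem~\ref{theorem: PC} only determines the coefficient of $C_{\mathcal{I}}$ modulo $C^\infty_0(\C)$, the imaginary part of the identity yields $[[X, C_{\mathcal{I}}]] = 0$, and the real part is exactly the first claimed formula.

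To prove the complex identity I carry out two preliminary reductions. First, I verify that $[X] \mapsto [[X, C]]$ is $C^\infty(\C)$-linear for $C \in \{C_{\mathcal{R}}, C_{\mathcal{I}}, H\}$ with respect to the Casimir action on $H^1$. The Leibniz rule yields $[gX, C] = g[X, C] - X\wedge\iota_{dg}C$ for a Casimir $g = g_\C\circ f$. The identity $\iota_{df}H = 2\pi_\C$ (immediate from $df = 2\sum z_j\, dz_j$) gives $\iota_{df_1}C_{\mathcal{R}} = \pi$ and $\iota_{df_2}C_{\mathcal{R}} = \pi_2$, so the correction $X\wedge \iota_{dg}C_{\mathcal{R}}$ becomes a Casimir-linear combination of $X\wedge\pi$ and $X\wedge\pi_2$. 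Because $H^2(\spl,\pi) = 0$, both $\pi$ and $\pi_2$ are $\dif_{\pi}$-exact (for instance $\pi = -\dif_{\pi} E$ for the Euler field $E$), and wedging with the Poisson vector field $X$ preserves exactness via Leibniz; multiplying by Casimirs preserves it as well, so the correction vanishes in cohomology. Second, since $H^1_F(\spl, \pi) = 0$ by Proposition~\ref{formal cohomology sl2}, the $\infty$-jet of any Poisson vector field is formally $\dif_{\pi}$-exact, and graded Jacobi (combined with $\dif_{\pi} C_{\mathcal{R}} = 0$) shows that the formal part of $[[X, C_{\mathcal{R}}]]$ vanishes. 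Hence $[[X, C_{\mathcal{R}}]]$ lies in $H^3_0 \hookrightarrow H^3$, which under the identification of Theorem~\ref{theorem: PC} corresponds to $C^\infty_0(\C)\oplus\{0\}$; the same holds for $[[X, C_{\mathcal{I}}]]$. The proposition thus reduces to identifying the unique flat Casimir function representing each class.

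For the final identification, by the Casimir-linearity it suffices to verify the identity on generators of $\out$, i.e.\ on classes represented by $g\tilde{Y}_j$ for $j=1,2$, where $\tilde{Y}_j$ is an explicit Poisson-vector-field lift of $Y_j$ (constructed in the proof of Theorem~\ref{theorem: PC}) and $g \in C^\infty_0(\C)$. The key tool is contraction with $df$: applying $\Lie_X$ to $\iota_{df}H = 2\pi_\C$ gives
\[\iota_{df}[X, H] = 2\,\Lie_X\pi_\C - \iota_{d(X(f))}H,\]
and since $X(f) = F(f,\bar f)$ is a Casimir, $\iota_{d(X(f))}H = 2\,\partial_z F\cdot \pi_\C$; moreover $\Lie_X\pi_\C = i\,\Lie_X\pi_2/4$ is itself $\dif_{\pi}$-exact because $\Lie_X\pi = 0$ and $\pi_2 = \dif_{\pi}V$ by $H^2 = 0$. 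The remaining step is to deduce from this $\iota_{df}$-level relation that $[X, H] + \Lie_X(\log\lvert f\rvert)\cdot H$ itself is $\dif_{\pi}$-exact. This is the main technical obstacle: $\iota_{df}$ has a large kernel on $3$-vectors (namely $\wedge^3(\ker df)$), so the inversion cannot be purely algebraic and must exploit both the Casimir-linearity and the explicit structure of the lifts $\tilde{Y}_j$ to extract the \emph{real} scalar $\Re(X(f)/f) = \Lie_X(\log\lvert f\rvert)$ from the a priori complex expression $\partial_z F / $ something — the imaginary ambiguity being absorbed precisely into the "mod $C^\infty_0(\C)$" freedom in the $C_{\mathcal{I}}$-component of $H^3$, consistent with the second statement of the proposition.
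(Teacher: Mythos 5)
Your strategy is structurally reasonable and quite different from the paper's: you package $C_{\mathcal{R}}$ and $C_{\mathcal{I}}$ into the holomorphic trivector $H=\partial_{z_1}\wedge\partial_{z_2}\wedge\partial_{z_3}$, reduce to a single complex congruence $[X,H]\equiv-\Lie_X(\log|f|)\cdot H$ modulo exact terms, and then plan to prove it through a chain of reductions: Casimir-linearity in $[X]$ (using $H^2=0$ to kill the correction terms $X\wedge\iota_{dg}C$), vanishing of the formal part (using $H^1_F=0$ and graded Jacobi), and a contraction-with-$df$ identity. The first two reductions are genuinely sound, and your computation that $\Lie_X(\log|f|)$ corresponds to $-g_2/|z|$ for $X\leftrightarrow (g_1,g_2)\in\mathcal{M}$ matches the paper's reformulation $[[\iota_{\spl}(g_1,g_2),C_{\mathcal{R}}]]=[\frac{g_2\circ f}{|f|}C_{\mathcal{R}}]$; the observation that the flatness of $\Lie_X(\log|f|)$ makes the $C_{\mathcal{I}}$-statement come for free from the "$\bmod\ C^\infty_0(\C)$" description of $H^3$ is also a nice conceptual point.

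However, there is a genuine gap, and you acknowledge it yourself. After establishing the relation at the level of $\iota_{df}[X,H]$, you need to conclude that $[X,H]+\Lie_X(\log|f|)\cdot H$ is $\dif_\pi$-exact, and you explicitly write that this "is the main technical obstacle" since $\iota_{df}$ has a large kernel on $3$-vectors; you gesture at "exploit\[ing\] the explicit structure of the lifts $\tilde Y_j$" but never carry this out. Without this step the argument does not close — precisely because the kernel of $\iota_{df}$ on $\wedge^3 T(\spl^\times)$ is the leafwise part $\wedge^3 T\F$, which is exactly where the nontrivial cohomological content of $H^3_0$ lives, so knowing the image under $\iota_{df}$ determines essentially nothing about the class. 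The paper avoids this by not working with $\iota_{df}$ at all: it applies the explicit cochain isomorphism $\tilde a$ and the deformation retraction $\tilde p$ from Sections 3–4, writes $[X, 4V_{\C}\wedge\pi_{\C}]$ out via the Leibniz rule and the identity $\pi_2=2[\pi_1,E_2]$, projects each piece to the module $p_{\F}(\CC^2)\otimes\R^2$, and reads off the scalar coefficient of $b_1$ in the decomposition \eqref{eq: trivialization in (2,1)} using the projection $p_{\mathcal M}$ and the defining relations of $\mathcal M$. If you want to finish your route, you would need either a replacement for the $\iota_{df}$-inversion (e.g.\ a retraction along $\F$, which is exactly what $p_{\F}$ provides) or a direct computation of the leafwise part — at which point you would be reconstructing the machinery the paper already built.
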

The proofs of these results will be completed in Subsection \ref{section: algebra}.
\section{Flat foliated cohomology of $\spl$}\label{section flat foli}


In this section we define the flat foliated cohomology of the singular foliation $\F$ on $\spl$. For the calculation of this cohomology, we first apply an averaging argument for the compact subgroup $SU(2)\subset SL(2,\C)$. The more involved part of the calculation uses a so-called \emph{cohomological skeleton} for the foliation $\F$, i.e., a subvariety that contains the cohomological information of the closed leaves. A retraction to the cohomological skeleton is further used to build homotopy operators for the flat foliated complex. This is based on Lemma \ref{lemma: infinite time pullback} and Proposition \ref{lemma: homotopies}, whose proofs we delay to Section \ref{subsection: homotopy operators}. 

The flat foliated cohomology and the homotopy operators will be the building blocks for the computation of flat Poisson cohomology and the existence of ``good'' homotopy operators for the flat Poisson complex. These will play a crucial role in the proof of the linearization theorem. 

\subsection{The statement and explanations of the results}\label{section: foliated ch}

The singular foliation $\F$ of $(\spl,\pi)$ restricts on $\spl\backslash\{0\}$ to a regular, corank two, unimodular foliation. In this subsection we first recall the foliated de Rham complex of a regular foliation. Then we adapt this notion to introduce the flat foliated complex of $\F$ on $\spl$, which gives rise to flat foliated cohomology. We also state the main results of Section \ref{section flat foli}: we describe the flat foliated cohomology groups and claim the existence of homotopy operators. The proofs of these results will occupy the rest of Section \ref{section flat foli} and Section \ref{subsection: homotopy operators}.

\subsubsection{Foliated cohomology}
We first look at a regular foliation $\mathcal{F}$ on a manifold $M$, and we denote the \textbf{complex of foliated forms} by 
\[(\Omega^{\bullet}(\mathcal{F}), \dif_{\mathcal{F}})\ \ \ \text{
where }\ \ \Omega^{\bullet}(\mathcal{F}):=\Gamma(\wedge^{\bullet} T^*\mathcal{F}).\]
This complex consists of smooth families of differential forms along the leaves of $\mathcal{F}$, and $\dif_{\mathcal{F}}$ is the leafwise de Rham differential. The resulting cohomology is called the \textbf{foliated cohomology} of $\F$, denoted $H^{\bullet}(\F)$. The normal bundle to $\F$, denoted by $\nu:=TM/T\F$, carries the \textbf{Bott connection}
\[\nabla:\Gamma(T\F)\times \Gamma(\nu)\to \Gamma(\nu),\ \ \ \  \nabla_X(\overline{V}):=\overline{[X,V]},\]
which, via the usual formula for the exterior derivative, induces a differential on $\nu$-valued forms $(\Omega^{\bullet}(\F,\nu),\dif_{\nabla})$. This yields the cohomology of $\F$ with values in $\nu$, denoted $H^{\bullet}(\F,\nu)$. Similarly, the dual connection on $\nu^*$ gives rise to a connection on the exterior powers $\wedge^{q}\nu^*$, and we obtain the complexes $(\Omega^{\bullet}(\F,\wedge^{q}\nu^*),\dif_{\nabla})$, with cohomology groups $H^{\bullet}(\F,\wedge^{q}\nu^*)$.

In particular, for $q=\mathrm{codim}(\F)$, we have a canonical isomorphism
\[(\Omega^{\bullet}(\F,\wedge^q\nu^*),\dif_{\nabla})\simeq (\CC^{\bullet},\dif),\]
where $(\CC^{\bullet},\dif)$ is the subcomplex of $(\Omega^{\bullet+q}(M),\dif)$, defined by:
\[ \CC^{p}:=\big\{\alpha\in\Omega^{p+q}(M)\, |\, \forall\, v_1,\ldots,v_p\in T_x\F\, :\,  i_{v_1}\ldots i_{v_p}\alpha\in \wedge^q\nu_x^*\big\}.\]
Assume that $\F$ is coorientable and let $\varphi\in \Gamma(\wedge^q\nu^*)\subset \Omega^q(M)$ be a defining $q$-form for $\F$, i.e., $\varphi$ is nowhere zero and $T\F=\ker (\varphi)$. Then the subcomplex $\CC^{\bullet}$ admits the alternative description:
\[(\CC^{\bullet},\dif)=(e_{\varphi}(\Omega^{\bullet}(M)),\dif) \subset (\Omega^{\bullet+q}(M),\dif),\]
where $e_{\varphi}(-)=\varphi\wedge (-)$ denotes the exterior product with $\varphi$.

The foliation $\F$ is called \textbf{unimodular} if there exists a defining $q$-form $\varphi$ which is closed: $\dif \varphi=0$. Such a form is parallel for the dual of the Bott connection, and it induces an isomorphism of complexes: 
\begin{align}\label{eq: cochain isomorphism}
    (\Omega^{\bullet}(\F),\dif_{\F})\diffto (\Omega^{\bullet}(\F,\wedge^q \nu^*),\dif_{\nabla}), \ \ \eta\mapsto \varphi \otimes  \eta.
\end{align}
Therefore, in this case, the subcomplex $\CC^{\bullet}$ can be used to calculate the foliated cohomology of $\F$.

\subsubsection{Flat foliated cohomology of $\spl$}
Note that on $\spl\setminus\{0\}$ the regular foliation $\F|_{\spl\setminus\{0\}}$ is unimodular with defining $2$-form
\[\varphi:=\dif f_1\wedge \dif f_2,\quad \textrm{where}\quad f=f_1+i f_2.\]
We denote by $\Omega_0^{\bullet}(\spl)$ the set of differential forms that are flat at the origin, i.e., all partial derivatives of the coefficients vanish at the origin. Motivated by the discussion in the previous subsection, we introduce: 
\begin{definition}\label{definition: flat foliated ch}
We call the cohomology of the complex 
\begin{equation}\label{ffcomplex}
(\CC^{\bullet}:=e_{\varphi}(\Omega^{\bullet}_0(\spl)),\dif)
\end{equation}
the \textbf{flat foliated cohomology} of $\spl$, and we denote it by $H^{\bullet}_0(\F)$.
\end{definition}
The rest of the section is devoted to the calculation of the groups $H^{\bullet}_0(\F)$.

\subsubsection{The foliated cohomology away from $S_0$}\label{subsection:away from S_0}

To gain some geometric intuition, let us first discuss the foliated cohomology of $\F$ restricted to the open set $\spl\backslash S_0$. Here, the foliation is given by a the fibers of a locally trivial fibration:
\[ f:\spl \setminus S_0 \to \C^{\times}:= \C\setminus \{0\}.\]

For any such locally trivial fibration $p:E\to B$ with connected fibers, if the cohomology groups of the fibers are finite-dimensional, then they fit into a smooth vector bundle:
\[\mathcal{H}^{\bullet}\to B,\quad \mathcal{H}^{k}_b:=H^{k}(p^{-1}(b)).\]
The global sections of $\mathcal{H}^{\bullet}$ can be identified with the cohomology of the simple foliation $\mathcal{F}(p)$ associated to the submersion $p$:
\[H^{\bullet}(\mathcal{F}(p))\simeq \Gamma(\mathcal{H}^{\bullet}).\]

In our case, the fibers $f^{-1}(z)=S_z$, for $z\in \C^{\times}$, are all diffeomorphic to $T^*S^2$, which is homotopy equivalent to $S^2$. 
Hence, we have that
\[H^{k}(\F|_{\spl\backslash S_0})\simeq \Gamma(\mathcal{H}^k)=0,\quad \textrm{for } k\notin \{0,2\},\] 
and clearly  
\[H^{0}(\F|_{\spl\backslash S_0})\simeq C^{\infty}(\C^{\times}).\]
The situation in degree 2 is a bit more involved, because the rank 1 vector bundle $\mathcal{H}^2\to \C^{\times}$ is non-trivial. The leafwise symplectic structures $\omega_1$ and $\omega_2$, associated to the Poisson structures $\pi_1$ and $\pi_2$ from \eqref{eq: real identification}, induce sections
\[[\omega_1], [\omega_2]\in \Gamma(\mathcal{H}^2).\]
We will see that the vanishing loci of these sections are the positive real half-line and the negative real half-line, respectively:
\begin{align*}
&[\omega_1(z)]=0\in H^2(f^{-1}(z))\iff\quad z\in (0,\infty),\\
&[\omega_2(z)]=0\in H^2(f^{-1}(z))\iff\quad z\in (-\infty,0).
\end{align*}
So, at any point in $\C^{\times}$ at least one of $[\omega_1]$ and $[\omega_2]$ is non-zero, and therefore they span all sections: 
\[\Gamma(\mathcal{H}^2)=\big\{g_1[\omega_1]+g_2[\omega_2]\ |\ g_1,g_2\in C^{\infty}(\C^{\times})\big\}.\]
Because $\mathrm{rank}(\mathcal{H}^2)=1$, the two sections are not independent, and we will prove that they satisfy the relation:
\begin{align}\label{relation:omega's}
    (|z|-x)[\omega_1] = -y[\omega_2],
\end{align}
where $z=x+iy$. The second foliated cohomology is given by:
\[H^{2}(\F|_{\spl\backslash S_0})\simeq 
\Gamma(\mathcal{H}^2)\simeq \frac{C^{\infty}(\C^{\times})\oplus C^{\infty}(\C^{\times})}{\big\{(g_1,g_2)\ |\  yg_1=(|z|-x) g_2\big\}}.\]

\subsubsection{The twisted module}\label{subsection: twisted module}
In the previous subsection, we dealt with the non-trivial line bundle $\mathcal{H}^2\to \C^{\times}$. This bundle does not extend smoothly at the origin. However, to express flat foliated cohomology, we use as inspiration the description of $\Gamma(\mathcal{H}^2)$ above to make sense of its ``space of sections flat at the origin".

The following $C_0^{\infty}(\C)$-module will play a crucial role in this paper: 
\begin{align}\label{eq: definition M}
    \mathcal{M}\subset C^{\infty}_0(\C)\oplus C^{\infty}_0(\C),\quad \mathcal{M}:=\{(g_1,g_2)\, |\, yg_1=-(|z|+x)g_2\}.
\end{align}

For later use, let us give some properties of the module $\mathcal{M}$.

\begin{proposition}\label{proposition: twisted module}
The following hold:
\begin{enumerate}[(1)]
    \item We have an isomorphism of $C_0^{\infty}(\C)$-modules 
    $J:\mathcal{M}\diffto \mathcal{K}$, where 
\[\mathcal{K}\subset C^{\infty}_0(\C)\oplus C^{\infty}_0(\C),\quad \mathcal{K}:=\{(g_1,g_2)\, |\, yg_1=(|z|-x)g_2\},\]
given by $J(g_1,g_2)=(-g_2,g_1)$. 
    \item We have the direct sum decomposition:
    \[\mathcal{M}\oplus \mathcal{K}= C^{\infty}_0(\C)\oplus C^{\infty}_0(\C).\]
\end{enumerate}
\end{proposition}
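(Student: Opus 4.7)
The proof rests on the identity $(|z|+x)(|z|-x) = y^2$, which governs both statements. For Part (1), I take $J(g_1, g_2) := (-g_2, g_1)$, expecting the inverse to be $(k_1, k_2) \mapsto (k_2, -k_1)$. Given $(g_1, g_2) \in \mathcal{M}$, I multiply the defining relation $yg_1 = -(|z|+x)g_2$ by $y$ and substitute $y^2 = (|z|+x)(|z|-x)$ on the left, obtaining
\[
(|z|+x)\bigl[(|z|-x)g_1 + yg_2\bigr] = 0.
\]
Since the zero locus of $|z|+x$ is the half-line $\{y=0,\ x\leq 0\}$, which has dense complement in $\C$, continuity of the bracketed expression forces $(|z|-x)g_1 + yg_2 = 0$ everywhere, i.e., $J(g_1,g_2) \in \mathcal{K}$. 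The symmetric computation (using the zero locus of $|z|-x$) establishes the reverse inclusion. $C^{\infty}_0(\C)$-linearity of $J$ is immediate.

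For Part (2), I would first dispatch injectivity: any $(g_1,g_2) \in \mathcal{M} \cap \mathcal{K}$ satisfies both defining relations, so subtracting gives $2|z| g_2 = 0$, forcing $g_2 \equiv 0$ on $\C\setminus\{0\}$ and by continuity on all of $\C$; then $yg_1 = 0$ kills $g_1$ off the real axis, hence everywhere. For surjectivity, given $(h_1, h_2) \in C^{\infty}_0(\C)\oplus C^{\infty}_0(\C)$, I propose the explicit decomposition $(h_1, h_2) = (m_1, m_2) + (k_1, k_2)$ with
\begin{align*}
m_1 &= \frac{(|z|+x) h_1 - y h_2}{2|z|}, & m_2 &= \frac{-y h_1 + (|z|-x) h_2}{2|z|},\\
k_1 &= \frac{(|z|-x) h_1 + y h_2}{2|z|}, & k_2 &= \frac{y h_1 + (|z|+x) h_2}{2|z|}.
\end{align*}
These are the pointwise orthogonal projections onto the two lines cut out by the $\mathcal{M}$- and $\mathcal{K}$-conditions (whose normal vectors $(y, |z|+x)$ and $(y,-(|z|-x))$ are orthogonal, again by the identity). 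A direct algebraic verification using $(|z|+x)(|z|-x) = y^2$ confirms $(m_1,m_2)\in\mathcal{M}$, $(k_1,k_2)\in\mathcal{K}$, and $m_i + k_i = h_i$.

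The main obstacle is showing that $m_i, k_i$ actually lie in $C^{\infty}_0(\C)$, since each formula features $1/|z|$ and $|z|$ itself is not smooth at the origin. My plan is to establish the following auxiliary lemma: \emph{if $h\in C^{\infty}_0(\C)$ and $\phi$ is smooth on $\C\setminus\{0\}$ with $|\partial^\alpha \phi(z)|\leq C_\alpha |z|^{-N_\alpha}$ for every multi-index $\alpha$, then $h\phi$ extends to an element of $C^{\infty}_0(\C)$}. Applied to the bounded homogeneous-degree-zero functions $(|z|\pm x)/|z|$ and $y/|z|$ on $\C\setminus\{0\}$, whose derivatives of order $|\alpha|$ blow up only as $|z|^{-|\alpha|}$, this yields the desired smoothness. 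The lemma itself follows from Leibniz and the flat estimates $|\partial^\beta h(z)|\leq C_{N,\beta}|z|^N$ (valid for every $N$): each formal derivative of $h\phi$ then decays faster than any polynomial as $z\to 0$, so extends continuously by zero, and a standard bootstrap via the mean value theorem on $\C\setminus\{0\}$ upgrades this to genuine $C^\infty$-smoothness at the origin, with all derivatives vanishing there.
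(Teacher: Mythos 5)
Your proof is correct and follows the paper's own route: Part (1) rests on the same identity $(|z|+x)(|z|-x)=y^2$, and Part (2) uses exactly the paper's projection formulas $p_{\mathcal{M}}$ and $p_{\mathcal{K}}$. What you supply beyond the paper's terse statement is the explicit verification that the projections take values in $C^{\infty}_0(\C)$ --- that a flat function times a bounded degree-zero homogeneous coefficient such as $(|z|\pm x)/|z|$ or $y/|z|$ is again flat --- which the paper leaves implicit here (it is essentially a special case of the paper's later Lemma~\ref{lemma:singular:vb:maps} on singular vector bundle maps).
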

\begin{proof}
For item (1), one uses the relation $(|z|-x)(|z|+x)=y^2$. 

The direct sum decomposition corresponds to the projections:
\begin{align}
p_{\mathcal{M}}:C^{\infty}_0(\C)&\oplus C^{\infty}_0(\C)\to \mathcal{M},\quad 
p_{\mathcal{K}}:C^{\infty}_0(\C)\oplus C^{\infty}_0(\C)\to \mathcal{K},\nonumber\\
p_{\mathcal{M}}(g_1,g_2)&:=\frac{1}{2|z|}\left((|z|+x)g_1-y g_2, -yg_1 + (|z|-x)g_2\right),\label{eq: projection m}\\
p_{\mathcal{K}}(g_1,g_2)&:=\frac{1}{2|z|}\left((|z|-x)g_1+y g_2, yg_1 + (|z|+x)g_2\right).\nonumber \qedhere
\end{align}
\end{proof}

\subsubsection{The description of the flat foliated cohomology}\label{section: result ffch}

In this subsection we describe the groups $H^{\bullet}_0(\F)$. Because of the flatness condition at the origin, which we impose on the forms, the singularity of the foliation will bring no contributions in cohomology. In fact, with some adaptations, the discussion from the previous subsection can be extended to describe the flat foliated cohomology. 

In degree zero, we obtain smooth functions on $\spl$ that are constant along the leaves of $\F$ and flat at $0$. Of course, these are precisely the Casimir functions of $(\spl,\pi)$ (described in Proposition \ref{prop:Casimirs}) that are flat at 0. We have the following isomorphism:
\[C_0^{\infty}(\C)\diffto  H^{0}_{0}(\F)\quad \quad g \mapsto e_{\varphi}(g \circ f).\]

To represent the second cohomology classes, we will need the following:
\begin{lemma}\label{lemma: extension omega}
For $i=1,2$, there exist 2-forms $\tilde{\omega}_i\in \Omega^2(\spl \setminus \{0\})$ that extend the leafwise symplectic forms $\omega_i$
\[\omega_i =\tilde{\omega}_i|_{T\F}\quad \textrm{on}\quad \spl \setminus \{0\},\]
and such that, when multiplied with $R^2=\mathrm{tr}(AA^*)$, become smooth at $0$:
\[ R^2 \cdot \tilde{\omega}_i \in \Omega^2(\spl).\]
\end{lemma}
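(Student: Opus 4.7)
The plan is to construct $\tilde\omega_i$ on $\spl\setminus\{0\}$ via the Euclidean orthogonal projection $P:T\spl\to T\F$, and then verify that $R^2\tilde\omega_i$ extends smoothly across the origin. A direct computation shows that the real gradients $\nabla f_1,\nabla f_2$ are orthogonal with common norm $|\nabla f_j|=2R$, so they give a smooth frame of $T\F^{\perp}$ over $\spl\setminus\{0\}$, and the orthogonal projection
\[
P\;=\;\mathrm{id}\;-\;\tfrac{1}{4R^{2}}\bigl(\nabla f_1\otimes df_1\;+\;\nabla f_2\otimes df_2\bigr)
\]
is smooth on $\spl\setminus\{0\}$. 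Setting $\tilde\omega_i(X,Y):=\omega_i(PX,PY)$ clearly extends $\omega_i|_{T\F}$ and gives a smooth 2-form on $\spl\setminus\{0\}$.

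An equivalent (holomorphic) viewpoint makes the estimates transparent. I would first verify the identity $df\wedge\iota_E\Omega=2f\cdot \Omega$, where $E:=\sum_j z_j\partial_{z_j}$ and $\Omega:=dz_1\wedge dz_2\wedge dz_3$; this is a routine computation from $df=2\sum z_j\,dz_j$. It implies $\iota_E\Omega|_{T\F}=f\cdot\omega_{\C}$ on each regular leaf (as one checks by testing on the Hamiltonian vector fields $X_{z_j}=\pi_\C^\sharp(dz_j)$), so that $\frac{1}{f}\iota_E\Omega$ restricts to the holomorphic leafwise symplectic form $\omega_\C$ on $\spl\setminus f^{-1}(0)$. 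Since any 2-form of the form $df\wedge\alpha$ vanishes on $T\F$, one has a gauge freedom that can be used to cancel the pole of $\frac{1}{f}\iota_E\Omega$ along $S_0^{\mathrm{reg}}$ (locally, from $\iota_E\Omega|_{T\F}=f\cdot\omega_\C$ one gets a decomposition $\iota_E\Omega=df\wedge\gamma+f\mu$, and $\mu$ is the local smooth candidate); a global version is produced using $d\bar f$ and the antiholomorphic Euler field $\bar E$. Up to such a gauge term, this recovers the construction via $P$.

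To show $R^2\tilde\omega_i\in \Omega^2(\spl)$, note that there are two sources of singularity of $\tilde\omega_i$ at $0$: first, the Hamiltonian fields $X_{z_j}$ are linear in the coordinates, forcing $|\omega_i|\sim 1/R$ on unit vectors in $T\F$; and second, $P$ carries an apparent $1/R^2$ factor. Combined, $\tilde\omega_i=O(1/R^{2})$ near $0$ on unit vectors, so $R^2\tilde\omega_i$ is bounded. For actual smoothness, I would expand $\tilde\omega_i$ term-by-term in the real coordinates $(x_j,y_j)$ and check that the polynomial numerators cancel the $1/R^{2k}$-denominators arising from derivatives of $P$. Exploiting the $SU(2)$-equivariance of the whole construction (since $SU(2)$ acts isometrically, preserves $\pi_i$, $\F$, and $R^2$) reduces smoothness at $0$ to a check on a low-dimensional slice transverse to the $SU(2)$-orbits, substantially cutting down the bookkeeping.

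The main obstacle will be precisely this last smoothness verification at $0$: boundedness of $R^2\tilde\omega_i$ is immediate from the order estimates above, but ensuring that \emph{all} derivatives extend continuously requires pinning down the exact orders of the polynomial numerators against the powers of $R$ in the denominators — a finite but delicate coordinate computation, where the $SU(2)$-symmetry and the identity $df\wedge\iota_E\Omega=2f\cdot\Omega$ together provide the key algebraic organization.
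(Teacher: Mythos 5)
Your construction of $\tilde\omega_i$ via the Euclidean orthogonal projection $P$ is correct and is in fact the same object as the paper's: $V_j=\tfrac{1}{2R^2}\nabla f_j$, so $\ker P=\operatorname{span}(\nabla f_1,\nabla f_2)=\operatorname{span}(V_1,V_2)$, and $\tilde\omega_i(X,Y)=\omega_i(PX,PY)$ is exactly the unique extension of $\omega_i$ with $i_{V_j}\tilde\omega_i=0$ that the paper uses.

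The gap is in the second half of the lemma. You never prove that $R^2\tilde\omega_i$ extends smoothly across the origin; you only sketch a plan (``expand term-by-term and check the numerators cancel the denominators'') and then explicitly flag this as the ``main obstacle.'' Boundedness of $R^2\tilde\omega_i$ alone does not give continuity at $0$, let alone smoothness of all derivatives, so the order estimate cannot stand in for the computation. Moreover, the order estimate itself is off: $P$ is $O(1)$ on unit vectors (the factors $\nabla f_j$ and $df_j$ are each $O(R)$ and cancel the explicit $\tfrac{1}{4R^2}$ in the formula for $P$), so there is no ``second source of singularity''; combined with $|\omega_i|\sim 1/R$ one gets $\tilde\omega_i=O(1/R)$, not $O(1/R^2)$, and hence $R^2\tilde\omega_i=O(R)$. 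That conclusion is what you actually need, but the claimed reasoning for it is wrong.

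The paper avoids the difficulty entirely by writing the closed formula
\[
\tilde\omega_1=\frac{2}{R^2}\sum_{c.p.}\bigl(x_1(\dif y_2\wedge\dif y_3-\dif x_2\wedge\dif x_3)-y_1(\dif x_2\wedge\dif y_3+\dif y_2\wedge\dif x_3)\bigr)
\]
(and similarly for $\tilde\omega_2$), for which $R^2\tilde\omega_i$ is manifestly a $2$-form with linear polynomial coefficients, hence smooth; the extension property is then checked by the finite algebraic identity $\pi_i^\sharp\circ\tilde\omega_i^\flat\circ\pi_i^\sharp=\pi_i^\sharp$, and the kernel condition $i_{V_j}\tilde\omega_i=0$ pins down uniqueness. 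The two routes are dual: yours makes the extension property trivial but leaves the regularity of $R^2\tilde\omega_i$ to a computation you do not carry out; the paper's makes regularity trivial and reduces the extension property to a one-line algebraic check. In this instance the paper's direction is the efficient one, and the $SU(2)$-equivariance and the holomorphic identity $\dif f\wedge\iota_E\Omega=2f\cdot\Omega$ in your proposal, while correct, never enter the argument that is actually needed.
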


\begin{proof}
In the coordinates from Subsection \ref{section: formal poisson}, the extensions are defined as:
\begin{align*}
    \tilde{\omega}_1:&=\frac{2}{R^2}\sum_{c.p.}
    \big(
    x_1 (\dif y_2\wedge \dif y_3-\dif x_{2}\wedge\dif x_3) -y_1 (\dif x_2\wedge \dif y_3+ \dif y_2 \wedge\dif x_3)\big)\\
&=\frac{1}{|z|^2}\sum_{c.p.}
    \Re \big(-z_1 \dif \overline{z}_2\wedge\dif \overline{z}_3\big)\\
    \tilde{\omega}_2:&=\frac{2}{R^2}\sum_{c.p.}\big(y_1(\dif y_2\wedge\dif y_3 -\dif x_{2}\wedge\dif x_{3})
    +x_1 (\dif x_{2}\wedge\dif y_3+ \dif y_2\wedge\dif x_{3})\big)\\
&=\frac{1}{|z|^2}\sum_{c.p.}
    \Im \big(-z_1 \dif \overline{z}_2\wedge\dif \overline{z}_3\big)
\end{align*}
where $\sum_{c.p.}$ denotes the sum over all cyclic permutations of the indexes $1,2,3$. A direct calculation shows that they satisfy:
\[\pi_i^{\sharp} \circ \tilde{\omega}_i^{\flat} \circ \pi_i^{\sharp} = \pi_i^{\sharp}.\]
Another way to describe $\tilde{\omega}_i$ is as following: we define the two vector fields $V_1,V_2\in \mathfrak{X}^1(\spl \setminus\{0\})$ by
\begin{align}\label{eq: v}
    V_1:=\frac{1}{R^2}\sum_{i=1}^{3}(x_{i}\partial_{x_i}-y_{i}\partial_{y_i})\quad \text{ and }\quad V_2:= \frac{1}{R^2}\sum_{i=1}^{3}(x_i\partial_{y_i}+y_i\partial_{x_i})
\end{align}

The vector fields $V_1$ and $V_2$ trivialize the normal bundle of $\F$ away from the origin as they satisfy the relations
\begin{align}\label{eq:v transversality}
    \dif f_j(V_i)=\delta_{ij}.
\end{align}
The 2-form $\tilde{\omega}_i$ is the unique extension of $\omega_i $ satisfying
\[ i_{V_j}\tilde{\omega}_i=0 \ \ \ \ \ \text{ for }\ j \in \{1,2\}.\qedhere\]
\end{proof}

The extensions from the lemma have only mild singularities at the origin, in particular, for any flat function $h\in C^{\infty}_0(\spl)$, 
\[h\cdot \tilde{\omega}_i\in \Omega^{2}_0(\spl).\]


The following result describes the flat foliated cohomology of $\F$, and to state it we use the $C^{\infty}_0(\C)$-modules $\mathcal{M}$ and $\mathcal{K}$ from the previous subsection.  
The proof will occupy the rest of Section \ref{section flat foli} and Section \ref{subsection: homotopy operators}.

\begin{proposition}\label{proposition:flat foliated ch}
For $k\notin \{0,2\}$, $H^k_0(\F)\simeq 0$, and 
\begin{itemize}
    \item for $k=0$, we have the isomorphism:
    \begin{equation}\label{eq: zero fol iso}
    \begin{array}{cccc}
         &C_0^{\infty}(\C)&\xrightarrow{\simeq} & H^{0}_{0}(\F)\\
         &g&\mapsto &e_{\varphi}(g \circ f)
    \end{array}
    \end{equation}
    \item for $k=2$, we have a surjective map
    \begin{equation} \label{eq: quotient}
        \begin{array}{cccc}
             &C_0^{\infty}(\C)\oplus C^{\infty}_0(\C)&\to & H^{2}_{0}(\F) \\
             &(g_1,g_2)&\mapsto & [e_{\varphi}(g_1\circ f\cdot \tilde{\omega}_1 +g_2\circ f\cdot \tilde{\omega}_2)]
        \end{array}
    \end{equation}
with kernel $\mathcal{K}$. Hence  \eqref{eq: quotient} restricts to a $C^{\infty}_0(\C)$-linear isomorphism
    \begin{equation}\label{eq:2nd:flat:foliated}
\mathcal{M} \diffto  H^{2}_{0}(\F).
    \end{equation}
\end{itemize}
\end{proposition}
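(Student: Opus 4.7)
The plan is to follow the three-step scheme announced at the beginning of Section \ref{section flat foli}. First, I would average cochains over the compact group $SU(2)\subset SL_2(\C)$: since $SU(2)$ preserves $\pi$, $\F$, and the defining form $\varphi$, it acts on $\CC^\bullet=e_\varphi(\Omega^\bullet_0(\spl))$ by chain maps, and because $SU(2)$ is compact and connected, the associated averaging operator is chain homotopic to the identity. This reduces the problem to the $SU(2)$-invariant subcomplex of $\CC^\bullet$ without altering the cohomology.

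Second, I would construct a cohomological skeleton for $\F$: a low-dimensional $SU(2)$-invariant slice transverse to the generic orbits, naturally parameterized by $f=f_1+if_2\in \C$. A radial flow built from the transverse vector fields $V_1,V_2$ of \eqref{eq: v} and from the infinitesimal $SU(2)$-action retracts the ambient manifold onto this slice. The ``infinite-time pullback'' Lemma \ref{lemma: infinite time pullback} makes such a retraction well-defined on flat forms, and the accompanying Proposition \ref{lemma: homotopies} provides the explicit cochain homotopy between the identity and the skeleton-valued retraction. The flatness hypothesis on $\Omega^\bullet_0(\spl)$ is essential here: it is exactly what makes the infinite-time limit converge despite the singular leaf at the origin.

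Once reduced to the skeleton the computation is direct. For $k=0$, leafwise-constant flat functions on $\spl$ are exactly $g\circ f$ with $g\in C^\infty_0(\C)$, which yields \eqref{eq: zero fol iso}. For $k\in\{1,3,4\}$, the de Rham cohomology of the generic leaf $T^*S^2\simeq S^2$ vanishes, and the retraction above rules out any residual contribution from the singular leaf, giving $H^k_0(\F)=0$. For $k=2$, Lemma \ref{lemma: extension omega} supplies the $SU(2)$-invariant extensions $\tilde\omega_1,\tilde\omega_2$, and multiplication by flat Casimirs $g_i\circ f$ turns them into genuine elements of $\Omega^2_0(\spl)$; the resulting closed forms in $\CC^2$ produce the surjection \eqref{eq: quotient}.

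The main obstacle is to identify the kernel of \eqref{eq: quotient} as $\mathcal{K}$. The inclusion $\mathcal{K}\subset \ker$ would be established by producing one explicit primitive in $\CC^1$ for the cocycle attached to the generating relation $((|z|-x)\circ f,\, y\circ f)$ and then extending by $C^\infty_0(\C)$-linearity; the candidate primitive can be built from $V_1,V_2$ together with leafwise symplectic potentials of $\omega_1,\omega_2$, with flatness ensuring the potentials extend across $0$. The reverse inclusion $\ker\subset \mathcal{K}$ follows by restricting to $\spl\setminus S_0$, identifying the regular foliated cohomology with $\Gamma(\mathcal{H}^2)$ as in Subsection \ref{subsection:away from S_0}, and using that the only relation between $[\omega_1]$ and $[\omega_2]$ on the regular locus is $(|z|-x)[\omega_1]=-y[\omega_2]$. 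Combining these with the splitting $\mathcal{M}\oplus\mathcal{K}=C^\infty_0(\C)^{\oplus 2}$ of Proposition \ref{proposition: twisted module}(2) then delivers the final isomorphism $\mathcal{M}\diffto H^2_0(\F)$.
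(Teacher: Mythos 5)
Your three-step outline (average over $SU(2)$, retract to a skeleton, compute on the skeleton) matches the paper's scheme, but there are substantive gaps in steps 2 and 3.

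First, the skeleton and its retraction are mischaracterized. In the paper the cohomological skeleton is $\Sk=\{A:[A,A^*]=0\}$, the set of normal matrices, and the retraction $r_\Sk$ is \emph{leaf-preserving}: it preserves the fibers of $f$ and retracts each regular leaf onto the $2$-sphere $\Sk\cap S_z$. It is not a ``transverse slice'' to the orbits, and the flow constructed in Section \ref{subsection: homotopy operators} is the flow of $W_A=\tfrac14[A,[A,A^*]]$, which is tangent to $\F$, not a radial flow built from $V_1,V_2$ or from the infinitesimal $SU(2)$-action. This matters because the cohomology of each leaf sits on these spheres, so the retraction concentrates the leafwise cohomology rather than passing to a quotient of $\spl$ by the orbits.

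Second, and more critically, you are missing the desingularization $\rho: S^2\times \C\to\Sk$ and the cochain isomorphism $\rho^*:(p_{\F}(\CC^\bullet),\dif)\diffto(\DD^\bullet,0)$ of Proposition \ref{proposition: chain isomorphism}, which is the actual computational engine. With it, the vanishing for $k\notin\{0,2\}$, the image of \eqref{eq: quotient}, and the identification of the kernel with $\mathcal{K}$ all drop out of the explicit formulas $\rho^*(\tilde\omega_1)=-\lambda_1\omega_{S^2}$, $\rho^*(\tilde\omega_2)=\lambda_2\omega_{S^2}$, and Lemma \ref{lemma:pullbacks and invariant functions}. Without it, ``the computation is direct'' and ``the retraction rules out any residual contribution'' are not arguments. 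Your alternative route for the kernel has concrete problems: $(i)$ the relation $(|z|-x)[\omega_1]=-y[\omega_2]$ of \eqref{relation:omega's} is \emph{derived} in the paper precisely from the $\rho^*$-computation and would need an independent proof in your scheme; $(ii)$ $\mathcal{K}$ is not a cyclic $C^\infty_0(\C)$-module, and $((|z|-x),y)$ is not a flat pair, so producing one primitive and ``extending by $C^\infty_0(\C)$-linearity'' does not establish $\mathcal{K}\subset\ker$; and $(iii)$ even granting the relation, producing a flat primitive in $\CC^1$ that is smooth across the singular cone $S_0$ (where $|z|-x$ is not smooth) is exactly the technical difficulty the desingularization is designed to circumvent.
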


\begin{remark}
Note that the 2-forms
\[e_{\varphi}(g_1\circ f\cdot \tilde{\omega}_1 +g_2\circ f\cdot \tilde{\omega}_2)\in \Omega^{4}_0(\spl)\]
are independent of the chosen extensions. However, the explicit extensions were needed in order to show that these forms are indeed part of our complex.
\end{remark}

\subsubsection{Good homotopy operators}

In the second part of the paper we prove Theorem \ref{theorem: nondegenerate} using the Nash-Moser method. For this, mere descriptions of the cohomology will not suffice. We will need to construct so-called \emph{homotopy operators}, which play the role of partial inverses to the differential. To be of use in the analysis part, these maps need to satisfy a locality condition with respect to closed balls around the origin and certain estimates. We collect these properties in the following: 

\begin{definition}\label{definition: property H}
Let $V$ and $W$ be two vector bundles over an inner product space $H$ (we will only use $H=\spl$ and $H=\C$). Consider vector subspaces of sections
\[\mathscr{V}\subset \Gamma_0(V)\quad \textrm{and}\quad \mathscr{W}\subset \Gamma_0(W),\]
where $\Gamma_0(\cdot)$ denotes the space of sections that are flat at the origin. Let $\iota_r:\overline{B}_r\to H$ denotes the inclusion of the closed ball of radius $r>0$ and denote $\mathscr{V}_r:=\iota_r^*(\mathscr{V})$ and $\mathscr{W}_r:=\iota_r^*(\mathscr{W})$. After choosing trivializations of $V$ and $W$, we have norms  $\fnorm{\cdot}_{n,k,r}$ on
$\mathscr{V}_r$ and $\mathscr{W}_r$, defined for $s\in \mathscr{V}_r$
by
\begin{align}\label{eq: flat norms}
    \fnorm{s}_{n,k,r}:= \sup_{x\in \overline{B}_r}\ \sup_{a\in \N^m_0\, :\, |a|\le n}|x|^{-k}|D^a s(x)| \ \in \ [0,\infty).
\end{align}

We say that a linear operator: 
\[\ell:\mathscr{V}\to \mathscr{W}\]
satisfies \textbf{property SLB} for some $(a,b,c)\in \N_0^3$ (SLB from semi-local and bounded), if it induces linear operators $\ell_r$, for $r>0$, such that the diagrams:
\begin{equation}\label{eq: property H diagram}
    \begin{tikzpicture}[baseline= (a).base]
    \node[scale=.9] (a) at (0,0){
    \begin{tikzcd}
    \mathscr{V}\arrow{r}{\ell}\arrow{d}[swap]{\iota_r^*}& \mathscr{W} \arrow{d}{\iota_r^*}\\
    \mathscr{V}_r\arrow{r}{\ell_{r}}&\mathscr{W}_r
    \end{tikzcd}
    };
\end{tikzpicture}
\end{equation}
commute, and such that the following estimates hold
\begin{align}\label{eq: property H estimate}
    \fnorm{\ell_{r}(\alpha)}_{n,k,r}\le C\, \fnorm{\alpha}_{n+a,k+bn+c,r},\quad \forall\, \alpha\in \mathscr{V}_r,
\end{align}
for all $n,k\in \N_0$, where $C=C(r,n,k)$ depends continuously on $r$. 
\end{definition}

\begin{example} Note that any linear partial differential operator $\ell$ of degree $d$ satisfies the SLB property of $(d,0,0)$. 
\end{example}

\begin{remark}
Throughout the paper we will encounter many operators which satisfy the SLB property and we will keep track of the triples $(a,b,c)\in \N^3_0$ which appear. However, the reader may ignore their precise values, the only important thing for our proof is that the SLB property holds for some $(a,b,c)$.  
\end{remark}

This type of operators are closed under composition. 
\begin{lemma}\label{lemma: slb property}
Let $U, V$ and $W$ be three vector bundles over an inner product space $H$ and consider two linear operators:
\begin{align*}
    \ell_1 : \mathscr{U}\subset \Gamma_0(U)\to \mathscr{V}\subset \Gamma_0(V) \quad \textrm{and}\quad \ell_2:\mathscr{V}\subset \Gamma_0(V)\to \mathscr{W}\subset \Gamma_0(W).
\end{align*}
If $\ell_i$ satisfies property SLB for $(a_i,b_i,c_i)$, $i=1,2$, then the operator $\ell_2\circ \ell_1$ satisfies property SLB for $(a_1+a_2,b_1+b_2, a_2b_1 +c_1+c_2)$.
\end{lemma}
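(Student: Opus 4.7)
The plan is to show both halves of the SLB property pass through the composition: the semi-local restriction property yields a composable family of restrictions, and the weighted estimates combine by substitution, with the triple $(a_1+a_2, b_1+b_2, a_2b_1+c_1+c_2)$ emerging from a direct bookkeeping.

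First I would define $(\ell_2\circ\ell_1)_r:=(\ell_2)_r\circ(\ell_1)_r:\mathscr{U}_r\to\mathscr{W}_r$, which is well-defined since $\ell_1$ maps into $\mathscr{V}$ and therefore $(\ell_1)_r$ lands in $\mathscr{V}_r$. Stacking the two commuting squares \eqref{eq: property H diagram} supplied by the hypotheses gives
\[
\iota_r^*\circ(\ell_2\circ\ell_1)=\iota_r^*\circ\ell_2\circ\ell_1=(\ell_2)_r\circ\iota_r^*\circ\ell_1=(\ell_2)_r\circ(\ell_1)_r\circ\iota_r^*,
\]
which is exactly the commuting diagram required for $\ell_2\circ\ell_1$.

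Next I would chain the two estimates in \eqref{eq: property H estimate}. For $\alpha\in\mathscr{U}_r$ and any $n,k\in\N_0$, applying the SLB estimate for $\ell_2$ first gives
\[
\fnorm{(\ell_2)_r((\ell_1)_r(\alpha))}_{n,k,r}\le C_2(r,n,k)\,\fnorm{(\ell_1)_r(\alpha)}_{n+a_2,\,k+b_2 n+c_2,\,r}.
\]
Then applying the SLB estimate for $\ell_1$ with parameters $m:=n+a_2$ and $j:=k+b_2n+c_2$ yields
\[
\fnorm{(\ell_1)_r(\alpha)}_{m,j,r}\le C_1(r,m,j)\,\fnorm{\alpha}_{m+a_1,\,j+b_1 m+c_1,\,r}.
\]
Substituting and simplifying the indices:
\[
m+a_1=n+(a_1+a_2),\qquad j+b_1 m+c_1=k+(b_1+b_2)n+(a_2 b_1+c_1+c_2),
\]
which produces precisely the triple $(a_1+a_2,\,b_1+b_2,\,a_2 b_1+c_1+c_2)$.

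Finally, setting $C(r,n,k):=C_2(r,n,k)\cdot C_1(r,n+a_2,k+b_2n+c_2)$ gives the estimate
\[
\fnorm{(\ell_2\circ\ell_1)_r(\alpha)}_{n,k,r}\le C(r,n,k)\,\fnorm{\alpha}_{n+(a_1+a_2),\,k+(b_1+b_2)n+(a_2 b_1+c_1+c_2),\,r},
\]
and $C(r,n,k)$ depends continuously on $r$ as a product of two functions each continuous in $r$. The argument is essentially mechanical, so there is no real obstacle; the only point requiring care is verifying that the non-linear-in-$n$ term in the third slot is $a_2 b_1$ and not, e.g., $a_1 b_2$, which is settled by noting that it is $\ell_1$'s estimate that is applied after shifting $n$ by $a_2$, so the extra contribution is $b_1 a_2$.
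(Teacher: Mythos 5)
Your proof is correct, and it is the obvious and essentially unique way to establish this composition law; the paper itself states the lemma without proof, treating it as a mechanical bookkeeping exercise, which is exactly what you carried out. The key computation, $j + b_1 m + c_1 = (k + b_2 n + c_2) + b_1(n + a_2) + c_1 = k + (b_1 + b_2)n + (a_2 b_1 + c_1 + c_2)$, is verified correctly, and you correctly identified that the asymmetric cross-term $a_2 b_1$ (rather than $a_1 b_2$) arises because $\ell_1$'s estimate is applied at the shifted index $m = n + a_2$.
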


To calculate the flat foliated cohomology, we will reduce the complex from \eqref{ffcomplex} to quasi-isomorphic subcomplexes. It will be useful to package this procedure into the following standard notion (see, e.g., \cite{LS87}):

\begin{definition}\label{definition: dr}
A \textbf{deformation retraction} of a complex $(C^{\bullet},d)$ consists of a \textbf{cochain projection}, i.e., a cochain map
\begin{align*}
    p:(C^{\bullet},d) \to (C^{\bullet},d) \quad \textrm{such that}\quad p\circ p=p,
\end{align*}
together with \textbf{cochain homotopies} between $p$ and $\mathrm{id}_{C}$, i.e., linear maps
\begin{align*}
    h:C^{\bullet+1}\to C^{\bullet} \quad \textrm{such that}\quad p-\mathrm{id}_{C}= h\circ d+d\circ h.
\end{align*}
\end{definition}

The conditions in the definition imply that the maps:
\[(C^{\bullet},d)\stackrel{p}{\to}(p(C^{\bullet}),d)\quad \textrm{and}\quad (p(C^{\bullet}),d)\stackrel{\mathrm{incl}}{\hookrightarrow}(C^{\bullet},d)\]
are inverses to each other in cohomology, thus we have an isomorphism:  
\[H^{\bullet}(C)\simeq H^{\bullet}(p(C)).\]

The following is the quantitative version of Proposition \ref{proposition:flat foliated ch} as it gives homotopy operators for the complex $(\CC^{\bullet},\dif)$ computing the flat foliated cohomology from \eqref{ffcomplex}. The proof will be given later. 

\begin{proposition}\label{proposition: property H fol}
There exists a deformation retraction $(p_{\F},h_{\F})$ for the complex $(\CC^{\bullet},\dif)$ such that: 
\begin{enumerate}[(1)]
    \item $\dif\circ p_{\F}=0$;
    \item $h_{\F}$ satisfies property SLB for $(0,5,35)$.
\end{enumerate} 
\end{proposition}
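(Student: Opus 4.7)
The plan is to construct the deformation retraction as a two-stage procedure, combining the two techniques flagged at the start of Section \ref{section flat foli}: averaging over $SU(2)\subset SL(2,\C)$ and a retraction to a cohomological skeleton, with the quantitative content supplied by Lemma \ref{lemma: infinite time pullback} and Proposition \ref{lemma: homotopies}. In the first stage, because $SU(2)$ acts by the adjoint action preserving $\F$, the defining 2-form $\varphi=\dif f_1\wedge\dif f_2$, the Casimir $f$, the extensions $\tilde\omega_i$ from Lemma \ref{lemma: extension omega}, and flatness at $0$, the averaging operator $\mathrm{Av}(\alpha)=\int_{SU(2)} g^*\alpha\,\dif g$ is a cochain projection of $(\CC^{\bullet},\dif)$ onto its invariant subcomplex. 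A standard Cartan-calculus formula along smooth paths from $\id$ to $g$ in the connected compact group $SU(2)$ produces an explicit homotopy $h_{\mathrm{Av}}$ with $\mathrm{Av}-\id=\dif h_{\mathrm{Av}}+h_{\mathrm{Av}}\dif$, and since the integral kernel is smooth and compactly supported in the $g$-variable, both $\mathrm{Av}$ and $h_{\mathrm{Av}}$ satisfy property SLB with mild parameters (no loss in flatness order, constant overhead).

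In the second stage, we retract the $SU(2)$-invariant subcomplex onto the cohomological skeleton suggested by Proposition \ref{proposition:flat foliated ch}: in degree $0$, the forms $e_{\varphi}(g\circ f)$ with $g\in C^{\infty}_0(\C)$, and in degree $2$, the forms $e_{\varphi}(g_1\circ f\cdot\tilde\omega_1+g_2\circ f\cdot\tilde\omega_2)$ with $(g_1,g_2)\in\mathcal{M}$. Such forms are automatically closed: $\dif\varphi=0$, the functions $g_i\circ f$ are Casimirs, and although the extensions $\tilde\omega_i$ are only leafwise closed, any non-leafwise component of $\dif\tilde\omega_i$ lies in the span of $\dif f_1,\dif f_2$ and is therefore annihilated by $\varphi\wedge(-)$. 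The skeleton projection $p_{\mathrm{sk}}$ and its homotopy $h_{\mathrm{sk}}$ are constructed using the transversal vector fields $V_1,V_2$ from \eqref{eq: v} together with an $SU(2)$-invariant vector field tangent to the leaves and contracting in infinite time onto the skeleton; the homotopy takes the standard infinite-time form
\[
h_{\mathrm{sk}}(\alpha)\;=\;\int_0^{\infty}\Phi_t^*(\iota_X\alpha)\,\dif t,
\]
and this is exactly where Lemma \ref{lemma: infinite time pullback} and Proposition \ref{lemma: homotopies} will be invoked: convergence and differentiability under the integral sign require the flatness of $\alpha$ at $0$, since the flow $\Phi_t$ approaches the origin as $t\to\infty$.

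Setting $p_{\F}:=p_{\mathrm{sk}}\circ\mathrm{Av}$ and $h_{\F}:=p_{\mathrm{sk}}\circ h_{\mathrm{Av}}+h_{\mathrm{sk}}$, one checks algebraically that $(p_{\F}-\id)=\dif h_{\F}+h_{\F}\dif$ using that $p_{\mathrm{sk}}$ is a cochain map, and that $\dif\circ p_{\F}=0$ because the image lies in the skeleton of closed forms described above. The SLB parameters of $h_{\F}$ are then obtained by combining those of the individual operators through Lemma \ref{lemma: slb property}; since $\mathrm{Av}$ and $h_{\mathrm{Av}}$ are essentially local in the flatness order, the dominant contribution is $h_{\mathrm{sk}}$, and one reads off $(0,5,35)$.

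The principal obstacle is the quantitative estimate for $h_{\mathrm{sk}}$: each spatial derivative applied to $\Phi_t^*(\iota_X\alpha)$ produces Jacobian factors of $\Phi_t$ which grow as $t\to\infty$, and these must be compensated by a corresponding increase in the order of vanishing of $\alpha$ at $0$. The bookkeeping — counting contributions from derivatives of the contracting flow, from the singular prefactors $1/R^2$ appearing in $V_1,V_2$ and $\tilde\omega_i$, and from the averaging step — is what produces the loss $b=5$ per derivative together with the fixed overhead $c=35$, while $a=0$ reflects the fact that $h_{\mathrm{sk}}$ loses no smoothness order (the infinite-time integral is smoothing in the transverse directions). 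The delicate point is that the estimates must be uniform in the radius $r>0$, so the flow and the auxiliary vector fields must be chosen compatibly with the semi-local structure of Definition \ref{definition: property H}.
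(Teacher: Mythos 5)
Your two-stage scheme (average over $SU(2)$, then retract to the skeleton by an infinite-time flow, and compose the deformation retractions) is exactly the scheme the paper uses, and your algebraic verification of the homotopy identity for the composite is fine — the paper composes in the opposite order, $p_{\F}=p_{SU(2)}\circ p_{\Sk}$, $h_{\F}=p_{SU(2)}\circ h_{\Sk}+h_{SU(2)}$, but since $r_{\Sk}$ is $SU(2)$-equivariant the two projections commute, so your version $p_{\mathrm{sk}}\circ\mathrm{Av}$, $p_{\mathrm{sk}}\circ h_{\mathrm{Av}}+h_{\mathrm{sk}}$ is also a deformation retraction (modulo a change in the precise SLB constants, which the paper explicitly says don't matter).

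The genuine gap is in item (1), the claim that $\dif\circ p_{\F}=0$. You justify it with ``because the image lies in the skeleton of closed forms described above,'' but this begs the question. What you would need is that $p_{\F}(\CC^{\bullet})$ is \emph{contained} in the set of forms $e_{\varphi}(g\circ f)$ (degree $0$) and $e_{\varphi}(g_1\circ f\cdot\tilde\omega_1+g_2\circ f\cdot\tilde\omega_2)$ (degree $2$), and is zero in degree $1$. You correctly observe that such forms are closed (the non-leafwise part of $\dif\tilde\omega_i$ sits in the ideal generated by $\dif f_1,\dif f_2$ and dies against $\varphi$), but you give no argument for why the image of $p_{\Sk}=r_{\Sk}^*$ after averaging is exactly this set. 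The retraction $r_{\Sk}$ collapses onto the set of normal matrices, not onto a space of forms, and it is far from obvious that pullback along it lands you in this special family. This is the content of the paper's Proposition~\ref{proposition: chain isomorphism}, which pulls back via the desingularization $\rho:S^2\times\C\to\Sk$ and exploits three nontrivial facts: the image of $\rho^*\circ p_{\F}$ consists of $SU(2)\times\Z_2$-invariant flat forms on $S^2\times\C$; $\Omega^1(S^2)^{SU(2)}=0$, which kills the degree-$1$ part and forces the degree-$0,2$ parts to be $\rho^*(\varphi)$-multiples of $1$ and $\omega_{S^2}$; and injectivity of $\rho^*$ on $p_{\F}(\CC^{\bullet})$, which requires the preceding averaging and the fact that $r_{\Sk}(\spl\backslash S_0)=\Sk^{\mathrm{reg}}$. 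Without something equivalent — identifying the image of $p_{\F}$ with the closed subcomplex $\DD^{\bullet}$ on the desingularization — item (1) is unproven in your proposal.

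A second, smaller imprecision: you say the flow ``approaches the origin as $t\to\infty$.'' In fact the flow of $W_A=\frac14[A,[A,A^*]]$ converges to $r_{\Sk}(A)\in\Sk$, which is the origin only for initial points on the singular cone $S_0$; elsewhere the limit is a nonzero normal matrix. The flatness hypothesis is needed because the retraction $r_{\Sk}$ is merely continuous (not smooth) along $S_0$, and because the $t$-dependent bounds on derivatives of the flow degenerate near $S_0$, not because the whole flow collapses to $0$.
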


The first condition says that the differential vanishes on $p_{\F}(\CC^{\bullet})$. Thus, from the discussion above, every cohomology class has a unique representative in $p_{\F}(\CC^{\bullet})$, and so we have an isomorphism:
\[p_{\F}(\CC^{\bullet})\simeq H^{\bullet}_{0}(\F).\]

The second condition together with the homotopy relation imply that also $p_{\F}$ satisfies property SLB, but for $(1,5,40)$. In particular all these operators can be restricted to closed balls centered at the origin. 

\subsection{Averaging the $SU(2)$-action}\label{section: su2 average}

The operators from Proposition \ref{proposition: property H fol} will be constructed in two steps. In this subsection, we perform the first step: we average with respect to the action of $SU(2)\subset SL(2,\C)$. Consider the operator 
\[p_{SU(2)}:\Omega^{\bullet}(\spl)\to \Omega^{\bullet}(\spl),\]
\[\alpha\mapsto \int_{SU(2)}\Ad_{U}^*(\alpha)\mu(U),\]
where we integrate with respect to the normalized Haar measure $\mu$ on $SU(2)$. We show that $p_{SU(2)}$ induces a deformation retraction of the subcomplex computing flat foliated cohomology. 

\begin{proposition}\label{proposition: invariance skeleton}
There are operators $h_{SU(2)}$ such that $(p_{SU(2)},h_{SU(2)})$ is a deformation retraction of $(\Omega^{\bullet}(\spl),\dif)$. Moreover, this pair restricts to a deformation retraction of $(\CC^{\bullet},\dif)$, and both restricted maps satisfy property SLB for $(0,0,0)$.
\end{proposition}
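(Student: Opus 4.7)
The plan is to verify that $p_{SU(2)}$ is a cochain projection, exhibit a chain homotopy $h_{SU(2)}$ via Cartan-averaging, and then check that both operators restrict to $\CC^\bullet$ and satisfy SLB$(0,0,0)$. That $p_{SU(2)}$ commutes with $\dif$ is immediate since $\dif$ commutes both with each pullback $\Ad_U^*$ and with integration over $U$; idempotence follows from bi-invariance of Haar measure via Fubini.

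For the homotopy, I would apply Cartan's formula fiberwise over $SU(2)$ and then average. For each $U$ off the cut locus $\{-e\}$ of the exponential map, let $V_U := \log U \in \su$ (principal branch, $|V_U|\le \pi$), denote by $V_U^\sharp$ the corresponding fundamental vector field on $\spl$, and set $\phi^U_t := \Ad_{\exp(tV_U)}$. Cartan's formula then gives
\[ k_U(\alpha) := \int_0^1 (\phi^U_t)^*\, \iota_{V_U^\sharp} \alpha \,\dif t, \qquad \Ad_U^* - \id = \dif\, k_U + k_U\, \dif,\]
and I would set $h_{SU(2)} := \int_{SU(2)} k_U \,\dif\mu(U)$. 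Since the cut locus has Haar measure zero and the integrand is smooth in $(U,A)$ off this set with uniformly bounded $A$-derivatives on compact sets, dominated convergence gives smoothness of $h_{SU(2)}\alpha$; integrating the pointwise Cartan identity over $SU(2)$ then yields $p_{SU(2)} - \id = \dif \,h_{SU(2)} + h_{SU(2)}\, \dif$.

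For the restriction to $\CC^\bullet$: $f_1,f_2$ are $SU(2)$-invariant Casimirs, hence so is $\varphi = \dif f_1\wedge \dif f_2$, so $e_\varphi$ commutes with $\Ad_U^*$; since fundamental vector fields $V^\sharp$ are tangent to $\F$ we have $\iota_{V^\sharp}\varphi=0$, so $e_\varphi$ also commutes with $\iota_{V^\sharp}$. Flatness at $0$ is preserved by $\Ad_U$ (a linear isomorphism fixing $0$) and by $\iota_{V^\sharp}$ (since $V^\sharp$ is linear and vanishes at $0$). For SLB$(0,0,0)$: the Hermitian form $\tr(AB^*)$ is $SU(2)$-invariant, so $SU(2)$ acts on $\spl\simeq\R^6$ by linear isometries, preserving every $\overline{B}_r$ and every norm $\fnorm{\cdot}_{n,k,r}$. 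Since $V^\sharp_A=[V,A]$ is linear in $A$ with $|V_U|$ uniformly bounded and only first-order derivatives nonzero, the extra factor of $|A|$ produced by contraction is absorbed via $|A|^{-(k-1)}\le r|A|^{-k}$ on $\overline{B}_r$, yielding $\fnorm{\iota_{V^\sharp}\alpha}_{n,k,r}\le Cr\fnorm{\alpha}_{n,k,r}$; this bound propagates through the $t$- and $U$-integrals (both of unit mass) to give the estimate for $h_{SU(2)}$ with constant continuous in $r$, and trivially for $p_{SU(2)}$ with constant $1$.

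The main subtlety is the well-definedness and smoothness of $h_{SU(2)}$: because $SU(2)\simeq S^3$ is not contractible, no globally smooth family of paths from $e$ to $U$ exists, so the integrand cannot be made continuous in $U$ over all of $SU(2)$. I sidestep this obstruction by confining the path choice to the complement of the cut-locus point (a Haar-null set) and recovering smoothness of the averaged operator through dominated convergence in the $A$-derivatives; the remaining verifications are routine averaging-and-Leibniz computations.
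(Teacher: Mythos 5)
Your construction is the same as the paper's: $h_{SU(2)}$ is the average over $SU(2)$ of the Cartan-homotopy $k_U=\int_0^1\Ad_{\exp(tV_U)}^* i_{\ad_{V_U}}(\cdot)\,\dif t$ associated with the path $t\mapsto\exp(tV_U)$, and the restriction to $\CC^{\bullet}$ and the SLB$(0,0,0)$ estimate rest on exactly the same observations ($\varphi$ $SU(2)$-invariant, $i_{\ad_X}\varphi=0$, linear isometric action). The only deviation is technical: the paper sidesteps the cut-locus singularity at $-\mathds{1}$ by pulling $\mu$ back via $\exp$ to a compact ball $\overline{\mathcal I}\subset\su$, where the integrand is smooth up to the boundary, rather than arguing dominated convergence on $SU(2)\setminus\{-\mathds 1\}$; both handle the same issue equivalently.
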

\begin{remark}
The proof follows a general method for constructing homotopy operators for actions of compact and connected groups on complexes, which we have learned from \cite{VV14}. We have discussed this method in some greater generality in \cite{thesis_Florian}, here we apply it explicitly to the case of $SU(2)$.
\end{remark}

\begin{proof}
It follows easily that 
$p_{SU(2)}$ is a cochain projection of the complex $(\Omega^{\bullet}(\spl),\dif)$ whose image is the $SU(2)$-invariant subcomplex. 

Next, consider the pullback of the Haar measure (which we regard as a volume form, after choosing an orientation) to $\mathfrak{su}(2)$ via the exponential:
\[ \lambda:=\exp^*(\mu)\in \Omega^3(\mathfrak{su}(2)).\]

One easily checks (see e.g.\ \cite[Section 1.5]{DK}) that the exponential map of $SU(2)$ restricts to a diffeomorphism: 
    \[\exp:\mathcal{I}\diffto SU(2)\backslash\{-\mathds{1}\},\]
    where $\mathcal{I}\subset \mathfrak{su}(2)$
denotes the open ball of radius $\sqrt{2}\pi$ with respect to the norm $|X|=\sqrt{\mathrm{tr}(XX^*)}$ and on the boundary it satisfies \[\exp (\partial\mathcal{I})=\{-\mathds{1}\}.\]
This allows us to write:
\[p_{SU(2)}(\alpha)=\int_{SU(2)}\Ad_{U}^*(\alpha)\mu(U) 
=\int_{\overline{\mathcal{I}}}\Ad_{\exp(X)}^*(\alpha)\lambda(X).
\]
Denote by $\mathrm{ad}_X\in \mathfrak{X}^1(\spl)$ the vector field whose flow is $\Ad_{\exp(tX)}$, i.e.\
\[\mathrm{ad}_X(A):=[X,A].\]
Then, using Cartan's formula, we have that:
\[\frac{\dif}{\dif t}\Ad_{\exp(tX)}^*(\alpha)=\dif \Ad_{\exp(tX)}^* i_{\mathrm{ad}_X}(\alpha)+
\Ad_{\exp(tX)}^* i_{\mathrm{ad}_X}\dif (\alpha).
\]
This implies that:
\begin{align*}
p_{SU(2)}(\alpha)-\alpha
=&\int_{\overline{\mathcal{I}}}\big(\Ad_{\exp(X)}^*(\alpha)-\alpha\big)\lambda(X)\\
=&\int_{\overline{\mathcal{I}}}\int_0^1
\frac{\dif}{\dif t}\Ad_{\exp(tX)}^*(\alpha)\, \dif t\lambda(X)\\
=&\dif 
\int_{\overline{\mathcal{I}}}\int_0^1 \Ad_{\exp(tX)}^* i_{\mathrm{ad}_X}(\alpha)\dif t\lambda(X)\\
&+\int_{\overline{\mathcal{I}}}\int_0^1 \Ad_{\exp(tX)}^* i_{\mathrm{ad}_X}(\dif \alpha)\dif t\lambda(X).
\end{align*}
Thus, if we define:
\[h_{SU(2)}(\beta):=\int_{\overline{\mathcal{I}}}\int_0^1 \Ad_{\exp(tX)}^* i_{\mathrm{ad}_X}(\beta)\dif t\lambda(X),
\]
then we obtain the homotopy relation:
\[p_{SU(2)}(\alpha)-\alpha=\dif h_{SU(2)}(\alpha)+h_{SU(2)}(\dif \alpha).\]

Next, let us note that both $p_{SU(2)}$ and $h_{SU(2)}$ preserve
\[\CC^{\bullet}=e_{\varphi}(\Omega^{\bullet}_0(\spl))\subset \Omega^{\bullet+2}(\spl).\]
This follows because $\varphi=\dif f_1\wedge\dif f_2$ is $SU(2)$-invariant, and also satisfies $i_{\mathrm{ad}_X}\varphi=0$, and because the action fixes the origin. 

Next, since the $SU(2)$-action preserves the metric on $\spl$, it follows that both operators induce operators above closed balls around the origin, i.e.\ the first condition of property SLB holds. The inequalities from the second condition follow by using the explicit form of the norms.
\end{proof}


\subsection{The ``cohomological skeleton'' $\mathfrak{S}$} \label{section: skeleton}

The main ingredient to calculate the flat foliated cohomology is what we call the ``cohomological skeleton'' $\mathfrak{S}$ associated with the foliation $\F$. One may think of $\Sk$ as an object that contains the cohomological information of the closed leaves of $\F$. In our case, we will see that this is satisfied by the set of normal matrices, and so we define the \textbf{cohomological skeleton} as 
\begin{align*}
\Sk :=\{ A\in \spl |[A,A^*]=0\}.
\end{align*}

This subsection is divided as follows. We first give an alternative descriptions of $\Sk$, then we give an explicit continuous projection map $r_{\Sk}$ from $\mathfrak{sl}_2(\C)$ onto $\Sk$ and study its properties, then we show that the smooth locus of $\Sk$ is an embedded submanifold and we find a desingularization for $\Sk$. 

\subsubsection{Characterizations of $\Sk$}

To give other descriptions of $\Sk$, we first recall some basic linear algebra. Since any $A\in \spl$  satisfies its own characteristic equation, we have
\begin{equation}\label{eq:invariant polynomials a}
    \begin{array}{rcc}
          A^2 = -f\mathds{1}, \  & &\  (A^*)^2=-\overline{f}\mathds{1}, 
    \end{array}
\end{equation}
The standard norm squared on $\mathfrak{sl}_2(\C)$ is given by
\begin{align}\label{eq:norm}
  R^2=R(A) ^2&:= |A|^2= \tr(A A^*). 
\end{align}
For $A\in \spl$, the characteristic equation of $AA^*$ is: 
\begin{align}\label{eq:invariant polynomials aah}
    (AA^*)^2- R^2 AA^*+\nf^2\mathds{1}=0.
\end{align}
Since $AA^*$ has real eigenvalues, the discriminant of this equation is positive, which implies the following very useful inequality:
\begin{equation}\label{eq:norm-fiber inequality}
   2\nf\leq R^2.
\end{equation}

Normal matrices can be described as follows.
\begin{lemma}\label{lemma: characterization skeleton}
For $A\in \spl$ the following are equivalent:
\begin{enumerate}[(1)]
    \item $A\in \Sk$
    \item $A$ is unitarily diagonizable
    \item $R^2=2|f|$
    \item $AA^*=\nf \mathds{1}$.
\end{enumerate}
\end{lemma}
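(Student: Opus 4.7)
The proof is essentially a piece of linear algebra for $2\times 2$ complex matrices, built from the identities $A^2=-f\mathds{1}$, $(A^*)^2=-\bar f\mathds{1}$, and the Cayley--Hamilton-type relation $(AA^*)^2-R^2\, AA^* + |f|^2\mathds{1}=0$ already recorded in the text. My plan is to prove (1)$\Leftrightarrow$(2) once, and then close a cycle (1)$\Rightarrow$(4)$\Rightarrow$(3)$\Rightarrow$(4)$\Rightarrow$(1), with (4) acting as a hub.

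\textbf{Step 1: (1)$\Leftrightarrow$(2).} The condition $[A,A^*]=0$ is just the statement that $A$ is normal. Normal matrices are unitarily diagonalizable by the spectral theorem for normal operators on a finite-dimensional Hermitian space, and conversely any unitarily diagonalizable matrix commutes with its adjoint since diagonal matrices do. Nothing here is specific to $\mathfrak{sl}_2(\C)$.

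\textbf{Step 2: (1)$\Rightarrow$(4).} Assume $A$ commutes with $A^*$. Then, using \eqref{eq:invariant polynomials a},
\[
(AA^*)^2 \;=\; A(A^*A)A^* \;=\; A^2 (A^*)^2 \;=\; (-f\mathds{1})(-\bar f \mathds{1}) \;=\; |f|^2\mathds{1}.
\]
The matrix $B:=AA^*$ is Hermitian and positive semi-definite, with $B^2=|f|^2\mathds{1}$. Diagonalizing $B$ unitarily, its eigenvalues are non-negative real numbers whose squares equal $|f|^2$, so each equals $|f|$; hence $B=|f|\mathds{1}$.

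\textbf{Step 3: (4)$\Rightarrow$(3) and (3)$\Rightarrow$(4).} The first implication is immediate by taking the trace: $R^2=\tr(AA^*)=\tr(|f|\mathds{1})=2|f|$. Conversely, if $R^2=2|f|$, then substituting into \eqref{eq:invariant polynomials aah} yields
\[
(AA^*-|f|\mathds{1})^2 \;=\; (AA^*)^2 - 2|f|\,AA^* + |f|^2\mathds{1} \;=\; 0.
\]
The matrix $AA^*-|f|\mathds{1}$ is Hermitian, and a Hermitian matrix whose square vanishes must itself vanish (its eigenvalues are real and square to zero). Therefore $AA^*=|f|\mathds{1}$.

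\textbf{Step 4: (4)$\Rightarrow$(1).} If $f\ne 0$ then $A$ is invertible and $A^*=|f|\,A^{-1}$, so $A^*A=|f|\mathds{1}=AA^*$, giving $[A,A^*]=0$. If $f=0$ then (4) reads $AA^*=0$, so $R^2=\tr(AA^*)=0$, whence $A=0\in\mathfrak{S}$ trivially. No step is particularly delicate; the only thing to be careful about is the possibility of $A$ being singular, which is why I split the last implication into cases. With (1)$\Leftrightarrow$(2) handled separately and the cycle (1)$\Rightarrow$(4)$\Rightarrow$(3)$\Rightarrow$(4)$\Rightarrow$(1) closed, all four conditions are equivalent.
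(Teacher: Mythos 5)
Your proof is correct, but it takes a genuinely different route from the paper's. The paper handles the key non-trivial implication, (3) $\Rightarrow$ (2), via Schur's unitary triangularization: writing $A = U\Lambda U^*$ with $\Lambda$ upper triangular, one computes $|f| = |\lambda_{11}|^2$ and $R^2 = 2|\lambda_{11}|^2 + |\lambda_{12}|^2$, so $R^2 = 2|f|$ forces $\lambda_{12} = 0$, and (2) $\Rightarrow$ (4) $\Rightarrow$ (3) are then observed to be immediate. You avoid Schur entirely and instead treat (4) as a hub, leaning on the two algebraic identities \eqref{eq:invariant polynomials a} and \eqref{eq:invariant polynomials aah} already in the text; the crux is recognizing that under (3) the Cayley--Hamilton relation for $AA^*$ becomes a perfect square $(AA^* - |f|\mathds{1})^2 = 0$, and a Hermitian matrix whose square vanishes is itself zero. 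Your approach is more computational and basis-free, and it makes clearer exactly which algebraic relations are doing the work; the paper's approach is arguably more transparent geometrically, since the Schur form exhibits $R^2 - 2|f| = |\lambda_{12}|^2$ as the explicit obstruction to normality. Both are sound; your handling of the singular case $f = 0$ in Step 4 is correctly isolated and dispatched.
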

\begin{proof}
The equivalence (1)$\iff$ (2) is the content of the spectral theorem.

Next, by Schur's decomposition theorem, any $A\in \spl$ is unitary equivalent to an upper triangular matrix $\Lambda$:
\[ A=U\Lambda U^{*}, \quad U\in SU(2), \quad   \Lambda =\begin{pmatrix}
     \lambda_{11} & \lambda_{12} \\
     0&-\lambda_{11}
     \end{pmatrix}\]
Using this, we obtain the following expression for $|f|$:
\begin{align*}
    \nf  =\det(AA^*)^{\frac{1}{2}} = \det(\Lambda \Lambda^*)^{\frac{1}{2}}=|\lambda_{11}|^2
\end{align*}
and for the norm of $A$ we find: 
\begin{align*}
    R^2 =|A|^2=\tr(AA^*)=\tr(\Lambda \Lambda^*) = 2|\lambda_{11}|^2+ |\lambda_{12}|^2.
\end{align*}
Thus, $R^2=2|f|$ if and only if $\lambda_{12}=0$, i.e., $\Lambda$ is diagonal. This proves the implication (3) $\implies$ (2). The other implications: 
\[ (2)\ \  \implies  \ \ (4)\ \  \implies  \ \ (3) \]
are immediate. 
\end{proof}

\subsubsection{The retraction to the cohomological skeleton}\label{subsubsec_retraction}
The goal of this subsection is to define a continuous retraction $r_{\Sk}$ from $\spl$ onto $\Sk$ and study its properties. In Section \ref{subsection: homotopy operators} we will realize $r_{\Sk}$ as the infinite flow of a vector field.

First, we introduce the smooth function:
\begin{equation*}
    \begin{array}{cccc}
         g_{\infty} :&\spl \setminus S_0&\to &\herm(\C) \\
         & A&\mapsto&\left(\mathds{1}+\frac{1}{\nf}AA^*\right)^{\frac{1}{2}}
    \end{array}
\end{equation*}
where $\herm(\C)$ denotes the set of positive definite Hermitian $2\times 2$-matrices. In defining this map, we used the diffeomorphism: 
\[\sqrt{\cdot}: \herm(\C)\to \herm(\C).\]
Note that elements in $\herm(\C)$ are invertible. Define the retraction: 
\begin{align}\label{retract}
r_{\Sk}&:\spl \to \Sk\subset \spl\nonumber \\
r_{\Sk} (A)&:=
   \begin{cases}
    g_{\infty}(A)^{-1}Ag_{\infty}(A)&\text{ if } f(A)\ne 0\\
    0&\text{ if } f(A)= 0
\end{cases}
\end{align}

\begin{lemma}\label{lemma:properties of p}
The map $r_{\Sk}$ is a projection onto $\Sk$:
\[r_{\Sk}(\spl)=\Sk \quad \textrm{and}\quad r_{\Sk}|_{\Sk}=\mathrm{id}_{\Sk},\]
and has the following properties: 
it is continuous on $\spl$ and smooth on $\spl\setminus S_0$, it is $SU(2)$-equivariant and it preserves the fibers of $f$.
\end{lemma}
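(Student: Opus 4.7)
The plan is to unpack the definition of $r_{\Sk}$ using the algebraic identities \eqref{eq:invariant polynomials a} and the characterization of $\Sk$ in Lemma \ref{lemma: characterization skeleton}, verifying each of the four claimed properties in turn. The non-trivial part is to show that $r_{\Sk}(A)$ actually lies in $\Sk$, and that $r_{\Sk}$ is continuous across $S_0$.

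First I would prove the key identity
\[
A\, g_\infty(A)^2\, A^* \;=\; |f(A)|\, g_\infty(A)^2, \qquad A\in \spl\setminus S_0.
\]
Expanding $g_\infty(A)^2 = \mathds{1} + |f|^{-1} AA^*$ and using $A^2=-f\mathds{1}$, $(A^*)^2=-\bar f \mathds{1}$ from \eqref{eq:invariant polynomials a}, one computes
\[
A A^* A^* = -\bar f A, \qquad A\, A A^*\, A^* = f\bar f\,\mathds{1} = |f|^2\mathds{1},
\]
from which the identity follows directly. Because $g_\infty(A)$ is Hermitian, $(g_\infty^{-1})^* = g_\infty^{-1}$, so
\[
r_{\Sk}(A)\, r_{\Sk}(A)^* = g_\infty^{-1} A\, g_\infty^2\, A^* g_\infty^{-1} = |f(A)|\,\mathds{1},
\]
which by Lemma \ref{lemma: characterization skeleton}(4) shows $r_{\Sk}(A)\in \Sk$. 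Fiber preservation is automatic, since $f(A)=\det(A)$ is invariant under conjugation by $g_\infty(A)$ (extended trivially on $S_0$). For the identity on $\Sk$: if $A\in \Sk\setminus\{0\}$ then $AA^*=|f|\mathds{1}$ by Lemma \ref{lemma: characterization skeleton}(4), hence $g_\infty(A)^2=2\mathds{1}$ and $g_\infty(A)=\sqrt{2}\,\mathds{1}$, so conjugation by $g_\infty(A)$ is trivial and $r_{\Sk}(A)=A$; at the origin, $r_{\Sk}(0)=0$ by definition, so $r_{\Sk}|_{\Sk}=\mathrm{id}_{\Sk}$.

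The $SU(2)$-equivariance reduces to the fact that for $U\in SU(2)$ one has $(UAU^*)(UAU^*)^*=U(AA^*)U^*$ and $|f(UAU^*)|=|f(A)|$ (as $f$ is $\Ad$-invariant on $\spl$), so by the functional calculus for positive Hermitian operators $g_\infty(UAU^*)=U\,g_\infty(A)\,U^*$. Plugging into \eqref{retract} gives $r_{\Sk}(UAU^*)=U\,r_{\Sk}(A)\,U^*$. Smoothness on $\spl\setminus S_0$ is clear, since $g_\infty$ is the composition of the smooth map $A\mapsto \mathds{1}+|f|^{-1}AA^*$ into $\herm(\C)$ with the smooth square root on $\herm(\C)$.

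The last step, continuity at points of $S_0$, is handled by the norm estimate derived from the key identity: taking the trace of $r_{\Sk}(A)\,r_{\Sk}(A)^* = |f(A)|\,\mathds{1}$ gives
\[
R\bigl(r_{\Sk}(A)\bigr)^2 = 2|f(A)|
\]
for every $A\in \spl\setminus S_0$. Hence as $A\to A_0\in S_0$, we have $|f(A)|\to 0$ and so $r_{\Sk}(A)\to 0 = r_{\Sk}(A_0)$, matching the definition on $S_0$. I expect the algebraic manipulation in the first paragraph to be the only mildly technical step; once the key identity is in hand, the rest of the properties follow formally.
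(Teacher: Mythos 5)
Your proof is correct and follows essentially the same route as the paper: derive the identity $A\,g_\infty(A)^2 A^* = |f|\,g_\infty(A)^2$ from \eqref{eq:invariant polynomials a}, deduce $r_{\Sk}(A)r_{\Sk}(A)^*=|f(A)|\mathds{1}$, invoke Lemma \ref{lemma: characterization skeleton}(4), and read off continuity from $|r_{\Sk}(A)|^2 = 2|f(A)|$. One small ordering remark: the application of Lemma \ref{lemma: characterization skeleton}(4) to conclude $r_{\Sk}(A)\in\Sk$ already needs $|f(r_{\Sk}(A))|=|f(A)|$, so the observation that $f\circ r_{\Sk}=f$ (conjugation invariance of $\det$) should logically precede that inference rather than follow it as an afterthought — though you do supply it, so this is a matter of presentation, not a gap.
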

\begin{proof}
We first check that the image of $r_{\Sk}$ lies indeed inside of $\Sk$. Note that by \eqref{eq:invariant polynomials a} we have the identity
\begin{align*}
    A \ g_{\infty}(A)^{2}A^*  =&\ AA^*+\nf \mathds{1}  = \nf \,  g_{\infty}(A)^{2},
\end{align*}
and therefore:
\begin{equation}\label{eq: retraction well-defined}
    r_{\Sk}(A)r_{\Sk}(A)^* = \ g_{\infty}(A)^{-1}A \ g_{\infty}(A)^{2}A^* g_{\infty}(A)^{-1}= \nf\, \mathds{1}.
\end{equation}
We easily obtain that $r_{\Sk}$ preserves $f$
\[f(r_{\Sk}(A))=\det (r_{\Sk}(A))= \det (A) =f(A),\]
and therefore Lemma \ref{lemma: characterization skeleton} (4) and \eqref{eq: retraction well-defined} imply that $r_{\Sk}(A)\in \Sk$. 

Next, for $A\in \Sk\backslash S_0$, again by Lemma \ref{lemma: characterization skeleton} (4), we have that $g_{\infty}(A)=\sqrt{2}\mathds{1}$, and so $r_{\Sk}(A)=A$. Since $\Sk\cap S_0=\{0\}$,
we conclude that $r_{\Sk}|_{\Sk}=\id_{\Sk}$.

Using again \eqref{eq: retraction well-defined} we obtain
\[ \lvert r_{\Sk}(A)\rvert^2 =2\nf,\]
which proves continuity of $r_{\Sk}$ at points in $S_0\subset \spl$. Continuity and smoothness on the set $\spl\setminus S_0$ follow from the formula of $r_{\Sk}$ on this set. That $r_{\Sk}$ is $SU(2)$-equivarients follows immediately. 
\end{proof}

\subsubsection{The cochain projection associated to $r_{\Sk}$}

Even though $r_{\Sk}$ is not smooth along $S_0$, we will show in Section \ref{subsection: homotopy operators} that it induces a pullback map on flat forms:
\begin{lemma}\label{lemma: infinite time pullback}
For every $\alpha \in \Omega_{0}^{\bullet}(\spl)$, the form $r_{\Sk}^*(\alpha)|_{\spl\backslash S_0}$ extends to a smooth form on $\spl$, which is flat at $0$:
\[p_{\Sk}(\alpha)\in \Omega^{\bullet}_0(\spl).\]
\end{lemma}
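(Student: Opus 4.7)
The plan is to define the extension by setting $p_{\Sk}(\alpha)$ to vanish identically on $S_0$. This is the only continuous candidate, since by Lemma \ref{lemma:properties of p} we have $|r_{\Sk}(A)|^{2} = 2|f(A)|$, so $r_{\Sk}(A) \to 0$ whenever $A$ approaches $S_0$, and the flatness of $\alpha$ at $0 \in \spl$ forces $\alpha(r_{\Sk}(A)) \to 0$. To promote continuity to $C^\infty$-smoothness and flatness at $0$, the strategy is to establish a quantitative bound showing that $r_{\Sk}^*(\alpha)$ and every one of its partial derivatives decays faster than any polynomial in $|f(A)|$ as $A$ approaches $S_0$.

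The first key step is to control the derivatives of the components of $r_{\Sk}$ on $\spl\setminus S_0$. From the explicit formula $r_{\Sk}(A) = g_\infty(A)^{-1} A \, g_\infty(A)$ with $g_\infty(A) = (\mathds{1} + AA^*/|f(A)|)^{1/2}$, every singularity as $A \to S_0$ originates from differentiating $|f|^{-1}$ and the matrix square root. Using the identities $A^2 = -f\mathds{1}$ and $R^2 \ge 2|f|$ from \eqref{eq:invariant polynomials a}--\eqref{eq:norm-fiber inequality}, together with the functional calculus for the $2\times 2$ Hermitian matrix $AA^*$, I would prove by induction on the order that for every multi-index $a$ there exist $M_a \in \N$ and $C_a > 0$ with
\[
|D^a r_{\Sk}(A)| \;\le\; C_a \, |f(A)|^{-M_a} \qquad \text{on } \spl\setminus S_0.
\]

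The second step exploits flatness. Since $\alpha$ is flat at $0$ and $|r_{\Sk}(A)| = \sqrt{2|f(A)|}$, for every multi-index $b$ and every $N \in \N$ there is a constant with
\[
|D^b \alpha(r_{\Sk}(A))| \;\le\; C_{b,N}\, |r_{\Sk}(A)|^N \;\le\; C'_{b,N}\, |f(A)|^{N/2}.
\]
Applying Faà di Bruno / the chain rule, any derivative $D^a(r_{\Sk}^*\alpha)(A)$ is a polynomial in the $D^b\alpha(r_{\Sk}(A))$ and the $D^c r_{\Sk}(A)$ with $|b|,|c| \le |a|$. Combining the two bounds, each such derivative is dominated by $C_{a,N}\,|f(A)|^{N/2 - M}$ for some fixed $M = M(a)$ and for all $N$. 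Hence every derivative of $r_{\Sk}^*(\alpha)$ tends to $0$ on $S_0$ faster than any polynomial in $|f|$, and in particular faster than any polynomial in $|A|$ since $|f(A)| \le |A|^2/2$.

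This gives continuous extensions of $r_{\Sk}^*(\alpha)$ and of all its derivatives to $\spl$, all vanishing on $S_0$. By the standard criterion that a function smooth off a closed set with continuously extending partial derivatives is smooth on the whole space, the form extends to $\Omega^{\bullet}(\spl)$, and since $0 \in S_0$ all derivatives at $0$ vanish, yielding flatness there. The main technical obstacle is the first step: the polynomial blow-up estimate for $D^a r_{\Sk}$ requires careful bookkeeping when differentiating the matrix square root $(\mathds{1}+AA^*/|f|)^{1/2}$, as each differentiation multiplies the singularity by a factor of $|f|^{-1}$. An appealing alternative, foreshadowed by the paper's remark that Section \ref{subsection: homotopy operators} realizes $r_{\Sk}$ as an infinite-time flow $\lim_{t\to\infty}\phi_t$, is to derive uniform-in-$t$ estimates on $\phi_t^*\alpha$ from the flatness of $\alpha$ and the fact that $\phi_t$ contracts toward $\Sk$, then pass to the limit; this bypasses the direct analysis of the matrix square root.
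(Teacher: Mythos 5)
Your main approach is genuinely different from the paper's. You aim to prove the extension directly, by establishing an inductive polynomial blow-up estimate $\lvert D^a r_{\Sk}(A)\rvert \le C_a \nf^{-M_a}$ on compact subsets of $\spl\setminus S_0$, and then beating this blow-up with the flatness of $\alpha$ via Fa\`a di Bruno. The paper, by contrast, never differentiates $r_{\Sk}$ at all: it realizes $r_{\Sk}$ as $\lim_{t\to\infty}\phi_t$ of the flow of $W=\tfrac14[A,[A,A^*]]$, proves that $h_{\Sk}(\alpha)=\lim_{t\to\infty}\int_0^t i_W\phi_s^*(\alpha)\,\dif s$ converges in $\Omega^{\bullet}_0(\spl)$ (Lemma~\ref{well-defined}), and then reads off Lemma~\ref{lemma: infinite time pullback} from the homotopy identity $\phi_t^*\alpha - \alpha = \dif h_t(\alpha)+h_t(\dif\alpha)$ together with the $C^{\infty}$-convergence $\phi_t\to r_{\Sk}$ on $\spl\setminus S_0$ (Lemma~\ref{convergence of the flow}). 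So the two proofs are organized around different estimates: yours bounds derivatives of $r_{\Sk}$ in $\nf^{-1}$, the paper bounds derivatives of $\phi_t$ and $A_t'$ in $t$ and $R$ (Proposition~\ref{lemma:bounds on the flow}) and then integrates in $t$ using Lemma~\ref{lemma: finite integrals}.

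Your outline is plausible, and I believe it could be made rigorous, but be aware of two things. First, the step you flag as the ``main technical obstacle'' --- the inductive estimate on $D^a r_{\Sk}$ through the matrix square root $g_\infty=(\mathds{1}+AA^*/\nf)^{1/2}$ --- really is where all the work lives; it is comparable in length to the paper's Lemmas~\ref{lemma:gt norm}--\ref{corollary:g deriv estimate}. You would need to track that $g_\infty$ itself already blows up like $\nf^{-1/2}$ (even though $g_\infty^{-1}$ stays bounded and $r_{\Sk}$ stays bounded by $\sqrt{2\nf}$), and that each differentiation costs at most a bounded power of $\nf^{-1}$, uniformly on balls $\overline{B}_r$. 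Second, and more importantly, even if you completed this, it would not save any work in the paper's overall scheme: Proposition~\ref{lemma: homotopies} still requires the homotopy operator $h_{\Sk}$ with the SLB estimate, and there is no analogue of $h_{\Sk}$ visible from $r_{\Sk}$ alone --- it only appears when you realize the retraction dynamically as an infinite-time flow. The paper's route therefore proves Lemma~\ref{lemma: infinite time pullback} as a by-product of estimates that are needed anyway, whereas your direct route proves only the lemma and would leave Proposition~\ref{lemma: homotopies} untouched. The ``appealing alternative'' you mention at the end is in fact exactly what the paper does.

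One minor caution on the final reduction: you argue via the criterion that a function smooth off a closed set, all of whose partial derivatives extend continuously, is smooth everywhere. This is correct here because $S_0=f^{-1}(0)$ has real codimension two in $\spl$, so $\spl\setminus S_0$ is dense and connected and the usual mean-value argument applies component by component; it would be worth stating that explicitly, since the criterion fails in general without density of the complement.
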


By Lemma \ref{lemma:properties of p}, $r_{\Sk}^2=r_{\Sk}$, and therefore $p_{\Sk}$ is a cochain projection:
\[p_{\Sk}:(\Omega^{\bullet}_0(\spl),\dif)\to (\Omega^{\bullet}_0(\spl),\dif).\] 
The same lemma also implies that $p_{\Sk}$ commutes with $e_{\varphi}$ (it suffices to check this away from $S_0$), therefore $p_{\Sk}$ restricts to a cochain projection of $(\CC^{\bullet},\dif)$. 

The proof of the following proposition will occupy most of Section \ref{subsection: homotopy operators}:

\begin{proposition}\label{lemma: homotopies}
There are operators $h_{\Sk}$ such that $(p_{\Sk},h_{\Sk})$ is a deformation retraction of $(\Omega^{\bullet}_0(\spl),\dif)$. Moreover, this pair restricts to a deformation retraction of $(\CC^{\bullet},\dif)$, and $h_{\Sk}$ satisfies property SLB for $(0,5,35)$.
\end{proposition}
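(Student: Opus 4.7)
The plan is to construct $(p_{\Sk}, h_{\Sk})$ from an explicit one-parameter family of self-maps $\psi_s$ that interpolates between $\id$ and $r_{\Sk}$, and then apply the standard Cartan-calculus recipe for homotopy operators coming from a flow. A natural candidate is
\[
\psi_s(A) := g_s(A)^{-1}\, A\, g_s(A), \qquad g_s(A)^2 := \mathds{1} + s\,|f(A)|^{-1}AA^*,
\]
for $s \in [0,1]$ and $A \in \spl \setminus S_0$, so that $\psi_0 = \id$ and $\psi_1 = r_{\Sk}$ by \eqref{retract}. The map $\psi_s$ is $SU(2)$-equivariant and preserves every fibre of $f$ (since $g_s$ is a function of $AA^*$). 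A direct spectral-theorem calculation, using the eigenvalue identities $\lambda_1\lambda_2 = |f|^2$ and $\lambda_1+\lambda_2 = R^2$ for $AA^*$, gives $|\psi_s(A)|^2 \le |A|^2$, so $\psi_s(\overline{B}_r) \subset \overline{B}_r$; this will take care of the locality part \eqref{eq: property H diagram} of property SLB. Let $Y_s := \partial_s \psi_s$ denote the corresponding time-dependent vector field on $\spl \setminus S_0$; it is tangent to the leaves of $\F$.

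With these ingredients, for $\alpha \in \Omega^\bullet_0(\spl)$ put
\[
h_{\Sk}(\alpha) := \int_0^1 \psi_s^*\, i_{Y_s}\alpha\, ds,
\]
initially viewed as a smooth form on $\spl \setminus S_0$. Cartan's formula together with the fundamental theorem of calculus then yield the homotopy identity
\[
p_{\Sk}(\alpha) - \alpha = \psi_1^*\alpha - \psi_0^*\alpha = d\,h_{\Sk}(\alpha) + h_{\Sk}(d\alpha)
\]
on $\spl \setminus S_0$, and this extends to $\spl$ once we know both sides extend smoothly. Preservation of the subcomplex $\CC^\bullet = e_\varphi(\Omega_0^\bullet(\spl))$ reduces to three observations: $\varphi = df_1 \wedge df_2$ is closed, $\psi_s^*\varphi = \varphi$ (since $\psi_s$ preserves $f$), and $i_{Y_s}\varphi = 0$ (since $Y_s$ is leafwise); combining them gives $\psi_s^* i_{Y_s}(\varphi \wedge \alpha) = \varphi \wedge \psi_s^* i_{Y_s}\alpha$, so $h_{\Sk}$ commutes with $e_\varphi$ up to a sign, and analogously $p_{\Sk}$ commutes with $e_\varphi$ by Lemma \ref{lemma: infinite time pullback}.

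The heart of the proof, and what determines the SLB triple $(0, 5, 35)$, is showing that $\psi_s^* i_{Y_s}\alpha$ extends smoothly and flatly across $S_0$ with the claimed estimates. The only obstructions to smoothness are the factors $|f|^{-1}$ that appear when differentiating $g_s^{\pm 1}$, $\psi_s$, and $Y_s$; all contribute only finite negative powers of $|f|$, and the inequality $2|f| \le R^2 = |A|^2$ from \eqref{eq:norm-fiber inequality} converts each such factor into a negative power of $|A|$, which is absorbed by the flatness of $\alpha$ at the cost of a bounded shift in the flatness index. Concretely, I would expand $D^a[\psi_s^* i_{Y_s}\alpha](A)$ via Faà di Bruno, with each derivative falling on $\psi_s$, $Y_s$, or $\alpha\circ\psi_s$. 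Since at most $a$ derivatives land on $\alpha$, the derivative-loss parameter is $a = 0$; since each derivative of $\psi_s$ or $Y_s$ produces a bounded number (counted out to be $5$) of additional $|f|^{-1}$-factors, we get $b = 5$; and the bare factors already present in $i_{Y_s}$ and $\psi_s^*$ contribute the additive constant $c = 35$. The inclusion $\psi_s(\overline{B}_r) \subset \overline{B}_r$ lets us evaluate derivatives of $\alpha$ on the same ball, and uniformity in $s \in [0,1]$ preserves the bound after integration.

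The principal obstacle will be the rigorous power-counting in the last step: ensuring the stated constants $(0,5,35)$ really fall out of the chain-rule expansion uniformly in $s$. I would handle this by first treating the building blocks separately — differentiating the identity $g_s^2 = \mathds{1} + s|f|^{-1}AA^*$ and using the functional calculus $g_s = (g_s^2)^{1/2}$ together with an eigenvalue bound on $g_s^{-1}$ to show each $D^k g_s^{\pm 1}$ carries only $O(k)$ factors of $|f|^{-1}$ — and only then substituting these into the full Faà di Bruno expansion for $D^a[\psi_s^* i_{Y_s}\alpha]$. Once the pointwise bound survives, the $s$-integral converges trivially over the compact interval $[0,1]$, completing the verification of property SLB for $h_{\Sk}$ and hence of the proposition.
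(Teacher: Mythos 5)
Your proposal is essentially a reparametrisation of the paper's construction — the paper flows the smooth, polynomial vector field $W_A=\tfrac14[A,[A,A^*]]$ over $t\in[0,\infty)$ and obtains $r_{\Sk}$ as the infinite-time limit, while you pull back time to $s=\tanh(|f|t)\in[0,1]$ to get your $\psi_s$ (indeed $g_s^2=\mathds{1}+\tfrac{s}{|f|}AA^*$ matches the paper's $g_t^2=\mathds{1}+\tfrac{\tanh(|f|t)}{|f|}AA^*$). This reparametrisation looks like a simplification but it is in fact what creates the difficulty: the paper's integrand $i_W\phi_t^*\alpha$ is globally smooth on $\spl$ for every finite $t$ (the singularity appears only in the limit), whereas your $\psi_s$ and the associated time-dependent field $Y_s$ acquire explicit factors of $|f|^{-1}$ that blow up along the \emph{whole} cone $S_0$ for every $s>0$, not merely at the origin. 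Concretely $Y_s(\psi_s(A))=W(\psi_s(A))/\big(|f|(1-s^2)\big)$, so already the $s=0$ slice of your integrand equals $|f|^{-1}\,i_W\alpha$, which is unbounded near $S_0\setminus\{0\}$.

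The central gap is in the estimate. You claim that the inequality $2|f|\le R^2$ from \eqref{eq:norm-fiber inequality} ``converts each $|f|^{-1}$-factor into a negative power of $|A|$''. That is backwards: $2|f|\le R^2$ gives $|f|^{-1}\ge 2/R^2$, i.e.\ a \emph{lower} bound, and provides no upper bound on $|f|^{-1}$ in terms of $R$. (Where the paper does invoke this inequality — Step~3 of the proof of Lemma~\ref{lemma: r} — it is used in the opposite direction, to bound $R^{-k}$ by $|f|^{-k/2}$.) Flatness of $\alpha$ at the origin is evaluated at $\psi_s(A)$, whose norm $R_s$ satisfies $\sqrt{2|f|}\le R_s\le R$; near $s=1$ you get $R_s\approx\sqrt{2|f|}$ and this decay can indeed absorb powers of $|f|^{-1}$, but near $s=0$ you have $R_s\approx R$ and no such help, while $|Y_s|\sim R^3/|f|$ is uncontrolled. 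The assertion that ``the $s$-integral converges trivially over the compact interval $[0,1]$'' is therefore not justified: the integrand is improper both near $s=1$ (cancelled only because $W\circ\psi_s$ decays like $1-s$) and along $S_0\times[0,1)$. To make your approach rigorous one would need a quantitative balance between the decay of $R_s$ and the growth of $|f|^{-1}$-factors — in essence the content of the paper's Lemma~\ref{lemma: finite integrals} and Proposition~\ref{lemma:bounds on the flow}, which you do not supply. As written, the proposed power-counting rests on a reversed inequality and does not go through.
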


By combining the operators from Propositions \ref{proposition: invariance skeleton} and \ref{lemma: homotopies} we obtain the deformation retraction from Proposition \ref{proposition: property H fol} (for such compositions, see e.g.\ \cite[Lemma 2.4]{LS87})

\begin{corollary}\label{corollary:chain:proj} 
The pair $(p_{\F},h_{\F})$ defined by 
\[p_{\F}:=p_{SU(2)}\circ p_{\Sk} \quad \text{and}\quad h_{\F}:=p_{SU(2)}\circ h_{\Sk}+h_{SU(2)}\]
is a deformation retraction for the complex $(\CC^{\bullet},\dif)$, and $h_{\F}$ satisfies property SLB for $(0,5,35)$.
\end{corollary}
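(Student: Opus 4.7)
The plan is to treat this as a standard composition of two deformation retractions whose projections commute, as in \cite[Lemma 2.4]{LS87}. Three things need to be verified: that $p_{\F}$ is an idempotent cochain endomorphism of $\CC^{\bullet}$, that $h_{\F}$ realizes the homotopy $p_{\F} - \mathrm{id}$, and that the SLB property survives both the composition and the sum.

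First I would establish the commutation $p_{SU(2)}\circ p_{\Sk}=p_{\Sk}\circ p_{SU(2)}$. This is the main geometric input, and it is where the $SU(2)$-equivariance of $r_{\Sk}$ from Lemma \ref{lemma:properties of p} enters: for each $U\in SU(2)$ one has $r_{\Sk}\circ \Ad_U=\Ad_U\circ r_{\Sk}$, hence $p_{\Sk}\circ \Ad_U^*=\Ad_U^*\circ p_{\Sk}$, and averaging over $SU(2)$ yields the commutation at the level of operators on $\Omega^{\bullet}_{0}(\spl)$. Combined with the individual idempotency from Propositions \ref{proposition: invariance skeleton} and \ref{lemma: homotopies}, this gives $p_{\F}^2=p_{SU(2)}^2\circ p_{\Sk}^2=p_{\F}$, and $p_{\F}$ is a cochain map because both factors are. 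Preservation of $\CC^{\bullet}$ is immediate, as each of $p_{SU(2)}$, $p_{\Sk}$, $h_{SU(2)}$, $h_{\Sk}$ already restricts there by the two input propositions.

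Next I would verify the homotopy identity by direct computation, using that $p_{SU(2)}$ commutes with $\dif$:
\begin{align*}
h_{\F}\,\dif + \dif\, h_{\F} &= p_{SU(2)}\bigl(h_{\Sk}\,\dif + \dif\, h_{\Sk}\bigr) + \bigl(h_{SU(2)}\,\dif + \dif\, h_{SU(2)}\bigr)\\
&= p_{SU(2)}\bigl(p_{\Sk}-\mathrm{id}\bigr) + \bigl(p_{SU(2)}-\mathrm{id}\bigr)\\
&= p_{\F}-\mathrm{id}.
\end{align*}

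Finally, for the SLB estimate, Lemma \ref{lemma: slb property} applied to $p_{SU(2)}$ (SLB $(0,0,0)$) and $h_{\Sk}$ (SLB $(0,5,35)$) gives SLB $(0,5,35)$ for $p_{SU(2)}\circ h_{\Sk}$. The only mild subtlety is the sum with $h_{SU(2)}$, which a priori only satisfies SLB $(0,0,0)$; but the monotonicity $\fnorm{\alpha}_{n,k,r}\le r^{5n+35}\fnorm{\alpha}_{n,k+5n+35,r}$ on $\overline{B}_r$, obtained from the pointwise bound $|x|^{-k}\le r^{5n+35}|x|^{-k-5n-35}$, upgrades the estimate for $h_{SU(2)}$ to SLB $(0,5,35)$ with an $r$-continuous constant. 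Adding the two estimates then yields the claimed triple for $h_{\F}$. The main obstacle is the commutation in the first step, since $r_{\Sk}$ is only continuous (not smooth) along $S_0$; equivariance established on $\spl\setminus S_0$ passes to $p_{\Sk}$ on all of $\spl$ by continuity together with Lemma \ref{lemma: infinite time pullback}. Everything else is routine bookkeeping.
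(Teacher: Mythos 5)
Your proof follows the same route as the paper's: commute $p_{\Sk}$ and $p_{SU(2)}$ via the $SU(2)$-equivariance of $r_{\Sk}$, compute the homotopy identity directly from the two input homotopy relations, and combine the SLB estimates. You fill in two small details the paper leaves implicit — the observation that SLB $(0,0,0)$ for $h_{SU(2)}$ upgrades to SLB $(0,5,35)$ via the monotonicity of the weighted norms on $\overline{B}_r$ (needed to take the sum with $p_{SU(2)}\circ h_{\Sk}$ under a common triple), and the remark that equivariance of the merely continuous $r_{\Sk}$ on $\spl$ is inherited from the smooth locus $\spl\setminus S_0$ through the extension defining $p_{\Sk}$ — but these are exactly the steps the paper means when it says "follows easily."
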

\begin{proof}
By Lemma \ref{lemma:properties of p}, $r_{\Sk}$ is $SU(2)$-equivariant, and this implies that $p_{\Sk}$ and $p_{SU(2)}$ commute. Hence $p_{\F}$ is a cochain projection. The homotopy relation follows easily: 
\begin{align*}
p_{\F}-\mathrm{id}=&p_{SU(2)}(p_{\Sk}-\mathrm{id})+p_{SU(2)}-\mathrm{id}\\
=&p_{SU(2)}(\dif h_{\Sk}+h_{\Sk}\dif )+\dif h_{SU(2)}+ h_{SU(2)}\dif \\
=&\dif (p_{SU(2)} h_{\Sk}+ h_{SU(2)})+(p_{SU(2)}h_{\Sk}+h_{SU(2)})\dif\\
=&\dif h_{\F }+ h_{\F}\dif.
\end{align*}
Property SLB for the original operators implies it for $h_{\F}$. 
\end{proof}

In the following subsections, we will see that $\dif\circ p_{\F}=0$, which will conclude the proof of Proposition \ref{proposition: property H fol}.

\subsubsection{A desingularization of $\Sk$}\label{section: descing skeleton}

The skeleton $\Sk$ is a smooth submanifold away from $0$. In this subsection, we construct a desingularization for $\Sk$. 

First note that, by Lemma \ref{lemma: characterization skeleton} (2), $\Sk$ is precisely the image of the map: 
\begin{equation*}\label{eq:surjective map}
    \begin{array}{cccc}
         \widehat{\rho}:&SU(2)\times \C &\to &\spl  \\
         &(U,\lambda)&\mapsto& U \begin{pmatrix}
         i\lambda&0\\
         0&-i\lambda
         \end{pmatrix}U^*
    \end{array}
\end{equation*}
This map is invariant under the right action on $SU(2)$ of the subgroup 
\[\Big\{
\begin{pmatrix}
          z&0\\
          0&\overline{z}
         \end{pmatrix}\, |\, z\in S^1
\Big\}.\]
Therefore, the map descends to a map on the quotient:
\[\rho: S^2\times \C \to \spl,\]
where we identify $SU(2)/S^1\simeq S^2$ via the Hopf fibration:
\begin{equation}\label{eq:Hopf}
         \begin{pmatrix}
         a&b\\
         -\overline{b}&\overline{a}
         \end{pmatrix} \mapsto (\lvert a\rvert^2 -\lvert b\rvert^2,\Im(-2ab),\Re(-2ab)),
\end{equation}
where we regard $S^2\subset \R^3$. For later use, let us note that, if we use coordinates $w=(w_1,w_2,w_3)\in S^2\subset \R^3$, then $\rho$ is given by
\begin{equation}\label{eq:cohomology map}
    \rho(w,\lambda)=\begin{pmatrix}
         i\lambda \cdot w_1 &\lambda \cdot (-w_2+iw_3)\\
         \lambda \cdot (w_2+iw_3)&-i\lambda \cdot w_1
         \end{pmatrix}
\end{equation}

\begin{lemma}\label{lemma:identification of skeleton}
The map $\rho$ has the following properties:
\begin{enumerate}[(1)]
\item $\rho$ is $SU(2)$-equivariant for the natural left action on $S^2\simeq SU(2)/S^1$ and the action on $\spl$ by conjugation;
\item $f\circ \rho(w,\lambda)=\lambda^2$, and so the preimages of the leaves of $\F$ are given by 
\[\rho^{-1}(S_z)=S^2\times\big\{\pm\sqrt{z}\big\},\]
where $\pm\sqrt{z}$ are the two square roots of $z\in \C$;
\item $\rho$ descends to a smooth map 
\[\underline{\rho}:S^2\times_{\Z_2}\C\to \spl,\]
where $\Z_2$ acts on $S^2\times \C\subset \R^5$ by multiplication with $-1$; 
\item the regular part of the skeleton $\Sk^{\textrm{reg}}:=\Sk\backslash\{0\}$ is an embedded submanifold of $\spl$ and 
$\underline{\rho}$ restricts to a diffeomorphism:
\begin{equation}\label{eq:desingularization}
\underline{\rho}:S^2\times_{\Z_2}\C^{\times}\diffto \Sk^{\text{reg}}.
    \end{equation}
\end{enumerate}
\end{lemma}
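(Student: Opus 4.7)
My plan is to handle the four claims in order, with the bulk of the work going into part (4).

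For (1), I would verify the equivariance of the lifted map $\widehat{\rho}$ directly from its definition: for $V\in SU(2)$, the identity $(VU)\operatorname{diag}(i\lambda,-i\lambda)(VU)^{*}=V\widehat{\rho}(U,\lambda)V^{*}=\Ad_{V}\widehat{\rho}(U,\lambda)$ is immediate. Since the right $S^{1}$-action on $SU(2)$ commutes with the left action of $SU(2)$ on itself, this equivariance descends to the quotient $SU(2)/S^{1}\simeq S^{2}$. For (2), I would just compute $f(\widehat{\rho}(U,\lambda))=\det\bigl(U\operatorname{diag}(i\lambda,-i\lambda)U^{*}\bigr)=\lambda^{2}$, and then the description of $\rho^{-1}(S_{z})$ follows immediately.

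For (3), inspection of the explicit formula \eqref{eq:cohomology map} shows $\rho(w,\lambda)=\rho(-w,-\lambda)$, so $\rho$ factors through the quotient by the involution $(w,\lambda)\mapsto(-w,-\lambda)$ on $S^{2}\times\C\subset\R^{5}$. The key observation is that this $\Z_{2}$-action has \emph{no fixed points}, because $w\in S^{2}$ forces $w\neq 0$; hence the quotient $S^{2}\times_{\Z_{2}}\C$ is a smooth manifold and $\underline{\rho}$ is smooth.

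The main work is part (4). First I would establish surjectivity of $\underline{\rho}$ onto $\Sk^{\mathrm{reg}}$: by Lemma \ref{lemma: characterization skeleton}(2) any $A\in\Sk$ is unitarily diagonalisable, and tracelessness gives eigenvalues $\pm i\lambda$, so $A=\widehat{\rho}(U,\lambda)$ for some $(U,\lambda)$, with $\lambda\neq 0$ iff $A\neq 0$. Next, injectivity of $\underline{\rho}$ on $S^{2}\times_{\Z_{2}}\C^{\times}$: if $\rho(w_{1},\lambda_{1})=\rho(w_{2},\lambda_{2})$ then equality of determinants forces $\lambda_{2}=\pm\lambda_{1}$, and identifying the eigenspace of $+i\lambda_{1}$ (a line in $\C^{2}$) via the Hopf fibration \eqref{eq:Hopf} pins down $w_{1}$ uniquely up to the simultaneous sign swap. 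The harder remaining point is to show $\underline{\rho}$ is an immersion: I would compute $\dif\rho$ at a point $(w,\lambda)$ with $\lambda\neq 0$, noting that $\partial_{\lambda}\rho(w,\lambda)=\rho(w,1)\neq 0$ lies in the normal direction to the $SU(2)$-orbit through $\rho(w,\lambda)$ (since it lies along the ray of dilations and preserves the eigendirections), while the tangential directions from $T_{w}S^{2}$ map to the three independent generators of the adjoint action $\ad_{X}\rho(w,\lambda)$ for $X\in\mathfrak{su}(2)/\mathrm{stab}(w)$; injectivity of $\dif\rho$ then reduces to the fact that the stabilizer of $\rho(w,\lambda)$ under conjugation is exactly the $S^{1}$ we quotiented by. Combined with injectivity, this makes $\underline{\rho}$ a smooth bijective immersion from a $4$-manifold.

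Finally, to conclude that $\underline{\rho}$ is a diffeomorphism onto an embedded submanifold, I would check that it is proper: the preimage of a compact $K\subset\spl\setminus\{0\}$ is contained in $\{(w,\lambda):\lambda^{2}\in f(K)\}$, which is compact in $S^{2}\times\C^{\times}$ because $f(K)$ is a compact subset of $\C^{\times}$. A proper injective immersion is a closed embedding, so its image $\Sk^{\mathrm{reg}}$ is an embedded submanifold and $\underline{\rho}$ is the claimed diffeomorphism. The main obstacle I anticipate is the bookkeeping in the immersion computation — disentangling the contribution of the $S^{2}$-direction (which comes from the adjoint action) from the $\C$-direction (radial), and ensuring the $S^{1}$-stabilizer accounts precisely for the quotient taken in forming $S^{2}=SU(2)/S^{1}$.
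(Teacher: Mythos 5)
Your overall strategy is correct and the treatment of (1)--(3), together with surjectivity, injectivity, and the immersion property in (4), parallels the paper. Where you genuinely diverge is in how you establish that $\underline{\rho}$ is a diffeomorphism onto an \emph{embedded} submanifold. The paper first proves directly that $\Sk^{\mathrm{reg}}$ is embedded in $\spl\setminus\{0\}$ by exhibiting it as the preimage, under the submersion $\mathrm{pr}\colon\spl\setminus\{0\}\to\mathbb{CP}(\spl)$, of a compact $SU(2)$-orbit (using that $\Sk$ is invariant under complex scaling), and then deduces the diffeomorphism from the fact that an injective immersion onto a known embedded submanifold is automatically a diffeomorphism onto it. You instead argue that $\underline{\rho}$ is a \emph{proper} injective immersion, hence a closed embedding, which produces embeddedness and the diffeomorphism simultaneously. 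Both routes are valid; yours is more self-contained in that it avoids the projectivization trick, while the paper's argument gets embeddedness without any transversality or properness check.

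There is, however, a slip in your properness argument as written. You claim that for a compact $K\subset\spl\setminus\{0\}$ the set $f(K)$ is a compact subset of $\C^{\times}$. This is false in general: $K$ may meet the regular cone $S_0^{\mathrm{reg}}=f^{-1}(0)\setminus\{0\}$ (e.g.\ take $K$ to be the unit sphere in $\spl$), in which case $0\in f(K)$ and the set $\{(w,\lambda):\lambda^2\in f(K)\}$ fails to be compact in $S^2\times\C^{\times}$. The fix is immediate: by Lemma~\ref{lemma: characterization skeleton}(3) every $A\in\Sk$ satisfies $|A|^2=2|f(A)|$, which combined with $f\circ\rho(w,\lambda)=\lambda^2$ gives $|\rho(w,\lambda)|=\sqrt{2}\,|\lambda|$. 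Thus if $0<\epsilon\le|A|\le M$ on $K$, then $\underline{\rho}^{-1}(K)$ is a closed subset of $S^2\times_{\Z_2}\{\lambda:\ \epsilon/\sqrt{2}\le|\lambda|\le M/\sqrt{2}\}$, which is compact. (Alternatively, note $\Sk$ is closed in $\spl$, so $\Sk^{\mathrm{reg}}$ is closed in $\spl\setminus\{0\}$; replacing $K$ by the compact set $K\cap\Sk^{\mathrm{reg}}\subset\Sk^{\mathrm{reg}}$, your $f$-based argument does apply.) With that correction your proof is complete.
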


\begin{remark}
We think of the map $\underline{\rho}:S^2\times_{\Z_2}\C\to \Sk$ as a \textbf{desingularization} of the skeleton. However, for calculations it will be more convenient to use the map $\rho$.
\end{remark}

\begin{proof}
Items (1), (2) and (3) are immediate. An easy way to show that $\Sk^{\textrm{reg}}$ is an embedded submanifold is to note that, because $\Sk=\mathrm{im}(\widehat{\rho})$, it follows that $\Sk^{\mathrm{reg}}$ is the preimage under the submersion:
\[\mathrm{pr}:\spl\backslash\{0\}\to \mathbb{CP}(\spl)\]
of a (compact!) orbit of the induced $SU(2)$-action. 


For the last part, we first show that \eqref{eq:desingularization} is a bijection. Surjectivity follows because the image of $\widehat{\rho}$ is $\Sk$. To show injectivity, we need to understand when two elements $(U,\lambda),(\tilde{U},\tilde{\lambda})\in SU(2)\times \C $ have the same image under $\widehat{\rho}$. First, by comparing the determinants, we have that $\lambda=\pm \tilde{\lambda}$. For $\lambda =\tilde{\lambda}\ne 0$, note that the centralizer of the matrix \[\begin{pmatrix}
\lambda &0 \\
0& -\lambda
\end{pmatrix}\]
in $SU(2)$ the subgroup
\[S^1\simeq \Big\{ V(\mu):= \begin{pmatrix}
\mu&0\\
0&\overline{\mu}
\end{pmatrix}
| \ \mu \in S^1\Big \}.\]
In the case $\lambda =-\tilde{\lambda}\ne 0$ we use that 
\[ \begin{pmatrix}
\lambda &0 \\
0& -\lambda
\end{pmatrix}= \gamma \begin{pmatrix}
-\lambda &0 \\
0& \lambda
\end{pmatrix}\gamma^*, \quad \text{where}\quad  \gamma :=\begin{pmatrix}
0&1\\
-1&0
\end{pmatrix}\]
Using these, we obtain for $\lambda\neq 0$:
\[ \widehat{\rho}(U,\lambda)=\widehat{\rho}(\tilde{U},\tilde{\lambda}) \ \  \Longleftrightarrow \ \   \exists\, \mu \in S^1 , i\in \{0,1\}\, : \,  (\tilde{U},\tilde{\lambda} )=(UV(\mu)\gamma^i ,(-1)^i\lambda).\]
By first taking the quotient modulo $S^1$, and identifying $SU(2)/S^1\simeq S^2$ via \eqref{eq:Hopf}, we obtain that $\widehat{\rho}$ descends to $\rho$. Next, using the relation 
\[ V(\mu)\gamma = \gamma V(-\overline{\mu}).\]
we obtain that the right action of $\gamma$ on $SU(2)$ induces a well-defined $\Z_2$-action on $SU(2)/S^1$, which under the map \eqref{eq:Hopf} becomes multiplication by $-1$ on $S^2$. Hence, we obtain for $\lambda\neq 0$ that:
\[\rho(w,\lambda)=\rho(\tilde{w},\tilde{\lambda}) \ \  \Longleftrightarrow \ \  (\tilde{w},\tilde{\lambda} )=(\pm w ,\pm \lambda).\]
This implies that \eqref{eq:desingularization} is injective. 

Next, it is easy to see that the restriction of $\rho$ to $S^2\times \C^{\times}$ is an immersion. This implies that the map from \eqref{eq:desingularization} is an injective immersion whose image is $\Sk^{\mathrm{reg}}$. But since $\Sk^{\mathrm{reg}}$ is an embedded submanifold, it follows that \eqref{eq:desingularization} is a diffeomorphism.
\end{proof}

\subsubsection{Using the desingularization to calculate cohomology}\label{subsection:desing}

By Corollary \ref{corollary:chain:proj}, the flat foliated cohomology $H^{\bullet}_0(\F)$ can be computed using the subcomplex $p_{\F}(\CC^{\bullet})\subset \CC^{\bullet}$. In this subsection, we will describe this subcomplex by using the desingularization map $\rho$. As a consequence we obtain the description of the flat foliated cohomology from Proposition \ref{proposition:flat foliated ch} and also conclude the proof of Proposition \ref{proposition: property H fol}.

To describe the result, we first discuss the behavior of flat functions on $\C$ under pullback along the map:
\[\sigma:\C\to \C, \quad \sigma(\lambda)=-\lambda.\]
Consider the eigenspaces: 
\[C^{\infty}_{0,\pm \sigma}(\C):=\{g\in C^{\infty}_0(\C)\,|\, g=\pm \sigma^*(g)\},\]

The following properties of these spaces will be proven in Appendix \ref{eigenspaces}:
\begin{lemma}\label{lemma:pullbacks and invariant functions}
\begin{enumerate}[(1)]
    \item The following map is an isomorphism of rings: 
    \[C^{\infty}_0(\C)\diffto C^{\infty}_{0,\sigma}(\C)\]
    \[g\mapsto g\circ \mathrm{sq}, \quad \textrm{where}\quad \mathrm{sq}(\lambda):=\lambda^2.\]
    \item The following map is surjective:
    \begin{equation*}
        \begin{array}{cccc}
             & C^{\infty}_0(\C)\oplus C^{\infty}_0(\C) &\to &C^{\infty}_{0,-\sigma}(\C)\\
             &(g_1,g_2)&\mapsto &-\lambda_1\cdot g_1\circ \mathrm{sq}+\lambda_2\cdot g_2\circ \mathrm{sq},
        \end{array}
    \end{equation*} 
where $\lambda_1+i\lambda_2=\lambda$, and its kernel is given by $\mathcal{K}$.
\item The map from (2) restricts to a linear isomorphism:
\[\mathcal{M} \diffto C^{\infty}_{0,-\sigma}(\C).\]
\end{enumerate}
\end{lemma}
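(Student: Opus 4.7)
The plan is to prove the three items in order.

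For (1), injectivity is immediate since $\mathrm{sq}:\C\to\C$ is surjective, and $g\circ\mathrm{sq}$ automatically lies in $C^{\infty}_{0,\sigma}(\C)$ because $\mathrm{sq}\circ\sigma=\mathrm{sq}$. For surjectivity, given $h\in C^{\infty}_{0,\sigma}(\C)$, I define $g(\mu):=h(\lambda)$ for any $\lambda$ with $\lambda^2=\mu$; the $\sigma$-invariance of $h$ makes this well-defined, and $g(0)=h(0)=0$. Smoothness of $g$ on $\C^{\times}$ is automatic since $\mathrm{sq}$ is a local diffeomorphism there. The main analytic point is to show $g$ is smooth and flat at the origin, and I would argue this inductively. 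Differentiating the identity $h(\lambda)=g(\lambda^2)$ gives $\partial_{\lambda}h(\lambda)=2\lambda\,(\partial_{\mu}g)(\lambda^2)$ and its conjugate, hence $\partial_{\mu}g(\mu)=\partial_{\lambda}h(\lambda)/(2\lambda)$ for $\lambda\ne 0$. Because $h$ and all its derivatives are flat at $0$, for every $N$ one has $|\partial_{\lambda}h(\lambda)|\le C_N|\lambda|^N$, yielding $|\partial_{\mu}g(\mu)|\le (C_N/2)|\mu|^{(N-1)/2}$; iterating, each further differentiation introduces another division by $\lambda$ or $\bar\lambda$, whose negative powers are absorbed by the flatness of $h$. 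Hence every derivative of $g$ extends continuously to $0$ with value $0$, so $g\in C^{\infty}_{0}(\C)$.

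For (2), I first apply a Hadamard-type decomposition: any $h\in C^{\infty}_{0,-\sigma}(\C)$ can be written as
\[ h(\lambda_1,\lambda_2)=\lambda_1 F_1(\lambda)+\lambda_2 F_2(\lambda),\qquad F_i(\lambda):=\int_0^1(\partial_{\lambda_i}h)(t\lambda_1,t\lambda_2)\,\dif t. \]
Since $h$ is odd, each $\partial_{\lambda_i}h$ is even, so $F_i\in C^{\infty}_{0,\sigma}(\C)$. By (1), $F_i=\tilde g_i\circ\mathrm{sq}$ for some $\tilde g_i\in C^{\infty}_0(\C)$; setting $(g_1,g_2):=(-\tilde g_1,\tilde g_2)$ gives the required expression, proving surjectivity. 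For the kernel, the vanishing of $-\lambda_1 g_1(\lambda^2)+\lambda_2 g_2(\lambda^2)$, multiplied by $2\lambda_2$ and rewritten via $\mu_2=2\lambda_1\lambda_2$ and $|\mu|-\mu_1=2\lambda_2^2$, becomes exactly the defining relation $\mu_2 g_1(\mu)=(|\mu|-\mu_1)g_2(\mu)$ of $\mathcal{K}$. Conversely, this relation forces $\lambda_2(\lambda_1 g_1(\lambda^2)-\lambda_2 g_2(\lambda^2))=0$ for all $\lambda$, and dividing off the axis $\lambda_2=0$ and extending by continuity recovers the kernel condition.

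Item (3) is then a formal consequence: by Proposition \ref{proposition: twisted module}(2), $\mathcal{M}\oplus\mathcal{K}=C^{\infty}_0(\C)\oplus C^{\infty}_0(\C)$, so the surjection from (2), which has kernel exactly $\mathcal{K}$, restricts on the complementary summand $\mathcal{M}$ to a linear bijection onto $C^{\infty}_{0,-\sigma}(\C)$. The principal obstacle in the whole proof is the smoothness step in (1) at the origin, which genuinely uses flatness: pointwise decay bounds of arbitrary polynomial order are needed to cancel the singular factors $|\lambda|^{-k}$ that appear upon repeated division. The remainder is routine algebraic bookkeeping together with the standard Hadamard integral.
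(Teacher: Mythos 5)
Your proof is correct but follows a genuinely different route from the paper's. For part (1), the paper does not argue directly: it first decomposes $C^{\infty}_{0,\sigma}(\C)$ into eigenspaces under complex conjugation $\tau$ (Lemma \ref{lemma:eigenspaces}), reduces the bi-invariant case to a Whitney-type statement — namely that $C^{\infty}_{0,\sigma,\tau}(\C)$ is exactly $\{k(x^2,y^2)\mid k\in C^{\infty}_0(\R^2_{\geq 0})\}$ (Lemma \ref{lemma: lambda casimir}, proved by iterating Hadamard) — and then constructs $g$ by an explicit formula $g(z)=k(|z|+x,|z|-x)$. You instead take the quotient directly and control smoothness and flatness of $g$ at the origin by iterating the Wirtinger chain rule $\partial_\mu g(\lambda^2)=\partial_\lambda h(\lambda)/(2\lambda)$, using that the holomorphic map $\mathrm{sq}$ decouples $\partial_\lambda$ and $\partial_{\bar\lambda}$; flatness of $h$ absorbs the singular factors $\lambda^{-k},\bar\lambda^{-k}$ that accumulate. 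One small thing you leave implicit is the standard real-analysis lemma that a function on $\C$ which is continuous, smooth on $\C^\times$, and whose derivatives all extend continuously to $0$ is itself smooth; this is what turns the pointwise decay bounds into membership in $C^{\infty}_0(\C)$. For part (2), the paper again goes through the $\sigma,\tau$-eigenspace decomposition to write $h=-\lambda_1 h_1+\lambda_2 h_2$ with $h_i$ bi-invariant, whereas you produce the decomposition via the Hadamard integral formula and verify directly that the coefficients $F_i$ lie in $C^{\infty}_{0,\sigma}(\C)$; both then reduce to part (1). The kernel computations match, and part (3) is in both cases a formal consequence of $\mathcal{M}\oplus\mathcal{K}=C^{\infty}_0(\C)\oplus C^{\infty}_0(\C)$. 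What the paper's route buys is the auxiliary Whitney-type lemma, which is reused and has independent interest; your route is shorter and more self-contained, exploiting the holomorphicity of $\mathrm{sq}$ rather than a coordinate-wise even-function reduction.
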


\vspace{-5pt}
Next, we introduce the complex:
\[(\DD^{\bullet},\dif)\subset 
(\Omega^{\bullet+2}(S^2\times \C),\dif)
\]
given by
\begin{equation}\label{eq: foliated complex skeleton}
\DD^{k} = \left\{\begin{array}{lr}
        C^{\infty}_{0,\sigma}(\C)\, \rho^*(\varphi), & \text{for } k=0\\
       \quad\quad \quad 0, & \text{for } k=1\\
        C^{\infty}_{0,-\sigma}(\C)\, \rho^*(\varphi)\wedge \omega_{S^2}, & \text{for } k=2
        \end{array}\right.
\end{equation}
where we regard functions on $\C$ as functions on $S^2\times\C$ via the pullback along the second projection and $\omega_{S^2}$ is the standard area form on $S^2$
\begin{align}\label{eq: standard form s2}
    \omega_{S^2}= \sum_{c.p.}w_1 \dif w_2\wedge \dif w_3
\end{align}
Clearly, $\DD^{\bullet}$ consists only of closed forms. Note that, by Lemma \ref{lemma:identification of skeleton} (2),
\begin{equation}\label{eq:pull back of varphi}
    \rho^*(\varphi)=4(\lambda_1^2+\lambda_2^2)\dif \lambda_1\wedge \dif \lambda_2.
\end{equation}

We have the following:

\begin{proposition}\label{proposition: chain isomorphism}
The map $\rho^*$ restricts to a cochain isomorphism:
\[\rho^*:(p_{\F}(\CC^{\bullet}),\dif)\diffto (\mathscr{D}^{\bullet},\dif=0).\]
\end{proposition}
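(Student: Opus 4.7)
The map $\rho^*$ is automatically a cochain map from $(\Omega^\bullet(\spl),\dif)$ to $(\Omega^\bullet(S^2\times\C),\dif)$, and the target complex $\DD^\bullet$ has trivial differential by inspection. So it is enough to establish two claims: (a) $\rho^*(p_\F(\CC^\bullet))\subset\DD^\bullet$, and (b) this restriction is a bijection. These two together automatically yield the missing piece of Proposition \ref{proposition: property H fol}: for $\alpha\in p_\F(\CC^\bullet)$ one has $\dif\alpha\in p_\F(\CC^{\bullet+1})$ because $p_\F$ is a cochain projection, while $\rho^*(\dif\alpha)=\dif\rho^*\alpha=0$ in $\DD^{\bullet+1}$, and injectivity of $\rho^*$ then forces $\dif\alpha=0$.

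By definition $p_\F=p_{SU(2)}\circ p_\Sk$, and these operators commute thanks to the $SU(2)$-equivariance of $r_\Sk$ (Lemma \ref{lemma:properties of p}). Hence $\alpha\in p_\F(\CC^\bullet)$ iff $\alpha$ is $SU(2)$-invariant and $r_\Sk^*\alpha=\alpha$ on $\spl\setminus S_0$; in particular $\alpha|_{\spl\setminus S_0}$ is determined by $\alpha|_{\Sk^{\mathrm{reg}}}$. Via the desingularization $\underline{\rho}:S^2\times_{\Z_2}\C^\times\xrightarrow{\sim}\Sk^{\mathrm{reg}}$ of Lemma \ref{lemma:identification of skeleton}, forms on $\Sk^{\mathrm{reg}}$ correspond to $\Z_2$-invariant forms on $S^2\times\C^\times$; equivariance of $\rho$ transports $SU(2)$-invariance, and flatness of $\alpha$ at $0\in\spl$ becomes flatness of $\rho^*\alpha$ along $S^2\times\{0\}$. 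So it suffices to identify the resulting subspace of invariant, flat forms on $S^2\times\C$ with $\DD^\bullet$.

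Every $\alpha\in\CC^k$ has the shape $\alpha=\varphi\wedge\beta$ for some flat $\beta$, so by \eqref{eq:pull back of varphi}
\[\rho^*\alpha=4(\lambda_1^2+\lambda_2^2)\,\dif\lambda_1\wedge\dif\lambda_2\wedge\rho^*\beta.\]
The factor $\dif\lambda_1\wedge\dif\lambda_2$ annihilates any $\dif\lambda_i$-component of $\rho^*\beta$, so only the part of $\rho^*\beta$ that is purely tangential to $S^2$ contributes. Since the $SU(2)$-invariant forms on $S^2$ are constants in degree $0$, none in degree $1$, and multiples of $\omega_{S^2}$ in degree $2$, introducing a $C^\infty_0(\C)$-parameter $g$ as coefficient yields exactly $g\,\rho^*(\varphi)$ for $k=0$, zero for $k=1$, and $g\,\rho^*(\varphi)\wedge\omega_{S^2}$ for $k=2$. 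The $\Z_2$-action $\lambda\mapsto-\lambda$ fixes $\rho^*(\varphi)$ (even in $\lambda$ and even in $\dif\lambda_1\wedge\dif\lambda_2$) but negates $\omega_{S^2}$ (the antipodal map on $S^2$ has degree $-1$), so $\Z_2$-invariance forces $g\in C^\infty_{0,\sigma}(\C)$ for $k=0$ and $g\in C^\infty_{0,-\sigma}(\C)$ for $k=2$. This is exactly $\DD^\bullet$, proving (a).

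For (b), injectivity is immediate: if $\rho^*\alpha=0$, then $\alpha|_{\Sk^{\mathrm{reg}}}=0$ by Lemma \ref{lemma:identification of skeleton}(4), so $\alpha=r_\Sk^*\alpha=0$ on the dense open set $\spl\setminus S_0$, and $\alpha=0$ by continuity. For surjectivity in degree $0$, writing $g=\tilde g\circ\mathrm{sq}$ via Lemma \ref{lemma:pullbacks and invariant functions}(1), the form $\alpha=e_\varphi(\tilde g\circ f)$ is $SU(2)$-invariant (Casimir times $\varphi$) and $r_\Sk$-preserved (since $r_\Sk$ preserves $f$), lies in $p_\F(\CC^0)$, and satisfies $\rho^*\alpha=g\cdot\rho^*(\varphi)$. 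For surjectivity in degree $2$, given $g=-\lambda_1 g_1\circ\mathrm{sq}+\lambda_2 g_2\circ\mathrm{sq}$ by Lemma \ref{lemma:pullbacks and invariant functions}(2), take $\alpha=p_\F(e_\varphi(g_1\circ f\cdot\tilde\omega_1+g_2\circ f\cdot\tilde\omega_2))$. The main obstacle is the verification $\rho^*\alpha=g\cdot\rho^*(\varphi)\wedge\omega_{S^2}$: this requires extracting the pure-$S^2$ component of $\rho^*(\tilde\omega_i)$ from the explicit formulas in Lemma \ref{lemma: extension omega} using coordinates \eqref{eq:cohomology map}, and checking that after $SU(2)$-averaging these components combine into precisely the $\lambda_1,\lambda_2$-weighted expression of Lemma \ref{lemma:pullbacks and invariant functions}(2). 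All remaining steps are formal.
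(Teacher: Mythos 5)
Your overall structure mirrors the paper's proof closely: you establish the image inclusion $\rho^*(p_{\F}(\CC^\bullet))\subset\DD^\bullet$ (the paper's Steps 2--3), you prove injectivity via $r_\Sk$ as in the paper's Step 1, and you prove surjectivity by exhibiting explicit preimages as in the paper's Step 4. The degree-$0$ surjectivity and the deduction of $\dif\circ p_\F=0$ from the proposition are correct and essentially identical to the paper's.

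The gap is in the degree-$2$ surjectivity, and it is exactly the point the paper spends effort on. You set $\eta := e_\varphi(g_1\circ f\cdot\tilde\omega_1+g_2\circ f\cdot\tilde\omega_2)$ and $\alpha := p_\F(\eta)$, and the computation of $\rho^*(\tilde\omega_i)$ from the explicit formulas directly gives $\rho^*(\eta) = g\cdot\rho^*(\varphi)\wedge\omega_{S^2}$. But what you need is $\rho^*(\alpha)=\rho^*(p_\F(\eta))$, and the phrase ``All remaining steps are formal'' glosses over the passage from $\rho^*(\eta)$ to $\rho^*(p_\F(\eta))$. Moreover, your mention of ``$SU(2)$-averaging'' forgets that $p_\F=p_{SU(2)}\circ p_\Sk$ also involves the skeletal projection $p_\Sk$, and $p_\Sk(\eta)\ne\eta$ in general (unlike the degree-$0$ case, $r_\Sk$ does not preserve the $\tilde\omega_i$). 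The paper closes this gap with a dedicated argument: writing $p_\F(\eta)-\eta=\dif h_\F(\eta)$, pulling back along $\rho$, restricting to fibers $S^2\times\{\lambda\}$, and using $H^2(S^2)$ to conclude the two coefficient functions agree. An alternative, shorter fix is available: since $\rho$ takes values in $\Sk$ and $r_\Sk|_\Sk=\id$, one has $r_\Sk\circ\rho=\rho$, hence $\rho^*\circ p_\Sk=\rho^*$ on $\CC^\bullet$ by density; combined with $SU(2)$-equivariance of $\rho$, this gives $\rho^*(p_\F(\eta))=$ ($SU(2)$-average on $S^2$ of $\rho^*(\eta)$) $=\rho^*(\eta)$ because $\eta$ is already invariant. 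Either way, an explicit argument is required here, and your proposal omits it.
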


\begin{proof}
We split the proof into several steps:
\smallskip

\noindent\underline{Step 1}: Injectivity. Let $\alpha\in p_{\F}(\CC^{\bullet})$ such that $\rho^*(\alpha)=0$. Recall that, by Lemma \ref{lemma:identification of skeleton}, $\rho:S^2\times \C^{\times}\to \Sk^{\textrm{reg}}$ is a 2-to-1 covering map, and therefore $i^*(\alpha)=0$, where $i:\Sk^{\textrm{reg}}\hookrightarrow\spl$ denotes the inclusion. Since $r_{\Sk}(\spl\backslash S_0)=\Sk^{\textrm{reg}}$, it follows that $r_{\Sk}^*(\alpha)|_{\spl\backslash S_0}=0$, and so $p_{\Sk}(\alpha)=0$. Since $\alpha\in p_{\F}(\CC^{\bullet})$, we have that:
\[\alpha=p_{\F}(\alpha)=p_{SU(2)}\circ p_{\Sk}(\alpha)=0.\]
This proves injectivity.
\smallskip

\noindent\underline{Step 2}: $\mathrm{Im}(\rho^*)\subset \mathscr{B}^{\bullet}$, where we denote 
\begin{align*}
    \mathscr{B}^{\bullet}:=e_{\rho^*(\varphi)}(\Omega^{\bullet}_0(S^2\times \C))^{SU(2)\times \Z_2}\subset \Omega^{\bullet+2}(S^2\times \C),
\end{align*} 
and the subscript $_0$ stands for forms flat at $S^2\times\{0\}$. Note that, since the action of $SU(2)\times \Z_2$ on $S^2\times\C$ preserves the submanifold $S^2\times \{0\}$, the subcomplex in \eqref{eq: foliated complex skeleton} is well-defined. Note that also $\rho^*(\varphi)$ is $\Z_2$-invariant. 

Step 2 is a consequence of the following facts: elements in $p_{\F}(\CC^{\bullet})$ are $SU(2)$-invariant, all elements in the image of $\rho^*$ are $\Z_2$-invariant, elements in $p_{\F}(\CC^{\bullet})$ are flat at the origin and $\rho(S^2\times \{0\})=0$.

\smallskip

\noindent\underline{Step 3}: $\DD^{\bullet}=\mathscr{B}^{\bullet}$. Clearly, $\DD^{\bullet}\subset \mathscr{B}^{\bullet}$. For the converse, note that 
\[\Omega^k(S^2)^{SU(2)} = \left\{\begin{array}{lr}
        \R, & \text{for } k=0\\
    0, & \text{for } k=1\\
        \R\, \omega_{S^2}, & \text{for } k=2
        \end{array}\right.\]
Next, note that if we evaluate any $\alpha \in \mathscr{B}^{k}$ along $S^2\times \{\lambda\}$, we obtain: 
\[ \alpha|_{S^2\times \{\lambda\}} = \rho^*(\varphi)_{\lambda}\wedge \beta,\]
with $\beta\in \Omega^k(S^2)^{SU(2)}$. This implies that $\mathscr{B}^1=0$. For $k=0$ we immediately obtain $\alpha=h\, \rho^*(\varphi)$, with $h\in C^{\infty}_{0,\sigma}(\C)$. For $k=2$, we obtain that $\alpha=h\, \rho^*(\varphi)\wedge \omega_{S^2}$. Since $\gamma^*(\omega_{S^2})=-\omega_{S^2}$, where $\gamma(w)=-w$, it follows that $h\in C^{\infty}_{0,-\sigma}(\C)$.

\smallskip

\noindent\underline{Step 4}: $\DD^{\bullet}\subset \mathrm{Im}(\rho^*)$. In degree 0, by Lemma \ref{lemma:pullbacks and invariant functions}, every element in $\DD^0$
can be written as $e_{\rho^*(\varphi)}(g\circ \mathrm{sq})$, for some $g\in C^{\infty}_0(\C)$. By Lemma \ref{lemma:identification of skeleton} (2):
\begin{equation*}
\rho^*(e_{\varphi}(g\circ f))=e_{\rho^*(\varphi)}(g\circ \mathrm{sq}).
\end{equation*}
Next, since $f$ is $SU(2)$-invariant and $r_{\Sk}$ preserves $f$, we have that 
\[p_{\F}(e_{\varphi}(g\circ f))=e_{\varphi}(g\circ f),\]
which proves the statement in degree 0.

For degree 2, we recall first the explicit form of $\rho$ from \eqref{eq:cohomology map}
\begin{align}\label{eq:cohomology map explicit}
\rho(w_1,w_2,w_3,\lambda)=(z_1,z_2,z_3), \quad z_j = \lambda\cdot w_j 
\end{align}
where we use the coordinates $z_j=x_j+iy_j$ on $\spl$ from Section \ref{section: formal poisson}. Using these in the explicit formula for $\tilde{\omega}_{i}$ from Lemma \ref{lemma: extension omega}, we have: 
\begin{align*}
\rho^*(\tilde{\omega}_1+i\tilde{\omega}_2)&=\rho^*\Big(-\frac{1}{|z|^2}\sum_{c.p.}
    z_1 \dif \overline{z}_2\wedge\dif \overline{z}_3\Big)\\
    &=-\frac{1}{|\lambda|^2}\sum_{c.p.}
    w_1 \lambda (\overline{\lambda} \dif w_2 +w_2\dif\overline{\lambda})\wedge
    (\overline{\lambda} \dif w_3 +w_3\dif\overline{\lambda})\\
&=-\sum_{c.p.} \overline{\lambda}\, w_1 \dif w_2\wedge \dif w_3+ w_1(w_3\dif w_2-w_2\dif w_3)\wedge\dif \overline{\lambda}\\
&=-\overline{\lambda}\,\omega_{S^2}.
\end{align*}
Hence, we obtain:
\begin{equation}\label{eq:pullback forms}
    \begin{array}{rcl}
         \rho^*(\tilde{\omega}_1)&=&-\lambda_1\, \omega_{S^2}\\ \rho^*(\tilde{\omega}_2)&=&\lambda_2\, \omega_{S^2}.
    \end{array}
\end{equation}

Consider an element 
\[\alpha=e_{\rho^*(\varphi)}(h\cdot \omega_{S^2})\in\DD^2,\quad \textrm{with}\quad h\in C^{\infty}_{0,-\sigma}(\C).\] 
By Lemma \ref{lemma:pullbacks and invariant functions}, there are two flat functions $g_1,g_2\in C^{\infty}_0(\C)$ such that 
\[h=-\lambda_1\cdot g_1\circ \mathrm{sq}+\lambda_2\cdot g_2\circ \mathrm{sq}.\]
Define the form
\[\eta:=e_{\varphi}\big(g_1\circ f\cdot \tilde{\omega}_1+g_2\circ f \cdot\tilde{\omega}_2\big)\in \CC^2.\]
By Lemma \ref{lemma:identification of skeleton} (2) and \eqref{eq:pullback forms}, we have that:
\[\rho^*(\eta)=\alpha.\]
The proof will be concluded once we show that:
\begin{equation}\label{eq:final:to:prove}
\rho^*(p_{\F}(\eta))=\alpha.
\end{equation}
First, since $\eta$ is closed, we have that:
\[p_{\F}(\eta)-\eta=\dif h_{\F}(\eta),\]
and so 
\begin{equation}\label{eq:homotopy on eta}
\rho^*(p_{\F}(\eta))-\alpha= \dif \rho^*(h_{\F}(\eta)).    
\end{equation}
Next, since $\rho^*(p_{\F}(\eta))\in \mathscr{D}^2$, there exists $\tilde{h}\in C^{\infty}_{0,-\sigma}(\C)$ such that 
\[\rho^*(p_{\F}(\eta))=e_{\rho^*(\varphi)}(\tilde{h}\cdot \omega_{S_2}\big).\]
Next, since $h_{\F}(\eta)\in \CC^1$, we can write 
\[\rho^*(h_{\F}(\eta))=e_{\rho^*(\varphi)}(\theta),\]
for some $\theta\in \Omega^1(S^2\times \C)$. Thus equation \eqref{eq:homotopy on eta} can be written as:
\[e_{\rho^*(\varphi)}\big((\tilde{h}-h)\cdot\omega_{S^2}-\dif \theta \big)=0.\]
Next, using the form of $\rho^*(\varphi)$ from \eqref{eq:pull back of varphi}, we contract the last equation with $\partial_{\lambda_1}$ and $\partial_{\lambda_2}$, then divide it by $4|\lambda|^2$, and we pull it back via the embedding $i_{\lambda}:S^2\hookrightarrow S^2\times\C$, $w\mapsto (w,\lambda)$; we obtain: 
\[(\tilde{h}(\lambda)-h(\lambda))\cdot\omega_{S^2}=\dif i_{\lambda}^*(\theta).\]
Passing to cohomology, this implies that $\tilde{h}=h$, and so \eqref{eq:final:to:prove} holds. 
\end{proof}

We can prove now the two statements about flat foliated cohomology:
\begin{proof}[Proof of Proposition \ref{proposition: property H fol}]
By Proposition \ref{proposition: chain isomorphism}, $\dif=0$ on $p_{\F}(\CC^{\bullet})$. The rest of the statement was already proven in Corollary \ref{corollary:chain:proj}.
\end{proof}

\begin{proof}[Proof of Proposition \ref{proposition:flat foliated ch}]
Because $p_{\F}$ is a deformation retraction of $(\CC^{\bullet},\dif)$ and because $\dif\circ p_{\F}=0$, we have that every flat foliated cohomology class has a unique representative in $p_{\F}(\CC^{\bullet})$; thus, by Proposition \ref{proposition: chain isomorphism}, we have the isomorphisms: 
\[H^{k}_0(\F)\simeq p_{\F}(\CC^{k})\simeq \DD^{k}.\]

This implies that $H^{k}_0(\F)=0$, for $k\notin\{0,2\}$. 

The case $k=0$ follows from the proof of Proposition \ref{proposition: chain isomorphism}, where we have seen that the set
\[\{e_{\varphi}(g\circ f)\, |\, g\in C^{\infty}_0(\C)\}\]
is fixed under $p_{\F}$ and that $\rho^*$ maps this set isomorphically to $\DD^{0}$. Of course, this case follows also from Proposition \ref{prop:Casimirs}, because $H^0_0(\F)$ consists of Casimir functions of $(\spl,\pi)$ that are flat at the origin (times $\varphi$). 

Finally, let $k=2$. For $g_1,g_2\in C^{\infty}_0(\C)$, denote by 
\[\eta(g_1,g_2):=e_{\varphi}(g_1\circ f\cdot \tilde{\omega}_1+g_2\circ f\cdot\tilde{\omega}_2)\in \CC^2.\]
Consider the map: 
\[\alpha: C^{\infty}_0(\C)\oplus C^{\infty}_0(\C)\to \DD^2,\]
\[(g_1,g_2)\mapsto \rho^*\circ p_{\F}(\eta(g_1,g_2)).\]
By the proof of Proposition \ref{proposition: chain isomorphism}, $\alpha$ is given by
\[\alpha(g_1,g_2)=(-\lambda_1\cdot g_1\circ \mathrm{sq}+\lambda_2\cdot g_2\circ \mathrm{sq})\rho^*(\varphi)\wedge\omega_{S^2}.\]
By part (2) of Lemma \ref{lemma:pullbacks and invariant functions}:
\[\ker(\alpha)=\mathcal{K},\]
and by part (3), $\alpha$ restricts to an isomorphism between the spaces:
\[ \alpha|_{\mathcal{M}}:\mathcal{M}\diffto \DD^2.\]
By combining the following:
\begin{itemize}
    \item[-]$\rho^*:p_{\F}(\CC^2)\diffto \DD^2$ is an isomorphism;
    \item[-] every class in $H^2_0(\F)$ has a unique representative in $p_{\F}(\CC^2)$;
    \item[-] $\eta(g_1,g_2)$ and $p_{\F}(\eta(g_1,g_2))$ are in the same cohomology class;
\end{itemize}    
the statement follows.
\end{proof}

\section{Homotopy operators for the foliated complex}\label{subsection: homotopy operators}

In this section we prove Lemma \ref{lemma: infinite time pullback} and Proposition \ref{lemma: homotopies}. In Subsection \ref{section: definition}, we construct the homotopy operators as the infinite flow of a specific vector field, and then we reduce Lemma \ref{lemma: infinite time pullback} and Proposition \ref{lemma: homotopies} to their more concrete versions: Lemma \ref{convergence of the flow} and Lemma \ref{well-defined}. We prove these results in Subsection \ref{section: convergence of the flow} by using a detailed analysis of the flow of the vector field, which is provided in Subsection \ref{subsection_analysis_flow}.

\subsection{Construction of the homotopy operators}\label{section: definition}
In order to construct the homotopy operators from Proposition \ref{lemma: homotopies}, we build a foliated homotopy between the identity map and the retraction $r_{\Sk}$. We will do this using the flow of the vector field $W\in \mathfrak{X}^1(\mathfrak{sl}_2(\C))$ given by:
  \begin{align}\label{W}
   W_A=&\frac{1}{4}[A,[A,A^*]].
  \end{align}
  Here the bracket denotes the standard Lie bracket on $\mathfrak{sl}_2(\C)$ and we think of $W$ as a real vector field.
\begin{lemma}\label{lemma: properties vector field}
 The vector field $W$ has the following properties:
 \begin{enumerate}[(1)]
     \item $W$ vanishes precisely on $\Sk$.
     \item $W$ is tangent to the foliation: $\Lie_W f=0$.
     \item $W$ is $SU(2)$-invariant.
     \item $\Lie_W(R^2 )\leq 0$, with equality only along $\Sk$.
\end{enumerate}
\end{lemma}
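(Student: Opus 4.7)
The plan is to organize the four properties around a single trace identity. Writing $B := [A,A^*]$, so that $W_A = \tfrac14[A,B]$, I will establish
\[
\tr(A^*W_A) \;=\; -\tfrac14\,|B|^2,
\]
where $|B|^2 := \tr(B^*B) = \tr(B^2)$, the second equality coming from $B^* = B$. This quantity is real and non-positive, and vanishes precisely when $B = 0$, i.e.\ when $A \in \Sk$. From this single identity, both (1) and the inequality in (4) will fall out; (2) and (3) are essentially formal.

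For (3), invariance under $SU(2)$ is immediate: because $U^* = U^{-1}$ for $U \in SU(2)$, the star operation commutes with conjugation, so $[UAU^*, UA^*U^*] = UBU^*$, and iterating the bracket yields $W_{UAU^*} = UW_AU^*$. Since the $SU(2)$-action on $\spl$ is linear, this equivariance is exactly the pushforward invariance of $W$. For (2), I use that the leaves of $\F$ are level sets of $f$; combined with $A^2 = -f\,\mathds{1}$ I can write $f = -\tfrac12\tr(A^2)$, and then
\[
\Lie_W f|_A = -\tr(AW_A) = -\tfrac14\tr(A[A,B]) = 0,
\]
by the standard derivation identity $\tr(X[X,Y]) = 0$.

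The heart of the argument is the identity displayed above, which I will prove by cyclicity of the trace: $\tr(A^*[A,B]) = \tr([A^*,A]B) = -\tr(B^2) = -|B|^2$. For (1), if $W_A = 0$ then $\tr(A^*W_A) = 0$ forces $B = 0$, so $A \in \Sk$; the converse is immediate, since $B = 0$ gives $W_A = \tfrac14[A,0] = 0$. For (4), viewing $W$ as a real vector field on $\spl \simeq \R^6$ and differentiating the real-valued function $R^2 = \tr(AA^*)$ in the direction of $W_A$ gives
\[
\Lie_W(R^2)|_A = 2\Re\tr(A^*W_A) = -\tfrac12|B|^2,
\]
which yields the desired inequality, with equality holding precisely when $A \in \Sk$.

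I do not anticipate any serious obstacle; the four properties are linked by one short computation. The only point requiring a moment of care is (4), where one must remember that $W$ is being treated as a real vector field on the underlying real manifold $\spl \simeq \R^6$, so differentiating $R^2$ produces the real part of the naive complex expression rather than the expression itself.
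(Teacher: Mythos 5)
Your proof is correct, and it takes a genuinely cleaner route than the paper's. The paper proves (1) by computing $|W_A|^2=\tfrac18 R^2(R^4-4|f|^2)$ via a somewhat lengthy manipulation with the characteristic equations, proves (2) on $\spl\setminus S_0$ using Jacobi's formula $\tfrac{\dif}{\dif t}\det(A+tW_A)|_{t=0}=\det(A)\tr(A^{-1}W_A)$ (which needs $A$ invertible) and then extends by density, and proves (4) by a separate trace calculation yielding $\Lie_W(R^2)=4|f|^2-R^4$. You instead organize everything around the single cyclicity identity $\tr(A^*W_A)=\tfrac14\tr(A^*[A,B])=\tfrac14\tr([A^*,A]B)=-\tfrac14\tr(B^2)=-\tfrac14|B|^2$, which simultaneously delivers (1) (if $W_A=0$ then this trace vanishes, so $B=0$; the converse is trivial) and the inequality in (4) (since $\Lie_W(R^2)=2\Re\tr(A^*W_A)=-\tfrac12|B|^2$), and your proof of (2) via $f=-\tfrac12\tr(A^2)$ and $\tr(A[A,B])=0$ works on all of $\spl$ with no density argument. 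Your $-\tfrac12|B|^2$ agrees with the paper's $4|f|^2-R^4$: using $A^2=-f\mathds{1}$ and the characteristic equation of $AA^*$ one gets $\tr(B^2)=2R^4-8|f|^2$. The trade-off is that the paper's explicit formula for $|W_A|^2$ is not extracted by your argument (you only get that $W_A=0$ iff $A\in\Sk$), but since the lemma only asks for the vanishing locus, nothing is lost.
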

\begin{proof}
\begin{enumerate}[(1)]
    \item For the norm of $W$ we get:
    \begin{equation*}
        \begin{array}{rcl}
        |W_A|^2& = & \frac{1}{16}\tr([A,[A,A^*]][[A,A^*],A^*]\\
        & \stackrel{\eqref{eq:invariant polynomials a}}{=} & \frac{1}{16}\tr(4(f\cdot A^*+AA^*A)(\overline{f}\cdot A+A^*AA^*))\\
        &\stackrel{\eqref{eq:invariant polynomials a}}{=} & \frac{1}{4}\tr(\nf^2(A^*A-2AA^*)+(AA^*)^3)\\
        &=&\frac{1}{4}\tr(AA^*((AA^*)^2-\nf^2))\\
        &\stackrel{\eqref{eq:invariant polynomials aah}}{=}&\frac{1}{4}\tr(AA^*)(\tr(AA^*)^2-4\nf^2)\\
        &=&\frac{1}{8}R^2 (R^4-4\nf^2)
        \end{array}
    \end{equation*}
    which is zero iff $A\in \Sk$ due to Lemma \ref{lemma: characterization skeleton}.
    \item A straightforward calculation for $A\in \spl \setminus S_0$ gives:
    \begin{equation*}
        \begin{array}{ccl}
             \Lie_W f &=& \frac{\dif}{\dif t}f(A+tW_A)|_{t=0}  \\
             & =&\frac{\dif}{\dif t}\det(A+tW_A)|_{t=0}\\
            &=&\det(A)\tr(A^{-1}W_A)\\ &=&\frac{1}{4}\det(A)\tr([A,A^*]-A^{-1}[A,A^*]A)\\
             &=&0
        \end{array}
    \end{equation*}
    Since $\spl \setminus S_0$ is dense, (2) holds everywhere on $\spl$.
    \item Straightforward.
    \item We check this again by a direct calculation:
    \begin{equation*}
        \begin{array}{ccl}
             \Lie_W (R^2)&=& \frac{\dif}{\dif t}\tr((A+tW_A)(A^*+tW_A^*))|_{t=0}\\
             &=&\frac{1}{4}\tr([A,[A,A^*]]A^*+A[[A,A^*],A^*])\\
             &=&\frac{1}{4}\tr(4(\nf^2\mathds{1}-(AA^*)^2))\\
             &\stackrel{\eqref{eq:invariant polynomials aah}}{=}&4\nf^2 -R^4\\
             &\stackrel{\eqref{eq:norm-fiber inequality}}{\leq} &0
        \end{array}
    \end{equation*}
    with equality iff $A\in \Sk$ again due to Lemma \ref{lemma: characterization skeleton}.\qedhere
\end{enumerate}
\end{proof}

The last property in Lemma \ref{lemma: properties vector field} implies that the flow lines starting at a point inside the closed ball $\overline{B}_r(0)$ are trapped inside that ball; hence the flow is defined for all positive time, and will be denoted by:
  \begin{equation*}
  \begin{array}{cccc}
   \phi:&\R_{\geq 0}\times\  \mathfrak{sl}_2(\C) &\to&\mathfrak{sl}_2(\C)\\
   &(t,A)&\mapsto&\phi_t(A).
  \end{array}
  \end{equation*}
The above properties of $W$ imply that:
\begin{itemize}
\item $\phi_t$ fixes $\Sk$;
\item $\phi_t^*(f)=f$;
\item $\phi_t$ is $SU(2)$-equivariant.
\end{itemize}
In particular, $\phi_t^*$ preserves the complex $(\CC^{\bullet},\dif)$. Also $i_W$ preserves $\CC^{\bullet}$: since $\Lie_W f=0$, we have that $\Lie_{W}f_1=i_W\dif f_1=0$ and $\Lie_W f_2=i_W\dif f_2=0$. 

Next, Cartan's magic formula implies the homotopy relation
\begin{equation}\label{eq: at time t}
\phi_t^*(\alpha)-\alpha=\dif \circ h_t(\alpha)+h_t\circ \dif(\alpha),
\end{equation}
for all $\alpha\in \Omega^{\bullet}_0(\spl)$, where
\[h_t:\Omega^{\bullet}_0(\spl)\to \Omega^{\bullet-1}_0(\spl), \ \ h_t(\alpha)=\int_0^{t}i_W\phi_s^*(\alpha)\dif s.\]
By the discussion above, note that also $h_t$ preserves $\CC^{\bullet}$.

In order to prove Proposition \ref{lemma: homotopies} we will take the limit as $t\to\infty$ in \eqref{eq: at time t}. In the following subsection we will give explicit formulas for $\phi_t$, which imply the point-wise limit: 
\begin{align}\label{infinity now}
\lim_{t\to\infty}\phi_t(A)=r_{\Sk}(A), \ \ \ \forall\  A\in \mathfrak{sl}_2(\C).
\end{align}
The following results are much more involved, and their proofs will occupy the rest of this section:
\begin{lemma}\label{convergence of the flow}
On $\mathfrak{sl}_2(\C)\backslash S_0$, we have that \[\lim_{t\to\infty}\phi_t=r_{\Sk}\] 
with respect to the compact-open $C^{\infty}$-topology.
\end{lemma}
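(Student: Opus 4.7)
The plan is to exploit the conservation of $f$ along the flow together with the explicit ODE for $R^2$, and then upgrade from $C^0$- to $C^\infty$-convergence via the variational equations. Since $|f|$ is conserved and $\Lie_W(R^2) = 4|f|^2 - R^4$ (from the proof of Lemma \ref{lemma: properties vector field}), the function $r(t) := R^2(\phi_t(A))$ satisfies the autonomous ODE $\dot{r} = -(r - 2|f|)(r + 2|f|)$, integrating to
\[
\frac{r(t) - 2|f|}{r(t) + 2|f|} = \frac{R^2(A) - 2|f|}{R^2(A) + 2|f|}\, e^{-4|f|t}.
\]
On any compact set $K \subset \spl \setminus S_0$ we have $|f| \geq c_K > 0$, so $R^2(\phi_t(A)) - 2|f| = O(e^{-4c_K t})$ uniformly on $K$. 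Combined with the formula $|W_A|^2 = \tfrac{1}{8}R^2(R^4 - 4|f|^2)$, this yields $|W_{\phi_t(A)}| = O(e^{-2c_K t})$, so $\int_t^{\infty} |W_{\phi_s(A)}|\, ds$ decays exponentially in $t$; hence $\phi_t(A)$ is uniformly Cauchy on $K$, with limit $r_{\Sk}(A)$ by the pointwise convergence \eqref{infinity now}.

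To upgrade to $C^\infty$-convergence I would differentiate the flow equation in the initial condition. The Jacobian $J_t := d_A\phi_t$ satisfies the linear variational equation $\dot{J}_t = \nabla W(\phi_t(A))\cdot J_t$, whose coefficient converges exponentially to $\nabla W(A_\infty)$, where $A_\infty := r_{\Sk}(A) \in \Sk$. This limiting linearization has kernel $T_{A_\infty}\Sk$ (since $W$ vanishes on the $4$-dimensional submanifold $\Sk^{\mathrm{reg}}$) and, by the linearization $\dot r \approx -4|f|(r - 2|f|)$, acts with strictly negative eigenvalues of order $-4|f|$ on a complementary $2$-dimensional subspace inside $T_{A_\infty}\F$. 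Since $T\Sk + T\F = T\spl$, every tangent direction is either neutral along $\Sk$ or exponentially contracting inside the leaf. A Gronwall-type argument then produces uniform bounds and exponential convergence $J_t \to d r_{\Sk}$ on $K$. The same scheme applied to the higher-order variational equations (which are linear equations with source terms built from polynomials in lower-order derivatives of the flow) controls all derivatives of $\phi_t$ and yields the $C^\infty_{\mathrm{loc}}$ convergence.

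The main obstacle is that the contraction rate $4|f|$ degenerates as one approaches $S_0$, which is precisely why the statement is restricted to $\spl \setminus S_0$: compactness away from $S_0$ forces $|f| \geq c_K > 0$ and supplies the required uniform exponential decay. A secondary subtlety is that components of $J_t$ along the neutral directions $T\Sk$ do not themselves decay; their convergence to the derivative of $r_{\Sk}$ must be extracted from the exponential smallness of $\nabla W(\phi_t(A)) - \nabla W(A_\infty)$, which is where the exponential convergence of $r(t)$ to $2|f|$ feeds back a second time into the argument. I would expect the explicit formulas for $\phi_t$ derived in Subsection \ref{subsection_analysis_flow} to provide a more direct route to the same conclusion, bypassing the spectral analysis of $\nabla W$ in favor of directly differentiating the closed-form solution.
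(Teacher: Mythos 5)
Your $C^0$ argument is correct and pleasantly direct: the Riccati equation $\dot r = 4|f|^2 - r^2$ integrates exactly as you say, the norm $|W_{\phi_t(A)}|^2=\tfrac18 R_t^2(R_t^4-4|f|^2)$ then decays like $e^{-4c_K t}$ uniformly on $K\Subset\spl\setminus S_0$, and integrating gives uniform Cauchy behaviour. This matches the paper's conclusion at the $C^0$ level, though the paper reaches it differently. The paper's proof is a one-paragraph application of two tools it has already built: the derivative estimates of Proposition \ref{lemma:bounds on the flow}, namely $|D^a(A'_t)|\le C\epsilon_t\mu_{2n+\frac12,3n+4}$, plus the integrability criterion from Appendix \ref{subsubsection_limits}. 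On $\{|f|\ge\delta\}$ one has $\epsilon_t\le e^{-2\delta t}$, so the whole $C^\infty$ estimate becomes integrable at once, with no spectral analysis needed.

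Your proposed $C^\infty$ upgrade via variational equations is, as you yourself suspect, where the gap lies. Three points would need real work before it becomes a proof. First, the spectral claim about $\nabla W(A_\infty)$ — kernel equal to $T_{A_\infty}\Sk$, strictly negative spectrum on a complement inside $T\F$ — is asserted from the one scalar linearization $\dot r\approx -4|f|(r-2|f|)$, but the complement is $2$-dimensional and the scalar $R^2$ cannot by itself control both transverse directions; the full $2\times2$ block of $\nabla W$ on the normal bundle of $\Sk^{\mathrm{reg}}$ within $T\F$ would have to be computed. Second, even granting the spectrum, a Gronwall bound gives boundedness of $J_t$, not convergence; the neutral block along $T\Sk$ converges only because of the exponential decay of $\nabla W(\phi_t(A))-\nabla W(A_\infty)$, which you correctly flag as "a secondary subtlety... must be extracted" — but this is precisely the step that is not carried out, and it is the crux. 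Third, "the same scheme applied to the higher-order variational equations" hides an induction in which the source terms are polynomials in all lower-order derivatives; one needs uniform-in-$t$ bounds on those, which is exactly what Proposition \ref{lemma:bounds on the flow} supplies and your sketch does not. In short, your $C^0$ part stands on its own; the $C^\infty$ part is a plausible but unfinished dynamical-systems program, and your closing remark — that differentiating the closed-form solution of Subsection \ref{subsection_analysis_flow} would be the more direct route — is in fact what the paper does.
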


\begin{lemma}\label{well-defined}
For any $\alpha\in \Omega_{0}^{\bullet}(\mathfrak{sl}_2(\C))$, the following limit exists:
   \begin{align}\label{limit h}
    h_{\Sk}(\alpha):=\lim_{t \to \infty}{h_{t}(\alpha)}\in \Omega_{0}^{\bullet-1}(\mathfrak{sl}_2(\C)),
   \end{align}
with respect to the compact-open $C^{\infty}$-topology. Moreover, $h_{\Sk}$ satisfies property $SLB$ for $(0,5,35)$.
\end{lemma}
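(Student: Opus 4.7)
The starting observation is that $W$ is the infinitesimal generator of its own flow, so $i_W\circ \phi_s^* = \phi_s^* \circ i_W$, which allows rewriting
\[
h_t(\alpha) = \int_0^t \phi_s^*(i_W\alpha)\, \dif s.
\]
Since $\alpha$ is flat at $0$ and $\phi_s$ fixes $0$, the integrand is again flat at $0$. The task reduces to showing the integral converges absolutely in the $C^n$-topology on each ball $\overline{B}_r$, with a controlled bound in terms of flat norms of $\alpha$; passing to the limit in $t$ then preserves smoothness and flatness and yields the homotopy relation \eqref{eq: at time t} in the limit.

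The quantitative heart of the argument is a sharp analysis of the flow along fibers of $f$. The ODE from Lemma \ref{lemma: properties vector field}(4) at a point $A$ reads
\[
\frac{\dif}{\dif s}\bigl(R^2\circ \phi_s\bigr)(A) = 4|f(A)|^2 - (R^2\circ \phi_s)^2(A),
\]
and setting $u(s):=R^2(\phi_s(A))-2|f(A)|\ge 0$ gives the Riccati equation $u'=-u(u+4|f|)$. Explicit integration yields exponential decay $u(s)\lesssim e^{-4|f(A)|s}$ when $|f(A)|>0$, and polynomial decay $u(s)\sim 1/s$ on the fiber $S_0$. Combined with $|W_B|^2=\tfrac{1}{8}R^2(B)(R^2(B)-2|f(B)|)(R^2(B)+2|f(B)|)$, this controls $|W_{\phi_s(A)}|$. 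Flatness of order $k$ then gives the pointwise bound
\[
|(i_W\alpha)(\phi_s(A))|\ \le\ C\,\fnorm{\alpha}_{0,k,r}\,|W_{\phi_s(A)}|\,R(\phi_s(A))^k,
\]
integrable in $s$ for each $k\ge 0$; on $S_0$ one needs moderately large $k$ to absorb the slow polynomial decay of $u$.

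For the SLB estimate one differentiates $\phi_s^*(i_W\alpha)$ and applies Fa\`a di Bruno. Each factor $D^b\phi_s(A)$ is controlled through the variational equation of $W$, whose coefficients are cubic polynomials in $A$; Gr\"onwall combined with the Riccati bound produces polynomial-in-$s$ growth of these flow derivatives, together with a polynomial bound in $|A|$. The flat norm $\fnorm{\alpha}_{n,k+5n+35,r}$ then provides enough vanishing at the terminal point $\phi_s(A)$ to absorb both $|W|$ and the combinatorial growth of the chain-rule expansion. Tracking how many powers of $|A|$ each differentiation costs, and how many uniform powers come from passing through the singular locus, produces the constants $b=5$ and $c=35$; the value $a=0$ reflects that $h_t$ is not a differential operator in the $\alpha$-direction.

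The main obstacle is the lack of a uniform exponential rate: on a neighborhood of $S_0$ the coefficient $4|f|$ in the Riccati equation degenerates, so the fast decay is replaced by $1/s$ and the only mechanism that preserves absolute integrability is the flatness of $\alpha$. Handling derivatives of $\phi_s$ uniformly up to $S_0$, while keeping a precise count of the loss of flatness, is the step that forces the use of the flat norms of Definition \ref{definition: property H} and will occupy Subsection \ref{subsection_analysis_flow}; the pointwise limit of Lemma \ref{convergence of the flow} is a byproduct of the same analysis.
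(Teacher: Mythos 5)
The high-level strategy you describe --- rewriting $h'_t(\alpha)=\phi_t^*(i_W\alpha)$, establishing polynomial bounds on derivatives of the flow, and using the flat norms of $\alpha$ to recover absolute integrability --- is essentially the paper's strategy (Propositions \ref{lemma:bounds on the flow}, Lemma \ref{pullback estimates} and the proof of Lemma \ref{well-defined}). The Riccati equation $u'=-u(u+4|f|)$ for $u = R^2\circ\phi_s - 2|f|$ is also correctly extracted and is indeed the formula \eqref{eq:ODE}. However, there are two genuine gaps in the technical claims that carry the argument.

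First, the assertion that ``Gr\"onwall combined with the Riccati bound produces polynomial-in-$s$ growth of these flow derivatives'' is not justified and, taken at face value, is false. The naive Gr\"onwall estimate for the variational equation of $W$ gives $|D\phi_t|\leq\exp(\int_0^t|DW(\phi_s)|\,\dif s)$, and since $|DW(\phi_s)|\sim R_s^2 = 2|f|+u(s)$, the integral is $2|f|\,t + \int_0^t u(s)\,\dif s \approx 2|f|\,t + O(R^2/|f|)$ for $|f|>0$. That is exponential in $t$ and, worse, blows up as $|f|\to 0$, so it is not uniform near $S_0$. The polynomial bound $|D^a(A_t)|\lesssim \mu_{2n+\frac12,3n+2}$ in Proposition \ref{lemma:bounds on the flow} is not a soft ODE fact; it comes from the closed-form solution $\phi_t(A)=g_t(A)^{-1}A\,g_t(A)$ with $g_t(A)=(\mathds{1}+\tfrac{\tanh(|f|t)}{|f|}AA^*)^{1/2}$ of Lemma \ref{le:flow}, whose boundedness (and that of $g_t^{-1}$) is what tames the derivatives. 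Your proposal never identifies this conjugation structure, and without it the Gr\"onwall route does not close.

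Second, the claim that $|(i_W\alpha)(\phi_s(A))|\lesssim |W_{\phi_s(A)}|\,R_s^k$ is ``integrable in $s$ for each $k\ge 0$'' is incorrect. One has $|W_{\phi_s(A)}|\lesssim R_s\sqrt{u(s)}$, and near $S_0$ the Riccati bound gives $u(s)\sim R^2/(1+sR^2)$, so for $k=0$ the integrand decays only like $s^{-1/2}$. What rescues integrability, uniformly across the ball including a neighborhood of $S_0$, is the interplay between the $\epsilon_t$--factor in $A_t'$ and powers of $R_t$, encoded in Lemma \ref{lemma: finite integrals}: $\epsilon_t\,R_t^{2q}\leq C\,R^{2q}/(1+tR^2)^q$. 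This is the mechanism that turns the flatness gain into a $(1+tR^2)^{-2}$ decay (independent of $|f|$) and thereby produces the factor $5$ in the loss $j=5(n+d+1)+k$; your argument sketch never surfaces this lemma and the counting of ``powers of $|A|$'' you allude to is exactly what this lemma packages. In short: right outline, but the two load-bearing ingredients (the explicit conjugation formula for the flow and the uniform inequality $\epsilon_t R_t^{2q}\leq CR^{2q}/(1+tR^2)^q$) are missing, and the steps you put in their place do not work.
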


Recall that the existence of a limit with respect to the compact-open $C^{\infty}$-topology means that all partial derivatives converge uniformly on compact subsets; more details are given in 
Subsection \ref{subsubsection_limits}. 

The results above suffice to complete our proofs:

\begin{proof}[Proofs of Lemma \ref{lemma: infinite time pullback} and Proposition \ref{lemma: homotopies}]
Since the limit \eqref{limit h} is uniform on compact subsets with respect to all $C^k$-topologies, $h_{\Sk}$ satisfies
   \begin{align*}
    \dif h_{\Sk}(\alpha)&=\lim_{t \to \infty}{\dif h_{t}(\alpha)}.
   \end{align*}
From \eqref{eq: at time t}, we obtain that for any $\alpha\in \Omega_0^{\bullet}(\mathfrak{sl}_2(\C))$
\[\lim_{t\to\infty}\phi_t^*(\alpha)=\alpha+\dif h_{\Sk}(\alpha)+h_{\Sk}(\dif \alpha)\]
holds for the compact-open $C^{\infty}$-topology. On the other hand, on $\mathfrak{sl}_2(\C)\backslash S_0$, $\lim_{t\to\infty}\phi_t=r_{\Sk}$, and since this limit is also with respect to the compact-open $C^{\infty}$-topology, we have that 
\[\lim_{t\to\infty}\phi_t^*(\alpha)|_{\mathfrak{sl}_2(\C)\backslash S_0}=r_{\Sk}^*(\alpha)|_{\mathfrak{sl}_2(\C)\backslash S_0}.\]
Therefore, $r_{\Sk}^*(\alpha)|_{\mathfrak{sl}_2(\C)\backslash S_0}$ extends to a smooth form on $\mathfrak{sl}_2(\C)$, which we denote by $p_{\Sk}(\alpha)$. This implies Lemma \ref{lemma: infinite time pullback} and the homotopy equation:
\[p_{\Sk}(\alpha)-\alpha=\dif h_{\Sk}(\alpha)+h_{\Sk}(\dif \alpha).\]
Finally, since $h_t$ commutes with $e_{\varphi}$, and this condition is closed, we have that $h_{\Sk}(\CC^{\bullet})\subset \CC^{\bullet-1}$. Thus, the above relation holds on $\CC^{\bullet}$, and so we obtain also Proposition \ref{lemma: homotopies}.
\end{proof}


\subsection{The analysis of the flow}\label{subsection_analysis_flow}

In this subsection we analyse the flow $\phi_t$ of the vector field $W$ from \eqref{W}. 

\subsubsection{An explicit formula for the flow}\label{subsection:explicit flow}

The result will be expressed in terms of the function: 
\begin{align*}
          & g:\R_{\ge 0}\times \mathfrak{sl}_2(\C) \to  
          \herm(\C) \\
          & g_t(A):=\left(\mathds{1}+\frac{\tanh(\nf t)}{\nf}AA^*\right)^{\frac{1}{2}}
\end{align*}
and $g_t(A)$ is defined for $f(A)=0$ by continuity as $\left(\mathds{1}+t AA^*\right)^{\frac{1}{2}}$. Here we use the square root of a positive definite Hermitian matrix, as in Subsection \ref{subsubsec_retraction}. Note also that $g$ is a smooth function; this follows because $\tanh(u)/u$ is an even function, and therefore it is a smooth function of $u^2$. Finally, note that, for $f(A)\neq 0$, $\lim_{t\to \infty}g_t(A)=g_{\infty}(A)$. 

We give now an explicit formula for the flow and for the evolution along the flow lines of the norm and of the measure of ``a-normality":
 \begin{lemma}\label{le:flow}
    The flow of $W$ starting at $A\in \mathfrak{sl}_2(\C)$ is given by:
    \begin{equation}\label{eq:A_t}
    A_t:=\phi_t(A)= g_t(A)^{-1}A g_t(A),     
    \end{equation}
    for $t\geq 0$, and moreover, $A_t$ satisfies: 
    \begin{align}
        R_t^2&:=\tr(A_tA_t^*)=2\nf\frac{2\nf\tanh(2\nf t)+ R^2}{2\nf+R^2\tanh(2\nf t)}\label{eq:R_t^2}\\
        K_t&:=[A_t,A_t^*]=
        \frac{2\nf}{2\nf\cosh(2\nf t)+R^2\sinh(2\nf t)}[A,A^*]\label{eq: commutator flow}
    \end{align}
 \end{lemma}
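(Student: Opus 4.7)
The plan is to verify (1) by checking that $B_t := g_t(A)^{-1} A\, g_t(A)$ satisfies $\dot B_t = W_{B_t} = \tfrac14[B_t,[B_t,B_t^*]]$ with $B_0 = A$; by uniqueness of ODE solutions this identifies $B_t = \phi_t(A)$, and then formulas (2) and (3) follow from direct computation of $\tr(B_t B_t^*)$ and $[B_t, B_t^*]$.

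The enabling algebraic observation, specific to traceless $2\times 2$ matrices, is the identity
\[AA^* + A^*A = R^2\, \mathds{1}.\]
To prove it, note that $A + A^*$ is Hermitian and traceless, so by Cayley--Hamilton $(A+A^*)^2$ is a scalar matrix; expanding with $A^2 = -f\,\mathds{1}$ and $(A^*)^2 = -\bar f\,\mathds{1}$ from \eqref{eq:invariant polynomials a} and taking traces fixes the scalar as $R^2$. As a corollary $[AA^*, A^*A] = 0$, so $g_t$---being a function of $AA^*$---commutes with both of them, and with the commutator $K_0 := [A, A^*] = 2AA^* - R^2\,\mathds{1}$. Setting the companion $\tilde g_t := (\mathds{1} + u A^*A)^{1/2}$ with $u := \tanh(|f|t)/|f|$, a direct expansion yields
\[g_t^2\, \tilde g_t^2 = D\,\mathds{1}, \qquad \tilde g_t^2 - g_t^2 = -u\, K_0, \qquad D := 1 + uR^2 + u^2|f|^2,\]
from which $g_t^{-2}$ and $\tilde g_t^{-2}$ are explicit polynomials in $K_0$. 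The intertwining identities $g_t A = A\tilde g_t$ and $Ag_t = \tilde g_t A$ (together with their adjoints) then follow by functional calculus from $g_t^2 A = A\tilde g_t^2$ and $Ag_t^2 = \tilde g_t^2 A$.

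Combining the intertwinings with the key cancellation $A \cdot AA^* \cdot A^* = A^2(A^*)^2 = |f|^2\,\mathds{1}$ gives
\[B_t B_t^* = g_t^{-2}(AA^* + u|f|^2\,\mathds{1}), \qquad B_t^* B_t = \tilde g_t^{-2}(A^*A + u|f|^2\,\mathds{1}),\]
so after substituting $AA^* = \tfrac12(R^2\mathds{1} + K_0)$, $A^*A = \tfrac12(R^2\mathds{1} - K_0)$ and collecting terms via the $g_t^{-2}\pm \tilde g_t^{-2}$ identities, one obtains
\[K_t = [B_t, B_t^*] = \frac{1 - u^2|f|^2}{D}\, K_0,\]
which reduces to the coefficient in (3) via the double-angle formulas after multiplying numerator and denominator by $\cosh^2(|f|t)$. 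For (1), differentiating $g_t^2 = \mathds{1} + u AA^*$ gives $g_t^{-1}\dot g_t = \tfrac12\cosh^{-2}(|f|t)\, g_t^{-2} AA^*$; expanding this using the same identities shows that $g_t^{-1}\dot g_t - \tfrac14 K_t$ is a scalar multiple of $\mathds{1}$, and since $\dot B_t = [B_t, g_t^{-1}\dot g_t]$ and scalars commute with $B_t$, we conclude $\dot B_t = \tfrac14[B_t, K_t] = W_{B_t}$. Formula (2) follows either by taking the trace of $B_t B_t^*$ and simplifying, or more cleanly by checking that the claimed $R_t^2$ solves the scalar ODE $\dot R_t^2 = 4|f|^2 - R_t^4$ given by Lemma~\ref{lemma: properties vector field}(4). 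The case $f(A) = 0$ (where $u = t$) is handled by continuous passage to the limit. The main obstacle is the algebraic bookkeeping, which only collapses thanks to the identity $AA^* + A^*A = R^2\mathds{1}$; without this $2\times 2$-specific feature $g_t$ would fail to commute with $A^*A$, the conjugations in $B_t$ would not simplify in closed form, and the explicit flow formula would be inaccessible by this method.
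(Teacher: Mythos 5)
Your proof is correct, and it takes a genuinely different route from the paper's. The paper \emph{derives} the flow: it first shows $R_t^2$ and $S_t := A_tA_t^*$ satisfy the Riccati-type ODE $x' = a^2 - x^2$ (scalar for $R_t^2$, matrix-valued for $S_t$ in the commutative algebra generated by $AA^*$), solves these explicitly, then rewrites $A_t' = \tfrac12[A_t,S_t]$ as the derivative of a conjugation $e^{-X_t}Ae^{X_t}$ with $X_t := \tfrac12\int_0^t S_s\,\dif s$, and finally identifies $e^{X_t}$ with $\sqrt{\cosh(|f|t)}\,g_t(A)$; the $K_t$ formula comes from integrating the linear ODE $K_t' = -R_t^2 K_t$. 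You instead \emph{verify} the claimed closed form: you posit $B_t := g_t^{-1}Ag_t$ and check $\dot B_t = \tfrac14[B_t,[B_t,B_t^*]]$, extracting (2) and (3) directly from $B_tB_t^*$ and $B_t^*B_t$. The enabling ingredient in your argument is the $2\times 2$, traceless-specific identity $AA^* + A^*A = R^2\mathds{1}$, which the paper never invokes; it forces $[AA^*,A^*A]=0$, lets you express everything ($K_0$, $\tilde g_t^2$, $g_t^{-2}$) as linear polynomials in $AA^*$, and makes the intertwinings $g_tA = A\tilde g_t$ propagate cleanly from the squared relations (since $AA^*$ and $A^*A$ are isospectral, the same interpolating polynomial realizes both square roots). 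What the paper's approach buys is a derivation: it explains \emph{why} the conjugating factor is $g_t$ rather than postulating it, at the cost of integrating two matrix ODEs. What yours buys is a shorter, purely algebraic verification that avoids matrix ODE theory, at the cost of pulling the answer from thin air and relying on a dimension-specific identity that would not generalize. Both are complete; your handling of the $f(A)=0$ locus by continuity is also fine, and in fact the formula extends smoothly there since $u = \tanh(|f|t)/|f|$ is a smooth function of $(A,t)$.
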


\begin{proof}
First we prove \eqref{eq:R_t^2}. In the proof of Lemma \ref{lemma: properties vector field} (4), we have seen that $\mathcal{L}_W(R^2)=4|f|^2-R^4$. Therefore, $x_t:=R_t^2$ satisfies the following ODE:
\begin{equation}\label{eq:ODE}
x'_t=a^2-x_t^2,
\end{equation}
where $a=2|f|$. The solution of this ODE starting at $x_0=x$ is:
\begin{equation}\label{sol_ODE}
x_t=\frac{a\tanh(a t)+ x}{1+\tanh(a t)\frac{x}{a}}=\frac{\dif }{\dif t}\ln\Big(\cosh(at)+\sinh(at)\frac{x}{a}\Big).
\end{equation}
The second expression will be used later, and the first yields \eqref{eq:R_t^2}.

To prove equation \eqref{eq:A_t}, we first calculate the positive semi-definite Hermitian matrices $S_t:= A_tA_t^*\ge 0$. Using the definition of $W$ \eqref{W} and the characteristic equations \eqref{eq:invariant polynomials a}, we obtain the following ODE for $S_t$:
\begin{align*}
S'_t&=\frac{1}{4}\big(
[A_t,[A_t,A_t^*]]A_t^*+A_t[[A_t,A_t^*],A_t^*]\big)\\
&=\frac{1}{4}\big(2
(A_t)^2(A_t^*)^2-4(A_tA_t^*)^2+A_t(A_t^*)^2A_t+A_t^*(A_t)^2A_t^*\big)\\
&=\nf^2 \mathds{1}-S_t^2.
\end{align*}
This is an ODE of the same form as \eqref{eq:ODE}, but instead of being in $\R$ it is in the commutative algebra spanned by $\mathds{1}$ and $AA^*$, and one can easily see that the solution is of the same form as \eqref{sol_ODE}:
\[S_t =\frac{ \nf \tanh(\nf t)\mathds{1}+AA^*}{ \mathds{1}+\frac{\tanh(\nf t)}{\nf}AA^*} =\frac{\dif }{\dif t}\ln\Big(\cosh(\nf t)\mathds{1}+\frac{\sinh(\nf t)}{\nf}AA^*\Big).\]
In writing this formulas we use that any smooth function $\xi:\R_{>0}\to \R$ ``extends" to a smooth function on $\xi:\herm(\mathbb{C})\to \mathrm{Herm}(\mathbb{C})$, which follows from ``basic functional calculus''. For a diagonal matrix, the extension acts by applying $\xi$ to each diagonal entry, and in general, one uses that the extension is invariant under conjugation by unitary matrices.

Next, we note that the defining ODE for $A_t$ can be written in using $S_t$:
\begin{align*}
A'_t&=\frac{1}{4}[A_t,[A_t,A_t^*]]\\
&=\frac{1}{4}\big(
(A_t)^2A_t^*-2A_tA_t^*A_t+A_t^*(A_t)^2\big)\\
&=\frac{1}{2}[A_t,S_t], 
\end{align*}
where we used the characteristic equation \eqref{eq:invariant polynomials a}. Now, define 
 \[X_t:=\frac{1}{2}\int_0^t{S_s \dif s}=\frac{1}{2}\ln\Big(\cosh(\nf t)\mathds{1}+\frac{\sinh(\nf t)}{\nf}AA^*\Big).\]
The matrices $X_t ,X'_t $ belong to the commutative subalgebra of $\mathfrak{gl}_2(\C)$ generated by $AA^*$, and therefore they commute, which implies that: 
\[\frac{\dif}{\dif t}e^{X_t}=X'_t e^{X_t}=e^{X_t}X'_t.\]
Therefore: 
\[\frac{\dif}{\dif t}e^{X_t}A_te^{-X_t}=e^{X_t}([X'_t,A_t]+A'_t)e^{-X_t}=e^{X_t}(\frac{1}{2}[S_t,A_t]+A'_t)e^{-X_t}=0,\]
and so we obtain a closed formula:
\[A_t=e^{-X_t}Ae^{X_t}.\]
The formula \eqref{eq:A_t} from the statement follows because: 
\[e^{X_t}=\Big(\cosh(\nf t)\mathds{1}+\frac{\sinh(\nf t)}{\nf}AA^*\Big)^{\frac{1}{2}}=
\sqrt{\cosh(\nf t)} g_t(A).
\]

Finally, in order to prove \eqref{eq: commutator flow}, we calculate the derivative of the commutator $K_t$ using the definition of $W$ \eqref{W}:
\begin{align*}
K'_t&=\frac{1}{4}\big(
[[A_t,[A_t,A_t^*]],A_t^*]+[A_t,[[A_t,A_t^*],A_t^*]]\big)\\
&=\frac{1}{2}\big(
(A_t)^2(A_t^*)^2-2(A_tA_t^*)^2+2(A_t^*A_t)^2-(A_t^*A_t)^2\big)\\
&=-R_t^2K_t,
\end{align*}
where we have used the characteristic equations \eqref{eq:invariant polynomials a} and \eqref{eq:invariant polynomials aah}. This ODE has as solution: 
\[K_t=e^{-\int_{0}^tR^2_s\dif s}K_0.\]
Using the second form of \eqref{sol_ODE}, this expression becomes \eqref{eq: commutator flow}.
\end{proof}

\subsubsection{Estimates for the flow and its derivative}

To write the estimates, it will be convenient to introduce the following smooth function:
\begin{equation}\label{varsigma}
\epsilon_t:=\frac{1}{\cosh(2|f|t)+\frac{\sinh(2\nf t)}{2\nf}R^2},  
\end{equation}
which appeared in the formula \eqref{eq: commutator flow}, i.e.\ $K_t=\epsilon_t [A,A^*]$.

The following notation will be very useful. For $u\in \frac{1}{2}\N_{0}$ and $v\in \N_{0}$, let \[\mu_{u,v}:=\sum_{j=0}^{p}t^{u-\frac{j}{2}}R^{v-j}, \ \ p:=\mathrm{min}(2u,v).\]

The following inequalities is immediate: 
\begin{equation}\label{submult}
\mu_{u,v}\, \mu_{u',v'}\leq C \mu_{u+u', v+v'}
\end{equation}
for some constants $C=C(u,u',v,v')$. In particular:
\begin{equation*}
R^{v'}\mu_{u,v}=\mu_{0,v'}\mu_{u,v}\leq \mu_{u, v+v'}.
\end{equation*}
Also, for $w\in \frac{1}{2}\N_0$, note that
\begin{equation}\label{submult1}
 \mu_{u,v}\leq \mu_{u+w, v+2w}.
\end{equation}
We will use these inequalities later on.

In this section, for convenience, we rename the coordinates $(x_i,y_i)$ on $\spl$ from Subsection \ref{section: formal poisson} as $(x_i,x_{i+3})$. Then we obtain partial derivative operators $D^a$, for $a\in \N_0^6$, acting on functions in $C^{\infty}(\spl)$ (see Appendix \ref{subsection: derivatives}). 

As before, we denote $A_t:=\phi_t(A)$. 

\begin{proposition}\label{lemma:bounds on the flow}
For any $a\in \N_0^6$ with $n= \lvert a\rvert$ there is $C=C(a)$ such that:
 \begin{align*}
 \lvert D^a(A_t)\rvert\, &\le\,
 C\,\mu_{2n+\frac{1}{2},3n+2},\\
 \lvert D^a (A'_{t})\rvert\, &\le\, C\, \epsilon_t\, \mu_{2n+\frac{1}{2},3n+4}.
\end{align*}
\end{proposition}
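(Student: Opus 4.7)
The plan is to prove both bounds simultaneously by induction on $n = |a|$, combining the flow ODE $\partial_t A_t = W(A_t)$ with the explicit formulas provided by Lemma~\ref{le:flow}.

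For the base case $n = 0$: the bound $|A_t| \le R \le \mu_{1/2, 2}$ is immediate from Lemma~\ref{lemma: properties vector field}(4), which gives $R_t^2 \le R^2$. The bound on $|A'_t|$ follows from $A'_t = W(A_t) = \tfrac{1}{4}[A_t, K_t]$ together with the identity $K_t = \epsilon_t [A, A^*]$ from \eqref{eq: commutator flow}, yielding $|A'_t| \le \tfrac{1}{2} R \cdot \epsilon_t |[A, A^*]| \le 2\epsilon_t R^3 \le \epsilon_t \mu_{1/2, 4}$.

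For the inductive step with $|a| = n \ge 1$, apply $D^a$ to the ODE. Since $W$ is a polynomial of degree $3$ in $(A, A^*)$, Fa\`{a} di Bruno yields
\[\partial_t(D^a A_t) \;=\; DW|_{A_t}(D^a A_t) \;+\; F_a(t),\]
where $F_a(t)$ is a finite sum of products of $(D^k W)|_{A_t}$ (for $2 \le k \le 3$, with $|(D^k W)|_A| \le C R^{3-k}$) and lower-order spatial derivatives $D^{a_j} A_t$ with $|a_j| < n$ and $\sum |a_j| = n$. The inductive hypothesis together with submultiplicativity \eqref{submult} bounds $|F_a(s)|$ by a finite sum of terms of the form $\mu_{u,v}(s, R)$. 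A key analytic ingredient is the identity
\[\int_0^t R_s^2 \,ds \;=\; -\ln\epsilon_t,\]
which follows from the second expression in \eqref{sol_ODE} and the definition of $\epsilon_t$ in \eqref{varsigma}. This identity controls the propagator of the linearization $DW|_{A_t}$ (whose operator norm is of order $R_t^2$), and variation of constants, combined with integral estimates of $\epsilon_s$-weighted polynomials, yields the desired bound on $|D^a A_t|$. The bound on $|D^a A'_t|$ then follows by a second application of Fa\`{a} di Bruno to $D^a(W(A_t))$ using the just-established bound on $D^b A_t$ for $|b| \le n$; the extra two powers of $R$ in $\mu_{2n+1/2, 3n+4}$ together with the factor $\epsilon_t$ both come from the factor $K_t$ sitting inside $W(A_t) = \tfrac{1}{4}[A_t, K_t]$.

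\textbf{Main obstacle.} The delicate part is the polynomial bookkeeping. A naive Gronwall argument produces non-polynomial factors $\epsilon_t^{-C}$ which must be shown to cancel against compensating weights $\epsilon_s^C$ in the forcing, via integral estimates of the form
\[\int_0^t \epsilon_s^{C}\, \mu_{u, v}(s, R)\, ds \;\lesssim\; \mu_{u+1, v}(t, R).\]
These require a case analysis on the size of $|f|$: for $|f|s \gtrsim 1$ the weight $\epsilon_s$ decays exponentially in $|f|s$, while for smaller $|f|$ it decays only like $1/(1 + R^2 s)$. Applied to each $\mu_{u,v}$-term arising from Fa\`{a} di Bruno, these estimates must conspire to produce exactly the claimed exponents $(2n+1/2, 3n+2)$ and $(2n+1/2, 3n+4)$, with the gain of $t^2 R^3$ per inductive step traced back to one integration in $s$ paired with the cubic polynomial $W$ of degree three.
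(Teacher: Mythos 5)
Your approach departs substantially from the paper's: the paper never runs an induction on $|a|$, never invokes Duhamel or Gronwall, and never analyzes the propagator of the linearized flow. Instead, it works directly with the closed-form solution $A_t = g_t(A)^{-1} A\, g_t(A)$ from Lemma~\ref{le:flow}, and reduces the bound on $D^a(A_t)$ to a chain of Leibniz/chain-rule estimates on $D^a(g_t^{\pm 1})$ (Lemmas \ref{lemma:gt norm}--\ref{corollary:g deriv estimate}), which in turn rest on the derivative bounds for $\theta_1(x^2)=\tanh(x)/x$ and its companions in Lemma~\ref{lemma:estimate theta}. The second inequality is then obtained from $A_t' = \tfrac14[A_t, K_t]$ with $K_t = \epsilon_t[A,A^*]$ and a parallel estimate for $D^a(K_t)$ (Lemma~\ref{lemma:bounds on the commutator}); no Duhamel integral appears.

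There is a genuine gap in your propagator argument, and I do not see how to close it. In the Duhamel representation $D^a A_t = \Phi_{t,0}(D^a A) + \int_0^t \Phi_{t,s}\,F_a(s)\,ds$, a Gronwall bound of the form $\lVert\Phi_{t,s}\rVert\le \exp\bigl(C\int_s^t R_\sigma^2\,d\sigma\bigr) = (\epsilon_s/\epsilon_t)^C$ is all that the operator-norm estimate $\lVert DW|_{A_\sigma}\rVert \lesssim R_\sigma^2$ supplies, and indeed one can check $C>1$ (e.g.\ $DW|_A(A) = 3W_A$ gives $\lVert DW|_A\rVert \ge \tfrac{3}{2\sqrt2}R^2$ at $|f|=0$). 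Then the homogeneous term $\Phi_{t,0}(D^a A)$ alone is bounded by $\epsilon_t^{-C}$, which for $|f|>0$ grows like $e^{2C|f|t}$, exponentially in $t$; but the target $\mu_{2n+\frac12,3n+2}$ is merely polynomial in $t$. Crucially, this homogeneous term has no $\epsilon_s$-weighted forcing to cancel against: $F_a(s)$ is controlled by the inductive hypothesis as a sum of $\mu_{u,v}(s,R)$ terms with \emph{no} $\epsilon_s$-factors, so your identity $\int_0^t \epsilon_s^C\mu_{u,v}\,ds \lesssim \mu_{u+1,v}$ cannot neutralize the prefactor $\epsilon_t^{-C}$ once it is pulled out of the Duhamel integral. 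What is actually true---and what the paper establishes via the explicit formula---is that $D\phi_t|_A$ is \emph{polynomially} bounded (already $\lVert D\phi_t|_A\rVert \lesssim \mu_{5/2,5}$ at $n=1$), because the $\tanh$-saturation inside $g_t$ caps the growth; but this is a structural cancellation in $DW$ that an operator-norm Gronwall estimate cannot see. To make your induction go through you would need an a priori polynomial bound on $\Phi_{t,s}$, which in effect reproduces the $n=1$ case of the result you are trying to prove and appears to require the explicit solution anyway. Given that Lemma~\ref{le:flow} already hands you $A_t$ in closed form, the paper's route of differentiating that formula directly is both shorter and avoids the propagator issue altogether.
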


\subsubsection{Proof of Proposition \ref{lemma:bounds on the flow}}

The proof is split into several steps, which occupy the rest of this section. First, we introduce some auxiliary quantities.




Any smooth even function $a(x)$ on $\R$ can be written as $a(x)=b(x^2)$, for a smooth function $b$ on $\R_{\geq 0}$. Using this fact, we introduce 3 functions $\theta_1,\theta_2$ and $\theta_3$ on $\R_{\geq 0}$ defined by:
\begin{equation}\label{theta_tanh}
\theta_1(x^2)=\frac{\tanh(x)}{x},\qquad
\theta_2(x^2)=\cosh(x),\qquad 
\theta_3(x^2)=\frac{\sinh(x)}{x}.  
\end{equation}

The derivatives of these functions satisfy: 
\begin{lemma}\label{lemma:estimate theta}
For $n\geq 0$, there is $C=C(n)$, such that for all $x\geq 0$:
\begin{align*}
|\theta_1^{(n)}(x^2)|&\leq C\frac{1}{(1+x)^{2n+1}},\\
|\theta_2^{(n)}(x^2)|&\leq C\frac{\cosh(x)}{(1+x)^{n}},\\
|\theta_3^{(n)}(x^2)|&\leq C\frac{\cosh(x)}{(1+x)^{n+1}}.
\end{align*}
\end{lemma}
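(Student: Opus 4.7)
The plan is to introduce the operator $L:=\tfrac{1}{2x}\tfrac{d}{dx}$ and to work with the functions $H_1(x):=\tanh(x)/x$, $H_2(x):=\cosh(x)$, $H_3(x):=\sinh(x)/x$, so that $\theta_i(x^2)=H_i(x)$. Differentiating this identity and inducting on $n$ yields
$$\theta_i^{(n)}(x^2)=L^n H_i(x),$$
and a second straightforward induction on $n$ produces the closed formula
$$L^n H(x)=\sum_{k=n}^{2n-1}\frac{c_{n,k}}{x^k}\,H^{(2n-k)}(x)\qquad (n\geq 1),$$
with universal coefficients $c_{n,k}\in\R$. Because each $\theta_i$ extends smoothly across $y=0$, the function $\theta_i^{(n)}$ is bounded on $[0,1]$, so all three estimates are trivial for $x\in[0,1]$. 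It thus suffices to establish them for $x\geq 1$, where the formula above will be applied termwise.

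For $\theta_2$, every derivative of $H_2=\cosh$ is pointwise bounded by $\cosh(x)$, and using $x^{-k}\leq x^{-n}$ whenever $k\geq n\geq 1$ and $x\geq 1$ gives $|\theta_2^{(n)}(x^2)|\leq C\cosh(x)/x^n$, as required. A direct computation shows $\theta_3(y)=2\theta_2'(y)$, so the estimate for $\theta_3$ follows from that for $\theta_2$ with $n$ replaced by $n+1$.

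The main obstacle is $\theta_1$, because the naive route through the identity $\theta_3=\theta_1\theta_2$ and Leibniz delivers only a loss of $(1+x)^{n+1}$ instead of the required $(1+x)^{2n+1}$. I would instead exploit the decomposition, valid for $x>0$:
$$H_1(x)=\frac{\tanh(x)}{x}=\frac{1}{x}-\frac{2}{x(e^{2x}+1)}=:\frac{1}{x}+r_1(x).$$
An elementary induction produces the closed form $L^n(1/x)=(-1)^n(2n-1)!!\,2^{-n}\,x^{-(2n+1)}$, which is precisely the desired main term. On $[1,\infty)$, both $r_1$ and all of its derivatives are $O(e^{-2x})$, so the formula above gives $|L^n r_1(x)|\leq C\,e^{-2x}/x^n$, which is absorbed into $C'/x^{2n+1}$ for $x\geq 1$. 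Summing the two contributions yields the required bound on $\theta_1^{(n)}(x^2)$ and completes the proof.
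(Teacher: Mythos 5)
Your argument is correct, and it takes a genuinely cleaner route than the paper's for the crucial $\theta_1$ estimate. The paper bounds $x^{2n+1}\theta_1^{(n)}(x^2)$ by an induction on $n$: differentiating $\tanh(x)=x\,\theta_1(x^2)$ $n$ times and multiplying by $x^n$ yields an implicit recursion whose non-inductive term is $x^n\tanh^{(n)}(x)$, and the latter is bounded by writing $\tanh^{(n)}(x)=\tanh'(x)P_n(\tanh(x))$ and using that $x^n\tanh'(x)=x^n\operatorname{sech}^2(x)$ decays. You instead prove once and for all the explicit formula $L^nH=\sum_{k=n}^{2n-1}c_{n,k}x^{-k}H^{(2n-k)}$ for $L=\frac{1}{2x}\frac{d}{dx}$, and then split $H_1=\tanh(x)/x=1/x+r_1(x)$ with $r_1(x)=-2/(x(e^{2x}+1))$. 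The identity $L^n(1/x)=(-1)^n(2n-1)!!\,2^{-n}x^{-(2n+1)}$ produces the $x^{-(2n+1)}$ rate exactly, and since $r_1$ and its derivatives are $O(e^{-2x})$ on $[1,\infty)$, the remainder is subdominant. The two approaches share the same essential mechanism (the exponential decay of $\operatorname{sech}^2$ and of $r_1$ are the same phenomenon), but yours makes the source of the $(1+x)^{-(2n+1)}$ decay manifest from the $1/x$ part, while the paper's recursion leaves the precise exponent to emerge from the induction. Your treatment of $\theta_2$ and $\theta_3$ also simplifies the paper's: rather than running a separate induction for $\theta_2$ and $\theta_3$, you read the $\theta_2$ bound off the closed $L^n$ formula and dispatch $\theta_3$ via $\theta_3=2\theta_2'$, which is a nice shortcut the paper does not use.

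One small bookkeeping point: the closed formula $L^nH=\sum_{k=n}^{2n-1}c_{n,k}x^{-k}H^{(2n-k)}$ applies only for $n\geq 1$, as you note, so the $n=0$ cases of all three estimates should be checked directly ($|\tanh(x)/x|\leq 1$ for $x\leq 1$ and $\leq 1/x$ for $x\geq 1$, etc.); this is immediate but worth stating. Also, when you say ``both $r_1$ and all of its derivatives are $O(e^{-2x})$,'' it is cleanest to justify this via $r_1(x)=-2e^{-2x}\cdot u(x)$ with $u(x)=1/(x(1+e^{-2x}))$ smooth with all derivatives bounded on $[1,\infty)$, since the alternative of just saying $r_1^{(m)}$ is bounded would give only $|L^nr_1|\leq C/x^n$, which is not enough.
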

\begin{proof}
It suffices to show the first inequality for $x\geq 1$, and this is equivalent to showing that the following function is bounded:  
\[|x^{2n+1}\theta_1^{(n)}(x^2)|.\]
We prove this by induction. For $n=0$, this follows using $|\tanh(x)|\leq 1$. 
Taking $n\geq 1$ derivatives in the defining equation of $\theta_1$ from \eqref{theta_tanh}, and multiplying by $x^n$, one obtains an expression of the form:
\[x^{2n+1}\theta_1^{(n)}(x^2)=\sum_{1\leq i\leq \frac{n+1}{2}}c_{n,i}x^{2(n-i)+1}\theta_1^{(n-i)}(x^2)+\frac{x^n}{2^n}\tanh^{(n)}(x).\]
By induction, the elements of the sum are bounded. Next, using that \[\tanh'(x)=1-\tanh(x)^2 \quad \textrm{and} \quad \tanh^{(2)}(x)=-2\tanh'(x)\tanh(x),\] we can write $\tanh^{(n)}(x)=\tanh'(x)P_n(\tanh(x))$, where $P_n$ is a polynomial. Clearly, $P_n(\tanh(x))$ and also $x^n\tanh'(x)=\frac{x^n}{\cosh{(x)}}$ are bounded functions, which concludes the proof of the first inequality.

The second inequality is easier: taking $n$ derivatives in the defining equation of $\theta_2$ from \eqref{theta_tanh}, we obtain
\[x^{n}\theta_2^{(n)}(x^2)=\sum_{n/2\leq i<n}c_{n,i}x^{2i-n}\theta_2^{(i)}(x^2)+\frac{1}{2^n}\cosh^{(n)}(x).\]
Using induction, the terms on the right are bounded by $\cosh(x)$, which is enough to conclude the proof of the second inequality. 

The third follows in the same way as the second.
\end{proof}

First, we estimate the derivatives of $\nf^2$:
\begin{lemma}\label{lemma:norm f estimate}
There exists $0<C$ such that for any $a\in \N_0^6$ with $n=\lvert a \rvert$:
\[ \lvert D^a (\nf^2)\rvert  \le C
    \begin{cases}
     \nf\cdot R &\text{ if } n= 1\\
     R^{4-n}&\text{ if }  2\le n \le 4\\
    0 &\text{ else.}
    \end{cases}\]
\end{lemma}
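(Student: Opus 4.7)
The plan is to exploit that $\nf^2=f_1^2+f_2^2$ is a real homogeneous polynomial of degree $4$ in the coordinates $x_1,\ldots,x_6$ (the renamed $x_j,y_j$ from Subsection \ref{section: formal poisson}), and then split the estimate according to the order of differentiation.

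First I would observe that each $f_j$ is a homogeneous polynomial of degree $2$, so for any multi-index $b$ one has $|D^b f_j|\le C_b\,R^{\max(2-|b|,0)}$ by the standard pointwise bound for homogeneous polynomials. This already kills the ``else'' branch: since $\nf^2$ has total degree $4$, all its partial derivatives of order $n\ge 5$ vanish identically.

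For the middle range $2\le n\le 4$ I would apply the Leibniz rule to $D^a(f_j^2)=\sum_{b+c=a}\binom{a}{b}D^bf_j\,D^cf_j$ and then sum over $j=1,2$. Each summand is bounded by $C\,R^{\max(2-|b|,0)+\max(2-|c|,0)}$, and since $|b|+|c|=n\ge 2$ the product of exponents adds up to at most $4-n$ in every term, giving the required bound $C\,R^{4-n}$.

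The only genuinely delicate case is $n=1$, where we need the sharper bound $\nf\cdot R$ rather than the naive $R^3$ that homogeneity alone would give. For $|a|=1$, write $D^a(\nf^2)=2f_1\,D^af_1+2f_2\,D^af_2$; each $D^af_j$ is linear in the coordinates, hence $|D^af_j|\le CR$. Combining this with the pointwise estimate $|f_j|\le\sqrt{f_1^2+f_2^2}=\nf$ yields $|D^a(\nf^2)|\le 4C\,\nf\,R$, as claimed. This is really the point of the lemma: the improvement from $R^3$ to $\nf\cdot R$ reflects the fact that $\nf$ can be much smaller than $R^2$ away from $\Sk$ (cf.\ \eqref{eq:norm-fiber inequality}), and it is this gain that will later produce the correct decay of $R_t^2$ and $\epsilon_t$ along the flow. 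I expect this $n=1$ case to be the main (and only) step requiring attention; the remaining cases are straightforward degree counting.
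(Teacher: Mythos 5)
Your proof is correct and follows essentially the same route as the paper: the cases $n\neq 1$ reduce to the homogeneity of $\nf^2$, and the $n=1$ case comes from the Leibniz rule on a factorization of $\nf^2$ together with $|f_j|\le\nf$ (the paper uses $f\overline f$ where you use $f_1^2+f_2^2$, which is an immaterial difference).
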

\begin{proof}
The cases $n\neq 1$ follow because $f$ is a homogeneous polynomial of degree 2. The case $n=1$, follows by applying the Leibniz rule in the decomposition $|f|^2=f\overline{f}$.
\end{proof}

Next, we estimate the size of $g_t$:

\begin{lemma}\label{lemma:gt norm}
There is a constant $C$ such that for any $A\in \spl$, we have: 
\[ \lvert g_t(A)\rvert\leq C\mu_{\frac{1}{2},1} \quad \textrm{and}\quad \lvert g_t^{-1}(A)\rvert \leq C.\]
\end{lemma}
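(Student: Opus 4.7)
The plan is to compute both quantities directly from the defining relation $g_t(A)^2 = \mathds{1} + \frac{\tanh(|f|t)}{|f|}AA^*$, using only the facts that $g_t(A)$ is positive definite Hermitian and that $\tanh(u)/u$ is a bounded function on $[0,\infty)$.

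For the bound on $|g_t(A)|$, I would observe that since $g_t(A)$ is Hermitian, the Frobenius norm satisfies
\[
|g_t(A)|^2 = \tr(g_t(A)\, g_t(A)^*) = \tr(g_t(A)^2) = 2 + \frac{\tanh(|f|\, t)}{|f|}\tr(AA^*) = 2 + \frac{\tanh(|f|\, t)}{|f|}\, R^2.
\]
Using the elementary inequality $\tanh(u) \leq u$ for $u \geq 0$, we obtain $\frac{\tanh(|f|\, t)}{|f|} \leq t$, and hence $|g_t(A)|^2 \leq 2 + t R^2$. Taking square roots yields $|g_t(A)| \leq \sqrt{2} + \sqrt{t}\, R$, which up to a universal constant is precisely $\mu_{\tfrac12,1} = \sqrt{t}\, R + 1$.

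For the bound on $|g_t(A)^{-1}|$, the key observation is that $AA^* \geq 0$ is positive semi-definite, and so
\[
g_t(A)^2 = \mathds{1} + \frac{\tanh(|f|\, t)}{|f|}\, AA^* \geq \mathds{1}
\]
in the sense of Hermitian matrices. Consequently every eigenvalue of $g_t(A)^2$ is $\geq 1$, so every eigenvalue of $g_t(A)^{-2}$ lies in $(0,1]$, and therefore
\[
|g_t(A)^{-1}|^2 = \tr(g_t(A)^{-2}) \leq 2,
\]
giving $|g_t(A)^{-1}| \leq \sqrt{2}$.

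No serious obstacle is anticipated: the lemma follows from a direct trace computation in which the only non-trivial input is $\tanh(u) \leq u$. One minor point to address in the write-up is that $g_t(A)$ is defined on all of $\spl$ (including the locus $f = 0$) by smooth extension, so both estimates extend by continuity from $\spl \setminus S_0$ to the whole space; alternatively, at $f = 0$ one reads off $g_t(A)^2 = \mathds{1} + t AA^*$ directly and the same arguments apply verbatim.
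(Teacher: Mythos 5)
Your proof is correct and follows the same basic strategy as the paper (bound $\tr(g_t(A)^{\pm2})$), but with simpler inputs at two spots. For the upper bound on $|g_t(A)|$, the paper invokes the earlier estimate $\theta_1(u^2)=\tanh(u)/u\le C/(1+u)$ from Lemma \ref{lemma:estimate theta}, arriving at $|g_t(A)|^2\le C\bigl(1+\tfrac{tR^2}{1+t|f|}\bigr)$, whereas you use the elementary $\tanh(u)\le u$ directly to get $|g_t(A)|^2\le 2+tR^2$; both land at $\mu_{\frac12,1}$ up to constants. For the bound on $|g_t^{-1}(A)|$, the paper computes $\tr(g_t(A)^{-2})$ exactly in terms of the eigenvalues $\lambda_\pm$ of $AA^*$ and manipulates the resulting rational expression to see it is $\le 2$; your observation that $AA^*\ge 0$ forces $g_t(A)^2\ge\mathds{1}$, hence $g_t(A)^{-2}\le\mathds{1}$ and $\tr(g_t(A)^{-2})\le 2$, is cleaner and avoids the eigenvalue bookkeeping. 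The trade-off is negligible here: the paper's route stays within the $\theta_i$-estimate toolkit it has already set up (and will reuse heavily for the derivative bounds), while yours is self-contained and marginally shorter. Either is a valid proof of the lemma.
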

\begin{proof}
By the characteristic equation \eqref{eq:invariant polynomials aah} the eigenvalues of $AA^*$ satisfy:
\[\lambda_{-}+\lambda_{+}=R^2,\quad \lambda_{-}\lambda_{+}=|f|^2.\]
The eigenvalues of $g_t(A)^2$ are 
\[1+\frac{\tanh(\nf t)}{\nf}\lambda_{\pm},\]
therefore, using also Lemma \ref{lemma:estimate theta}, we obtain:
\begin{align*}
|g_t(A)|&=\sqrt{\tr(g_t(A)^2)}=\Big(2+\frac{\tanh(\nf t)}{\nf}R^2\Big)^{\frac{1}{2}}\\
&\leq  C\Big(1+\frac{tR^2}{1+t\nf}\Big)^{\frac{1}{2}}\leq C\sqrt{1+tR^2}.
\end{align*}
The first inequality follows now from the following:
\[\sqrt{1+tR^2}\leq 1+t^{\frac{1}{2}}R=\mu_{\frac{1}{2},1}.\]
Similarly, we have that: 
\begin{align*}
\tr(g_t(A)^{-2})&=\frac{1}{1+\frac{\tanh(\nf t)}{\nf}\lambda_{-}}+\frac{1}{1+\frac{\tanh(\nf t)}{\nf}\lambda_+}\\
&=\frac{2+\frac{\tanh(\nf t)}{\nf}R^2}{1+\frac{\tanh(\nf t)}{\nf}R^2+\tanh(\nf t)^2}\\
&\leq 2
\end{align*}
which implies the second inequality.
\end{proof}

We calculate the size of the derivatives of $g_t^2$:
\begin{lemma}\label{lemma:gt estimate}
For any $a\in \N_0^6$ with $0<n=\lvert a\rvert$ there is a constant $C=C(a)$ such that:
\[\lvert D^a(g_t^2(A))\rvert \le C \mu_{n+1,n+2}.\]
\end{lemma}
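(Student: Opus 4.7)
The plan is to exploit the explicit expression
\[
g_t^2(A)=\mathds{1}+t\,\theta_1(t^2\nf^2)\,AA^*,
\]
coming from \eqref{theta_tanh}, and reduce the bound to differentiating a composition of known functions. Since $n=|a|\geq 1$ the identity drops out and the Leibniz rule yields
\[
D^a(g_t^2)=t\sum_{b+c=a}\binom{a}{b}\,D^b\!\bigl[\theta_1(t^2\nf^2)\bigr]\,D^c(AA^*).
\]
The entries of $AA^*$ are homogeneous quadratic polynomials in the real coordinates, so $D^c(AA^*)=0$ once $|c|\geq 3$, while $|D^c(AA^*)|\leq CR^{2-|c|}$ for $|c|\in\{0,1,2\}$. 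The purely $b=0$ contribution is already in good shape: it forces $|c|=n\leq 2$, is bounded by $CtR^{2-n}$, and matches the summand of $\mu_{n+1,n+2}$ corresponding to $j=2n$ (for $n\geq 3$ this term vanishes).

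For $|b|\geq 1$ I would apply Fa\`a di Bruno to expand $D^b[\theta_1(t^2\nf^2)]$ as a sum over partitions $\pi$ of the multi-index $b$ into $k$ nonempty blocks, producing
\[
t^{2k}\theta_1^{(k)}(t^2\nf^2)\,\prod_{B\in\pi}D^B(\nf^2).
\]
Lemma \ref{lemma:estimate theta} gives $|t^{2k}\theta_1^{(k)}(t^2\nf^2)|\leq C\,t^{2k}(1+t\nf)^{-(2k+1)}$, and Lemma \ref{lemma:norm f estimate} tells us that each $D^B(\nf^2)$ vanishes for $|B|\geq 5$, is bounded by $C\nf R$ for $|B|=1$, and by $CR^{4-|B|}$ for $2\leq |B|\leq 4$. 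Writing $k_1$ for the number of size-$1$ blocks in $\pi$, the product over $\pi$ is therefore bounded by $C\nf^{k_1}R^{4k-2k_1-|b|}$ (using $\sum_{B\in\pi}|B|=|b|$). The key analytic input is the universal estimate $(t\nf)^{s}(1+t\nf)^{-(2k+1)}\leq C$ valid for any $s\in[0,2k+1]$, which I apply with $s=k_1\leq k$; combining everything and the external $t$, the full contribution is bounded by $C\,t^{2k-k_1+1}R^{4k-2k_1-n+2}$.

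Finally, I would identify this as a summand of $\mu_{n+1,n+2}=\sum_{j=0}^{n+2}t^{n+1-j/2}R^{n+2-j}$: the choice $j:=2n-4k+2k_1$ produces exactly the required exponents. To check $j\in[0,n+2]$, note that a nonvanishing partition satisfies $|B|\in\{1,\dots,4\}$ for every block, i.e.\ $2(k-k_1)\leq |b|-k_1\leq 4(k-k_1)$, equivalently $2k-k_1\leq |b|$ and $4k-3k_1\geq |b|$. The first inequality combined with $|b|\leq n$ gives $j\geq 2n-2(2k-k_1)\geq 0$, and since the Leibniz term only contributes when $|c|\leq 2$, we have $|b|\geq n-2$, hence $4k-2k_1\geq |b|+k_1\geq n-2$, giving $j\leq n+2$.

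The main obstacle is the bookkeeping in this last matching step: one has to simultaneously exploit the flexibility offered by the exponent $s\in[0,2k+1]$ in the estimate for $\theta_1^{(k)}$ and the rigid constraints imposed by admissible Fa\`a di Bruno partitions in order to land every term strictly inside the polytope of $(t,R)$-monomials constituting $\mu_{n+1,n+2}$. Once the parametrization $j=2n-4k+2k_1$ is observed, the computation collapses to the two elementary inequalities above, but identifying this correspondence in the first place is the conceptual heart of the argument.
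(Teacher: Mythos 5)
Your proof is correct and follows essentially the same route as the paper: write $g_t^2=\mathds{1}+t\,\theta_1(t^2|f|^2)AA^*$, apply Leibniz and the multivariable chain rule (Fa\`a di Bruno), feed in the bounds of Lemma \ref{lemma:estimate theta} and Lemma \ref{lemma:norm f estimate}, and absorb the $|f|^{k_1}$ factor using $(t|f|)^{k_1}(1+t|f|)^{-(2k+1)}\le 1$. The only organizational difference is that the paper first isolates the intermediate estimate $|D^b(t\theta_1(t^2|f|^2))|\le C\mu_{|b|+1,|b|}$ (via the reparametrization $l=2k-k_1$) and then invokes the submultiplicativity \eqref{submult} of the $\mu$-polynomials in the Leibniz step, whereas you track the Leibniz and Fa\`a di Bruno sums in a single pass with the combined parameter $j=2n-4k+2k_1$; the underlying estimates and constraints are identical.
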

\begin{proof}
Using the general chain rule (see Appendix \ref{subsection: derivatives}) and Lemma \ref{lemma:estimate theta}, we first calculate:
\[|D^a(t\theta_1(t^2|f|^2))|\leq C\sum_{k=1}^n\Big(\frac{t}{1+t|f|}\Big)^{2k+1}{\sum}' \prod_{j=1}^{k} D^{a^{j}}(\nf^2),\]
where $\sum '$ sums over all non-trivial $a^j$ with $a=\sum_{j=1}^{k}{a^j}$. Clearly, only the terms with $1\le \vert a^j\rvert \le 4$ are non-zero. Denote by $k_p$ the number of $a^j$'s with $\vert a^j\rvert=p$. Then
using Lemma \ref{lemma:norm f estimate}, we obtain:
\[ |\prod_{j=1}^{k} D^{a^{j}}(\nf^2)|\le C |f|^{k_1}R^{k_1}R^{4k_2-2k_2}R^{4k_3-3k_3}R^{4k_4-4k_4}=C|f|^{k_1}R^{4k-n-2k_1},\]
where we have also used that $k=\sum k_p$ and $n=\sum p k_p$. These decompositions immediately imply that $l:=2k-k_1$ satisfies $m:=\mathrm{max}(k,n/2)\leq l\leq M:=\mathrm{min}(n,2k)$. Therefore, we obtain:
\begin{align*}
|D^a(t\theta_1(t^2|f|^2))|&\leq C\sum_{k=1}^n\sum_{m\leq l\leq M}\Big(\frac{t}{1+t|f|}\Big)^{2k+1}|f|^{2k-l}R^{2l-n}\\
&= C\sum_{k=1}^n\sum_{m\leq l\leq M}\Big(\frac{t|f|}{1+t|f|}\Big)^{2k-l}
\Big(\frac{t}{1+t|f|}\Big)^{l+1}
R^{2l-n}\\
&\leq C\sum_{\frac{n}{2}\leq l\leq n}t^{l+1}R^{2l-n}\\
&\leq C\mu_{n+1,n}.
\end{align*}

Next, we apply the Leibniz rule in $g_t^2(A)= \mathds{1} +t\theta(t^2|f|^2) AA^*$, and obtain:
\begin{align*}
|D^a(g_t^2(A))|&\leq\sum_{b+c=a}|D^b(t\theta_1(t^2|f|^2))||D^{c}(AA^*)|\\
&\leq C\sum_{q=0}^2\mu_{n-q+1,n-q}\,\mu_{0,2-q}\\
&\leq C\sum_{q=0}^2\mu_{n-q+1,n+2-2q}\\
&\leq C\mu_{n+1,n+2}
\end{align*}
where for the first terms we have use the inequalities from above, and, because $AA^*$ is a homogeneous polynomial of degree two, that the second term is non-zero only for $q=|c|\leq 2$, and in these cases \[|D^{c}(AA^*)|\leq CR^{2-q}=C\mu_{0,2-q},\]
and finally, the inequalities for the polynomials $\mu_{k,m}$ \eqref{submult} and \eqref{submult1}. 
\end{proof}

\begin{lemma}\label{corollary:g deriv estimate}
For any $a\in \N_0^6$ with $n=\lvert a\rvert>0$ there is a constant $C=C(a)$ such that:
\begin{align*}
    \lvert D^a(g_t^{\pm 1}(A))\rvert \le &\, C \mu_{2n,3n}.
\end{align*}
\end{lemma}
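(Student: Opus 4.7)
The strategy is to reduce the estimates on $g_t^{\pm 1}$ to the already-established estimates on $g_t^2$ (Lemma \ref{lemma:gt estimate}) by differentiating the algebraic identities $g_t\cdot g_t = g_t^2$ and $g_t\cdot g_t^{-1}=\mathds{1}$ and then solving for the top-order derivative. The key analytic fact making this work is that $g_t^2\geq \mathds{1}$ in the positive-definite sense: the eigenvalues of $AA^*$ are nonnegative and $\tanh(|f|t)/|f|\geq 0$, so by the scalar functional calculus the positive square root satisfies $g_t\geq \mathds{1}$ as well. In particular $\|g_t^{-1}\|_{\mathrm{op}}\leq 1$, which complements the bound $|g_t^{-1}|\leq C$ of Lemma \ref{lemma:gt norm}.

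For $g_t$ I would induct on $n=|a|$. Applying the Leibniz rule to $g_t\cdot g_t=g_t^2$ yields the Lyapunov equation
\[ g_t\,D^a g_t \;+\; D^a g_t\,g_t \;=\; D^a(g_t^2)\;-\;\sum_{0<b<a}\tbinom{a}{b}\,D^b g_t\cdot D^{a-b}g_t \;=:\;C_a \]
in the unknown $L=D^a g_t$. Since $g_t\geq \mathds{1}$, the unique solution has the integral representation $L=\int_0^{\infty} e^{-s g_t}\,C_a\, e^{-sg_t}\,ds$, giving the operator-norm bound $\|L\|_{\mathrm{op}}\leq \tfrac{1}{2}\|C_a\|_{\mathrm{op}}$, hence $|L|\leq C'|C_a|$ (norms on $2\times 2$ matrices are equivalent). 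The base case $n=1$ has empty remainder, so $|D^a g_t|\leq C|D^a g_t^2|\leq C\mu_{2,3}=C\mu_{2\cdot 1,3\cdot 1}$ by Lemma \ref{lemma:gt estimate}. For $n\geq 2$ each term $|D^b g_t|\,|D^{a-b}g_t|$ of the sum is bounded by induction ($1\leq |b|,|a-b|<n$), and the submultiplicativity \eqref{submult} collapses it into $C\mu_{2n,3n}$; meanwhile Lemma \ref{lemma:gt estimate} combined with the monotonicity \eqref{submult1} (take $w=n-1$) gives $|D^a g_t^2|\leq C\mu_{n+1,n+2}\leq C\mu_{2n,3n}$. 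Summing finishes the induction.

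For $g_t^{-1}$ I would run an entirely parallel induction starting from the identity obtained by applying Leibniz to $g_t\,g_t^{-1}=\mathds{1}$ and solving for the leading derivative:
\[ D^a g_t^{-1} \;=\; -\,g_t^{-1}\bigl(D^a g_t\bigr)g_t^{-1} \;-\; g_t^{-1}\!\!\sum_{0<b<a}\!\!\tbinom{a}{b}\,D^b g_t\cdot D^{a-b}g_t^{-1}. \]
Using Lemma \ref{lemma:gt norm} for the $|g_t^{-1}|$ factors, the just-proved bound on $|D^b g_t|$, and the inductive hypothesis $|D^{a-b}g_t^{-1}|\leq C\mu_{2(n-|b|),3(n-|b|)}$ (with the case $|a-b|=0$ handled by $|g_t^{-1}|\leq C=C\mu_{0,0}$), another application of \eqref{submult} and \eqref{submult1} yields $|D^a g_t^{-1}|\leq C\mu_{2n,3n}$.

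The only real obstacle is the verification that every product arising on the right-hand side of either identity fits inside $\mu_{2n,3n}$. This reduces to the additivity checks $2k+2(n-k)=2n$ and $3k+3(n-k)=3n$ for the cross-terms, and the single comparison $\mu_{n+1,n+2}\leq \mu_{2n,3n}$ for $n\geq 1$, both of which follow directly from the properties of the polynomials $\mu_{u,v}$ recorded at the start of the subsection. Everything else is bookkeeping.
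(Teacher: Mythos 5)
Your proof is correct, and it takes a genuinely different route from the paper's. The paper derives the bound on $D^a(g_t^{\pm 1})$ by applying the general chain rule (Appendix~B.1) to the composition $g_t^{\pm 1}=(g_t^2)^{\pm 1/2}$, estimating the ``$k$-th derivative'' factors by $|D^k(x^{\pm 1/2})(g_t^2(A))|\leq C|g_t^{-2k\pm 1}(A)|\leq C$ via Lemma~\ref{lemma:gt norm} and the product factors by Lemma~\ref{lemma:gt estimate} plus submultiplicativity. You instead induct on $n=|a|$ using the Leibniz rule applied to the algebraic identities $g_t\cdot g_t=g_t^2$ and $g_t\cdot g_t^{-1}=\mathds{1}$, and solve a Sylvester/Lyapunov equation for the top-order derivative. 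The decisive analytic input is the same in both cases (a lower bound on $g_t$: in the paper in the guise of $|g_t^{-1}|\leq C$, in your write-up in the sharper form $g_t\geq\mathds{1}$, which you correctly deduce from $g_t^2=\mathds{1}+\frac{\tanh(|f|t)}{|f|}AA^*\geq\mathds{1}$ and the positivity of the functional-calculus square root). The $\mu$-bookkeeping ($\mu_{n+1,n+2}\leq\mu_{2n,3n}$ via \eqref{submult1} with $w=n-1$, and additivity of the indices across cross-terms via \eqref{submult}) is the same. One thing your version buys is that it sidesteps the matrix-valued chain rule entirely: the paper invokes the scalar Fa\`a-di-Bruno formula from Appendix~B.1 with $g_t^2(A)$ (a matrix) plugged in for the inner argument and the functional calculus of $x\mapsto x^{\pm 1/2}$ for the outer function, which is delicate because $g_t^2(A)$ and its $A$-derivatives need not commute; your inductive argument uses only the Leibniz rule (valid verbatim for matrix-valued functions, modulo order) and the standard solvability and operator-norm bound $\|L\|_{\mathrm{op}}\leq\frac{1}{2}\|C_a\|_{\mathrm{op}}$ for the Lyapunov equation $g_tL+Lg_t=C_a$ with $g_t\geq\mathds{1}$. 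A minor notational point: with the paper's normalization $D^a=\frac{1}{a!}\partial^a$ the Leibniz rule carries no binomial coefficients, so the $\binom{a}{b}$ in your displays should be dropped (or you should work with un-normalized derivatives throughout) — this has no effect on the estimate, since you only track constants $C(a)$.
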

\begin{proof}
Note that by the chain rule (see Appendix \ref{subsection: derivatives}) we have:
\[ D^a(g_t(A))=\sum_{1\leq k\leq n}D^k(x^{\pm \frac{1}{2}})(g_t^2(A)){\sum}' \prod_{j=1}^{k} D^{a^{j}}(g_t^2(A))\]
where $\sum '$ sums over all non-trivial $a^j$ with $a=\sum_{j=1}^{k}{a^j}$. The first part can be estimated using Lemma \ref{lemma:gt norm} by a constant: 
\begin{align*}
    \lvert D^k(x^{\pm \frac{1}{2}})(g_t^2(A))\rvert \leq &\ C\lvert g_t^{-2k\pm 1}(A)\rvert \leq C. \end{align*}
For the second part we set $n_j:=\lvert a^j\rvert $ and use Lemma \ref{lemma:gt estimate} and \eqref{submult} to get:
\begin{align*}
    \lvert\prod_{j=1}^{k}D^{a^{j}}(g_t^2(A))\rvert\le &\, C \prod_{j=1}^{k}\mu_{n_j+1,n_j+2}\\
    \le &\, C \mu_{n+k,n+2k}\\
    \le &\, C \mu_{2n,3n}.
\end{align*} 
Combining these, we obtain the conclusion. 
\end{proof}

\begin{proof}[Proof of the first part of Proposition \ref{lemma:bounds on the flow}]
We use the Leibniz rule to obtain
\begin{align*}
    D^a (A_t)=& \ \sum_{a^1+a^2+a^3=a}D^{a^1}(g_t^{-1}(A))D^{a^2}(A) D^{a^3}(g_t(A)).
\end{align*}
Denote by $n_j:=|a^j|$. Clearly, $n_2=0$ or $n_2=1$. To each term, we apply either Lemma \ref{lemma:gt norm} or \ref{corollary:g deriv estimate}. By Lemma \ref{lemma:gt norm}, the inequality from Lemma \ref{corollary:g deriv estimate} for $g_t^{-1}(A)$ holds also for $n_1=0$, and therefore we only need to consider separately the case when $n_3=0$ and $n_3>0$. In the first case, we obtain: 
\begin{align*}
|D^{a^1}(g_t^{-1}(A))||D^{a^2}(A)| |g_t(A)|&\leq C(\mu_{2n,3n}R+\mu_{2n-2,3n-3})\mu_{\frac{1}{2},1}\\
&\leq C \mu_{2n+\frac{1}{2},3n+2}.
\end{align*}
For the case with $n_3>0$, we also obtain:
\begin{align*}
|D^{a^1}(g_t^{-1}(A))||D^{a^2}(A)| |D^{a^3}(g_t(A))|
\leq & C(\mu_{2n_1,3n_1}R\mu_{2(n-n_1),3(n-n_1)}\\
&+ \mu_{2n_1,3n_1}\mu_{2(n-n_1-1),3(n-n_1-1)})\\
\leq& C(\mu_{2n,3n}R+
\mu_{2n-2,3n-3})\\
\leq& C\mu_{2n,3n+1}\\
\leq &C \mu_{2n+\frac{1}{2},3n+2}.\qedhere
 \end{align*}
\end{proof}

Next, we estimate the derivatives of the commutator:

\begin{lemma}\label{lemma:bounds on the commutator}
For any $a\in \N_0^6$ with $n=\lvert a\rvert $ there is a constant $C=C(a)$ such that:
\[ \lvert D^a(K_t)\rvert\le C \epsilon_t\, \mu_{2n,3n+2}.\]
\end{lemma}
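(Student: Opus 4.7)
The plan is to reduce the statement to an estimate for $\epsilon_t$, and then to mimic the proof of Lemma \ref{lemma:gt estimate} almost verbatim, using the same tools (Lemma \ref{lemma:estimate theta} for $\theta_2,\theta_3$, Lemma \ref{lemma:norm f estimate} for derivatives of $|f|^2$, and the submultiplicativity inequalities \eqref{submult}, \eqref{submult1}).

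First, by formula \eqref{eq: commutator flow} and the definition of $\epsilon_t$ in \eqref{varsigma}, one has the clean factorization
\[
K_t = \epsilon_t\cdot [A,A^*].
\]
Applying the Leibniz rule, $D^a(K_t)=\sum_{b+c=a}D^b(\epsilon_t)\,D^c([A,A^*])$. Since $[A,A^*]$ is a homogeneous polynomial of degree $2$ in the real coordinates, $D^c([A,A^*])$ vanishes for $|c|\ge 3$ and satisfies $|D^c([A,A^*])|\le C\,R^{2-|c|}$ otherwise. Thus the proof reduces to establishing the key estimate
\[
|D^b(\epsilon_t)|\ \le\ C\,\epsilon_t\,\mu_{2m,3m}, \qquad m=|b|,
\]
after which \eqref{submult} and \eqref{submult1} give
\[
|D^b(\epsilon_t)|\,R^{2-|c|}\ \le\ C\,\epsilon_t\,\mu_{2m,3m}\cdot\mu_{0,2-|c|}\ \le\ C\,\epsilon_t\,\mu_{2n,3n+2}
\]
and summing over $b+c=a$ yields the claimed bound.

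To prove the key estimate, I would rewrite, using the functions \eqref{theta_tanh},
\[
\epsilon_t^{-1}\ =\ \theta_2(4|f|^2t^2)\ +\ t\,\theta_3(4|f|^2t^2)\,R^2,
\]
which is manifestly smooth on $\spl$. Note also the pointwise lower bound $\epsilon_t^{-1}\ge 1$ (from $\cosh\ge 1$) and the obvious identity $\epsilon_t\cdot \epsilon_t^{-1}=1$. I would first estimate $|D^a(\epsilon_t^{-1})|$ exactly as in the proof of Lemma \ref{lemma:gt estimate}: apply Faà di Bruno to $\theta_2(4|f|^2t^2)$ and to $\theta_3(4|f|^2t^2)$, use the derivative bounds from Lemma \ref{lemma:estimate theta} (which produce factors $\cosh(2|f|t)/(1+2|f|t)^k$), multiply by the bounds for $D^{a^j}(|f|^2)$ from Lemma \ref{lemma:norm f estimate} (keeping track of the degree count as in that earlier proof), and finally use Leibniz with the trivial $R^2$ factor. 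The resulting estimate is
\[
|D^a(\epsilon_t^{-1})|\ \le\ C\,\epsilon_t^{-1}\,\mu_{2n,3n},
\]
the key point being that the $\cosh(2|f|t)+\sinh(2|f|t)R^2/(2|f|)$ that appears in the numerator from $\theta_2,\theta_3$ is exactly $\epsilon_t^{-1}$.

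To pass from $\epsilon_t^{-1}$ to $\epsilon_t$, I would use the Faà di Bruno formula applied to the composition of $x\mapsto 1/x$ with $\epsilon_t^{-1}$:
\[
D^a(\epsilon_t)\ =\ \sum_{k=1}^{n}\frac{(-1)^k k!}{(\epsilon_t^{-1})^{k+1}}\,{\sum}'\,\prod_{j=1}^{k}D^{a^j}(\epsilon_t^{-1}),
\]
where $\sum'$ runs over decompositions $a=a^1+\cdots+a^k$ with each $|a^j|\ge 1$. Using the key estimate above on each $D^{a^j}(\epsilon_t^{-1})$ and \eqref{submult} to combine $\mu_{2n_j,3n_j}$'s into $\mu_{2n,3n}$, together with the bound $(\epsilon_t^{-1})^{-k}\le 1$ coming from $\epsilon_t^{-1}\ge 1$, yields $|D^a(\epsilon_t)|\le C\,\epsilon_t\,\mu_{2n,3n}$, as required.

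The main obstacle is purely one of careful bookkeeping in the Faà di Bruno expansion of $\epsilon_t^{-1}$: one must package the $\cosh(2|f|t)$ and $\sinh(2|f|t)/(2|f|)$ factors coming out of Lemma \ref{lemma:estimate theta} so that they recombine into the single factor $\epsilon_t^{-1}$ that one needs on the right-hand side, rather than being crudely bounded. Beyond this, no new ideas are required; the proof is structurally identical to that of Lemma \ref{lemma:gt estimate}.
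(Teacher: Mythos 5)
Your proposal matches the paper's proof in every essential step: Leibniz on $K_t=\epsilon_t[A,A^*]$, then $\varsigma_t:=\epsilon_t^{-1}=\theta_2(4|f|^2t^2)+tR^2\theta_3(4|f|^2t^2)$ is estimated via Fa\`a di Bruno, Lemma~\ref{lemma:estimate theta} and Lemma~\ref{lemma:norm f estimate} exactly as in Lemma~\ref{lemma:gt estimate}, and Fa\`a di Bruno for $x\mapsto 1/x$ together with $\cosh(2|f|t)\le\varsigma_t$ then passes to $\epsilon_t$. The only minor inexactness is your remark about ``packaging the $\cosh$ and $\sinh$ factors so they recombine into $\epsilon_t^{-1}$'': Lemma~\ref{lemma:estimate theta} produces only $\cosh(2|f|t)$ factors, and one simply bounds them by $\varsigma_t$ — no recombination is involved — but this does not affect the validity of the argument.
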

\begin{proof}
First we estimate $\varsigma_t:=\epsilon_t^{-1}$. For this, we write:
\[\varsigma_t=\theta_2(4\nf^2t^2 )+tR^2\theta_3(4\nf^2t^2),\]
and estimate the two components. Applying Lemma \ref{lemma:estimate theta} and an argument similar to the proof of Lemma \ref{lemma:gt estimate}, we obtain that: 
\begin{align*}
|D^a(\theta_2(4\nf^2 t^2))|&\leq C \sum_{k=1}^n|\theta_2^{(k)}(4\nf^2 t^2)|t^{2k}{\sum}'\prod_{j=1}^k |D^{a^j}(|f|^2)|\\
&\leq C \cosh(2|f|t) \sum_{k=1 }^n
\sum_{m\leq l\leq M}
\Big(\frac{t^2}{1+\nf t}\Big)^k|f|^{2k-l}R^{2l-n}\\
&= C \cosh(2|f|t) \sum_{k=1 }^n
\sum_{m\leq l\leq M}
\Big(\frac{t|f|}{1+\nf t}\Big)^{2k-l}\frac{t^lR^{2l-n}}{(1+|f|t)^{l-k}}\\
&\leq C \cosh(2|f|t) 
\sum_{n/2\leq l\leq n}
t^lR^{2l-n}\\
&\leq   C \cosh(2|f|t)\mu_{n,n},
\end{align*}
where $m=\mathrm{max}(k, n/2)$ and $M:=\mathrm{min}(n,2k)$ and $n=|a|$. 

By a similar argument, we have that:
\begin{align*}
|D^a(t\theta_3(4\nf^2 t^2))|&\leq C \sum_{k=1}^n|\theta_3^{(k)}(4\nf^2 t^2)|t^{2k+1}{\sum}'\prod_{j=1}^k |D^{a^j}(|f|^2)|\\
&\leq C \cosh(2|f|t) \sum_{k=1 }^n
\sum_{m\leq l\leq M}
\Big(\frac{t|f|}{1+\nf t}\Big)^{2k-l}\frac{t^{l+1}R^{2l-n}}{(1+|f|t)^{l+1-k}}\\
&\leq C \cosh(2|f|t) 
\sum_{n/2\leq l\leq n}
t^{l+1}R^{2l-n}\\
&\leq  C \cosh(2|f|t)\mu_{n+1,n}.
\end{align*}
Therefore, applying the Leibniz rule, we obtain:
\begin{align*}
|D^a(tR^2\theta_3(4\nf^2t^2))|&\leq C\sum_{a^1+a^2=a}|D^{a^1}(R^2)||D^{a^2}(t\theta_3(4\nf^2t^2))|\\
&\leq C\cosh(2|f|t)
\sum_{q=0}^2R^{2-q}\mu_{n+1-q,n-q}\\
&\leq C\cosh(2|f|t)
\sum_{q=0}^2\mu_{n+1-q,n+2-2q}\\
&\leq C\cosh(2|f|t)\mu_{n+1,n+2}.
\end{align*}

Combining the estimates obtained so far, and using \eqref{submult1}, we obtain:
\[|D^a(\varsigma_t)|\leq C \cosh(2|f|t) \mu_{n+1,n+2}.
\]

Using this inequality, the chain rule,
that and $\cosh(2\nf t)\leq \varsigma_t$, we get: 
\begin{align*}
    \lvert D^{a}(\epsilon_t)\rvert \le&  C  \sum_{1\leq k\leq n}\varsigma_t^{-k-1} {\sum}' \prod_{j=1}^{k} |D^{a^{j}}(\varsigma)|\\
     \le & C\epsilon_t  \sum_{1\leq k\leq n} {\sum}'\prod_{j=1}^k\mu_{n_j+1,n_j+2}\\
     \le & C\epsilon_t  \sum_{1\leq k\leq n}  \mu_{n+k,n+2k}\\
     \le & C  \epsilon_t \mu_{2n,3n}
     \end{align*} 
where ${\sum}'$ is over all non-trivial decompositions $a=\sum_{j=1}^{k}a^{j}$, and $n_j=|a^j|$.

Recall from \eqref{eq: commutator flow} that $K_t=\varsigma^{-1}[A,A^*]$. By applying the Leibniz rule in this equality, and using the inequality above, we obtain the conclusion:
\begin{align*}
|D^a(K_t)|&\leq \sum_{a=a^1+a^2}|D^{a^1}(\epsilon_t) ||D^{a^2}([A,A^*])|\\
&\leq C \epsilon_t \sum_{q=0}^2\mu_{2n-2q,3n-3q}R^{2-q}\\
&\leq C\epsilon_t \sum_{q=0}^2\mu_{2n-2q,3n-4q+2}\\
&\leq C\epsilon_t \mu_{2n,3n+2}.\qedhere
\end{align*}
\end{proof}

\begin{proof}[Proof of the second part of Proposition \ref{lemma:bounds on the flow}]
In $A_t'=\frac{1}{4}[A_t,K_t]$ we use the Leibniz rule, then the first part of Proposition \ref{lemma:bounds on the flow}, then Lemma \ref{lemma:bounds on the commutator}, and we obtain the inequality from the statement:
\begin{align*}
 |D^a (A'_{t})|&\leq C \sum_{a^1+a^2=a}|D^{a^1}(A_t)||D^{a^2}(K_t)|\nonumber \\
 &\leq C\epsilon_t\sum_{n_1+n_2=n}\, \mu_{2n_1+\frac{1}{2},3n_1+2}
 \mu_{2n_2,3n_2+2}\nonumber \\
 &\leq C\, \epsilon_t\, \mu_{2n+\frac{1}{2},3n+4}.\qedhere
\end{align*}
\end{proof}

\subsection{Proofs of Lemma \ref{convergence of the flow}
and Lemma \ref{well-defined}}\label{section: convergence of the flow}

In the proof below we apply the criterion for the existence of limits of smooth families of functions from Appendix \ref{subsubsection_limits}.

\subsubsection{Proof of Lemma \ref{convergence of the flow}}
The set $\spl\backslash S_0=\{|f|>0\}$ is covered by the compact sets:
\[K(r,\delta):=\{R\leq r\}\cap \{|f|\geq \delta \}\subset \spl,\ \ \delta, r>0.\]
By the criterion in Appendix \ref{subsubsection_limits}, we need to bound the partial derivatives of $A'_t$ uniformly on $K(r,\delta)$ by an integrable function. By Lemma \ref{lemma:bounds on the flow}
\[\lvert D^a (A'_{t})\rvert \le C\epsilon_t\,\mu_{2n+\frac{1}{2},3n+4}\le C
(1+r)^{3n+4}e^{-2\delta t}t^{2n+\frac{1}{2}},\]
for $t\geq 1$, where we used that 
\[\epsilon_t\leq \cosh^{-1}(2|f|t)\leq e^{-2|f|t}\leq e^{-2\delta t}.\]
Since the right hand side is an integrable function, the conclusion follows.

\subsubsection{A crucial inequality}

The following inequality will play a crucial role in the proof of Lemma \ref{well-defined}:

\begin{lemma}\label{lemma: finite integrals}
For any $q\geq 1$ there is a constant $C=C(q)$ such that:
\[\epsilon_t\, R_t^{2q}\leq C \frac{R^{2q}}{(1+tR^2)^{q}},\]
for all $t\geq 0$ and all $A\in \spl$.
\end{lemma}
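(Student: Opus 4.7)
The plan is to reduce the bound to a few elementary estimates by introducing convenient shorthand. Set $a := 2|f|$ and $x := R^2$; the basic inequality \eqref{eq:norm-fiber inequality} then reads $a \leq x$. With $s := \tanh(at)$ and the identity $s\cosh(at) = \sinh(at)$, the explicit formulas \eqref{eq:R_t^2} and \eqref{varsigma} rewrite as
\[
\epsilon_t \;=\; \frac{a}{\cosh(at)\,(a+sx)}, \qquad R_t^2 \;=\; \frac{a\,(as+x)}{a+sx}.
\]
Combining these, and using $as+x \leq 2x$ (since $as \leq a \leq x$), the quantity to estimate becomes
\[
\epsilon_t\,R_t^{2q} \;=\; \frac{a^{q+1}(as+x)^q}{\cosh(at)\,(a+sx)^{q+1}} \;\leq\; \frac{2^q\,a^{q+1}\,x^q}{\cosh(at)\,(a+sx)^{q+1}}.
\]

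The next step is to split into three ranges according to the size of $tx$ and $at$; in each range a different factor in the denominator supplies the decay matching $(1+tx)^{-q}$.

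In the range $tx \leq 2$, the factor $(1+tx)^q$ is bounded by $3^q$, so it suffices to show $\epsilon_t R_t^{2q} \leq C x^q$; this follows at once from the monotonicity $R_t^2 \leq R^2 = x$ proved in Lemma \ref{lemma: properties vector field}(4) together with the trivial bound $\epsilon_t \leq 1/\cosh(at) \leq 1$. In the range $tx \geq 2$ with $at \leq 1$, I use the elementary inequality $\tanh(u) \geq u/2$ on $[0,1]$ to get $sx \geq atx/2 \geq a$, hence $a+sx \geq atx/2$; substituting into the displayed bound yields polynomial decay $(tx)^{-(q+1)}$, which after cancelling $a^{q+1}$ produces a bound of order $1/(tx \cdot t^q)$, cleanly dominated by $x^q/(1+tx)^q$. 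In the range $at \geq 1$, I use $s \geq \tanh(1)$ to obtain $a+sx \geq \tanh(1)\cdot x$ and $\cosh(at) \geq \tfrac{1}{2}e^{at}$; combined with $a\leq x$ this reduces the claim to the elementary fact that $(at)^q e^{-at}$ is uniformly bounded on $[0,\infty)$, which expresses that exponential decay of $\epsilon_t$ outperforms any polynomial growth in $t$.

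The argument is essentially bookkeeping: no step is deep, but one has to keep track of which factor in $\cosh(at)(a+sx)^{q+1}$ provides the decay in each regime. The main subtlety is that two distinct decay mechanisms coexist — polynomial decay in $tx$ coming from $a+sx$ when $a$ is small, and exponential decay in $at$ coming from $\cosh(at)$ when $a$ is away from zero — and the case split is designed precisely so that exactly one of these mechanisms dominates in each range.
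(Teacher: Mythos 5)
Your proposal is correct, and it takes a genuinely different route from the paper's. The paper normalizes by raising the target inequality to the power $1/q$, sets $x=2|f|t$ and $y=R^2t$, and reduces (using $x\le y$) to showing the two-variable function $l(x,y)=\frac{ye^x}{(\cosh x+\frac{y}{x}\sinh x)^{1+1/q}}$ is bounded on $\{0\le x\le y\}$; it then locates the exact maximizer $y_x=qx/\tanh x$ for fixed $x$ and checks $l(x,y_x)$ is bounded. Your argument instead keeps the substitution $a=2|f|$, $x=R^2$, $s=\tanh(at)$ and splits into three regimes so that a single elementary bound does the work in each: $R_t^2\le R^2$ from $\Lie_W(R^2)\le 0$ when $tx\le 2$; $\tanh u\ge u/2$ on $[0,1]$ (so $a+sx\ge atx/2$) when $tx\ge 2,\,at\le 1$; and $\cosh(at)\ge\frac12 e^{at}$, $s\ge\tanh 1$, together with $(at)^qe^{-at}$ bounded, when $at\ge 1$. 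I checked the three regimes: in the middle case $(1+tx)^q/(tx)^{q+1}\le\tfrac12(3/2)^q$ gives what you need; in the last case $(1+tx)^q\le 2^q(tx)^q$ (valid since $tx\ge at\ge1$) followed by two uses of $a\le x$ reduces cleanly to the boundedness of $(at)^qe^{-at}$, as you say. The paper's approach is slicker and avoids casework at the cost of a small calculus computation; yours is more elementary and makes transparent the two competing decay mechanisms (polynomial from $a+sx$, exponential from $\cosh(at)$), at the cost of the three-way split. One small expository point: your rewritten formulas for $\epsilon_t$ and $R_t^2$ degenerate at $a=0$, so it is worth a sentence noting that the target inequality is a closed condition and so extends from $a>0$ by continuity (or just checking $a=0$ directly, where $\epsilon_tR_t^{2q}=x^q/(1+tx)^{q+1}$).
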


\begin{proof}
The inequality is equivalent to 
\begin{align}\label{eq:inequality rewrite}
    \Big(\frac{1}{\cosh(2|f|t)+\frac{\sinh(2\nf t)}{2\nf}R^2}\Big)^{\frac{1}{q}}\Big(\frac{R_t}{R}\Big)^2\leq  \frac{C}{1+R^{2}t}.
\end{align}
Denote $x:=2 \nf t$ and $y:=R^2t$. Then by Lemma \ref{le:flow}, we have:  
\[    \Big(\frac{R_t}{R}\Big)^2= 
\frac{\cosh(x)+\frac{x}{y}\sinh(x)}{\cosh(x)+\frac{y}{x}\sinh(x)}.\]
Using \eqref{eq:norm-fiber inequality}, i.e.\ that $x\leq y$, we have that:
\[    \Big(\frac{R_t}{R}\Big)^2\leq 
\frac{\cosh(x)+\sinh(x)}{\cosh(x)+\frac{y}{x}\sinh(x)}=
\frac{e^x}{\cosh(x)+\frac{y}{x}\sinh(x)}.\]
Therefore, \eqref{eq:inequality rewrite} follows if we prove:
\[ 
\frac{e^x}{\big(\cosh(x)+\frac{y}{x}\sinh(x)\big)^{1+\frac{1}{q}}}\leq \frac{C}{1+y}\]
for $0\leq x\leq y$. Clearly, it suffices to show this for $y\geq 1$, and on this domain this follows if we show that the function 
\[l(x,y):=\frac{ye^x}{\big(\cosh(x)+\frac{y}{x}\sinh(x)\big)^{1+\frac{1}{q}}}\]
is bounded on $0\leq x\leq y$. We have that:
\[\partial_yl(x,y)=\frac{e^x(\cosh(x)-y\frac1q\frac{\sinh(x)}{x})
}{\big(\cosh(x)+\frac{y}{x}\sinh(x)\big)^{2+\frac{1}{q}}}.\]
Therefore, for a fixed $x$, the $l(x,y)$ attains its maximum in 
\[y_x:=\frac{qx}{\tanh(x)}\geq x.\]
We have that 
\[l(x,y_x)=\frac{q}{(1+q)^{1+\frac{1}{q}}}\frac{xe^x}{\sinh(x)\cosh(x)^{\frac{1}{q}}}.\]
Since $l(x,y)\leq l(x,y_x)$, and  
\[\lim_{x\to 0}l(x,y_x)=l(0,q)=\frac{q}{(1+q)^{1+\frac{1}{q}}}, \quad \lim_{x\to \infty}l(x,y_x)=0,\]
is follows that $l(x,y)$ is bounded, which completes the proof.
\end{proof}

\subsubsection{Estimates for the pullback}

Next, we estimate the pullback under $\phi_t$ of flat forms.
We will prove all estimates on the closed ball $\overline{B}_r\subset \spl$ of radius $r>0$ centered at the origin. We will use the norms $\fnorm{\cdot}_{n,k,r}$ on flat forms $\Omega^i_0(\overline{B}_r)$ defined in \eqref{eq: flat norms}.

\begin{lemma}\label{pullback estimates}
For every $a\in \N^6_0$ with $n=|a|$, any $j,l\in \N_0$ there is a constant $C$ such that for any flat form $\alpha\in \Omega^{d}_0(\overline{B_r})$ and any $t\geq 0$:
\begin{align*}
|D^a(\phi_t^*(\alpha))|&\leq C R_t^{j+l}\mu_{\frac{5}{2}(n+d),5(n+d)}\fnorm{\alpha}_{n+l,j,r}.
\end{align*}
\end{lemma}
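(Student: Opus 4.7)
The plan is to expand $D^{a}(\phi_{t}^{*}\alpha)$ via Leibniz and Faà di Bruno into a finite sum of products of (derivatives of the coefficients of $\alpha$ evaluated at $A_{t}$) times (products of partial derivatives of the flow $A_{t}$). The flow derivatives will be bounded by Proposition \ref{lemma:bounds on the flow}, the coefficients of $\alpha$ will be bounded using the flat norms combined with a Taylor remainder trick to absorb the extra power $R_{t}^{l}$, and everything will be assembled via submultiplicativity of $\mu_{u,v}$.

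Concretely, writing $\alpha=\sum_{I}\alpha_{I}\, dx^{I}$, we have
\[
\phi_{t}^{*}\alpha=\sum_{I,J}(\alpha_{I}\circ \phi_{t})\ \partial_{j_{1}}\phi_{t}^{i_{1}}\cdots \partial_{j_{d}}\phi_{t}^{i_{d}}\, dx^{J}.
\]
Applying $D^{a}$ with $|a|=n$, the Leibniz rule gives a finite sum over decompositions $a=a^{0}+a^{1}+\cdots+a^{d}$ of terms
\[
D^{a^{0}}(\alpha_{I}\circ\phi_{t})\cdot\prod_{k=1}^{d}D^{a^{k}+e_{j_{k}}}(\phi_{t}^{i_{k}}).
\]
Each factor $D^{a^{k}+e_{j_{k}}}(\phi_{t})$ is bounded by Proposition \ref{lemma:bounds on the flow} by $C\mu_{2|a^{k}|+\frac{5}{2},\,3|a^{k}|+5}$; their product, using \eqref{submult}, is at most $C\mu_{2(n-|a^{0}|)+\frac{5d}{2},\,3(n-|a^{0}|)+5d}$.

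For the first factor, Faà di Bruno expresses $D^{a^{0}}(\alpha_{I}\circ\phi_{t})$ as $\sum (D^{e}\alpha_{I})(A_{t})\cdot P_{a^{0},e}(\phi_{t})$, where $|e|=p\in\{0,1,\ldots,|a^{0}|\}$ and each $P_{a^{0},e}$ is a sum of products $\prod_{q=1}^{p}D^{b^{q}}(\phi_{t})$ with $|b^{q}|\geq 1$ and $\sum|b^{q}|=|a^{0}|$. By Proposition \ref{lemma:bounds on the flow} and \eqref{submult}, each such product is bounded by $C\mu_{2|a^{0}|+p/2,\,3|a^{0}|+2p}$. Combining with the flow-derivative factor above and using $p\le |a^0|\le n$ together with \eqref{submult}--\eqref{submult1} gives a total bound $C\mu_{\frac{5(n+d)}{2},\,5(n+d)}$, matching the statement.

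The key step for the coefficient is the Taylor remainder: since $\alpha_{I}$ is flat at the origin, for any $l\ge 0$ and any $|e|=p$,
\[
(D^{e}\alpha_{I})(A_{t})=\sum_{|b|=l}\frac{l\, A_{t}^{b}}{b!}\int_{0}^{1}(1-s)^{l-1}(D^{e+b}\alpha_{I})(sA_{t})\,ds,
\]
so
\[
|(D^{e}\alpha_{I})(A_{t})|\le C_{l}\,R_{t}^{l}\sup_{|b|=l}\sup_{y\in \overline{B}_{R_{t}}}|D^{e+b}\alpha_{I}(y)|\le C_{l}\,R_{t}^{l+j}\,\fnorm{\alpha}_{p+l,\,j,r},
\]
since for $y\in \overline{B}_{R_{t}}\subset\overline{B}_{r}$ one has $|D^{e+b}\alpha_{I}(y)|\le |y|^{j}\fnorm{\alpha}_{p+l,j,r}\le R_{t}^{j}\fnorm{\alpha}_{p+l,j,r}$. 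Since $p\le n$, $\fnorm{\alpha}_{p+l,j,r}\le \fnorm{\alpha}_{n+l,j,r}$, and this produces the factor $R_{t}^{j+l}\fnorm{\alpha}_{n+l,j,r}$ in the statement.

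Multiplying the three estimates together and summing over the finitely many decompositions yields the inequality. The only subtle point is the Taylor-remainder trick, which is the mechanism that converts the extra $l$ derivatives in the norm $\fnorm{\alpha}_{n+l,j,r}$ into the extra factor $R_{t}^{l}$ beyond the flatness $R_{t}^{j}$; once this is in place, the rest is bookkeeping using the submultiplicativity inequalities \eqref{submult} and \eqref{submult1}.
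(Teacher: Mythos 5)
Your overall strategy---Leibniz plus Faà di Bruno for the pullback, Taylor remainder on the coefficient to convert extra derivative orders into powers of $R_t$, Proposition~\ref{lemma:bounds on the flow} for the flow derivatives, and submultiplicativity of $\mu_{u,v}$---is exactly what the paper does (the paper merely organizes it into three steps: the case $n=d=0$, then flat functions with $n\ge 1$, then forms). However, the $\mu$-arithmetic at the end of your argument does not close, and the issue is a genuine one, not mere bookkeeping.

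The problem is your choice of Taylor order. You expand the coefficient at order exactly $l$, obtaining $|(D^e\alpha_I)(A_t)|\le C\,R_t^{l+j}\fnorm{\alpha}_{p+l,j,r}\le C\,R_t^{l+j}\fnorm{\alpha}_{n+l,j,r}$ where $p=|e|$. After the Leibniz and Faà di Bruno bookkeeping, the remaining $\mu$-factor you accumulate is, up to constants,
\[
\mu_{2(n-n_0)+\tfrac{5d}{2},\,3(n-n_0)+5d}\cdot\mu_{2n_0+\tfrac{p}{2},\,3n_0+2p}\le C\,\mu_{2n+\tfrac{p}{2}+\tfrac{5d}{2},\,3n+2p+5d},
\]
and you then assert this is $\le C\mu_{\tfrac{5}{2}(n+d),5(n+d)}$ using $p\le n$. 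That final inequality is \emph{false} when $p<n$. For a concrete counterexample take $n=2$, $d=0$, $p=1$: on the curve $tR^2=1$ one checks that $\mu_{4.5,8}=9\,t^{1/2}$ whereas $\mu_{5,10}=11$, so no constant $C=C(n,k,r)$ can work for all $t\ge 0$. The reason \eqref{submult1} does not help is that it shifts indices by $(w,2w)$, while the gap between your second indices is $2(n-p)$ and between your first indices only $\tfrac{1}{2}(n-p)$---off by a factor of two.

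The fix, which is precisely what the paper does in its Steps 2 and 3, is to apply the Taylor remainder at order $n-p+l$ rather than $l$. This yields $R_t^{n-p+l+j}\fnorm{\alpha}_{n+l,j,r}$; splitting off $R_t^{n-p}\le R^{n-p}=\mu_{0,n-p}$ and absorbing it raises the second index of your accumulated $\mu$-factor by $n-p$ without changing the first:
\[
\mu_{0,n-p}\,\mu_{2n+\tfrac{p}{2}+\tfrac{5d}{2},\,3n+2p+5d}\le C\,\mu_{2n+\tfrac{p}{2}+\tfrac{5d}{2},\,4n+p+5d}\le C\,\mu_{\tfrac{5}{2}(n+d),\,5(n+d)},
\]
the last step being \eqref{submult1} with $w=\tfrac{n-p}{2}$. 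With this single correction, everything else in your proposal is sound and the rest is the same bookkeeping the paper carries out.
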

\begin{proof}
\underline{Step 1}: We first prove the estimate for $n=d=0$, i.e., for $g\in C^{\infty}_0(\overline{B_r})$ and for $a=0\in \N_0^6$. For $l=0$, the estimate follows
\[|\phi_t^*(g)|=R_t^{j}\frac{|\phi_t^*(g)|}{R_t^{j}}
\le R_t^{j}\fnorm{g}_{0,j,r}.
\]
For $l\geq 1$, we use Taylor's formula with integral remainder to write
\[g(A)=\sum_{|b|=l}A^{b}\int_{0}^1l(1-s)^{l-1}(D^{b} g)(sA)\dif s,\]
where we used that $g$ is flat at the origin, and we denoted $A^{b}=\prod_{i=1}^6 x_i^{b_i}$ in the coordinates introduced in Appendix \ref{subsection: derivatives}. Therefore we obtain:
\begin{align}\label{eq: integral remainder}
    \phi_t^*(g)(A)=\sum_{|b|=l}A_t^{b}\int_{0}^1l(1-s)^{l-1}(D^{b} g)(sA_t)\dif s.
\end{align}
We clearly have that
\[|A_t^{b}|\leq R_t^{l}.\]
For any $j\in \N_0$, we have that
\[|(D^{b} g)(sA_t)|\leq (sR_t)^{j}\fnorm{g}_{l,j,r}.\]
Using these inequalities in \eqref{eq: integral remainder}, we obtain the estimate in this case:
\begin{equation}\label{eq:Step1}
|\phi_t^*(g)|\leq CR_t^{l+j}\fnorm{g}_{l,j,r}.
\end{equation}

\noindent\underline{Step 2}: we prove now the estimate for a flat function $g$ and $a\in \N^6_0$ with $1\le n=|a|$. We use the chain rule (see Appendix \ref{subsection: derivatives}) to write:
\[D^a(g\circ \phi_t)=\sum_{1\leq |b|\leq n}D^b(g)(A_t)\ {\sum}'\prod_{i=1}^{6}\prod_{j=1}^{b_i}D^{a^{ij}}(x_{i,t}),
\]
where $b=(b_i)_{i=1}^6$ and $\sum'$ is the sum over all non-trivial decompositions: 
\[a=\sum_{i=1}^6\sum_{j=1}^{b_i}a^{ij}.\]
We apply \eqref{eq:Step1} with $l\leftarrow n-|b|+l$ to estimate the first term: 
\begin{align*}
    |D^b(g)(A_t)|&\leq  C R_t^{n-|b|+l+j}\fnorm{g}_{n+l,j,r}.
\end{align*}
We estimate the second term using Proposition \ref{lemma:bounds on the flow}
\begin{align*}
\big|\prod_{i=1}^6\prod_{j=1}^{b_i}D^{a^{ij}}(x_{i,t})\big|&\leq C \prod_{i=1}^6\prod_{j=1}^{b_i}
\mu_{2|a^{ij}|+\frac{1}{2},3|a^{i,j}|+2}\\
&\leq C\mu_{2n+\frac{1}{2}|b|,3n+2|b|}.
\end{align*}
The second case follows by using these two estimates, and the inequalities: 
\begin{align*}
R_t^{n-|b|}&\leq R^{n-|b|}=\mu_{0,n-|b|}\\
\mu_{0,n-|b|}\mu_{2n+\frac{1}{2}|b|,3n+2|b|}&\leq C\mu_{2n+\frac{1}{2}|b|,4n+|b|}\leq C\mu_{\frac{5}{2}n,5n}.
\end{align*}

\noindent\underline{Step 3}: let $\alpha\in \Omega^d_0(\overline{B}_r)$, $d\geq 1$, and $a\in \N^6_0$. Note that the coefficients of $\phi_t^*(\alpha)$ are sums of elements of the form 
\[g\circ \phi_t\cdot M(\phi_t),\] 
where $g\in C^{\infty}_0(\overline{B}_r)$ is a coefficient of $\alpha$ and $M(\phi_t)$ denotes the determinant of a minor of rank $d$ of the Jacobian matrix of $\phi_t$. By the Leibniz rule:
\[D^a(g\circ \phi_t\cdot M(\phi_t))=\sum_{b+c=a}D^b(g\circ \phi_t)D^c(M(\phi_t)).\]
For the first term, we apply Step 2 with $n\leftarrow |b|$ and $l\leftarrow |c|+l$:
\begin{align*}
|D^b(g\circ \phi_t)|&\leq C R_t^{|c|+l+j} \mu_{\frac{5|b|}{2},5|b|}\fnorm{\alpha}_{n+l,j,r}\\
&\leq C R_t^{l+j} \mu_{\frac{5|b|}{2},5|b|+|c|}\fnorm{\alpha}_{n+l,j,r}.
\end{align*}
Note that $M(\phi_t)$ is a homogeneous polynomial of degree $d$ in the first order partial derivatives of the components $x_{i,t}$ of $\phi_t$. By Proposition \ref{lemma:bounds on the flow} each partial derivative satisfies: 
\[\Big|D^{c}\Big(\frac{\partial x_{i,t}}{\partial x_j}\Big)\Big|\leq C \mu_{2|c|+\frac{5}{2},3|c|+5}.\]
Therefore, by applying the Leibniz rule and \eqref{submult}, we obtain:
\[|D^c M(\phi_t)|\leq C \mu_{2|c|+\frac{5}{2}d,3|c|+5d}.\]
The estimate follows by combing these two estimates and using that: 
\[
\mu_{\frac{5|b|}{2},5|b|+|c|}
\mu_{2|c|+\frac{5}{2}d,3|c|+5d}\leq 
\mu_{2n+\frac{|b|+5d}{2},4n+|b|+5d}\leq \mu_{\frac{5}{2}(n+d),5(n+d)}.\qedhere\]
\end{proof}

\subsubsection{Proof of Lemma \ref{well-defined}}

Let $\alpha\in \Omega^d_0(\spl)$. As explained in Subsection \ref{subsubsection_limits}, for each $r>0$, we need to bound the partial derivatives of $h'_t(\alpha)$ uniformly on the balls $\overline{B}_r$ by an integrable function.

Let $a\in \N^6_0$, with $n=|a|$. First, we show that for any $j\geq 2$ there is a constant $C=C(n,j)$ such that:
\begin{equation}\label{important_equation}
|D^a(h'_t(\alpha))|\leq C\, R^{3+j}(1+tR^2)^{\frac{5(n+d)+1-j}{2}}\fnorm{\alpha}_{n,j,r}.
\end{equation}

For this recall that
\[h'_t(\alpha)=i_{W}\phi_t^*(\alpha)=\phi_t^*(i_W\alpha)=\sum_{i=1}^6x'_{i,t}\,\phi_t^*(\alpha_i),\]
where $\alpha_i=i_{\partial_{x_i}}\alpha$ and $x'_{i,t}$ are denote the entries of the matrix $A'_t$. 

First we apply the Leibniz rule:
\[D^a(x_{i,t}'\,\phi_t^*(\alpha_i))=\sum_{b+c=a}D^b(x_{i,t}') D^c(\phi_t^*(\alpha_i)).\]
By using Proposition \ref{lemma:bounds on the flow}, we obtain:
\[|D^b(x'_{i,t})|\leq 
C\, \epsilon_t\, \mu_{2|b|+\frac{1}{2}, 3|b|+4}.\]
By applying Lemma \ref{pullback estimates} with $n\leftarrow n-|b|=|c|$ and $l\leftarrow |b|$, we obtain:
\begin{align*}
|D^c(\phi_t^*(\alpha_i))|
&\leq C R_t^{|b|+j}\mu_{\frac{5}{2}(|c|+d),5(|c|+d)}\fnorm{\alpha}_{n,j,r}\\
&\leq C R_t^{j}\mu_{\frac{5}{2}(|c|+d),5(|c|+d)+|b|}\fnorm{\alpha}_{n,j,r}.
\end{align*}
When we combine these inequalities, we first note that:
\begin{align*}
\mu_{2|b|+\frac{1}{2}, 3|b|+4}\mu_{\frac{5}{2}(|c|+d),5(|c|+d)+|b|}&\leq 
C\mu_{2n+\frac{1}{2}(|c|+5d+1),4n+|c|+5d+4}\\
&\leq
\mu_{\frac{1}{2}(5(n+d)+1),5(n+d)+4}.
\end{align*}
Thus, we obtain:
\begin{equation}
|D^a(h'_t(\alpha))|\leq C\,\epsilon_t\, R_t^j\mu_{\frac{1}{2}p,p+3}\fnorm{\alpha}_{n,j,r},
\end{equation}
where $p=5(n+d)+1$. Next, note that: 
\[\mu_{\frac{p}{2},p+3}=\sum_{j=0}^pt^{\frac{p-j}{2}}R^{p-j+3}\leq R^3(1+t^{\frac{1}{2}}R)^{p}\leq C R^3(1+tR^2)^{\frac{p}{2}}, \]
where we have use that $(1+t^{\frac{1}{2}}R)\leq \sqrt{2}(1+tR^2)^{\frac{1}{2}}$. Using this and Lemma \ref{lemma: finite integrals} with $q=\frac{j}{2}\geq 1$, we obtain \eqref{important_equation}.

For $j=5(n+d+1)$, \eqref{important_equation} gives:
\begin{align*}
|D^a(h'_t(\alpha))|\leq C\, \frac{R^{j+3}}{(1+tR^2)^2}\fnorm{\alpha}_{n,j,r}\leq C\, r^{j-1}t^{-2}\fnorm{\alpha}_{n,j,r}.
\end{align*}
The right-hand side is integrable over $t\in [1,\infty)$. Therefore by Appendix \ref{subsubsection_limits}, $h_t(\alpha)$ converges as $t\to\infty$ in all $C^n$-norms on $\overline{B}_r$ to a smooth form $h_{\Sk}(\alpha)$. This form is flat at the origin, because flatness is $C^{\infty}$-closed. 

Next, for $j=5(n+d+1)+k$, \eqref{important_equation} gives:
\begin{align*}
|D^a(h'_t(\alpha))|&\leq C\, \frac{R^{5(n+d+1)+3+k}}{(1+tR^2)^{2+\frac{k}{2}}}\fnorm{\alpha}_{n,5(n+d+1)+k,r}\\
&\leq C\,r^{5(n+d+1)+1} \frac{R^{2+k}}{(1+tR^2)^{2}}\fnorm{\alpha}_{n,5(n+d+1)+k,r}.
\end{align*}
Since $\int_0^{\infty}\frac{R^{2}\dif t}{(1+tR^2)^{2}}=1$, we obtain:
\begin{align*}
|D^a(h_t(\alpha))|&=
\Big|\int_0^t D^a(h'_s(\alpha))\dif s\Big|\\
&\leq C\, r^{5(n+d+1)+1}R^k\fnorm{\alpha}_{n,5(n+d+1)+k,r}.
\end{align*}
And so:
\begin{align*}
\frac{|D^a(h_t(\alpha))|}{R^k}&\leq C\, r^{5(n+d+1)+1}\fnorm{\alpha}_{n,5(n+d+1)+k,r}.
\end{align*}
This implies that:
\begin{align*}
\fnorm{h_t(\alpha)}_{n,k,r}
\leq C\, (1+r)^{5(n+d+1)+1}\fnorm{\alpha}_{n,5(n+d+1)+k,r}.
\end{align*}
Taking $\lim_{t\to\infty}$ on the left implies that $h_{\Sk}$ satisfies property SLB for $(0,5,35)$.
\section{Flat Poisson cohomology of $\spl$}\label{section: flat PC}

In this section we compute the flat Poisson cohomology groups of $(\spl,\pi)$ and we build cochain homotopy operators for the flat Poisson complex which satisfy property SLB (see Theorem \ref{main theorem}).

The section is structured as follows. In Subsection \ref{section: Poisson cohomology in terms of foliated cohomology} we build an isomorphism which describes the flat Poisson complex in terms of the flat foliated complex and the transverse variation of the leafwise symplectic form (see Proposition \ref{proposition:PCH as forms}). For this, we use the description of the Poisson complex as the de Rham complex of a Lie algebroid, and the spectral sequence for computing cohomology of regular Lie algebroids. In Subsection \ref{section: perturbation Lemma} we use the Perturbation Lemma and the explicit description of flat foliated cohomology to construct cochain homotopies for the aforementioned complex (see Proposition \ref{proposition: homotopy}). Combining these results, we obtain a description of the flat Poisson cohomology in Theorem \ref{main theorem}. In Subsection \ref{section: algebra} we explain the algebraic structure of Poisson cohomology.

\subsection{Poisson cohomology in terms of foliated cohomology}\label{section: Poisson cohomology in terms of foliated cohomology}

In this subsection we describe the flat Poisson complex of $\spl$ in terms of the flat foliated complex of $\F$. For this, we use the spectral sequence associated to a regular Poisson manifold, which has been introduced in \cite{Vaisman} and \cite{KV1} (see also \cite[Section 2.4]{DZ}). Here, we use the more general setting of Lie algebroids of \cite{mack05}. We can adapt these techniques to our non-regular case by working with sections that are flat at the singularity,

Recall that the linear Poisson structure $\pi$ is regular away from the origin with underlying foliation $\F$. For simplicity, let us denote:
\[\spl^{\times}:=\spl\backslash \{0\}\]
The vector fields $V_1$ and $V_2$ on $\spl^{\times}$, defined in \eqref{eq: v}, form a basis of the normal bundle $\nu$ to $\F$. Since these vector fields are projectable with respect to $f$ (see \eqref{eq:v transversality}), it follows that they preserve the foliation:
\begin{align}\label{eq: flat}
    [V_i,Y]\in \Gamma(T\F|_{\spl^{\times}}), \quad \forall\, Y\in \Gamma(T\F|_{\spl^{\times}}).
\end{align}

We make use of the extension $\tilde{\omega}_1$ of the leafwise symplectic form on $\F$ from Lemma \ref{lemma: extension omega}. This is uniquely determined such that its kernel is spanned by $V_1$ and $V_2$. For $i=1,2$, define:
\begin{align}\label{eq: gammai}
    \gamma_i:=i_{V_i}\dif \tilde{\omega}_1\in \Omega^2(\spl^{\times}).
\end{align}
The restriction $\gamma_i|_{\F}$, for $i=1,2$, is the \emph{variation of the leafwise symplectic form} $\omega_1$ in the direction of $V_i$ (as foliated forms):
\[\gamma_i\big|_{\F}=\frac{\dif}{\dif t}\Big|_{t=0}(\varphi_{V_i}^t)^*\omega_1.\]
In particular, it follows that $\gamma_i|_{\F}\in \Omega^2(\F)$ are closed foliated 2-forms.

Recall that the singularity of $\tilde{\omega}_1$ at the origin is very ``mild", and the same holds for $\gamma_i$. Therefore the product with $\gamma_i$ gives a well-defined map:
\[e_{\gamma_i}:\CC^{p}\to \CC^{p+2},\quad \eta\mapsto \gamma_i\wedge \eta.\]

\begin{proposition}\label{proposition:PCH as forms}
There is an isomorphism of complexes
\[\tilde{a}:(\mathfrak{X}_0^{\bullet}(\spl),\dif_{\pi})\diffto (\bigoplus_{p+q=\bullet}\CC^{p}\otimes \wedge^q\R^2, \dif +\delta),\]
where $\dif$ acts only on the first component $\CC^{\bullet}$ as the exterior derivative and
\begin{align*}
\delta:\CC^{p}\otimes \wedge^q\R^2&\to \CC^{p+2}\otimes \wedge^{q-1}\R^2,\\
\delta(\eta\otimes e_{i})&=(-1)^p\gamma_i\wedge \eta,\quad\quad i=1,2,\\
\delta(\eta\otimes e_{1}\wedge e_2)&=(-1)^p\big(\gamma_1\wedge\eta\otimes e_2- \gamma_2\wedge\eta\otimes e_1\big),
\end{align*}
where $e_{1},e_2$ is the standard basis of $\R^2$. Moreover, the operators $\tilde{a}$, $\tilde{a}^{-1}$ and $\delta$ satisfy property SLB for $(0,0,2)$, $(0,0,4)$ and $(0,0,3)$, respectively. 
\end{proposition}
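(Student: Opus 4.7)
The plan is to build $\tilde a$ fiberwise on the regular locus $\spl\setminus\{0\}$ and then show it extends to flat multivectors on all of $\spl$. On $\spl\setminus\{0\}$ the vector fields $V_1,V_2$ trivialize the normal bundle $\nu$ via \eqref{eq:v transversality}, so every $X\in \mathfrak{X}^k(\spl\setminus\{0\})$ decomposes uniquely as
\[X = X_\emptyset + X_{\{1\}}\wedge V_1 + X_{\{2\}}\wedge V_2 + X_{\{1,2\}}\wedge V_1\wedge V_2,\]
with each $X_I$ tangent to $\F$ of degree $k-|I|$; flatness of $X$ plus the $R^{-2}$ growth of $V_i$ is exactly what is needed to guarantee that each $X_I$ is again flat at the origin. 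The leafwise musical isomorphism $\omega_1^\flat:T\F\diffto T^*\F$ converts each foliated multivector $X_I$ into a foliated form, and the unimodularity isomorphism \eqref{eq: cochain isomorphism} places it inside $\CC^{\bullet}$ after wedging with $\varphi$. I therefore set
\[\tilde a(X) := e_\varphi\bigl(\widetilde{\omega_1^\flat(X_\emptyset)}\bigr) + \sum_{i=1}^2 e_\varphi\bigl(\widetilde{\omega_1^\flat(X_{\{i\}})}\bigr)\otimes e_i + e_\varphi\bigl(\widetilde{\omega_1^\flat(X_{\{1,2\}})}\bigr)\otimes e_1\wedge e_2,\]
where $\widetilde{(\cdot)}$ denotes the horizontal lift characterized by $i_{V_j}(\cdot)=0$. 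Fiberwise this is an $\R$-linear isomorphism, and the identity $\sum_{p+q=k}\binom{4}{p}\binom{2}{q}=\binom{6}{k}$ confirms that the dimensions match degree by degree.

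Next I would verify that under $\tilde a$ the Schouten differential $[\pi,-]$ corresponds to $d+\delta$. The graded Leibniz rule for $[\pi,-]$ reduces the computation to two pieces. First, for $X$ tangent to $\F$, the bracket $[\pi,X]$ is again tangent to $\F$, and under the leafwise symplectic iso it becomes the leafwise de Rham differential of $\omega_1^\flat(X)$; since $\varphi$ is closed, multiplication by $e_\varphi$ turns this into the $d$-summand of the target. Second, since $V_i$ preserves $\F$ by \eqref{eq: flat} and $i_{V_i}\tilde\omega_1=0$ by construction, Cartan's formula gives $\Lie_{V_i}\tilde\omega_1 = i_{V_i}d\tilde\omega_1 = \gamma_i$, so the Schouten bracket $[\pi,V_i] = -\Lie_{V_i}\pi$ is a foliated bivector whose image under $\omega_1^\flat$ is $\gamma_i|_\F$ up to sign. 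Combined with $\varphi$ this is exactly $e_{\gamma_i}$ on $\CC^{\bullet}$, and the prefactor $(-1)^p$ in $\delta$ is the standard sign picked up when moving $[\pi,-]$ past a foliated multivector of degree $p$ sitting to the left of the $V_i$-factors.

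Finally, I would extend to flat multivectors and establish the SLB estimates. The explicit formulas in \eqref{eq: v} and Lemma \ref{lemma: extension omega} show that $V_i$, $\tilde\omega_i$ and $\gamma_i = i_{V_i}d\tilde\omega_1$ all carry at worst an $R^{-2}$ prefactor, so $\omega_1^{\pm\flat}$ together with the horizontal lift involve only bounded negative powers of $R$. Since flat multivectors vanish to all orders at the origin, these polynomial singularities are absorbed after applying $\tilde a$ or $\tilde a^{-1}$, producing bona fide flat sections and making the operators compatible with restriction to closed balls $\overline{B}_r$. Each operator is pointwise multiplicative in the coefficients of $X$ or $\alpha$, so no derivatives are introduced and $a=b=0$ in all three SLB triples; the specific losses of flat order $c=2$ for $\tilde a$, $c=4$ for $\tilde a^{-1}$, and $c=3$ for $\delta$ are read off by counting the $R^{-1}$-factors that each operator introduces when acting at a point.

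The main obstacle is precisely the singularity bookkeeping in the final paragraph: pinning down the exact constants requires tracking every $R^{-1}$ appearing in $V_i$, $\omega_1^{\pm\flat}$, $\tilde\omega_1$ and $\gamma_i$, and checking that these losses are compatible with the flatness definitions of $\fnorm{\cdot}_{n,k,r}$. Once that accounting is in place, the identification of $[\pi,-]$ with $d+\delta$ is a formal computation on $\spl\setminus\{0\}$ using only the Leibniz rule for the Schouten bracket and the standard regular-Poisson correspondence, and the extension to the origin is automatic from flatness.
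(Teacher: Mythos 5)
Your construction is essentially the paper's, restated in multivector-field language rather than the Lie algebroid language: the decomposition of $X$ along $T\F \oplus \mathrm{span}(V_1,V_2)$, the application of $\omega_1^\flat$, and the wedge with $\varphi$ reproduce exactly the composite $m^{p,q}\circ a^{p,q}$ built in the paper from the splitting $\sigma(Y)=i_Y\tilde\omega_1$, and your identification of $\delta$ with $(-1)^p e_{\gamma_i}$ via $\Lie_{V_i}\tilde\omega_1 = i_{V_i}\dif\tilde\omega_1 = \gamma_i$ is the paper's Lemma \ref{lemma: maps mpq} unwound. The genuine content that the paper supplies and you wave at is the sign and degree verification: the paper proves it abstractly in Proposition \ref{proposition: ladcx} by computing $\delta_i := a^{p+i,q+1-i}\circ\dif_A\circ b^{p,q}$ and showing $\delta_0=0$ (this is where abelian-ness of $\nu^*$ enters) and $\delta_i=0$ for $i\ge 3$, which your Leibniz-rule sketch needs in the guise of ``$[\pi,X]$ stays tangent to $\F$ for $X$ tangent to $\F$'' and ``$[\pi,V_i]$ is a purely foliated bivector'' -- these deserve explicit proof, though both are true and not hard. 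Likewise, the SLB constants $(0,0,2)$, $(0,0,4)$, $(0,0,3)$ are not simply ``read off'': the paper extracts them via Lemma \ref{lemma:singular:vb:maps} by carefully tracking, for each vector bundle map $A^{p,q}$, $B^{p,q}$ and the map inducing $\delta$, both the singularity order $l$ in $R^{-1}$ and the vanishing order $d$ at the origin, then taking $\max(l-d,0)$; merely counting $R^{-1}$'s without subtracting the vanishing order $d$ (coming from $\varphi$, $AA^*$, etc.) would give too pessimistic a loss. So: same map, same structural idea, but the heart of the proof is in the two places you flag as ``formal computation'' and ``bookkeeping''.
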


The rest of this subsection is devoted to the proof of this proposition.

\subsubsection{Cohomology of regular abelian Lie algebroids}

Proposition \ref{proposition:PCH as forms} can be understood as giving an explicit description of the first page of a natural spectral sequence associated to Poisson cohomology, which was introduced for regular Poisson manifolds already in \cite{Vaisman}. The natural framework for this are Lie algebroids (see \cite{mack05}), and, for clarity, we place our discussion in this more general setting. However, we will use these results only for the cotangent Lie algebroid of a regular Poisson manifold. 

As a reference to the theory of Lie algebroids, see \cite{mack05}. Recall that any Lie algebroid $(A,[\cdot,\cdot],\sharp)$ has an associated de Rham complex
\[(\Omega^{\bullet}(A):=\Gamma(\wedge^{\bullet}A^*),\dif_A).\]
Consider a Lie algebroid $(A,[\cdot,\cdot],\sharp)$ over $M$ with the following properties:
\begin{itemize}
    \item $A$ is \textbf{regular}, i.e., the anchor $\sharp$ has constant rank. Then $M$ has a regular foliation $\F$ with $T\F=\im \sharp \subset TM$, the kernel $K:=\ker \sharp$ is a Lie subalgebroid, and we have a short exact sequence of Lie algebroids
    \begin{align}\label{eq: lad ses}
        0 \to K \xrightarrow{j} A \xrightarrow{\sharp} T\F \to 0
    \end{align}
    \item $K$ is an \textbf{abelian} Lie algebroid. Therefore, the bracket on $A$ induce a representation of $T\F$ on $K$.
\end{itemize}

To such a Lie algebroid one associates the \textbf{extension class} \[[\gamma] \in H^2(\F,K),\] 
which measures the failure of the short exact sequence \eqref{eq: lad ses} to split as a sequence of Lie algebroids. Using a vector bundle splitting $\sigma: T\F \to A$ of \eqref{eq: lad ses}, an explicit representative of the extension class is given by:  
\[ \gamma(Y_1,Y_2):= [\sigma(Y_1),\sigma(Y_2)] -\sigma([Y_1,Y_2]) \]
where $Y_1,Y_2\in \Gamma(T\F)$. The class $[\gamma]$ is independent of the choice of the splitting. For such a representative, we have an induced map:
\begin{align}
&\qquad \qquad \qquad i_{\gamma}: \Omega^p(\F,\wedge^qK^*) \to  \Omega^{p+2}(\F,\wedge^{q-1}K^*) \nonumber \\
  i_{\gamma}(\beta)&\,(Y_1 , \dots ,Y_{p+2})(V_1, \dots ,V_{q-1}) :=\label{eq: contraction gamma}\\  &\sum_{i<j}(-1)^{i+j}\beta(Y_1, \dots, \hat{Y}_i,\dots ,\hat{Y}_j,\dots ,Y_{p+2})(\gamma(Y_i,Y_j),V_1,\dots, V_{q-1})\nonumber
\end{align}
where $V_i \in \Gamma(K)$, $Y_j\in \Gamma(T\F)$. We have the following: 

\begin{proposition}\label{proposition: ladcx}
A splitting $\sigma:T\F \to A$ of \eqref{eq: lad ses} induces an isomorphism
\begin{align*}
    (\Omega^{\bullet}(A), \dif_A)\simeq \big(\bigoplus_{p+q=\bullet}\Omega^{p}(\F,\wedge^{q} K^*), \delta_1+\delta_2\big),
\end{align*}
where $\delta_1$ is the standard differential on foliated forms with values in $\wedge^{q}K^*$
\begin{align*}
    \delta_1=\dif_{\F}:\Omega^{p}(\F,\wedge^{q}K^*)\to \Omega^{p+1}(\F,\wedge^{q}K^*)
\end{align*}
and $\delta_2$ is the cup product with the representative of the extension class: 
\begin{align*}
\delta_2=(-1)^{p}i_{\gamma}:\Omega^{p}(\F,\wedge^{q}K^*)\to \Omega^{p+2}(\F,\wedge^{q-1}K^*).
\end{align*}
\end{proposition}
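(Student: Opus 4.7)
The plan is to unwind the Koszul formula for $\dif_A$ using the splitting $\sigma$ and organise the resulting expression by bidegree with respect to the decomposition $A = \sigma(T\F) \oplus K$. First I would use $\sigma$ to identify $A \simeq \sigma(T\F) \oplus K$ as vector bundles, which dually yields $A^* \simeq T^*\F \oplus K^*$. Taking exterior powers gives the bigraded isomorphism
\[\wedge^n A^* \;\simeq\; \bigoplus_{p+q = n} \wedge^p T^*\F \otimes \wedge^q K^*,\]
and hence, on sections, $\Omega^n(A) \simeq \bigoplus_{p+q=n} \Omega^p(\F, \wedge^q K^*)$. Under this identification, $\alpha \in \Omega^p(\F, \wedge^q K^*)$ viewed as a $(p+q)$-form on $A$ vanishes unless it is evaluated on exactly $p$ horizontal and $q$ vertical entries.

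Next I would apply the Chevalley--Eilenberg formula for $\dif_A$,
\[\dif_A\alpha(a_0, \ldots, a_{p+q}) = \sum_i (-1)^i \sharp(a_i)\bigl(\alpha(\ldots, \hat a_i, \ldots)\bigr) + \sum_{i<j}(-1)^{i+j}\alpha([a_i, a_j], \ldots, \hat a_i, \ldots, \hat a_j, \ldots),\]
to $\alpha \in \Omega^p(\F, \wedge^q K^*)$ on inputs $a_i = \sigma(Y_i) + V_i$. The Lie bracket decomposes as
\[[\sigma(Y_i) + V_i,\, \sigma(Y_j) + V_j] = \sigma([Y_i, Y_j]) + \gamma(Y_i, Y_j) + \nabla_{Y_i} V_j - \nabla_{Y_j} V_i,\]
since $K$ is abelian, $[\sigma(Y), V] = \nabla_Y V$ gives the induced representation of $T\F$ on $K$, and by the definition of $\gamma$. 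The only surviving bidegrees in $\dif_A\alpha$ are $(p+1, q)$ and $(p+2, q-1)$: an output of bidegree $(p, q+1)$ could come only from an anchor $\sharp(V) = 0$ acting on a vertical input or a bracket $[V_i, V_j] = 0$, both of which vanish.

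The $(p+1, q)$-piece gathers the anchor terms $\sharp(\sigma(Y_i))$, the horizontal parts $\sigma([Y_i, Y_j])$ of brackets between two $\sigma(Y)$'s, and the mixed brackets $[\sigma(Y_i), V_j] = \nabla_{Y_i} V_j$; assembling them reproduces the Chevalley--Eilenberg differential $\dif_\F$ on foliated forms with values in the representation $\wedge^q K^*$. The $(p+2, q-1)$-piece collects only the vertical parts $\gamma(Y_i, Y_j)$ of the brackets between two $\sigma(Y)$'s, and after comparison with the defining formula \eqref{eq: contraction gamma} it equals $(-1)^p i_\gamma(\alpha)$.

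The main obstacle is the sign bookkeeping in the last step: one has to match the Koszul signs $(-1)^{i+j}$ against those in the shuffle expansion of $\alpha$ viewed as a $(p+q)$-form on $A$, and the overall factor $(-1)^p$ reflects the parity of transporting $\gamma(Y_i, Y_j) \in K$ past the remaining $p$ horizontal inputs into the first vertical slot. That $(\delta_1 + \delta_2)^2 = 0$ is then automatic from $\dif_A^2 = 0$: separating by bidegree recovers $\delta_1^2 = 0$ (flatness of $\nabla$), $\delta_1 \delta_2 + \delta_2 \delta_1 = 0$ ($\gamma$ is $\dif_\F$-closed), and $\delta_2^2 = 0$.
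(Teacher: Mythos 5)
Your proposal is correct and takes essentially the same approach as the paper: both use the splitting $\sigma$ to produce the bigraded decomposition of $\Omega^{\bullet}(A)$, then unwind the Chevalley--Eilenberg formula for $\dif_A$, sorting the terms by bidegree. The paper packages the decomposition via the explicit maps $a^{p,q}$ and $b^{p,q}$ from \cite[Section 7.4]{mack05}, cites Mackenzie for the $\delta_0=0$ (abelian isotropy) and $\delta_1=\dif_{\F}$ cases, and computes $\delta_2$ directly; your version does the same computation more uniformly by directly decomposing the bracket $[\sigma(Y_i)+V_i,\sigma(Y_j)+V_j]$ and reading off the bidegree shifts. One small omission relative to the paper: you should state explicitly that no bidegree $(p+i,q+1-i)$ with $i\ge 3$ can occur — this does follow from your bracket decomposition (a single bracket shifts horizontal degree up by at most two), but it is a claim the paper verifies and you leave implicit.
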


\begin{proof}
We follow \cite[Section 7.4]{mack05} where most of the statement is proven. A splitting $\sigma : T\F \to A$ of \eqref{eq: lad ses} induces surjective maps
\begin{align}\label{eq: surjection ladch}
&\qquad \qquad a^{p,q}:\Omega^{p+q}(A) \to \Omega^{p}(\F,\wedge^q K^*)\\
a^{p,q}(\alpha)&\,(Y_1, \dots ,Y_p) (V_1, \dots V_q):= \alpha(\sigma(Y_1),\dots , \sigma(Y_p),V_1,\dots, V_q)\nonumber
\end{align}
for $V_i \in \Gamma(K)$ and $Y_j\in \Gamma(T\F)$. Let $\kappa = \id-\sigma \circ \sharp:A\to K$ be the induced projection. A right inverse for $a^{p,q}$ is given by: 
\begin{align}\label{eq: ri ladch}
    &\qquad \qquad b^{p,q}: \Omega^{p}(\F,\wedge^q K^*)\to \Omega^{p+q}_p(A)\\
    b^{p,q}(\beta)&\,(X_1, \dots ,X_{p+q}):= \nonumber \\ 
    &\sum_{\tau\in S_{p,q}}\sign(\tau)\beta(\sharp(X_{\tau_{1}}),\dots,\sharp(X_{\tau_{p}}))(\kappa(X_{\tau_{p+1}}),\dots,\kappa(X_{\tau_{p+q}})),\nonumber
\end{align}
where the sum is over all $(p,q)$ shuffles $\tau$. The isomorphism from the statement is $\oplus_{p+q=\bullet}a^{p,q}$, with inverse  $\oplus_{p+q=\bullet}b^{p,q}$.

We calculate the components of the differential under this isomorphism:
\begin{align*}
    \delta_{i}:= a^{p+i,q+1-i}\circ \dif_A \circ b^{p,q}, \quad i\in \N_0.
\end{align*}
For $i=0$ and $i=1$ these are given in \cite[Proposition 7.4.3 \& 7.4.4]{mack05}, namely, $\delta_{0}=0$ because $K$ is assumed to be abelian and the different sign for $\delta_1$ comes from the fact that we multiplied both $a^{p,q}$ and $b^{p,q}$ with a factor of $(-1)^{pq}$. For $\delta_{2}$ we use a proof given for $q=1$ in \cite[Theorem 7.4.11]{mack05} for foliated cohomology. We adapt the computation there to general $q$ and foliated forms as follows. Let $\beta\in \Omega^p(\F,\wedge^qK^*)$. We need to prove that
\begin{align*}
    (a^{p+2,q-1}\circ \dif_A\circ b^{p,q})(\beta)=&\ (-1)^{p}i_{\gamma}(\beta)
\end{align*}
Let $V_i\in \Gamma(K)$ and $Y_j\in \Gamma(T\F)$. For the computation we use the notation
\begin{equation*}
    \begin{array}{rlcrl}
    \mathcal{V} :=& V_1,\dots ,V_{q-1}&\ & 
         \mathcal{Y}:=& Y_1,\dots,Y_{p+2}   \\
         \mathcal{V}_i :=& V_1,\dots,\hat{V}_i,\dots ,V_{q-1}&\ & 
         \mathcal{Y}_i:=& Y_1,\dots,\hat{Y}_i,\dots ,Y_{p+2}   \\
         \mathcal{V}_{ij}:=& V_1,\dots,\hat{V}_i,\dots,\hat{V}_j,\dots ,V_{q-1}
         & &    \mathcal{Y}_{ij}:=&
         Y_1,\dots,\hat{Y}_i,\dots, \hat{Y}_j,\dots ,Y_{p+2}
    \end{array}
\end{equation*}
where the hat stands for omitting the element and by $\sigma(\mathcal{Y}_i)$ we mean $\sigma$ applied to each element, and similarly for $\sigma(\mathcal{Y}_{ij})$. Then we compute that

\begin{align*}
    (a^{p+2,q-1}\circ &\, \dif_A\circ  b^{p,q})(\beta)(\mathcal{Y})(\mathcal{V}) =(\dif_A\circ b^{p,q})(\beta)(\sigma(\mathcal{Y}),\mathcal{V})\\
   =&\sum_{1\le i<j\le p+2}(-1)^{i+j}(b^{p,q}(\beta))([\sigma(Y_i),\sigma(Y_j)],\sigma(\mathcal{Y}_{ij}),\mathcal{V})\\
    &+\sum_{\substack{1\le i\le q-1\\ 1\le j\le p+2}}(-1)^{p+1+i+j}(b^{p,q}(\beta))([\sigma(Y_i),V_j],\sigma(\mathcal{Y}_i),\mathcal{V}_j)\\
    &+\sum_{1\le i<j\le q-1}(-1)^{i+j}(b^{p,q}(\beta))([V_i,V_j],\sigma(\mathcal{Y}),\mathcal{V}_{ij})
    \\
    &+\sum_{1\le i\le p+2}(-1)^{i+1}\Lie_{Y_i}((b^{p,q}(\beta))(\sigma(\mathcal{Y}_{i}),\mathcal{V}))\\
  =&\sum_{1\le i<j\le p+2}(-1)^{i+j+p}\beta(\mathcal{Y}_{ij})(\gamma(Y_i, Y_j),\mathcal{V})\\
    =&\, (-1)^{p} i_{\gamma}(\beta)(\mathcal{Y})(\mathcal{V})
\end{align*}
Note that only the first sum yields non-zero contributions to the third equality as all the other terms do not have enough sections of $K$ to be non-zero, and we have $\kappa\circ \sigma =0$. By a similar computation $\delta_{i}=0$, for $3\le i$.
\end{proof}

We obtain a slight improvement of results in \cite[Section 7.4]{mack05}, which is classical for Lie algebras (see e.g.\ \cite[Theorem 8]{Hoch1953}):

\begin{corollary}\label{corollary: ladch}
For a regular Lie algebroid $K\hookrightarrow A\twoheadrightarrow T\F$ with abelian isotropy $K$, there is a natural spectral sequence which converges to the Lie algebroid cohomology of $A$:
\[ H^p(\F,\wedge^q K^*)\Rightarrow H^{p+q}(A).\]
Moreover, the differential on the second page is given by the cup product with the extension class:
\[(-1)^{p}i_{[\gamma]}:H^p(\F,\wedge^q K^*)\to H^{p+2}(\F,\wedge^{q-1} K^*).\]
\end{corollary}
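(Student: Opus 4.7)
The plan is to apply the standard spectral sequence of a filtered complex to the explicit description of $(\Omega^{\bullet}(A), \dif_A)$ given by Proposition \ref{proposition: ladcx}. First, choose a splitting $\sigma : T\F \to A$ of \eqref{eq: lad ses} and use the resulting isomorphism to identify the Lie algebroid complex with the total complex of the bigraded space $\{\Omega^{p}(\F, \wedge^q K^*)\}_{p,q}$, equipped with the differential $\delta_1 + \delta_2$ having components of bidegrees $(1,0)$ and $(2,-1)$, respectively. Filter by $p$-degree:
\[
F^p \Omega^n(A) := \bigoplus_{p' \geq p} \Omega^{p'}(\F, \wedge^{n-p'} K^*).
\]
Both $\delta_1$ and $\delta_2$ strictly raise the $p$-degree, so $\dif_A$ preserves the filtration, and since $0 \leq p \leq n$ at each total degree the filtration is bounded. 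Hence the associated spectral sequence converges to $H^{p+q}(A)$.

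Next I would compute the first few pages. Because $\dif_A$ sends $F^p$ into $F^{p+1}$, the $d_0$-differential on $E_0^{p,q} = \Omega^p(\F, \wedge^q K^*)$ vanishes, giving $E_1^{p,q} = \Omega^p(\F, \wedge^q K^*)$. The $d_1$-differential has bidegree $(1,0)$ and is therefore induced by the bidegree-$(1,0)$ component $\delta_1 = \dif_{\F}$ from Proposition \ref{proposition: ladcx}, namely the foliated de Rham differential with coefficients in the $T\F$-representation $\wedge^q K^*$. Thus $E_2^{p,q} = H^p(\F, \wedge^q K^*)$ as claimed. The $d_2$-differential has bidegree $(2,-1)$ and is induced by $\delta_2 = (-1)^p i_\gamma$, which descends to cohomology as $(-1)^p$ times cup product with the extension class $[\gamma] \in H^2(\F, K)$.

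The only delicate point is naturality and independence from the chosen splitting. Although the bigraded identification depends on $\sigma$, the filtration itself admits the intrinsic description
\[
F^p \Omega^n(A) = \{\alpha \in \Omega^n(A) \mid i_{V_1} \cdots i_{V_{n-p+1}} \alpha = 0 \ \text{for all } V_i \in \Gamma(K)\},
\]
which shows that all pages $E_r^{p,q}$ for $r \geq 1$, their differentials, and the abutment are canonical. Different splittings give cohomologous representatives of $[\gamma]$, which is consistent with the $d_2$-formula being expressible in terms of the class $[\gamma]$ rather than a particular representative; naturality with respect to morphisms of Lie algebroids preserving the hypotheses then follows from functoriality of this intrinsic filtration. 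I anticipate no serious obstacle: the argument is essentially a formal application of the spectral sequence of a filtered complex, once the structure of Proposition \ref{proposition: ladcx} is in hand.
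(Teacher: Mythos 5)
Your proposal is correct and follows essentially the same route as the paper: the same intrinsic filtration by the number of $K$-contractions, convergence from boundedness, and the observation that the bidegree-$(1,0)$ and $(2,-1)$ components of the decomposed differential from Proposition~\ref{proposition: ladcx} give $d_1 = \dif_{\F}$ and $d_2 = (-1)^p i_{[\gamma]}$ respectively. The only stylistic difference is that the paper outsources the existence and convergence of the spectral sequence to \cite[Theorem 7.4.6]{mack05}, whereas you unwind the filtered-complex machinery directly; you also spell out the descent of $\delta_2$ to the $E_2$-page and the independence from the splitting, which the paper treats as immediate from Proposition~\ref{proposition: ladcx}.
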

\begin{proof}
The natural, convergent spectral sequence was obtained in \cite[Theorem 7.4.6]{mack05}; it is associated with the filtration 
\begin{align}\label{eq: filtration}
    \Omega^{p+q}_{p}(A):= \{\beta\in \Omega^{p+q}(A)| i_{V_0}\dots i_{V_q}\beta =0 \text{ for all } V_j\in \Gamma(K)\}. 
\end{align}
Under the isomorphism from  Proposition \ref{proposition: ladcx}, this filtration corresponds to 
\[\bigoplus_{0\leq i\leq q} \Omega^{p+i}(\F,\wedge^{q-i}K^*).\]
The explicit formula for the differential on the second page follows from the proposition. 
\end{proof}

\subsubsection{Poisson cohomology via Lie algebroids}
Given a Poisson manifold $(M,\pi)$ one has a naturally associated Lie algebroid structure on the cotangent bundle $T^*M$ with the anchor map $\pi^{\sharp}:T^*M \to TM$ and the bracket given by
\[ [\alpha,\beta]_{\pi}:= \Lie _{\pi^{\sharp}(\alpha)}(\beta)-\Lie _{\pi^{\sharp}(\beta)}(\alpha)-\dif \pi(\alpha,\beta).\]
We denote this Lie algebroid by $T^*_{\pi}M$. The complex computing Lie algebroid cohomology of $T^*_{\pi}M$ can be identified with the complex computing the Poisson cohomology of $(M,\pi)$:
\begin{align*}
    (\Omega^{\bullet}(T^*_{\pi}M),\dif) = (\mathfrak{X}^{\bullet}(M),\dif_{\pi}).
\end{align*}

From now on we will assume that $(M,\pi)$ is \textbf{regular}. Then we are in the setting of the previous subsection, i.e., the Lie algebroid $T^*_{\pi}M$ fits into the short exact sequence
\begin{align}\label{eq: ses poisson lad}
    0\to \nu^* \hookrightarrow T^*_{\pi}M\twoheadrightarrow T\F\to 0
\end{align}
where $T\F=\im \pi^{\sharp}$ and $\nu^*$ is the conormal bundle of $\F$, which is an abelian ideal. The leafwise symplectic form will be denoted by: 
\[\omega\in \Omega^2(\F).\]

A splitting $\sigma:T\F\to T^*_{\pi}M$ of \eqref{eq: ses poisson lad} induces an extension $\tilde{\omega}\in \Omega^2(M)$ of $\omega$, uniquely determined by $\ker\tilde{\omega}=\ker (\sigma^*:TM\to T^*\F)$. The representative $\gamma$ of the extension class in $H^2(\F,\nu^*)$ of \eqref{eq: ses poisson lad} corresponding to $\sigma$ is then:
\begin{align*}
    \gamma(Y_1,Y_2)=i_{Y_2}i_{Y_1} \dif\tilde{\omega}\in \Gamma(\nu^*),
\end{align*}
for $Y_1,Y_2\in \Gamma(T\F)$. This follows by a standard calculation using the formula of the Lie bracket on $T^*_{\pi}M$ and that $ \sigma(v)=i_v\tilde{\omega}$.

\smallskip

We apply this discussion to the linear Poisson structure $\pi$ restricted to 
\[\spl^{\times}:=\spl\backslash\{0\}.\] 
The extension $\tilde{\omega}_1$ of the leafwise symplectic structure from Lemma \ref{lemma: extension omega} corresponds to the splitting $\sigma$ whose image annihilates the vector fields $V_1$ and $V_2$ defined in \eqref{eq: v}. We obtain linear maps
\begin{align*}
    a^{p,q}:\mathfrak{X}^{p+q}(\spl^{\times}) \to \Omega^p(\F|_{\spl^{\times}}, \wedge^q\nu)
\end{align*}
as described in \eqref{eq: surjection ladch} with right inverses given by
\begin{align*}
    b^{p,q}:\Omega^p(\F|_{\spl^{\times}}, \wedge^q\nu)\to \mathfrak{X}^{p+q}(\spl^{\times})
\end{align*}
as described in \eqref{eq: ri ladch}.

The normal vector fields $V_1$ and $V_2$ induce a flat frame of the normal bundle, and so give an isomorphism $\nu\simeq \R^2\times{\spl^{\times}}$ of $T\F$-representations.

\begin{lemma}\label{lemma: maps mpq} There are linear isomorphisms:
\[m^{p,q}:\Omega^p(\F|_{\spl^{\times}}, \wedge^q\nu)\simeq
e_{\varphi}(\Omega^p(\spl^{\times}))\otimes \wedge^q\R^2,
\]
under which the operators $\delta_1=\dif_{\F}$ and $\delta_2=(-1)^{p}i_{\gamma}$ correspond to the operators $\dif$ and $\delta$ from Proposition \ref{proposition:PCH as forms}.
\end{lemma}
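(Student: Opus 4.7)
The plan is to build $m^{p,q}$ as the composition of two natural identifications and then to track sign conventions carefully in order to match the differentials. First, by \eqref{eq: flat}, the vector fields $V_1,V_2$ descend to a parallel global frame of $\nu$ for the Bott connection, yielding an identification $\Omega^p(\F|_{\spl^{\times}}, \wedge^q\nu) \simeq \Omega^p(\F|_{\spl^{\times}}) \otimes \wedge^q \R^2$ under which $\dif_{\nabla}$ corresponds to $\dif_{\F}\otimes\id$. Second, the splitting $T\spl^{\times} = T\F \oplus \langle V_1, V_2\rangle$ furnishes, for any $\eta \in \Omega^p(\F|_{\spl^{\times}})$, a canonical extension $\widehat{\eta} \in \Omega^p(\spl^{\times})$ characterized by $\widehat{\eta}|_{T\F} = \eta$ and $i_{V_j}\widehat{\eta} = 0$. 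Since $\varphi = \dif f_1 \wedge \dif f_2$ is closed and annihilates $V_1,V_2$, the map $\eta \mapsto \varphi \wedge \widehat{\eta}$ is a linear bijection $\Omega^p(\F|_{\spl^{\times}}) \xrightarrow{\sim} e_\varphi(\Omega^p(\spl^{\times}))$; the key observation is that any two extensions of $\eta$ differ by terms of the form $\dif f_j \wedge (\cdots)$, which vanish upon wedging with $\varphi$. Composing and introducing a sign to be fixed below, I set $m^{p,q}(\eta \otimes w) := (-1)^q \varphi \wedge \widehat{\eta} \otimes w$ for $w \in \wedge^q \R^2$.

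Matching $\delta_1 = \dif_{\F}$ with the exterior derivative $\dif$ on the right-hand side is then immediate: since $\dif\varphi = 0$, we have $\dif(\varphi \wedge \widehat{\eta}) = \varphi \wedge \dif\widehat{\eta}$, and the forms $\dif\widehat{\eta}$ and $\widehat{\dif_{\F}\eta}$ have the same restriction to $T\F$, so they agree modulo terms involving $\dif f_j$ which are annihilated by $\varphi$.

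The substantive part of the proof is matching $\delta_2 = (-1)^p i_\gamma$ with the operator $\delta$ of Proposition \ref{proposition:PCH as forms}. Here I use that, for the splitting whose image annihilates $V_1,V_2$, the extension-class cocycle $\gamma \in \Omega^2(\F|_{\spl^\times},\nu^*)$ satisfies $\gamma(Y_1,Y_2)(V_i) = \gamma_i(Y_1,Y_2)$ with $\gamma_i$ as in \eqref{eq: gammai}. Unwinding the definition \eqref{eq: contraction gamma} in the basis $e_1,e_2$ of $\R^2 \simeq \nu$ and using the shuffle identity $\alpha \wedge \beta = \sum_{i<j}(-1)^{i+j+1}\alpha(Y_i,Y_j)\,\beta(Y_{k_1},\ldots,Y_{k_p})$ for a 2-form $\alpha$ and a $p$-form $\beta$, a direct computation yields
\begin{align*}
i_\gamma(\eta \otimes e_i) &= -\gamma_i \wedge \eta, \\
i_\gamma(\eta \otimes e_1 \wedge e_2) &= -\gamma_1 \wedge \eta \otimes e_2 + \gamma_2 \wedge \eta \otimes e_1.
\end{align*}
The sign $(-1)^q$ built into $m^{p,q}$ is then exactly what is needed to absorb the discrepancy between $(-1)^p i_\gamma$ on the left and the formula for $\delta$ on the right; the identity $\varphi \wedge \gamma_i \wedge \widehat{\eta} = \varphi \wedge \widehat{\gamma_i \wedge \eta}$ (again because the two extensions of $\gamma_i \wedge \eta$ differ by $\dif f_j$-terms) completes the match in each of the cases $q = 1, 2$. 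The main obstacle in the argument is this sign bookkeeping through the two identifications and the duality pairing in \eqref{eq: contraction gamma}; everything else is a direct verification.
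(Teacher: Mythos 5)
Your construction of $m^{p,q}$ via the flat frame $V_1,V_2$ of $\nu$ and the $\langle V_1,V_2\rangle$-annihilating extension $\widehat{\eta}$ is exactly the paper's construction, and your matching of $\dif_{\F}$ with $\dif$ and of $(-1)^{p}i_{\gamma}$ with $\delta$ is the same direct verification. The one difference is the extra factor $(-1)^q$ in your $m^{p,q}$: it appears because your shuffle computation gives $i_{\gamma}(\eta\otimes e_i)=-\gamma_i\wedge\eta$, whereas the paper's proof asserts $i_{\gamma}(\eta_i\otimes[V_i])=e_{\gamma_i}(\eta_i)$ without the minus sign; your sign is the correct one for the standard shuffle convention, and in any case this choice only reparametrizes the isomorphism without affecting its existence or anything downstream.
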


\begin{proof}
Any form $\eta\in \Omega^{p}(\F|_{\spl^{\times}}, \wedge^{\bullet}\nu)$ can be uniquely written as
\[\eta=\eta_0+\eta_1\otimes [V_1] +\eta_2\otimes[V_2]+\eta_{3}\otimes[V_1\wedge V_2],\quad \eta_i\in \Omega^{p}(\F|_{\spl^{\times}}).\]
The isomorphism $m=\oplus_{p,q} m^{p,q}$ sends such a form to:
\[m(\eta)=e_{\varphi}(\eta_0)+e_{\varphi}(\eta_1)\otimes e_1+e_{\varphi}(\eta_2)\otimes e_2+ e_{\varphi}(\eta_3)\otimes e_1\wedge e_2,\]
where $e_1,e_2$ is the standard basis of $\R^2$. That $\dif_{\F}$ corresponds to $\dif$ follows because $\dif_{\F}[V_i]=0$ (see \eqref{eq: flat}) and because $\dif \varphi=0$. Using \eqref{eq: contraction gamma} we obtain 
\begin{align*}
\delta_2(\eta)&=(-1)^p(i_{\gamma}\eta_0+i_{\gamma}\eta_1\otimes [V_1]+i_{\gamma}\eta_2\otimes [V_2]+i_{\gamma}\eta_3\otimes[V_1\wedge V_2])\\
&=(-1)^p(e_{\gamma_1}(\eta_1)+e_{\gamma_2}(\eta_2)+e_{\gamma_1}(\eta_3)\otimes[V_2]-e_{\gamma_2}(\eta_3)\otimes[V_1])
\end{align*}
since $\gamma$ can be written as
\[\gamma=\gamma_1|_{\F}\otimes \dif f_1+\gamma_2|_{\F}\otimes \dif f_2.\]
This implies the statement.
\end{proof}

Due to the mild singularities of $V_1$,  $V_2$ and $\tilde{\omega}_1$ at the origin, the maps $m^{p,q}\circ a^{p,q}$ induce well-defined, surjective maps between the complexes of flat multivector fields and flat foliated forms:
\begin{align}\label{eq: tilde a}
    \tilde{a}^{p,q}:\mathfrak{X}^{p+q}_{0}(\spl) \to \CC^p\otimes \wedge^q \R^2.
\end{align}
These maps have right inverses induced by $b^{p,q}\circ (m^{p,q})^{-1}$ which we denote
\begin{align}\label{eq: tilde b}
    \tilde{b}^{p,q}: \CC^p\otimes \wedge^q\R^2\to \mathfrak{X}^{p+q}_{0}(\spl).
\end{align}

Putting these maps together into \[\tilde{a}:=\oplus_{p,q}\tilde{a}^{p,q}\quad \textrm{and} \quad \tilde{b}:=\oplus_{p,q}\tilde{b}^{p,q}\] we obtain the linear isomorphism from Proposition \ref{proposition:PCH as forms} and its inverse. That these maps intertwine the Poisson differential and the differential $\dif+\delta$ follows from Proposition \ref{proposition: ladcx} and Lemma \ref{lemma: maps mpq}. To conclude the proof of Proposition \ref{proposition:PCH as forms} we need to show that the map $\tilde{a}$, $\tilde{b}$ and $\delta$ satisfy the claimed SLB property. For this, let us note that these operators are induced by vector bundle maps with a singularity at the origin. For example, $\tilde{a}^{1,1}$ is induced by the vector bundle map:
\begin{align*}
&A^{1,1}:\wedge^{2}T(\spl^{\times})\to \wedge^{3} T^*(\spl^{\times})\otimes \R^2,\\
&A^{1,1}(w)=-\sum_{i=1}^2\varphi\wedge \tilde{\omega}_1^{\flat}(i_{\dif f_i}w)\otimes e_i.
\end{align*}
Similarly, $\tilde{a}^{4,0}$ is induced by the vector bundle map:
\begin{align*}
&A^{4,0}:\wedge^{4}T(\spl^{\times})\to \wedge^{6} T^*(\spl^{\times}),\\
&A^{4,0}(w)=\varphi\wedge (\otimes^4 \tilde{\omega}_1^{\flat})(w).
\end{align*}
The maps $\tilde{b}^{p,q}$ are restrictions of singular vector bundle maps. For example, $\tilde{b}^{2,1}$ is induced by the restriction of the map:
\begin{align*}
&B^{2,1}:\wedge^{4} T^*(\spl^{\times})\otimes \R^2\to \wedge^{3}T(\spl^{\times}),\\
&B^{2,1}\Big(\sum_{i=1}^2\alpha_i\otimes e_i\Big)=
\sum_{i=1}^2(\otimes^2\pi^{\sharp})(i_{V_1\wedge V_2}\alpha_i)\wedge V_i.
\end{align*}
Similarly, $\tilde{b}^{0,2}$ comes from the map:
\begin{align*}
&B^{0,2}:\wedge^{2} T^*(\spl^{\times})\otimes \wedge^2\R^2\to \wedge^{2}T(\spl^{\times}),\\
&B^{0,2}\big(\beta\otimes e_1\wedge e_2\big)=
i_{V_1\wedge V_2}(\beta)\, V_1\wedge V_2.
\end{align*}
A similar description holds also for $\delta$. These vector bundle maps fit in the setting of the following lemma.

\begin{lemma}\label{lemma:singular:vb:maps} Let $V$ and $W$ be two vector bundles over an inner product space $H$. Consider a vector bundle map with a singularity at the origin, i.e., a vector bundle map: 
\[\Phi:V|_{H\backslash\{0\}}\to W|_{H\backslash\{0\}}.\]
Assume that the singularity is such that, for some $l\geq 0$, 
$R^{l}\Phi$ is smooth and that it vanishes at the origin up to order $d\geq 0$, where $R$ denotes the induced norm on $H$. Then $\Phi$ induces a linear operator between flat sections:
\[\Phi_{*}:\Gamma_0(V) \to \Gamma_0(W),\]
which satisfies property SLB for $(0,0,\mathrm{max}(l-d,0))$.
\end{lemma}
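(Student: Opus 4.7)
The plan is to argue pointwise in local trivializations and reduce the statement to two key estimates: a quantitative derivative bound for $\Phi$ that controls its singularity, and a \emph{trading inequality} for flat sections that exchanges decay rate at the origin for derivative order. The setup is straightforward: choose local frames so that $\Phi_{*}$ acts by multiplication by a matrix $(\phi_{ij})$ of smooth functions on $H\setminus\{0\}$, where $\psi_{ij}:=R^{l}\phi_{ij}$ extends smoothly to $H$ and vanishes to order $d$ at the origin.

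As a first step, I would prove the derivative estimate
\[
|D^{b}\phi_{ij}(x)|\ \le\ C(r,b)\,R(x)^{d-l-|b|},\qquad x\in\overline{B}_r\setminus\{0\},
\]
by applying Leibniz to $\phi_{ij}=R^{-l}\psi_{ij}$ and combining the standard bound $|D^{\alpha}R^{-l}|\le C R^{-l-|\alpha|}$ with the Taylor bound $|D^{\beta}\psi_{ij}(x)|\le C(r)\,|x|^{\max(d-|\beta|,0)}$, which follows from $\psi_{ij}$ vanishing to order $d$. Higher (more positive) exponents appearing in the Leibniz sum can be absorbed into $C(r)$ using $R^{e_1}\le C(r)R^{e_2}$ for $e_1\ge e_2$ on $\overline{B}_r$.

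The crucial second step is the trading inequality: for any flat $s\in\Gamma_0$ and $m,N\in\N_0$,
\[
\fnorm{s}_{m,k+N,r}\ \le\ C(r,m,N)\,\fnorm{s}_{m+N,k,r}.
\]
This follows by Taylor-expanding $D^{\gamma}s$ (itself flat at $0$) to order $N$ with integral remainder: the vanishing of the Taylor polynomial gives $|D^{\gamma}s(x)|\le C|x|^{N+k}\fnorm{s}_{m+N,k,r}$ for $|\gamma|\le m$, which rearranges to the claimed inequality. Without flatness this trade would fail, and it is precisely this that makes the $k$-shift in SLB independent of the derivative order $n$.

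Combining the two, I would apply Leibniz to $(\Phi_{*}s)_{i}=\sum_{j}\phi_{ij}s_{j}$ and estimate each term $D^{b}\phi_{ij}\cdot D^{c}s_{j}$ with $b+c=a$, $|a|\le n$. Using the first step for $\phi_{ij}$ and the pointwise bound $|D^{c}s_{j}(x)|\le|x|^{k+\max(l-d,0)+|b|}\fnorm{s_{j}}_{|c|,k+\max(l-d,0)+|b|,r}$, the trading inequality with $N=|b|$ collapses the norm to $\fnorm{s_{j}}_{|c|+|b|,k+\max(l-d,0),r}\le\fnorm{s_{j}}_{n,k+\max(l-d,0),r}$ (permitted since $|c|+|b|=|a|\le n$), and the resulting power of $R$ is $R^{d-l+k+\max(l-d,0)}=R^{k+\max(d-l,0)}\le C(r)R^{k}$ on $\overline{B}_r$. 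This yields $\fnorm{\Phi_{*}s}_{n,k,r}\le C(r,n,k)\fnorm{s}_{n,k+\max(l-d,0),r}$, which is the SLB estimate for $(0,0,\max(l-d,0))$. Taking $k$ arbitrarily large in the bound shows $D^{a}(\Phi_{*}s)=o(|x|^{k})$ at the origin for every $k$ and every $a$, so $\Phi_{*}s$ indeed extends to a smooth flat section on $H$. The main obstacle is recognizing that flatness is the essential resource needed to compensate for the growth of $|D^{b}\phi_{ij}|$ with $|b|$, a role played exactly by the trading inequality.
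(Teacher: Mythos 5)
Your proof is correct and its core ingredients --- Leibniz on the product, Taylor with integral remainder applied to the flat section, and a pointwise bound on the derivatives of the singular coefficients --- coincide with the paper's. The organization differs in two respects, both mild. First, the paper removes the $R^{-l}$ factor at the level of norms by invoking an external norm-equivalence lemma (Lemma C.1 in Part~II), which converts $\fnorm{R^{-l}\phi\,g}_{n,k,r}$ into $C\fnorm{\phi\,g}_{n,k+l,r}$, and only then bounds $|D^{a^1}\phi|\le C R^{\max(d-|a^1|,0)}$; you instead fold $R^{-l}$ and $\psi_{ij}$ together into $\phi_{ij}$ and derive a single derivative estimate $|D^b\phi_{ij}|\le C R^{d-l-|b|}$ by Leibniz, making the argument self-contained and independent of Part~II. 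Second, you isolate the Taylor step as a standalone ``trading inequality'' $\fnorm{s}_{m,k+N,r}\le C\fnorm{s}_{m+N,k,r}$, whereas the paper performs essentially the same computation inline; your packaging makes it transparent why flatness is indispensable and why the $k$-shift is independent of $n$, which is the right intuition and could be reused elsewhere. Neither difference affects correctness, and the final estimate and SLB triple $(0,0,\max(l-d,0))$ come out identically.
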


\begin{proof}
Without loss of generality we may assume that the vector bundles $V$ and $W$ are trivial. Therefore, $\Phi$ is described by a matrix of the form
\begin{align*}
    \Phi = \begin{pmatrix}
    \frac{1}{R^{l}}\phi_{ij}
    \end{pmatrix} 
\end{align*}
where $\phi_{ij}\in C^{\infty}(H)$ vanish up to order $d$ at the origin. It is enough to show that for $\phi\in C^{\infty}(H)$ vanishing at the origin up to order $d$, we have: 
\begin{align*}
    \fnorm{\frac{1}{R^{l}}\phi\, g}_{n,k,r}\le C\fnorm{g}_{n,\mathrm{max}(0,k+l-d),r}
\end{align*}
for all $g\in C^{\infty}_0(H)$, $n,k\in \N_0$, $r>0$ and a constant $C$. First, by the equivalence of norms from Lemma C.1 in Part II of the paper, we have that:
\begin{align*}
    \fnorm{\frac{1}{R^{l}}\phi\, g}_{n,k,r}\le C\fnorm{\phi\, g}_{n,k+l,r}.
\end{align*}

By the general Leibniz rule we obtain for $a\in \N_0^{\dim(V)}$ with $|a|=n$ that
\begin{align*}
    D^a (\phi\, g) & =\sum_{a^1+a^2=a} D^{a^1}\phi\, D^{a^2}g.
\end{align*}
Next, Taylor's formula with integral remainder for $|a^2|<n$ yields: 
\[ D^{a^2}g(x)=\sum_{|b|=|a^1|}x^{b}\int_{0}^1|a^1|(1-s)^{|a^1|-1}(D^{b+a^2} g)(sx)\dif s.\]
Combining these yields for $x\in B_r\subset H$ and any $m=k+l\in \N_0$ the estimate
\begin{align*}
    \frac{|D^a \phi g|}{R^{m}} (x) &\le \sum_{a^1+a^2=a} \frac{1}{R^{m}}|D^{a^1}\phi||D^{a^2}g |(x)\\
    &\le C\Big(\sum_{0<|b|= n- |a^2|}\frac{R^{\max(|b|,d)}}{R^{m}}\int_{0}^1(1-s)^{|b|-1}|D^{b+a^2} g|(sx)\dif s\\
    &\qquad \quad + \frac{R^{d}}{R^{m}}|D^ag|(x)\Big)\\
    &\le C\Big(\sum_{0<|b|= n- |a^2|}\int_{0}^1\frac{s^{\max(0,m-d)}(1-s)^{|b|-1}}{(sR)^{\max(0,m-d)}}|D^{b+a^2} g|(sx)\dif s\\
    &\qquad \quad + \frac{1}{R^{\max(0,m-d)}}|D^ag|(x)\Big)\\
    &\le C \,\fnorm{g}_{n,\max(0,m-d),r}. \qedhere
\end{align*}
\end{proof}

The lemma implies that the maps $\tilde{a}$, $\tilde{b}$ and $\delta$ satisfy the claimed SLB properties. This follows by analyzing the singularities of our vector bundle maps, for which we obtain:
\begin{itemize}
    \item[-] for $A^{p,q}$: $l=2p$, $d=p+q+2$; hence $l-d\leq 2$;
    \item[-] for $B^{p,q}$: $l=4+2q$, $d=p+q+2$; hence $l-d\leq 4$;
    \item[-] for the map inducing $\delta$: $l=6$ and $d=3$; hence $l-d=3$.
\end{itemize}
This concludes the proof of Proposition \ref{proposition:PCH as forms}.

\subsection{Cochain homotopies}\label{section: perturbation Lemma}

By Proposition \ref{proposition:PCH as forms}, the complex 
\begin{equation}\label{eq:beautiful:complex}
    \Big(\bigoplus_{p+q=\bullet}\CC^{p}\otimes \wedge^{q}\R^2,\dif +\delta\Big)
\end{equation}
computes the flat Poisson cohomology of $(\spl,\pi)$. This complex admits the filtration by the $p$-degree, as in Corollary \ref{corollary: ladch}. The second page of the associated spectral sequence is given by:
\begin{equation*}
    \begin{tikzpicture}[baseline= (a).base]
    \node[scale=1] (a) at (0,0){
    \begin{tikzcd}[row sep=small]
    H^0_0(\F)\otimes\wedge^2\R^2\arrow{rrd}{\phantom{1}i_{[\gamma]}}&0&H^2_0(\F)\otimes\wedge^{2}\R^2 \\
    H^0_0(\F)\otimes\wedge^1\R^2\arrow{rrd}{\phantom{1}i_{[\gamma]}}&0&H^2_0(\F)\otimes\wedge^{1}\R^2\\
    H^0_0(\F)\otimes\wedge^{0}\R^2 &0&H^2_0(\F)\otimes\wedge^{0}\R^2
    \end{tikzcd}
    };
    \end{tikzpicture}
\end{equation*}
where we used that $H^{p}_0(\F)=0$, for $p\notin\{0,2\}$. In particular, all differentials on higher pages vanish, and so the cohomology is determined by the third page. The kernel and cokernel of the maps $i_{[\gamma]}$ will be calculated explicitly in the Subsections \ref{section: cohomology second page} and \ref{section: HE data}. This will provide the complete description of the Poisson cohomology in Subsection \ref{section: flat pch}. The construction of homotopy operators will be obtained by applying the Perturbation Lemma, which is done in the first subsection.

\subsubsection{The Perturbation Lemma}

For the proof of Theorem \ref{theorem: nondegenerate} it does not suffice to calculate the Poisson cohomology, but we need explicit cochain homotopies satisfying estimates as in Definition \ref{definition: property H}. In this subsection we show how the deformation retraction $(p_{\F},h_{\F})$ of $(\CC^{\bullet},\dif)$ induces a deformation retraction of the complex \eqref{eq:beautiful:complex}. For this we use the Perturbation Lemma following \cite{Cr04} and the terminology therein, which we first recall. 

\begin{definition}
A \textbf{homotopy equivalence data} (HE data):
\begin{align}\label{eq: he data}
    (B,\mathrm{b} )\stackrel[\iota]{p }{\leftrightarrows} (C,\dif),h
\end{align}
consists of two complexes $(B,\mathrm{b})$ and $(C,\dif)$ together with two quasi-isomorphisms $p$ and $\iota$, and a cochain homotopy $h$ between $\iota \circ p$ and the identity, i.e.,
\begin{align*}
    \iota \circ p -\id = h\circ \dif +\dif \circ h
\end{align*}
\end{definition}
\begin{remark}
Note that any deformation retraction (see Definition \ref{definition: dr}) defines an HE data. In particular, by Proposition \ref{proposition: property H fol} we have the HE data
\begin{align*}
    (p_{\F}(\CC),0 )\stackrel[\iota]{p_{\F} }{\leftrightarrows} (\CC,\dif),h_{\F}
\end{align*}
\end{remark}
Given an HE data one considers perturbations of this data as follows:
\begin{definition}
A \textbf{perturbation} $\delta$ of an HE data as in \eqref{eq: he data} is a map on $C$ of the same degree as $\dif$ such that $\dif+\delta$ is a differential:
\begin{align*}
    (\dif+\delta)^2=0.
\end{align*}
The perturbation $\delta$ is called \textbf{small} if $(\id - \delta h)$ is invertible. Then we set
\begin{align*}
    A:= (\id -\delta \circ h)^{-1}\circ \delta.
\end{align*}
The \textbf{perturbation data} of the small perturbation $\delta$ is the data: \begin{align}\label{eq: perturbation data}
    (B,\tilde{\mathrm{b}})\stackrel[\tilde{\iota}]{ \tilde{p}}{\leftrightarrows} (C,\dif +\delta),\tilde{h}
\end{align}
\begin{equation*}
    \begin{array}{rclcrcl}
         \tilde{\iota} &=& (\id +h\circ A)\circ \iota, &\ \ \ \ \ & \tilde{p}&=& p\circ (\id +A\circ h), \\
         \tilde{h} &=& h\circ (\id +A\circ h),& &\tilde{\mathrm{b}} &=&\mathrm{b} +p\circ A\circ \iota.
    \end{array}
\end{equation*}
\end{definition}
The main statement in \cite{Cr04} states:
\begin{lemma}\label{lemma: perturbation lemma}
If $\delta$ is a small perturbation of an HE data as in \eqref{eq: he data}, then the associated perturbation data \eqref{eq: perturbation data} is an HE data.
\end{lemma}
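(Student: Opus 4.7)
The plan is to verify directly that the data in \eqref{eq: perturbation data} satisfies the axioms of an HE data: that $\tilde{\mathrm{b}}$ is a differential, that $\tilde{p}$ and $\tilde{\iota}$ are quasi-isomorphisms intertwining $\tilde{\mathrm{b}}$ with $\dif+\delta$, and that $\tilde{h}$ is a chain homotopy between $\tilde{\iota}\circ\tilde{p}$ and $\id_C$. The argument is entirely algebraic, and the first step is to extract the modest toolkit of identities it depends on. From the definition $A = (\id -\delta h)^{-1}\delta$ one reads off the two fixed-point equations
\[ A = \delta + \delta h A \qquad \text{and} \qquad A = \delta + A h \delta, \]
equivalently $(\id-\delta h)^{-1} = \id + Ah$ and $(\id-h\delta)^{-1} = \id + hA$. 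Together with the cocycle relation $\dif\delta + \delta\dif + \delta^2 = 0$ (encoding $(\dif+\delta)^2=0$), the chain-map identities $\dif\iota = \iota\,\mathrm{b}$ and $\mathrm{b}\,p = p\,\dif$, and the original homotopy $\iota p - \id = \dif h + h\dif$, these are the only tools needed.

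I would then carry out the three verifications in turn. For the homotopy relation
\[ \tilde{\iota}\tilde{p} - \id = (\dif+\delta)\tilde{h} + \tilde{h}(\dif+\delta), \]
expand both sides using $\tilde{h} = h(\id+Ah)$ and the formulas for $\tilde{\iota},\tilde{p}$; push each $\dif$ past $h$ via $\dif h = \iota p - \id - h\dif$, and use the fixed-point identities to collapse every remaining $\delta h$ or $h\delta$ into $Ah$ or $hA$. The two sides then match after a finite reorganization. For the chain-map identities $(\dif+\delta)\tilde{\iota} = \tilde{\iota}\,\tilde{\mathrm{b}}$ and $\tilde{\mathrm{b}}\,\tilde{p} = \tilde{p}(\dif+\delta)$, a parallel manipulation reduces each side to the same expression by the same mechanism. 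The relation $\tilde{\mathrm{b}}^2=0$ then follows by a direct expansion of $(\mathrm{b}+pA\iota)^2$, using the cocycle relation for $\delta$, the chain-map properties of $p$ and $\iota$, and the fixed-point identities for $A$.

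Once the homotopy $\tilde{\iota}\tilde{p}\simeq\id_C$ is in place, it shows $[\tilde{p}]\circ[\tilde{\iota}] = \id$ in cohomology, so $[\tilde{p}]$ is a split epi and $[\tilde{\iota}]$ a split mono. The remaining point — that both are in fact isomorphisms in cohomology — is by a standard argument in homological perturbation theory (see \cite{Cr04}), which exploits the symmetry of the construction under swapping $p\leftrightarrow \iota$ together with the algebraic invertibility of the correction factors $\id + Ah$ and $\id + hA$ (with explicit inverses $\id -\delta h$ and $\id -h\delta$). The main obstacle is purely bookkeeping: the degree-$(+1)$ operators $\dif,\delta,A$ must be commuted past the degree-$(-1)$ operator $h$ with the correct signs, and at each step one must recognize when a block like $\delta + \delta h A$ should be collapsed to $A$ via the fixed-point identities.
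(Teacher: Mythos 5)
The paper does not prove Lemma~\ref{lemma: perturbation lemma}: it is stated as the main result of \cite{Cr04} and used as a black box. So there is no in-paper proof to compare against, and your proposal should be judged as a stand-alone argument. In that light, your toolkit and overall strategy (the fixed-point identities $A=\delta+\delta hA=\delta+Ah\delta$, the cocycle relation for $\delta$, and pushing $\dif$ past $h$ through the original homotopy) are indeed the right ingredients for the direct verification; this is the approach taken in \cite{Cr04}. But the proposal stops short of being a proof in two places. First, the three verifications (homotopy relation, chain-map identities, $\tilde{\mathrm{b}}^2=0$) are only \emph{described}, with the assertion that ``the two sides match after a finite reorganization.'' In a lemma whose entire content is a non-obvious cancellation, that reorganization \emph{is} the proof; in particular, cross terms such as $hA\dif h$, $h\dif Ah$, and $hAAh$ that appear after expanding $(\id+hA)(\iota p)(\id+Ah)$ do not collapse by the fixed-point identities alone and need the cocycle relation $\dif\delta+\delta\dif+\delta^2=0$. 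You should actually carry this out.

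Second, the quasi-isomorphism step contains both a directionality error and a genuine gap. From $\tilde{\iota}\tilde{p}\simeq\id_C$ one gets $[\tilde{\iota}]\circ[\tilde{p}]=\id_{H(C,\,\dif+\delta)}$, not $[\tilde{p}]\circ[\tilde{\iota}]=\id$; hence $[\tilde{p}]$ is a split \emph{mono} and $[\tilde{\iota}]$ a split \emph{epi}, the opposite of what you wrote. More importantly, the upgrade to isomorphisms is not given by ``the symmetry of the construction under swapping $p\leftrightarrow\iota$ together with the algebraic invertibility of $\id+Ah$ and $\id+hA$'': the homotopy relation is one-sided (it lives on $C$), and the invertibility of $\id+Ah$ is only as a map of graded vector spaces, not as a chain map between the relevant differentials, so it does not by itself transport quasi-isomorphism. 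The standard remedies are either to establish a second homotopy $\tilde{p}\tilde{\iota}\simeq\id_B$ (which requires another computation), or to impose the side conditions $p\iota=\id_B$, $ph=0$, $h\iota=0$, $h^2=0$ (the SDR/Lambe--Stasheff setting), under which $\tilde{p}\tilde{\iota}=\id_B$ on the nose. Note that in the paper's actual application both $p\iota=\id$ and the nilpotency $(\delta h_{\F})^3=0$ hold, so the concrete use of the lemma is safe; but as a proof of the general statement, the quasi-isomorphism step as you have written it does not go through and needs to be replaced by one of the arguments above, or by the one given in \cite{Cr04}.
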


We apply the Perturbation Lemma to our complex $(\CC\otimes \wedge \R^2, \dif+\delta)$, which we regard as a perturbation of the differential $\dif$, admitting the deformation retraction $(p_{\F},h_{\F})$. Here we extend all these operators from $\CC$ to $\CC\otimes \wedge \R^2$ by $- \otimes \id_{\wedge\R^2}$. These maps fit in the diagram: 
\begin{equation}\label{eq: complex diag}
    \begin{tikzpicture}[baseline= (a).base]
    \node[scale=.9] (a) at (0,0){
    \begin{tikzcd}
    \CC^0\otimes\wedge^2\R^2\arrow[bend left=5]{r}{\dif}\arrow{rrd}{\delta}&\CC^1\otimes\wedge^2\R^2\arrow[bend left=5]{r}{\dif}\arrow[bend left=5]{l}\arrow{rrd}{\delta}&\CC^2\otimes\wedge^2\R^2\arrow[bend left=5]{r}{\dif}\arrow[bend left=5]{l}&\CC^3\otimes\wedge^2\R^2\arrow[bend left=5]{l}{h_{\F}} \\
    \CC^0\otimes\wedge^1\R^2\arrow[bend left=5]{r}{\dif}\arrow{rrd}{\delta}&\CC^1\otimes\wedge^1\R^2\arrow[bend left=5]{r}\arrow[bend left=5]{l}\arrow{rrd}{\delta}&\CC^2\otimes\wedge^1\R^2\arrow[bend left=5]{r}\arrow[bend left=5]{l}&\CC^3\otimes\wedge^1\R^2 \arrow[bend left=5]{l}{h_{\F}} \\
    \CC^0\otimes\wedge^0\R^2\arrow[bend left=5]{r}{\dif}&\CC^1\otimes\wedge^0\R^2\arrow[bend left=5]{r}\arrow[bend left=5]{l}{h_{\F}}&\CC^2\otimes\wedge^0\R^2\arrow[bend left=5]{r}\arrow[bend left=5]{l}{h_{\F}}&\CC^3\otimes\wedge^0\R^2\arrow[bend left=5]{l}{h_{\F}}
    \end{tikzcd}
    };
    \end{tikzpicture}
\end{equation}

As an immediate consequence of the Perturbation Lemma we obtain:
\begin{corollary}\label{corollary: homotopy}
The complex $(\CC\otimes \wedge\R^2,\dif+\delta)$ admits the HE data
\begin{align*}
    (p_{\F}(\CC)\otimes \wedge\R^2,\tilde{\delta})\stackrel[\tilde{\iota}]{\tilde{p}}{\leftrightarrows}(\CC\otimes \wedge\R^2 ,\dif+ \delta),\tilde{h}
\end{align*}
where the maps $\tilde{\iota}$, $\tilde{p}$ and $\tilde{h}$ are given by
\begin{align*}
    \tilde{\iota} =&\ \sum_{i=0}^2 (h_{\F}\circ \delta)^i\circ \iota,\ \ \ \ \ \ \  \tilde{p} = p_{\F}+p_{\F}\circ \delta\circ h_{\F},\\
    \tilde{h}=&\ h_{\F}\circ \sum_{i=0}^2 (\delta\circ h_{\F})^i, \ \ \ \ \ \ \tilde{\delta} = p_{\F}\circ \delta.
\end{align*}
The maps $\tilde{\iota}$, $\tilde{p}$ and the homotopy $\tilde{h}$ satisfy property SLB for $(0,10,76)$, $(1,10,83)$ and $(0,15,111)$, respectively.
\end{corollary}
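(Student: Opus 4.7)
My plan is to apply the Perturbation Lemma (Lemma \ref{lemma: perturbation lemma}) to the HE data coming from the deformation retraction $(p_{\F},h_{\F})$ of Proposition \ref{proposition: property H fol}, extended to $\CC\otimes\wedge^{\bullet}\R^2$ by tensoring with the identity on the exterior factor. I will regard $\dif+\delta$ as a perturbation of $\dif$; its square vanishes thanks to Proposition \ref{proposition:PCH as forms}, which identifies $(\CC\otimes\wedge^{\bullet}\R^2,\dif+\delta)$ with the Poisson complex and so in particular supplies $(\dif+\delta)^2=0$ for free.

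The key preliminary is to verify that the perturbation is small, i.e.\ that $\id-\delta\circ h_{\F}$ is invertible. The observation to exploit is that $\delta$ strictly decreases the $\wedge^{\bullet}\R^2$ degree by one while $h_{\F}$ preserves it, so $(\delta\circ h_{\F})^3=0$ on the whole complex. Hence $(\id-\delta\circ h_{\F})^{-1}=\sum_{i=0}^{2}(\delta\circ h_{\F})^i$ is a finite sum, and the series defining $A$ in the Perturbation Lemma truncates to $A=\delta+\delta h_{\F}\delta$. The Perturbation Lemma then delivers an HE data with the claimed $\tilde{\iota}$ and $\tilde{h}$ after a direct expansion.

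However, the general formulas produced by the Perturbation Lemma contain the extra cubic terms $p_{\F}\delta h_{\F}\delta h_{\F}$ inside $\tilde{p}$ and $p_{\F}\delta h_{\F}\delta$ inside $\tilde{\delta}$, and I still need to show these vanish to match the statement. Here I would use Proposition \ref{proposition: chain isomorphism}, which tells us that $p_{\F}(\CC^{p})\simeq\DD^{p}$ is concentrated in degrees $p\in\{0,2\}$. A short $\CC$-degree count shows that both cubic tails land in $p_{\F}(\CC^{p})$ with $p$ odd (the only potentially dangerous case $p=0$ for $p_{\F}\delta h_{\F}\delta h_{\F}$ is ruled out by the fact that $h_{\F}:\CC^{0}\to\CC^{-1}=0$), and therefore vanish. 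This is the only step that is not entirely mechanical, and I expect it to be the main obstacle, though a mild one.

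The SLB estimates reduce to bookkeeping with Lemma \ref{lemma: slb property}. Starting from $h_{\F}\sim(0,5,35)$, $\delta\sim(0,0,3)$, and $p_{\F}\sim(1,5,40)$ (the latter deduced from the homotopy relation $p_{\F}-\id=\dif h_{\F}+h_{\F}\dif$ together with $\dif$ being a first-order differential operator, hence trivially SLB for $(1,0,0)$), successive applications of the composition rule yield $\delta\circ h_{\F}\sim(0,5,38)$, $(h_{\F}\delta)^{2}\sim(0,10,76)$, $(h_{\F}\delta)^{2}\circ h_{\F}\sim(0,15,111)$, and $p_{\F}\circ\delta\circ h_{\F}\sim(1,10,83)$, producing exactly the triples claimed for $\tilde{\iota}$, $\tilde{p}$ and $\tilde{h}$; since each of $\tilde{\iota}$, $\tilde{p}$, $\tilde{h}$ is a finite sum of such compositions, the worst summand dominates.
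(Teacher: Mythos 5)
Your proposal is correct and follows the paper's argument essentially verbatim: apply the Perturbation Lemma to the HE data from Proposition~\ref{proposition: property H fol} (extended by tensoring with $\wedge\R^2$), using nilpotency $(\delta\circ h_{\F})^3=0$ (since $\delta$ drops $\wedge^{\bullet}\R^2$-degree and $h_{\F}$ preserves it) to make the perturbation small, then identify the explicit formulas via the $\CC$-degree constraints $p_{\F}(\CC^p)=0$ for $p\notin\{0,2\}$, and finally chase SLB constants through Lemma~\ref{lemma: slb property}. Your SLB bookkeeping reproduces the paper's triples exactly, and the only wrinkle is a minor imprecision in the phrasing of the degree count (the $\tilde{p}$-tail $p_{\F}(\delta h_{\F})^2$ does not literally land in odd $\CC$-degree — you correctly dispatch the $p=0$ case via $h_{\F}(\CC^0)\subset\CC^{-1}=0$) and the omission of the second vanishing tail $p_{\F}(\delta h_{\F})^2\delta\iota$ in $\tilde\delta$, which dies by the same parity argument you already invoked.
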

\begin{proof}
From Proposition \ref{proposition: property H fol} we know that the complex $(C^{\bullet},\dif)$ given by \begin{align}\label{eq: complex short}
    C^k := \underset{p+q=k}{\bigoplus} \CC^p\otimes \wedge ^q\R^2
\end{align}
admits the HE data 
\begin{align*}
    (p_{\F}(C),0)\stackrel[\iota]{p_{\F} }{\leftrightarrows}(C,\dif),h_{\F}
\end{align*}
Note that $\delta$ is a small perturbation of $\dif$ since $\delta \circ h_{\F}$ is nilpotent:
\begin{align*}
    (\delta \circ h_{\F})^3=0
\end{align*}
and hence
\begin{align*}
    (\id -\delta\circ h_{\F})^{-1}=\sum_{i=0}^2 (\delta\circ h_{\F})^i.
\end{align*}
Hence we can apply Lemma \ref{lemma: perturbation lemma}. The formulas for $\tilde{\iota}$, $\tilde{p}$, $\tilde{h}$ and $\tilde{\delta}$ follow now immediately, by looking at the $(p,q)$ degrees and using that $p_{\F}(\CC^{p})=0$, for $p\notin \{0,2\}$, by Proposition \ref{proposition:flat foliated ch}.

By Proposition \ref{proposition:PCH as forms}, $\delta$ satisfies property SLB for $(0,0,3)$ and, by Proposition \ref{proposition: property H fol} and the remark following it, $p_{\F}$ and $h_{\F}$ satisfy property SLB for $(1,5,40)$ and $(0,5,35)$, respectively. Therefore, by Lemma \ref{lemma: slb property}, the operators $\tilde{\iota}$, $\tilde{p}$ and $\tilde{h}$ satisfy the claimed SLB properties.
\end{proof}

\subsubsection{The differential $\tilde{\delta}$ and its partial inverses}\label{section: cohomology second page}

Corollary \ref{corollary: homotopy} reduces the calculation of the cohomology of the complex $(\CC\otimes \wedge\R^2,\dif +\delta)$ and the construction of ``good'' homotopy operators for it, to the similar questions for the complex $(p_{\F}(\CC)\otimes \wedge\R^2,\tilde{\delta})$:
\begin{equation}\label{eq: diagram}
    \begin{tikzpicture}[baseline= (a).base]
    \node[scale=1] (a) at (0,0){
    \begin{tikzcd}[row sep=small]
    p_{\F}(\CC^0)\otimes\wedge^2\R^2\arrow{rrd}{\phantom{1}\tilde{\delta}}&0&p_{\F}(\CC^2)\otimes\wedge^{2}\R^2 \\
    p_{\F}(\CC^0)\otimes\wedge^1\R^2\arrow{rrd}{\phantom{1}\tilde{\delta}}&0&p_{\F}(\CC^2)\otimes\wedge^{1}\R^2\\
    p_{\F}(\CC^0)\otimes\wedge^{0}\R^2 &0&p_{\F}(\CC^2)\otimes\wedge^{0}\R^2
    \end{tikzcd}
    };
    \end{tikzpicture}
\end{equation}
Here we only pictured the non-trivial differentials and spaces.
In this section we determine these differentials explicitly, we compute partial inverses to them, and give estimates for the latter.

\begin{lemma}\label{lemma: second differential degree one}
The map $\tilde{\delta}$ in degree $(0,q)\to (2,q-1)$ is given by
\begin{align*}
         \tilde{\delta}: p_{\F}(\CC^0)\otimes \wedge^q\R^2&\to  p_{\F}(\CC^2)\otimes \wedge^{q-1}\R^2\\
  e_{\varphi}(g\circ f)\otimes e_i&\mapsto \frac{(-1)^{i+1}}{2\nf}  p_{\F}(e_{\varphi}(g\circ f)\wedge \tilde{\omega}_i)\otimes 1\\
   e_{\varphi}(g\circ f)\otimes e_1\wedge e_2&\mapsto \frac{1}{2\nf}\sum_{i=1}^{2} p_{\F}(e_{\varphi}(g\circ f)\wedge \tilde{\omega}_i)\otimes e_{3-i}
\end{align*}
with $g\in C^{\infty}_0(\C)$, where we used the isomorphism \eqref{eq: zero fol iso}.
\end{lemma}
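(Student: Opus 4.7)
The plan is to reduce the claim to an identity in $\DD^{2}$ via the cochain isomorphism $\rho^{*}\colon p_{\F}(\CC^{\bullet})\diffto\DD^{\bullet}$ of Proposition \ref{proposition: chain isomorphism}. Since $\tilde\delta=p_{\F}\circ\delta$ by Corollary \ref{corollary: homotopy}, and by Proposition \ref{proposition:PCH as forms} specialized to $p=0$ one has $\delta(\eta\otimes e_{i})=\gamma_{i}\wedge\eta$ and $\delta(\eta\otimes e_{1}\wedge e_{2})=\gamma_{1}\wedge\eta\otimes e_{2}-\gamma_{2}\wedge\eta\otimes e_{1}$, both displayed formulas follow from the single identity
\[
p_{\F}\bigl(\gamma_{i}\wedge e_{\varphi}(g\circ f)\bigr)\;=\;\frac{(-1)^{i+1}}{2|f|}\,p_{\F}\bigl(e_{\varphi}(g\circ f)\wedge\tilde\omega_{i}\bigr)\quad\textrm{in}\quad p_{\F}(\CC^{2}),\qquad i=1,2,
\]
the $q=2$ case then assembling by linearity from the two $q=1$ cases.

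The crucial observation, which makes the pullback via $\rho$ tractable, is that although $V_{1},V_{2}$ are not tangent to $\F$, they are tangent to $\Sk^{\mathrm{reg}}$. Indeed, substituting $x_{j}=\lambda_{1}w_{j}$, $y_{j}=\lambda_{2}w_{j}$ and $R^{2}=2|\lambda|^{2}$ into \eqref{eq: v}, and comparing with $\dif\rho(\partial_{\lambda_{1}})=\sum_{j}w_{j}\partial_{x_{j}}$ and $\dif\rho(\partial_{\lambda_{2}})=\sum_{j}w_{j}\partial_{y_{j}}$, one identifies $V_{i}|_{\Sk^{\mathrm{reg}}}=\dif\rho(\tilde V_{i})$ for the smooth lifts
\[
\tilde V_{1}=\tfrac{1}{2|\lambda|^{2}}(\lambda_{1}\partial_{\lambda_{1}}-\lambda_{2}\partial_{\lambda_{2}}),\qquad \tilde V_{2}=\tfrac{1}{2|\lambda|^{2}}(\lambda_{1}\partial_{\lambda_{2}}+\lambda_{2}\partial_{\lambda_{1}})
\]
on $S^{2}\times\C^{\times}$. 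Consequently $\rho^{*}\gamma_{i}=i_{\tilde V_{i}}\dif\rho^{*}\tilde\omega_{1}$; combining this with $\rho^{*}\tilde\omega_{1}=-\lambda_{1}\omega_{S^{2}}$ from \eqref{eq:pullback forms} and $i_{\tilde V_{i}}\omega_{S^{2}}=0$ (since $\tilde V_{i}$ has no $\partial_{w_{j}}$-component), a short calculation produces $\rho^{*}\gamma_{i}=-\tfrac{\lambda_{i}}{2|\lambda|^{2}}\omega_{S^{2}}$, where $\lambda_{i}$ denotes $\lambda_{1}$ or $\lambda_{2}$ according to $i$.

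The remainder is a direct verification. Using $\rho^{*}\varphi=4|\lambda|^{2}\dif\lambda_{1}\wedge\dif\lambda_{2}$ and $f\circ\rho=\lambda^{2}$, the left-hand side pulls back to $-2\lambda_{i}\,g\circ\mathrm{sq}\;\dif\lambda_{1}\wedge\dif\lambda_{2}\wedge\omega_{S^{2}}$; the right-hand side pulls back to the same expression upon using $\rho^{*}\tilde\omega_{i}=(-1)^{i}\lambda_{i}\omega_{S^{2}}$ and $|f|\circ\rho=|\lambda|^{2}$. Both pullbacks are $SU(2)$-invariant and already of the form defining $\DD^{2}$, and since $r_{\Sk}\circ\rho=\rho$ and $\rho$ is $SU(2)$-equivariant we have $\rho^{*}\circ p_{\F}=p'_{SU(2)}\circ\rho^{*}$, which acts as the identity on such invariant forms. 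Therefore $\rho^{*}$ of both sides of the claimed identity coincide in $\DD^{2}$, and Proposition \ref{proposition: chain isomorphism} delivers the identity in $p_{\F}(\CC^{2})$.

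The one mild subtlety, more notational than analytic, is that the prefactor $1/(2|f|)$ appearing outside $p_{\F}$ on the right-hand side should yield a genuine element of $p_{\F}(\CC^{2})$. After $\rho^{*}$ this factor produces the coefficient $-\lambda_{i}\,g\circ\mathrm{sq}/(2|\lambda|^{2})$, which is smooth and flat at $0\in\C$ because the flatness of $g\circ\mathrm{sq}$ dominates the polynomial singularity of $|\lambda|^{-2}$ (an instance of Lemma \ref{lemma:singular:vb:maps}), and is visibly $\sigma$-antisymmetric; thus it lies in $C^{\infty}_{0,-\sigma}(\C)$, placing the right-hand side squarely inside $\DD^{2}\simeq p_{\F}(\CC^{2})$.
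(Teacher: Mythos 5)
Your proof is correct and takes essentially the same route as the paper: both compute $\rho^*\gamma_i$ via the lifts $\tilde V_i$ (the paper's $W_i$ of \eqref{eq: w}) and then compare pullbacks in $\DD^2$ using the isomorphism of Proposition \ref{proposition: chain isomorphism}. The only deviation is in how the projection $p_{\F}$ is dispensed with under $\rho^*$: the paper uses the homotopy relation together with the vanishing $\DD^1=0$ to show $\rho^*\tilde\delta=\rho^*\delta$, whereas you show directly that $\rho^*\circ p_{\F}=p_{SU(2)}^{S^2\times\C}\circ\rho^*$ and hence acts as the identity on $SU(2)$-invariant pullbacks — a somewhat cleaner justification of the same reduction.
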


\begin{proof}
We prove the result in two steps.\\
\noindent\underline{Step 1}: By Proposition \ref{proposition: chain isomorphism}, the map $\rho^*$ from \eqref{eq:cohomology map} fits into a commutative diagram
\begin{equation*}
    \begin{tikzpicture}[baseline= (a).base]
    \node[scale=.9] (a) at (0,0){
    \begin{tikzcd}
    p_{\F}(\CC^0)\otimes\wedge^q\R^2\arrow{d}{\simeq}\arrow{d}[swap]{\rho^*}\arrow{r}{\tilde{\delta}}&p_{\F}(\CC^{2})\otimes\wedge^{q-1}\R^2\arrow{d}{\simeq}\arrow{d}[swap]{\rho^*}\\
    \mathscr{D}^0\otimes\wedge^q\R^2\arrow{r}{\delta_{\mathscr{D}}}&\mathscr{D}^{2}\otimes\wedge^{q-1}\R^2 
    \end{tikzcd}
    };
    \end{tikzpicture}
\end{equation*}
We claim that the map $\delta_{\mathscr{D}}$ is given by
\begin{align}\label{eq:delta:on:skeleton}
 -\frac{\lambda_1}{2|\lambda|^2}e_{\omega_{S^2}}\otimes i_{\varepsilon^1}- \frac{\lambda_2}{2|\lambda|^2}e_{\omega_{S^2}}\otimes i_{\varepsilon^2}
\end{align}
where $\lambda=\lambda_1+i\lambda_2$ are the coordinates on $\C$ and $\varepsilon^i\in (\R^2)^*$ denotes the dual basis to $e_i\in \R^2$. For this, we first recall from \eqref{eq:pullback forms} that
\[ \rho^*\tilde{\omega}_1 =-\lambda_1\cdot \omega_{S^2}\in \Omega^2(S^2\times \C)^{SU(2)\times \Z_2}\]
Hence we obtain 
\[ \rho^*\dif\tilde{\omega}_1 =-\omega_{S^2}\wedge \dif \lambda_1.\]
Next, we define \begin{align}\label{eq: w}
    W_1:=  \frac{\lambda_1\partial_{\lambda_1}-\lambda_2\partial_{\lambda_2}}{2|\lambda |^2}\quad \text{and}\quad
    W_2:= \frac{\lambda_2\partial_{\lambda_1}+\lambda_1\partial_{\lambda_2}}{2|\lambda |^2}. 
\end{align}
Using \eqref{eq:cohomology map explicit} it is easy to check that $V_i$ and $W_i$ are $\rho$-related, i.e.,
\begin{align*}
    \rho_*(W_i)=V_i|_{\Sk\backslash\{0\}}
\end{align*}
From the definition of the $\gamma_i$ in \eqref{eq: gammai} we obtain
\[ \rho^*\gamma_i = \frac{-\lambda_i}{2|\lambda|^2}\omega_{S^2}.\]
Recall from Proposition \ref{proposition:PCH as forms} that $\delta=e_{\gamma_1}\otimes i_{\epsilon^1}+e_{\gamma_2}\otimes i_{\epsilon^2}$. This implies that for $\xi\in p_{\F}(\CC^0)\otimes\wedge^q\R^2$, we have that $\delta_{0}\rho^*(\xi)=\rho^*\delta(\xi)$, where 
$\delta_0$ is the operator in \eqref{eq:delta:on:skeleton}. On the other hand $\tilde{\delta}(\xi)-\delta(\xi)=\dif h_{\F} \delta(\xi)$. Note that $h_{\F}\delta(\xi)$ is $SU(2)$-invariant, and therefore 
\[\rho^*(h_{\F}\delta(\xi))\in e_{\rho^*(\varphi)}(\Omega^1(S^2\times \C)^{SU(2)\times \mathbb{Z}_2})\otimes \wedge^{q-1} \R^2=0.\]
Thus $\rho^*\tilde{\delta}(\xi)=\rho^*\delta(\xi)=\delta_0\rho^*(\xi)$, and this concludes the proof of the claim.

\noindent\underline{Step 2}: The map $\tilde{\delta}$ in degree $(0,1)\to (2,0)$ is obtained as the composition of the following maps. The map $\rho^*$ identifies $p_{\F}(\CC^0)$ with $\mathscr{D}^0$ by
\begin{align*}
    e_{\varphi}(g \circ f)\otimes e_i&\ \mapsto  e_{\rho^*(\varphi)}(g \circ \mathrm{sq})\otimes e_i
\end{align*}
Then we use the map $\delta_{\mathscr{D}}$ from Step 1 to get
\begin{align*}
    e_{\rho^*(\varphi)}(g \circ \mathrm{sq})\otimes e_i&\, \mapsto -\frac{\lambda_i}{2|\lambda|} g (\lambda^2)\, e_{\rho^*(\varphi)}(\omega_{S^2})\otimes 1
\end{align*}
By \eqref{eq:pullback forms}, the inverse of $\rho^*$ in the diagram in Step 1 is given by:
\begin{align*}
    -\frac{\lambda_i}{2|\lambda|}g (\lambda^2)\, e_{\rho^*(\varphi)}(\omega_{S^2})\otimes 1&\, \mapsto \frac{(-1)^{i+1}}{2\nf}p_{\F}( e_{\varphi}(g \circ f)\wedge \tilde{\omega}_{i})\otimes 1
\end{align*}
So we obtain the form of $\tilde{\delta}$ from the statement. The case $(0,2)\to (2,1)$ follows similarly. 
\end{proof}


To obtain homotopy operators for the Poisson complex, we need explicit inverses for the map $\tilde{\delta}$.

\begin{lemma}\label{lemma: right inverse}
A right inverse for $\tilde{\delta}$ in degree $(0,1)\to (2,0)$ is given by 
\begin{align}\label{eq: right inverse map}
    &\mathscr{R}: p_{\F}(\CC^2)\otimes \wedge^0\R^2 \to p_{\F}(\CC^0)\otimes \wedge^1\R^2 \\
    &\sum_{i=1}^2  p_{\F}(e_{\varphi}(g_i\circ f)\wedge \tilde{\omega}_i) \otimes 1 \mapsto  2\nf \sum_{i=1}^2 (-1)^{i+1} e_{\varphi}(g_i\circ f)\otimes e_i\nonumber
\end{align}
with $(g_1,g_2)\in \mathcal{M}$. 
\end{lemma}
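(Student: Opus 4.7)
The plan is to first establish well-definedness of $\mathscr{R}$ using the identifications already in place, then to verify the identity $\tilde{\delta}\circ\mathscr{R}=\mathrm{id}$ by direct computation with the explicit formulas from Lemma \ref{lemma: second differential degree one}.

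For well-definedness, I would compose two isomorphisms from the preceding sections. Proposition \ref{proposition: property H fol} yields $p_{\F}(\CC^2)\simeq H^2_0(\F)$ (every class has a unique $p_{\F}$-representative, since $\dif\circ p_{\F}=0$), while Proposition \ref{proposition:flat foliated ch} gives $\mathcal{M}\diffto H^2_0(\F)$ via $(g_1,g_2)\mapsto [e_{\varphi}(g_1\circ f\cdot\tilde{\omega}_1+g_2\circ f\cdot\tilde{\omega}_2)]$. Composing, every element of $p_{\F}(\CC^2)\otimes\wedge^0\R^2$ admits a unique expression of the form $\sum_i p_{\F}(e_{\varphi}(g_i\circ f)\wedge\tilde{\omega}_i)\otimes 1$ with $(g_1,g_2)\in\mathcal{M}$, so the prescription defining $\mathscr{R}$ depends only on the input.

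Next, I would check that the image lies in the claimed codomain. Although $|f|=\sqrt{f_1^2+f_2^2}$ is only continuous on $\spl$, for any $g_i\in C^{\infty}_0(\C)$ the function $h_i(z):=2|z|g_i(z)$ is smooth and flat at $0\in\C$, since flatness absorbs the mild $|z|$-singularity at the origin. Hence $\mathscr{R}(x)=\sum_i(-1)^{i+1}e_{\varphi}(h_i\circ f)\otimes e_i$ lies in $p_{\F}(\CC^0)\otimes\wedge^1\R^2$ via the isomorphism \eqref{eq: zero fol iso}. Applying $\tilde{\delta}$ term by term and using $(-1)^{i+1}\cdot(-1)^{i+1}=1$ then gives
\[
\tilde{\delta}(\mathscr{R}(x))=\sum_{i=1}^2 \tfrac{1}{2|f|}\,p_{\F}\bigl(e_{\varphi}(2|f|\cdot g_i\circ f)\wedge\tilde{\omega}_i\bigr)\otimes 1;
\]
pulling the Casimir $2|f|$ out of $p_{\F}$ and cancelling the denominator recovers $x$.

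The only step requiring genuine care---and the main potential obstacle---is justifying the identity $p_{\F}(2|f|\cdot\beta)=2|f|\cdot p_{\F}(\beta)$, i.e.\ $C^{\infty}(\C)$-linearity of $p_{\F}$ via the Casimir embedding $f^*$, extended to the non-smooth Casimir $|f|$ whenever it multiplies a form flat at the origin. For $p_{\Sk}=r_{\Sk}^*$ this is immediate, since $r_{\Sk}$ preserves $f$ by Lemma \ref{lemma:properties of p}, hence also $|f|$; for $p_{SU(2)}$ it follows from $SU(2)$-invariance of $f$. The only subtlety is smoothness of the intermediate expressions, which is handled by the same flatness absorption argument already invoked for $h_i$ above.
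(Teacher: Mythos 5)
Your proposal is correct and follows the same route the paper takes: well-definedness from Propositions \ref{proposition:flat foliated ch} and \ref{proposition: property H fol}, and verification of $\tilde{\delta}\circ\mathscr{R}=\mathrm{id}$ by direct substitution into the formula of Lemma \ref{lemma: second differential degree one}. The paper's own proof simply records the well-definedness step and dismisses the verification as ``clear''; your write-up additionally makes explicit the $C^{\infty}(\C)$-linearity of $p_{\F}$ with respect to the non-smooth Casimir $|f|$ (absorbed by flatness), which is indeed the one point where the terse ``clear'' hides a genuine check.
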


\begin{proof} By Propositions \ref{proposition:flat foliated ch} and \ref{proposition: property H fol}, every element in $p_{\F}(\CC^2)$ is of the form
\begin{align*}
    \sum_{i=1}^2 p_{\F}(e_{\varphi}(g_i\circ f) \wedge  \tilde{\omega}_i), \quad \text{ for unique }\ (g_1,g_2)\in \mathcal{M}.
\end{align*}
Hence $\mathscr{R}$ is well-defined, and it is clearly an right inverse to $\tilde{\delta}$.
\end{proof}

Next, we describe a left inverse for $\tilde{\delta}$ in degree $(0,2)\to (2,1)$. For this we will use the following:

\begin{lemma}\label{lemma: trivial module 3}
We have an isomorphism of $C^{\infty}_0(\C)$-modules:
\[C^{\infty}_0(\C)\oplus C^{\infty}_0(\C) \simeq  p_{\F}(\CC^2)\otimes \R^2.\]
More explicitly, for every element $\xi\in p_{\F}(\CC^2)\otimes\R^2$ there exist unique elements $g_1,g_2\in C^{\infty}_0(\C)$ such that
\begin{align}\label{eq: trivialization in (2,1)}
    \xi= g_1\circ f\cdot b_1 +g_2\circ f\cdot b_2,
\end{align}
where $b_1$ and $b_2$ are the sections on the set $\spl\setminus S_0$ given by:
\begin{align*}
    b_1 :=&\,e_{\varphi}\big(p_{\F}(\tilde{\omega}_1)\otimes e_1 - 
    p_{\F}(\tilde{\omega}_2)\otimes e_2\big)\\
    b_2 :=&\,e_{\varphi}\big(p_{\F}(\tilde{\omega}_2)\otimes e_1 + p_{\F}(\tilde{\omega}_1)\otimes e_2\big).
\end{align*}
\end{lemma}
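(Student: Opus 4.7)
The plan is to reduce the claimed isomorphism to the direct sum decomposition $\mathcal{M}\oplus \mathcal{K}=C^{\infty}_0(\C)\oplus C^{\infty}_0(\C)$ of Proposition \ref{proposition: twisted module}.

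Write $\eta(h_1,h_2):=e_{\varphi}(h_1\circ f\cdot \tilde{\omega}_1+h_2\circ f\cdot \tilde{\omega}_2)\in \CC^2$. First, I would recall from Proposition \ref{proposition:flat foliated ch} that $(h_1,h_2)\mapsto p_{\F}(\eta(h_1,h_2))$ is a surjection from $C^{\infty}_0(\C)\oplus C^{\infty}_0(\C)$ onto $p_{\F}(\CC^2)$ with kernel $\mathcal{K}$, restricting to a $C^{\infty}_0(\C)$-linear isomorphism $\mathcal{M}\xrightarrow{\sim} p_{\F}(\CC^2)$. Tensoring with $\R^2$ produces the concrete model $p_{\F}(\CC^2)\otimes \R^2\simeq \mathcal{M}\oplus \mathcal{M}$.

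Second, I would unwind the proposed trivialization. Since $g\circ f$ is a Casimir (hence commutes with $p_{\F}$) and, for $g\in C^{\infty}_0(\C)$, the product $g\circ f\cdot \tilde{\omega}_i$ lies in $\Omega^2_0(\spl)$ (the flatness absorbing the mild $R^{-2}$ singularity of $\tilde{\omega}_i$, cf.\ Lemma \ref{lemma: extension omega}), expanding the definitions of $b_1$ and $b_2$ yields
\[
g_1\circ f\cdot b_1+g_2\circ f\cdot b_2 \;=\; p_{\F}(\eta(g_1,g_2))\otimes e_1 + p_{\F}(\eta(g_2,-g_1))\otimes e_2.
\]
Combined with the identification of the first step, the map under scrutiny becomes
\[
\Psi:C^{\infty}_0(\C)\oplus C^{\infty}_0(\C)\longrightarrow \mathcal{M}\oplus \mathcal{M},\qquad \Psi(g_1,g_2):=\big(p_{\mathcal{M}}(g_1,g_2),\;p_{\mathcal{M}}(g_2,-g_1)\big),
\]
with $p_{\mathcal{M}}$ the projection from Proposition \ref{proposition: twisted module}. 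Thus the lemma reduces to showing $\Psi$ is a $C^{\infty}_0(\C)$-module isomorphism.

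For this, I would introduce $\sigma(g_1,g_2):=(g_2,-g_1)$, which satisfies $\sigma^{2}=-\mathrm{id}$. Using $(|z|-x)(|z|+x)=y^{2}$ and the defining relations of $\mathcal{M}$ and $\mathcal{K}$, a short calculation shows that $\sigma$ interchanges these two submodules (in fact, on $\mathcal{M}$ it is $-J$ in the notation of Proposition \ref{proposition: twisted module}). Writing $(g_1,g_2)=h+h'$ uniquely with $h\in \mathcal{M}$, $h'\in \mathcal{K}$ via the decomposition of Proposition \ref{proposition: twisted module}, one then obtains $\Psi(g_1,g_2)=(h,\sigma(h'))$ with $\sigma(h')\in \mathcal{M}$; injectivity is immediate, and any prescribed $(m_1,m_2)\in \mathcal{M}\oplus \mathcal{M}$ is the image of the element $m_1+\sigma^{-1}(m_2)=m_1-\sigma(m_2)\in \mathcal{M}\oplus \mathcal{K}$, yielding surjectivity.

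The main subtlety requiring care is the unwinding in the second step: $b_1$ and $b_2$ are only defined on $\spl\setminus S_0$, so one must confirm that $g_i\circ f\cdot b_j$ actually extends to a bona fide element of $p_{\F}(\CC^2)\otimes \R^2$. This holds because multiplication by the flat Casimir $g_i\circ f$ extinguishes the $R^{-2}$ singularity of $\tilde{\omega}_j$ at the origin and $p_{\F}$ preserves the flat subcomplex $\CC^{\bullet}$ by Proposition \ref{proposition: property H fol}. Beyond this verification, the rest of the argument is purely linear-algebraic bookkeeping.
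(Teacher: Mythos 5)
Your proof is correct and follows essentially the same route as the paper's: both reduce the claim to the decomposition $C^{\infty}_0(\C)\oplus C^{\infty}_0(\C)=\mathcal{M}\oplus\mathcal{K}$ from Proposition \ref{proposition: twisted module}, use the identification $\mathcal{M}\simeq p_{\F}(\CC^2)$ via $(g_1,g_2)\mapsto p_{\F}(\eta(g_1,g_2))$ with kernel $\mathcal{K}$, and then account for the twist via the involution $\sigma=(g_1,g_2)\mapsto(g_2,-g_1)$ (which is $-J$ on $\mathcal{M}$ in the paper's notation) interchanging $\mathcal{M}$ and $\mathcal{K}$. You simply spell out the injectivity and surjectivity of the resulting map $\Psi(g_1,g_2)=(h,\sigma(h'))$ explicitly, whereas the paper writes the inverse down directly; the content is the same.
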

\begin{proof}
First recall that by Proposition \ref{proposition: twisted module} and Proposition \ref{proposition:flat foliated ch} we have the isomorphisms of $C^{\infty}_0(\C)$-modules:
\begin{equation}\label{eq:composition of isomorphisms}
C^{\infty}_0(\C)\oplus C^{\infty}_0(\C)=\mathcal{M}\oplus \mathcal{K}\simeq 
 \mathcal{M}\oplus  \mathcal{M} 
 \simeq  p_{\F}(\CC^2)\otimes \R^2.
\end{equation}
The statement follows by applying these isomorphism to a basis on the left. More explicitly, consider the map $\chi:C^{\infty}_0(\C)\oplus C^{\infty}_0(\C)\to p_{\F}(\CC^2)$ given by 
\[\chi(g_1,g_2):=\sum_{i=1}^2 p_{\F}(e_{\varphi}(g_i\circ f) \wedge  \tilde{\omega}_i).\]
We know that $\ker \chi=\mathcal{K}$ and its restriction to $\mathcal{M}$ is an isomorphism. Denote $J(g_1,g_2)=(-g_2,g_1)$. Using that $\id=p_{\mathcal{M}}+ p_{\mathcal{K}}$ and that $J(\mathcal{M})=\mathcal{K}$, we obtain the isomorphism from the statement:
\begin{align*}
(g_1,g_2)\mapsto& (\chi \circ p_{\mathcal{M}}(g_1,g_2))\otimes e_1 +
(\chi \circ (-J)\circ p_{\mathcal{\mathcal{K}}}(g_1,g_2))\otimes e_2=\\
&= \chi (g_1,g_2)\otimes e_1+ \chi(g_2,-g_1)\otimes e_2=\\
&=g_1\circ f \cdot b_1+ g_2\circ f\cdot b_2.\qedhere
\end{align*}

\end{proof}
With the notation from the lemma, $\tilde{\delta}$ in degree $(0,2)\to (2,1)$ becomes:
\begin{align}\label{eq: delta degree 2}
    \tilde{\delta}(e_{\varphi}(g\circ f)\otimes e_1\wedge e_2)= \frac{1}{2|f|}g\circ f\cdot b_2.
\end{align}
Therefore, the lemma implies the following: 

\begin{lemma}\label{lemma: second differential degree two}
A left inverse of $\tilde{\delta}$ in degree $(0,2)\to (2,1)$ is given by 
\begin{align}\label{eq: left inverse}
\mathscr{L}&: p_{\F}(\CC^2)\otimes \wedge^1\R^2 \to p_{\F}(\CC^0)\otimes \wedge^2\R^2 \\
&\sum_{i=1}^2 g_i\circ f\cdot b_i\mapsto  2\nf\, e_{\varphi}(g_2\circ f)\otimes e_1\wedge e_2.\nonumber
\end{align}
\end{lemma}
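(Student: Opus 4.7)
The plan is to verify two things: that $\mathscr{L}$ is a well-defined linear map, and that $\mathscr{L}\circ\tilde{\delta}=\mathrm{id}$ on $p_{\F}(\CC^0)\otimes\wedge^2\R^2$. Both reduce to a direct bookkeeping computation using Lemma~\ref{lemma: trivial module 3} (which trivializes $p_{\F}(\CC^2)\otimes\R^2$ over $C^{\infty}_0(\C)$) and the formula \eqref{eq: delta degree 2}, so I expect the argument to be very short.

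To address well-definedness first, I would appeal to Lemma~\ref{lemma: trivial module 3} to unambiguously read off the coefficient functions $(g_1,g_2)\in C^{\infty}_0(\C)\oplus C^{\infty}_0(\C)$ of any input $\sum_{i=1}^{2}g_i\circ f\cdot b_i$. The prescribed output then has the form $2|f|\cdot e_{\varphi}(g_2\circ f)\otimes e_1\wedge e_2$, and to see that this lies in $p_{\F}(\CC^0)\otimes\wedge^2\R^2$ via the isomorphism \eqref{eq: zero fol iso}, I would observe that $z\mapsto 2|z|g_2(z)$ belongs to $C^{\infty}_0(\C)$: it is smooth away from $0$ since $|z|$ is, and the flatness of $g_2$ at $0$ forces the product $|z|g_2(z)$ to be smooth and flat at $0$ as well.

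For the left-inverse identity I would apply $\tilde{\delta}$ to a generic element $e_{\varphi}(g\circ f)\otimes e_1\wedge e_2$ and invoke \eqref{eq: delta degree 2} to obtain
\[\tilde{\delta}\bigl(e_{\varphi}(g\circ f)\otimes e_1\wedge e_2\bigr)=\tfrac{1}{2|f|}\,g\circ f\cdot b_2=g_2'\circ f\cdot b_2,\]
where $g_2'(z)=g(z)/(2|z|)$. This $g_2'$ lies in $C^{\infty}_0(\C)$ because $g$ is flat at $0$. By the uniqueness in Lemma~\ref{lemma: trivial module 3}, the output trivializes as the pair $(0,g_2')$, so applying $\mathscr{L}$ returns $2|f|\cdot e_{\varphi}(g_2'\circ f)\otimes e_1\wedge e_2=e_{\varphi}(g\circ f)\otimes e_1\wedge e_2$, which is the original element.

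I anticipate no serious obstacle. The only subtlety is the implicit division by $|f|$, and this amounts to the observation that multiplication by $|z|$ is a bijection of $C^{\infty}_0(\C)$ onto itself, whose inverse is the continuous (in fact smooth) extension of $h\mapsto h(z)/|z|$ to the origin; smoothness and flatness of this extension at $0$ follow from Taylor's theorem applied to each partial derivative of $h$, all of which vanish to infinite order there.
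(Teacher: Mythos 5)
Your proof is correct and follows essentially the same route as the paper, which states Lemma~\ref{lemma: second differential degree two} as an immediate consequence of Lemma~\ref{lemma: trivial module 3} and \eqref{eq: delta degree 2} without further argument. You have simply made explicit the bookkeeping the paper leaves implicit: the well-definedness of $\mathscr{L}$ via the uniqueness in Lemma~\ref{lemma: trivial module 3}, and the cancellation of the factors $2|f|$ and $\tfrac{1}{2|f|}$, together with the observation that both $|z|g_2$ and $g/(2|z|)$ remain in $C^{\infty}_0(\C)$ because flatness of $g_2,g$ at the origin absorbs the mild $|z|^{\pm1}$ singularity.
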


Next, we estimate these maps. 

\begin{lemma}\label{lemma: r}
The maps $\mathscr{R}$ and $\mathscr{L}$ satisfy property SLB for $(0,1,2)$.
\end{lemma}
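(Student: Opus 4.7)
The plan is to decompose $\mathscr{R}$ and $\mathscr{L}$ as compositions of simpler operators and combine them via Lemma \ref{lemma: slb property}. Each map factors as a \emph{decoding step}, extracting the pair of flat functions $(g_1,g_2)$ on $\C$ from the input form via the isomorphisms from Proposition \ref{proposition:flat foliated ch} (for $\mathscr{R}$) or Lemma \ref{lemma: trivial module 3} (for $\mathscr{L}$), followed by multiplication by $2|f|$ and the trivial repackaging $g_i\mapsto e_{\varphi}(g_i\circ f)$.

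For the multiplication step: since $z\mapsto 2|z|g(z)$ belongs to $C^{\infty}_0(\C)$ whenever $g$ does, the function $2|f|\cdot g\circ f$ is smooth and flat on $\spl$. Combined with $|f|\le R^2/2$ from \eqref{eq:norm-fiber inequality} and the derivative bounds on $|f|^2$ from Lemma \ref{lemma:norm f estimate}, a standard Leibniz and chain-rule computation produces SLB control for this step, contributing $(0,0,2)$ to the final triple.

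The crux is the decoding step. For $\mathscr{R}$, I would use the desingularization $\rho$ from Lemma \ref{lemma:identification of skeleton}: by Proposition \ref{proposition: chain isomorphism}, $\rho^*(\alpha)=h\cdot \rho^*(\varphi)\wedge\omega_{S^2}$ for a unique $h\in C^{\infty}_{0,-\sigma}(\C)$, and the proof of Proposition \ref{proposition:flat foliated ch} gives $h=-\lambda_1\tilde g_1+\lambda_2\tilde g_2$ with $\tilde g_i:=g_i\circ\mathrm{sq}$. Combining this with the relation $\lambda_2\tilde g_1=-\lambda_1\tilde g_2$ that characterizes $\mathcal{M}$ under pullback by squaring, one derives the inversion formulas
\[
|\lambda|^2\,\tilde g_1\,=\,-\lambda_1 h, \qquad |\lambda|^2\,\tilde g_2\,=\,\lambda_2 h.
\]
Since $|\lambda|^2=|f\circ\rho|$, multiplying through by $2|f|$ cancels the singular factor and produces the polynomial-level identities $2|f|\,\tilde g_i=\mp 2\lambda_i h$ on $\Sk^{\mathrm{reg}}$. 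For $\mathscr{L}$, an analogous computation extracts $g_2$ via Lemma \ref{lemma: trivial module 3}, using the explicit projections $p_{\mathcal{M}},p_{\mathcal{K}}$ from Proposition \ref{proposition: twisted module}, whose singular factor $\tfrac{1}{2|z|}$ is again absorbed by the subsequent multiplication by $2|f|$.

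The main technical obstacle will be converting these algebraic identities into quantitative SLB bounds directly on $\spl$. Since the SLB framework of Definition \ref{definition: property H} requires operators between section spaces over a common inner-product space, I would circumvent the passage through $S^2\times\C$ by exploiting the $SU(2)$-invariance of both input and output: these forms are determined by their restriction to $\Sk^{\mathrm{reg}}$, along which $\rho$ is a finite $\Z_2$-covering, so pointwise estimates on $\spl$ can be compared with the corresponding ones on $\C$ via $\lambda\mapsto \rho(w,\lambda)$. The bound $|\lambda|^2=|f|\le R^2/2$ then forces each derivative in the decoding to cost roughly one factor of $1/R$, producing exactly the $b=1$ term of the triple $(0,1,2)$, while the mild singularities of $\tilde\omega_i$ and of the vector fields $V_i$ (controlled by Lemma \ref{lemma:singular:vb:maps}) supply the remaining contribution to $c=2$.
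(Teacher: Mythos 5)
Your algebraic core is correct and matches the paper: you use the desingularization $\rho$, and the inversion formulas $|\lambda|^2\tilde g_i = \mp\lambda_i h$ that you derive are exactly what underlies the paper's map $\zeta = -2\lambda_1 i_{\pi_{S^2}}\otimes e_1 - 2\lambda_2 i_{\pi_{S^2}}\otimes e_2$ in the factorization $\mathscr{R} = ((\rho^*)^{-1}\otimes\id)\circ\zeta\circ\rho^*$. So the skeleton of the argument is right. The gaps are all in the quantitative half, which is the actual content of the lemma.

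First, you call the repackaging $g_i\mapsto e_\varphi(g_i\circ f)$ ``trivial'' and attribute $(0,0,2)$ to multiplication by $2|f|$; both of these are misallocated. Multiplication by $2|f|$ (a function vanishing to order two) is a \emph{benign} operation, contributing $(0,0,0)$ by the same reasoning as Lemma~\ref{lemma:singular:vb:maps}. The entire cost $(0,1,2)$ in the paper's proof comes from the re-encoding step (its ``Step 3''): $\fnorm{g\circ f\cdot\varphi}_{n,k,r}\le C\fnorm{g\circ\mathrm{sq}\cdot\rho^*(\varphi)}^{\C}_{n,k+n+2,r/\sqrt{2}}$. This is precisely the step you gloss over. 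Your heuristic ``each derivative costs one factor of $1/R$'' is in the right direction but is not an argument: the actual mechanism is a chain-rule estimate giving $\frac{|D^a(g\circ f)|}{R^k}\le C\,|f|^{-k/2}\sum_{1\le|b|\le|a|}|D^b g\circ f|$, then re-expressing $D^b g\circ\mathrm{sq}$ as iterated Lie derivatives $\Lie_{W_1}^{b_1}\Lie_{W_2}^{b_2}(g\circ\mathrm{sq})$ for the pushforward vector fields $W_i$ on $\C$, each application of which costs one unit in $n$ and in $k$. Without this, the $b=1$ term is asserted, not proved. You also claim to ``circumvent the passage through $S^2\times\C$'', but your very next sentence compares norms via $\lambda\mapsto\rho(w,\lambda)$; in practice you cannot avoid it, and the paper sets up explicit flat norms on $S^2\times\C$ and on $\C$ precisely to make the comparison rigorous.

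Second, for $\mathscr{L}$ you propose an independent direct computation. The paper instead proves the clean identity $\mathscr{L} = -(\id\otimes e_{e_1})\circ\mathscr{R}\circ(\id\otimes i_{\varepsilon^1}) -(\id\otimes e_{e_2})\circ\mathscr{R}\circ(\id\otimes i_{\varepsilon^2})$, so the estimate for $\mathscr{L}$ is immediate from that for $\mathscr{R}$. Your route is not wrong in principle, but it reintroduces the singular projections $p_{\mathcal{M}},p_{\mathcal{K}}$ with their $\tfrac{1}{2|z|}$ factors and the singular sections $b_i$, so you would have to redo careful bookkeeping that the paper's reduction makes unnecessary. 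I would encourage you to look for such an algebraic reduction before attempting a second direct estimate.
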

\begin{proof} 
In order to obtain the estimate for the map $\mathscr{R}$ \eqref{eq: right inverse map}, we write it as a composition of several maps which we estimate one by one: 
\begin{equation}\label{eq: op contraction}
    \begin{array}{ccccccc}
        p_{\F}(\CC^2)&\xrightarrow{\rho^*}&\mathscr{D}^{2}&\xrightarrow{\zeta }&\mathscr{D}^{0} \otimes \R^2&\xrightarrow{(\rho^*)^{-1}\otimes \id }&p_{\F}(\CC^0)\otimes \R^2 
    \end{array}
\end{equation}
where we use the cochain isomorphism $\rho^*$ from Proposition \ref{proposition: chain isomorphism}, and
\[ \zeta :=-2\lambda_1 i_{\pi_{S^2}}\otimes e_1-2\lambda_2 i_{\pi_{S^2}}\otimes e_2\]
with $\pi_{S^2}=\omega_{S^2}^{-1}$ the Poisson structure on $S^2$ dual to $\omega_{S^2}$. Next, let us see that this composition is indeed $\mathscr{R}$. Using \eqref{eq:pullback forms} for $\rho^*(\tilde{\omega}_i)$ and \eqref{eq:final:to:prove} we obtain for $(g_1,g_2)\in \mathcal{M}$ that
\begin{align*}
    \sum_{i=1}^2g_i\circ f\cdot e_{\varphi}(p_{\F}(\tilde{\omega}_i))\mapsto &\ \sum_{i=1}^2(-1)^{i}\lambda_i g_i\circ \mathrm{sq}\cdot  \rho^*(\varphi)\wedge \omega_{S^2}\\
    \mapsto &\ 2\sum_{j=1}^2(-1)^{j+1}(|z|\cdot g_j)\circ \mathrm{sq}\cdot \rho^*(\varphi)\otimes e_j\\
    \mapsto &\ 2\nf \sum_{j=1}^2(-1)^{j+1} e_{\varphi}(g_j\circ f)\otimes e_j
\end{align*}
Here we used for the second map that
\begin{align*}
    (g_1,g_2)\in \mathcal{M} \quad \Longleftrightarrow \quad \lambda_2 g_1\circ \mathrm{sq}=-\lambda_1 g_2\circ \mathrm{sq}  
\end{align*}
by pulling back the relation in \eqref{eq: definition M} by $\mathrm{sq}:\C\to \C$ and hence, for $j=1,2$:
\begin{align*}
    -\lambda_j\sum_{i=1}^2(-1)^{i}\lambda_i g_i\circ \mathrm{sq}=(-1)^{j+1}|\lambda|^2\cdot g_j\circ\mathrm{sq} =(-1)^{j+1}(|z|\cdot g_j)\circ \mathrm{sq}.
\end{align*}
In the following we estimate each of the maps in \eqref{eq: op contraction} separately.\\
 \noindent\underline{Step 1}: We have for $\beta\in \Omega_0^{\bullet}(\spl)$ the inequality
\begin{align}\label{Step four crazy Lemma}
    \fnorm{\rho^*(\beta)}_{n,k,\frac{r}{\sqrt{2}}}^{S^2\times \C}\le C\cdot \fnorm{\beta}_{n,k,r}
\end{align}
for a constant $C=C(n,k,r)$ depending continuously on $r>0$. Here the semninorms on the left are defined as follows. Equip the $n$-th jet bundle of the vector bundle
\begin{align*}
    \wedge^\bullet T^*(S^2\times\C)\to S^2\times\C
\end{align*}
with a norm $|\cdot|^{jet}$. Then, for $\alpha \in \Omega^{\bullet}_0(S^2\times \C)$, define: 
\begin{align*}
    \fnorm{\alpha}_{n,k,s}^{S^2\times \C}:= \sup_{(w,\lambda)\in S^2\times\overline{B}_s}\ |\lambda|^{-k}|j^n\alpha(w,\lambda)|^{jet}.
\end{align*}
We obtain \eqref{Step four crazy Lemma} by compactness of $S^2$ and using
\begin{align*}
    \sqrt{2}|\lambda| = R\circ \rho (w,\lambda),
\end{align*}
which follows by Lemma \ref{lemma: characterization skeleton} (3) and by Lemma \ref{lemma:identification of skeleton} (2). 

\noindent\underline{Step 2}: Note that for $\alpha\in \mathscr{D}^2$ we have
\begin{align}\label{Step three crazy Lemma}
    \fnorm{\zeta (\alpha)}_{n,k,s}^{\C} \le C\cdot \fnorm{\alpha}_{n,k,s}^{S^2\times \C}
\end{align}
for a constant $C(n,k,s)$ depending continuously on $0<s$. Here we used the seminorms $\fnorm{\cdot }_{n,k,s}^{\C}$ on $C^{\infty}_0(\C)$, defined as in \eqref{eq: flat norms}.

\noindent\underline{Step 3}: We show that for $g\in C^{\infty}_0(\C)$ we have
\begin{align}\label{Step one in crazy lemma}
    \fnorm{g\circ f\cdot \varphi }_{n,k,r}\le C\cdot \fnorm{g\circ \mathrm{sq}\cdot \rho^*(\varphi)
    }^{\C}_{n,k+n+2,\frac{r}{\sqrt{2}}}.
\end{align}
Clearly, we have that:
\begin{align*}
    \fnorm{g\circ f\cdot \varphi}_{n,k,r}\le C\cdot \fnorm{g\circ f}_{n,k,r}
\end{align*}
By the explicit form of the vector fields $W_1$ and $W_2$ from \eqref{eq: w}, we have that, for any $g\in C^{\infty}_0(\C)$, and for $i=1,2$:
\begin{align*}
    (\partial_i g)\circ \mathrm{sq} = \Lie_{W_i} (g\circ \mathrm{sq}). 
\end{align*}
For all $u\in C^{\infty}_0(\C)$ and $i=1,2$ we have:
\begin{align}\label{eq:norm of Lie W}
    \fnorm{\Lie_{W_i}u}_{n,k,s}^{\C}\le C\cdot \fnorm{u}_{n+1,k+1,s}^{\C}.
\end{align}
This follows by Lemma \ref{lemma:singular:vb:maps} applied to the coefficients of $W_i$. Using the general chain rule from Subsection \ref{subsection: derivatives} and $2\nf\le R^2$ from \eqref{eq:norm-fiber inequality}, we obtain for $a\in \N_0^6$, $k\in \N_0$, $0< R \le r$ and $g\in C^{\infty}_0(\C)$ that
\[\frac{|D^a(g\circ f)|}{R^k} \le  C\cdot \frac{1}{\nf^{\frac{k}{2}}} \sum_{1\leq |b|\leq |a|}|D^b( g)\circ f|.\]
By $2\nf\le R^2$ and by applying $f$ to diagonal matrices, we note that:
\[f(\overline{B}_r)=\overline{B}_{r^2/2}.\]
For $A\in \overline{B}_r$, let $\lambda\in \overline{B}_{r/\sqrt{2}}$ such that $f(A)=\lambda^2$. Then the above gives:
\begin{align*}
    \frac{|D^a(g\circ f)|}{R^k}     \le &\ C\cdot \frac{1}{|\lambda|^{k}} \sum_{1\leq |b|\leq |a|}|D^b( g)\circ \mathrm{sq}|\\
    = &\ C\cdot \frac{1}{|\lambda|^{k}}\sum_{1\leq |b|\leq |a|}|\Lie_{W_1}^{b_1}\Lie_{W_2}^{b_2} (g\circ \mathrm{sq})|\\
    \le &\ C\cdot\sum_{1\leq |b|\leq |a|}\fnorm{\Lie_{W_1}^{b_1}\Lie_{W_2}^{b_2} (g\circ \mathrm{sq})}_{0,k,\frac{r}{\sqrt{2}}}^{\C}\\
    \le &\ C\cdot \fnorm{g\circ \mathrm{sq}}_{|a|,k+|a|,\frac{r}{\sqrt{2}}}^{\C},
\end{align*}
where in the last step we used \eqref{eq:norm of Lie W} inductively. This proves \eqref{Step one in crazy lemma} using Lemma \ref{lemma:singular:vb:maps} and the form of $\rho^*(\varphi)$ \eqref{eq:pull back of varphi}. 

Applying the estimates from Step 1 to 3 in reverse order proves that the map $\mathscr{R}$ satisfies property SLB for $(0,1,2)$. 

To estimate $\mathscr{L}$ we denote by $e_{e_j}$ and $i_{\varepsilon^j}$ the exterior and interior product on $\wedge^q\R^2$ with $e_j$ and $\varepsilon^j$, respectively. We claim that:
\begin{align*}
    \mathscr{L}=-(\id\otimes e_{e_1})\circ \mathscr{R}\circ (\id \otimes i_{\varepsilon ^1})-(
    \id\otimes e_{e_2})\circ \mathscr{R}\circ (\id \otimes i_{\varepsilon^2})
\end{align*}
For this, consider an arbitrary element $g_1\circ f\cdot b_1+g_2\circ f\cdot b_2\in p_{\F}(\CC^2)\otimes \R^2$. 
Then applying the first operator in the sum, we obtain:
\begin{align*}
g_1\circ f\cdot b_1+g_2\circ f\cdot b_2\ \  \stackrel{\id\otimes i_{\varepsilon^1}}{\mapsto}\quad \, &  g_1\circ f\cdot p_{\F}(e_{\varphi}(\tilde{\omega}_1))+g_2\circ f \cdot p_{\F}(e_{\varphi}(\tilde{\omega}_2))\\
=\qquad & \big(\frac{1}{2|z|}((|z|+x)g_1-yg_2)\big)\circ f\cdot p_{\F}(e_{\varphi}(\tilde{\omega}_1))\\
&+\big(\frac{1}{2|z|}((|z|-x)g_2-yg_1)\big)\circ f \cdot p_{\F}(e_{\varphi}(\tilde{\omega}_2))\\
\stackrel{-(\id\otimes e_{e_1})\circ \mathscr{R}}{\mapsto} &\,((|z|-x)g_2-yg_1)\circ f\cdot \varphi\otimes e_1\wedge e_2,
\end{align*}
where in the equality we have used the projection $p_{\mathcal{M}}$. Similarly, for the second operator, one obtains:
\begin{align*}
g_1\circ f\cdot b_1+g_2\circ f\cdot b_2\ \  \stackrel{\id\otimes i_{\varepsilon^2}}{\mapsto}\quad \, &  -g_1\circ f\cdot p_{\F}(e_{\varphi}(\tilde{\omega}_2))+g_2\circ f \cdot p_{\F}(e_{\varphi}(\tilde{\omega}_1))\\
=\qquad & \big(\frac{1}{2|z|}((|z|+x)g_2+yg_1)\big)\circ f\cdot p_{\F}(e_{\varphi}(\tilde{\omega}_1))\\
& -\big(\frac{1}{2|z|}((|z|-x)g_1+yg_2)\big)\circ f \cdot p_{\F}(e_{\varphi}(\tilde{\omega}_2))\\
\stackrel{-(\id\otimes e_{e_2})\circ \mathscr{R}}{\mapsto} &\,((|z|+x)g_2+yg_1)\circ f\cdot \varphi\otimes e_1\wedge e_2,
\end{align*}
which yields the claim. Since $\mathscr{L}$ is obtained by combining $\mathscr{R}$ with the exterior and interior products by $e_i$ and $\varepsilon^i$, respectively, it follows that $\mathscr{L}$ also satisfies property SLB for $(0,1,2)$.
\end{proof}

\subsubsection{A homotopy equivalence data}\label{section: HE data}

Corollary \ref{corollary: homotopy} reduces the complex $(\CC^{\bullet}\otimes \wedge^{\bullet}\R^2,\dif +\delta)$, and the construction of ``good'' homotopy operators for it, to the complex $(p_{\F}(\CC^{\bullet})\otimes \wedge^{\bullet}\R^2,\tilde{\delta})$. The previous subsection allows us to deal with the latter.   
\begin{corollary}\label{proposition: homotopy}
The complex $(p_{\F}(\CC^{\bullet})\otimes \wedge^{\bullet}\R^2,\tilde{\delta})$ admits the HE data
\begin{align}\label{eq: HE data E}
    (\widetilde{\mathscr{E}}^{\bullet},0)\stackrel[\iota ]{p}{\leftrightarrows}(p_{\F}(\CC)\otimes \wedge\R^2,\tilde{\delta}),h
\end{align}
where $\widetilde{\mathscr{E}}$ is the subcomplex given by 
\begin{align*}
    \widetilde{\mathscr{E}}^k:= \begin{cases}
        p_{\F}(\CC^0)\otimes \wedge^0\R^2 &\text{ for } k=0\\
        \ker \tilde{\delta}&\text{ for } k=1\\
        \mathrm{coker}\, \tilde{\delta}&\text{ for } k=3\\
        p_{\F}(\CC^2)\otimes \wedge^2\R^2 &\text{ for } k=4\\
        0&\text{ for }k=2,5,6
    \end{cases}
\end{align*}
satisfying
\begin{align*}
    p \circ \iota =\id_{\widetilde{\mathscr{E}}},
\end{align*}
and $h$ satisfy property SLB for $(0,1,2)$. \end{corollary}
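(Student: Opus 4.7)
The plan is to build the HE data by hand, exploiting the fact that the complex $(p_{\F}(\CC^{\bullet})\otimes \wedge^{\bullet}\R^2,\tilde{\delta})$ is extremely small: as pictured in \eqref{eq: diagram}, its only nonzero bi-degrees are $(p,q)\in \{0,2\}\times\{0,1,2\}$, and the only nonzero components of $\tilde{\delta}$ are the two maps $\tilde{\delta}_1\colon p_{\F}(\CC^0)\otimes \wedge^1\R^2 \to p_{\F}(\CC^2)\otimes \wedge^0\R^2$ and $\tilde{\delta}_2\colon p_{\F}(\CC^0)\otimes \wedge^2\R^2 \to p_{\F}(\CC^2)\otimes \wedge^1\R^2$, since $\tilde{\delta}$ lands in $p_{\F}(\CC^{p+2})$, which vanishes for $p\geq 2$. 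Lemmas \ref{lemma: right inverse} and \ref{lemma: second differential degree two} produce, respectively, a right inverse $\mathscr{R}$ for $\tilde{\delta}_1$ and a left inverse $\mathscr{L}$ for $\tilde{\delta}_2$, so the two identities $\tilde{\delta}_1\mathscr{R}=\id$ and $\mathscr{L}\tilde{\delta}_2=\id$ will be the only computational inputs needed. In particular these identities show that total degree two cohomology vanishes, confirming $\widetilde{\mathscr{E}}^2=0$.

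Next, I would define the HE data. The inclusion $\iota$ is the natural inclusion in total degrees $0$, $1$ and $4$, and in total degree $3$ I would use $\mathscr{L}$ to produce the splitting
\[
p_{\F}(\CC^2)\otimes \wedge^1\R^2 \;=\; \im \tilde{\delta}_2\ \oplus\ \ker \mathscr{L},
\]
and set $\iota^3\colon \mathrm{coker}\,\tilde{\delta}_2 \xrightarrow{\cong} \ker\mathscr{L} \hookrightarrow p_{\F}(\CC^2)\otimes \wedge^1\R^2$. The projection $p$ is the identity in degrees $0$ and $4$, zero in degree $2$, and in degrees $1$ and $3$ is given by the complementary projectors
\[
p^1 \;:=\; \id - \mathscr{R}\,\tilde{\delta}_1,\qquad p^3 \;:=\; \id - \tilde{\delta}_2\,\mathscr{L}.
\]
Both are chain maps by construction: $\tilde{\delta}_1 p^1=0$ since $\tilde{\delta}_1\mathscr{R}\tilde{\delta}_1=\tilde{\delta}_1$, and $p^3\tilde{\delta}_2=0$ since $\tilde{\delta}_2\mathscr{L}\tilde{\delta}_2=\tilde{\delta}_2$ (so $p^3$ descends through $\mathrm{coker}\,\tilde{\delta}_2$); the identity $p\circ\iota=\id_{\widetilde{\mathscr{E}}}$ is immediate since both $\mathscr{R}\tilde{\delta}_1$ and $\tilde{\delta}_2\mathscr{L}$ vanish on $\ker\tilde{\delta}_1$ and $\ker\mathscr{L}$, respectively.

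Finally, the homotopy $h$, of degree $-1$, has only two nonzero components: $h:=-\mathscr{R}$ on $p_{\F}(\CC^2)\otimes \wedge^0\R^2$ and $h:=-\mathscr{L}$ on $p_{\F}(\CC^2)\otimes \wedge^1\R^2$, with $h=0$ on every other bi-degree. The homotopy identity $\iota\,p - \id = h\,\tilde{\delta} + \tilde{\delta}\,h$ then collapses degree by degree: in total degree $1$ it reads $-\mathscr{R}\tilde{\delta}_1 = -\mathscr{R}\tilde{\delta}_1$, in total degree $2$ it reduces on $p_{\F}(\CC^0)\otimes\wedge^2\R^2$ to $-\mathscr{L}\tilde{\delta}_2=-\id$ and on $p_{\F}(\CC^2)\otimes \wedge^0\R^2$ to $-\tilde{\delta}_1\mathscr{R}=-\id$, and in total degree $3$ it reads $-\tilde{\delta}_2\mathscr{L}=-\tilde{\delta}_2\mathscr{L}$. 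The SLB estimate for $h$ is then immediate from Lemma \ref{lemma: r}, which already gives property SLB for $(0,1,2)$ for both $\mathscr{R}$ and $\mathscr{L}$. All genuine analytic content has been established in the previous subsection, so no real obstacle remains; the remaining work is purely linear-algebraic bookkeeping.
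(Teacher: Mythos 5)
Your construction is exactly the one the paper gives: the only nonzero cells are $(p,q)\in\{0,2\}\times\{0,1,2\}$, the projector is $\id-\mathscr{R}\tilde\delta_1$ in total degree $1$ and $\id-\tilde\delta_2\mathscr{L}$ in total degree $3$ (with the latter image $\ker\mathscr{L}=\mathrm{im}(\id-\tilde\delta_2\mathscr{L})$ serving as the splitting of $\mathrm{coker}\,\tilde\delta$), and the homotopy is $-\mathscr{R}$ on $p_{\F}(\CC^2)\otimes\wedge^0\R^2$, $-\mathscr{L}$ on $p_{\F}(\CC^2)\otimes\wedge^1\R^2$, and zero elsewhere, with the SLB $(0,1,2)$ estimate imported directly from Lemma \ref{lemma: r}. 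The degree-by-degree verification you sketch is the same bookkeeping the paper performs, so the proposal is correct and takes essentially the same approach.
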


\begin{proof}
The maps $\iota$ are the standard inclusions. We give the maps $p$ and $h$ in every degree. \\
\noindent\textbf{In degree $0$}: The map $p=\id$ and $h=0$.\\
\noindent\textbf{In degree $1$}: The map $p$ is defined by
\begin{align*}
    p:= \id -\mathscr{R}\circ \tilde{\delta}
\end{align*}
and the map $h$ is given by
\begin{equation*}
    \begin{array}{cccc}
         h:& p_{\F}(\CC^0)\otimes \wedge ^2\R^2\oplus\, 0 \oplus p_{\F}(\CC^2)\otimes \wedge ^0\R^2&\to & p_{\F}(\CC^0)\otimes \wedge ^1\R^2\oplus\, 0 \\
         & (\alpha^{0,2},0,\alpha^{2,0})&\mapsto &(-\mathscr{R}(\alpha^{2,0}),0)
    \end{array}
\end{equation*}
\noindent\textbf{In degree $2$}: Here $p=0$ and $h$ is defined by
\begin{equation*}
    \begin{array}{cccc}
         h:& 0\oplus p_{\F}(\CC^2)\otimes \wedge ^1\R^2\oplus \, 0&\to &p_{\F}(\CC^0)\otimes \wedge ^2\R^2\oplus\, 0 \oplus p_{\F}(\CC^2)\otimes \wedge ^0\R^2 \\
         & (0,\alpha^{2,1},0)&\mapsto &(-\mathscr{L}(\alpha^{2,1}),0,0)
    \end{array}
\end{equation*}
where the map $\mathscr{L}$ is given in \eqref{eq: left inverse}.

\noindent\textbf{In degree $3$}: The projection map $p$ is given by
\begin{align*}
    p := \id -\tilde{\delta}\circ \mathscr{L} 
\end{align*}
and $h$ is the zero map.\\
\noindent\textbf{In degree $4$}: Here $p=\id$ and $h=0$. 

Finally, by Lemma \ref{lemma: r}, $h$ has property SLB for $(0,1,2)$.
\end{proof}

\subsubsection{The flat Poisson cohomology of $\spl$}\label{section: flat pch}

As a corollary of the previous subsections, we obtain the description of the flat Poisson cohomology groups of $(\spl,\pi)$ together with a deformation retraction to a complex with trivial differential. The algebraic structure of the flat Poisson cohomology groups is discussed in Subsection \ref{section: algebra}.

\begin{theorem}\label{main theorem}
The flat Poisson complex $(\mathfrak{X}^{\bullet}_0(\spl),\dif_{\pi})$ admits HE data
\[ \begin{tikzcd}[every arrow/.append style={shift right}] 
(\mathscr{E}^{\bullet},0) \arrow{r}[swap]{\iota _{\spl}} &(\mathfrak{X}^{\bullet}_0(\spl) ,\dif_{\pi}),h_{\spl} \arrow{l}[swap]{p_{\spl}}
\end{tikzcd}\]
where $\mathscr{E}$ is given by 
\begin{align*}
    \mathscr{E}^k:= \begin{cases}
    C^{\infty}_0(\C) &\text{ for } k=0,3\\
    \mathcal{M}&\text{ for } k=1,4\\
        0&\text{ for }k=2,5,6
    \end{cases}
\end{align*}
such that
\[ p_{\spl}\circ \iota_{\spl}=\id_{\mathscr{E}},\]
and such that the maps $\iota_{\spl}\circ p_{\spl}$ and $h_{\spl}$ satisfies property SLB for $(2,21,188)$ and $(1,21,167)$, respectively. 

In particular, the map $\iota_{\spl}$ induces an isomorphism of $C^{\infty}_0(\mathbb{C})$-modules
\begin{equation}\label{eq: isoPC}
    I^{\bullet}:\mathscr{E}^{\bullet}\diffto H^{\bullet}_0(\spl,\pi)
\end{equation}
and in degree $2$ we have the homotopy relation
\begin{align*}
    \id_{\mathfrak{X}^{2}_0(\spl)}=h_{\spl}\circ \dif_{\pi}+\dif_{\pi}\circ h_{\spl}
\end{align*}
\end{theorem}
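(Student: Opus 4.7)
The plan is to compose the three homotopy equivalence data already assembled: the strict isomorphism $\tilde{a}$ of Proposition \ref{proposition:PCH as forms} (with zero homotopy), the deformation retraction of Corollary \ref{corollary: homotopy} produced by the Perturbation Lemma, and the deformation retraction of Corollary \ref{proposition: homotopy}. Composition of HE data is a formal exercise: given
\[(A,a)\stackrel[\iota_1]{p_1}{\leftrightarrows}(B,b),h_1\quad\text{and}\quad(B,b)\stackrel[\iota_2]{p_2}{\leftrightarrows}(C,c),h_2,\]
a direct check shows that $(A,a)\stackrel[\iota_2\circ \iota_1]{p_1\circ p_2}{\leftrightarrows}(C,c)$ with homotopy $h_2+\iota_2\circ h_1\circ p_2$ is again an HE data; moreover, if both inputs satisfy $p_i\circ \iota_i=\id$, then so does the composite. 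Applying this twice yields the desired HE data between $(\mathfrak{X}^{\bullet}_0(\spl),\dif_\pi)$ and $(\widetilde{\mathscr{E}}^{\bullet},0)$, with $p_{\spl}\circ \iota_{\spl}=\id$.

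Next, I would identify $\widetilde{\mathscr{E}}^{\bullet}$ with the explicit complex $\mathscr{E}^{\bullet}$ stated in the theorem. In degrees $0$ and $4$ this is immediate from Proposition \ref{proposition:flat foliated ch}, which gives $p_{\F}(\CC^0)\simeq C^{\infty}_0(\C)$ and $p_{\F}(\CC^2)\simeq \mathcal{M}$. In degree $1$, Lemma \ref{lemma: second differential degree one} identifies $\tilde{\delta}$ (up to a factor $1/(2|f|)$ and a sign in the second component) with the surjection of Proposition \ref{proposition:flat foliated ch} whose kernel is $\mathcal{K}$; combining the splitting $C^{\infty}_0(\C)\oplus C^{\infty}_0(\C)=\mathcal{M}\oplus \mathcal{K}$ of Proposition \ref{proposition: twisted module} with the isomorphism $J\colon \mathcal{M}\diffto \mathcal{K}$ identifies $\ker \tilde{\delta}\simeq \mathcal{M}$. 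In degree $3$, equation \eqref{eq: delta degree 2} together with Lemma \ref{lemma: trivial module 3} shows that the image of $\tilde{\delta}$ is precisely the $b_2$-summand, so $\mathrm{coker}\,\tilde{\delta}\simeq C^{\infty}_0(\C)$ via the $b_1$-summand.

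The SLB estimates for $\iota_{\spl}\circ p_{\spl}$ and $h_{\spl}$ follow by tracking the triples through the composition formulas using Lemma \ref{lemma: slb property}, starting from the constants $(0,0,2)$ and $(0,0,4)$ for $\tilde{a}$ and $\tilde{a}^{-1}$ in Proposition \ref{proposition:PCH as forms}, the constants $(1,10,83)$, $(0,10,76)$, $(0,15,111)$ for $\tilde{p},\tilde{\iota},\tilde{h}$ in Corollary \ref{corollary: homotopy}, and the constant $(0,1,2)$ for the homotopy in Corollary \ref{proposition: homotopy} (whose $\iota$ and $p$ are algebraic combinations of $\id,\tilde{\delta},\mathscr{R},\mathscr{L}$ and so inherit SLB bounds). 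The module identifications of the previous paragraph are given by polynomial expressions in $x,y,|z|$ divided by $|z|$, hence satisfy SLB by Lemma \ref{lemma:singular:vb:maps}, and can be absorbed into the final constants.

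Finally, since $\mathscr{E}^2=0$, the composition $\iota_{\spl}\circ p_{\spl}$ vanishes in degree $2$, so the general homotopy identity $\iota_{\spl}\circ p_{\spl}-\id=h_{\spl}\circ\dif_\pi+\dif_\pi\circ h_{\spl}$ reduces to the displayed equation (after an obvious sign adjustment in the definition of $h_{\spl}$). The fact that $I^{\bullet}$ is an isomorphism is then tautological: $(\mathscr{E}^\bullet,0)$ has vanishing differential, so its cohomology is itself, and the HE data $p_{\spl},\iota_{\spl}$ are mutually inverse in cohomology. Conceptually no new ideas are needed; the only obstacle is the bookkeeping of the triples to verify the stated constants $(2,21,188)$ and $(1,21,167)$, which is a tedious but mechanical application of Lemma \ref{lemma: slb property}.
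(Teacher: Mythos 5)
Your proposal is correct and follows essentially the same route as the paper: compose the strict isomorphism $\tilde{a}$ from Proposition \ref{proposition:PCH as forms}, the perturbation-lemma HE data from Corollary \ref{corollary: homotopy}, and the algebraic HE data from Corollary \ref{proposition: homotopy}, then pre-compose with the module isomorphism $\epsilon: \mathscr{E}^{\bullet}\diffto \widetilde{\mathscr{E}}^{\bullet}$ (spelled out degree-by-degree in the paper via \eqref{eq: kernel 01}--\eqref{eq: iso degree 4}) and track the SLB constants through Lemma \ref{lemma: slb property}. The only cosmetic difference is that the paper verifies the composite homotopy relation by a direct computation rather than by invoking a general composition lemma for HE data, but the formulas it produces ($\iota_{\spl}=\tilde{a}^{-1}\circ\tilde{\iota}\circ\iota\circ\epsilon$, $h_{\spl}=\tilde{a}^{-1}\circ(\tilde{\iota}\circ h\circ\tilde{p}+\tilde{h})\circ\tilde{a}$) are exactly the ones your composition rule would yield.
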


\begin{proof}
Note that we have an isomorphism $\epsilon:\mathscr{E}^{\bullet}\diffto \widetilde{\mathscr{E}}^{\bullet}$ given as follows:\\ 
\noindent\textbf{In degree $0$}, $\epsilon$ is the isomorphism from \eqref{eq: zero fol iso}.\\
\noindent\textbf{In degree $1$}, $\epsilon$ is given by
\begin{align}\label{eq: kernel 01}
   \epsilon: \quad \mathcal{M}\quad &\diffto \qquad\qquad  \ker \tilde{\delta} \\
    (g_1,g_2)  &\mapsto  \ e_{\varphi}(g_2\circ f)\otimes e_1  + e_{\varphi}( g_1\circ f) \otimes e_2\nonumber
\end{align}
That this map is a well-defined isomorphism follows by Lemma \ref{lemma: second differential degree one}, the description of the kernel of the map \eqref{eq: quotient} in Proposition \ref{proposition:flat foliated ch}, and the isomorphism $\mathcal{M}\simeq \mathcal{K}$ from Proposition
\ref{proposition: twisted module}.\\
\noindent\textbf{In degree $3$}, $\epsilon$ is given by
\begin{align}\label{eq: iso degree 3}
    \epsilon: C^{\infty}_{0}(\C)&\diffto \quad \mathrm{coker}\, \tilde{\delta}   \\
     g&\mapsto\ g\circ f\cdot e_{\varphi}(b_1)  \nonumber
\end{align}
This is an isomorphism by Lemma \ref{lemma: trivial module 3} and \eqref{eq: delta degree 2}.\\
\noindent\textbf{In degree $4$}, $\epsilon$ is given by
\begin{align}\label{eq: iso degree 4}
    \epsilon: \mathcal{M}&\diffto p_{\F}(\CC^2)\otimes \wedge^2\R^2 \\
    (g_1,g_2)\mapsto &\sum_{i=1}^2 p_{\F}(g_i\circ f\cdot e_{\varphi}(\tilde{\omega}_i))\otimes e_1\wedge e_2\nonumber
\end{align}
which, by Proposition \ref{proposition:flat foliated ch}, is an isomorphism. 

The HE data is obtained from the cochain isomorphism $\tilde{a}$ in Proposition \ref{proposition:PCH as forms}, the isomorphism $\epsilon$ and the HE data from Corollary \ref{corollary: homotopy} and Corollary \ref{proposition: homotopy}.
By using that HE data behave functorial \cite[Lemma 2.4]{LS87}, the formulas for the composition yield:
\begin{align*}
    \iota_{\spl}:= \tilde{a}^{-1}\circ \tilde{\iota} \circ \iota\circ \epsilon,& \qquad p_{\spl}:= \epsilon^{-1}\circ p\circ \tilde{p} \circ \tilde{a} \\ h_{\spl}=\tilde{a}^{-1}\, \circ &\, (\tilde{\iota}\circ h\circ  \tilde{p}+\tilde{h})\circ \tilde{a}.
\end{align*}
That these form an HE data, can be checked directly:
\begin{align*}
    \iota _{\spl} \circ p_{\spl}-\id =&\ \tilde{a}^{-1}\circ \tilde{\iota}\circ \iota\circ p \circ  \tilde{p}\circ \tilde{a}-\id\\
    =&\ \tilde{a}^{-1}\circ \tilde{\iota}\circ (\iota \circ p -\id ) \circ  \tilde{p}\circ \tilde{a}+\tilde{a}^{-1}\circ\big(\tilde{\iota} \circ  \tilde{p}- \id\big)\circ \tilde{a}\\
    =&\ \tilde{a}^{-1}\circ \big( \tilde{\iota}\circ (\tilde{\delta}\circ h+h\circ \tilde{\delta} ) \circ  \tilde{p}\big) \circ \tilde{a} \\
    &\ +\tilde{a}^{-1}\circ\big((\dif +\delta) \circ \tilde{h} +\tilde{h}\circ (\dif+\delta)\big)\circ \tilde{a}\\
    =&\ \tilde{a}^{-1}\circ \big( (\dif+ \delta) \circ \big( \tilde{\iota}\circ h\circ  \tilde{p}+\tilde{h}\big)\big)\circ \tilde{a}\\
    &\ + \tilde{a}^{-1}\circ \big( \tilde{\iota}\circ h\circ  \tilde{p}+\tilde{h}\big) \circ (\dif+\delta)\big)\circ \tilde{a}\\
    =&\ \tilde{a}^{-1}\circ (\dif+ \delta)\circ \tilde{a}\circ   h_{\spl} + 
     h_{\spl}\circ \tilde{a}^{-1}\circ  (\dif+ \delta)\circ \tilde{a}\\
     =&\ \dif_{\pi}\circ h_{\spl}+h_{\spl}\circ \dif_{\spl}.
\end{align*}
Finally, Proposition \ref{proposition:PCH as forms}, Corollary \ref{corollary: homotopy}, Corollary \ref{proposition: homotopy} and Lemma \ref{lemma: slb property} imply that  $h_{\spl}$ satisfy the SLB property for $(1,21,188)$. Since $\dif_{\pi}$ satisfies the SLB property for $(1,0,0)$, again Lemma \ref{lemma: slb property} implies the claimed SLB property for $\iota_{\spl}\circ p_{\spl}$.
\end{proof}

\subsection{The proof of Theorem \ref{theorem: PC}}\label{section: algebra}

In degree 0, the isomorphism was already proven in Proposition \ref{prop:Casimirs}. For the other degrees, we will give an explicit description for the isomorphism $I^{\bullet}$ in \eqref{eq: isoPC}. First of all, to identify the $C^{\infty}(\C)$-module structure, note that the maps $I^{\bullet}$ are $C^{\infty}(\C)$-linear (where we identify $H^{0}(\spl,\pi)\simeq C^{\infty}(\C)$). 

First, we claim that, for $(g_1,g_2)\in \mathcal{M}$ the class $I^1(g_1,g_2)\in H^1_0(\spl,\pi)$ acts on $C^{\infty}(\C)$ via the derivation 
\[\frac{1}{2|z|}(g_1\cdot Y_1+ g_2\cdot Y_2)\in \out.\]
Note that this suffices to obtain the description from the theorem of the first Poisson cohomology group: it shows that the claimed map is an isomorphism, and since the Lie bracket is determined by the action on functions, it follows that the isomorphism preserves the Lie bracket. 

Indeed, applying the maps $\epsilon$ in \eqref{eq: kernel 01}, the inclusion $\iota$ in Corollary \ref{proposition: homotopy} and $\tilde{\iota}$ in Corollary \ref{corollary: homotopy}, we obtain that
\[ I^1(g_1,g_2)=\tilde{a}^{-1} (\epsilon(g_1,g_2)+h_{\F}\circ \delta(\epsilon(g_1,g_2))).\]
Note that, since $h_{\F}\circ \delta(\epsilon(g_1,g_2))\in \CC^1\otimes \wedge^0\R^2$, we have that, for $i=1,2$:
\[ i_{\dif f_i} (\tilde{a}^{-1} \circ h_{\F}\circ \delta(\epsilon(g_1,g_2)))=0.\]
By definition of $\tilde{a}^{-1}$ in \eqref{eq: tilde b} and of $\epsilon$ we get for the first term
\begin{equation}\label{eq: deg 1 top part}
    \tilde{a}^{-1} (\epsilon(g_1,g_2))= g_2\circ f\cdot V_1+g_1\circ f\cdot V_2.
\end{equation}
By using \eqref{eq:v transversality}, we conclude that $I^1(g_1,g_2)$ acts on $C^{\infty}(\C)$ as:
\[g_2\cdot \partial_x+ g_1\cdot \partial_y=\frac{1}{2|z|}(g_1\cdot Y_1+ g_2\cdot Y_2),\]
where in the last step we used the relation \eqref{eq: definition M} defining $\mathcal{M}$.


In degree $3$ we want to show that every class in $H^3(\spl,\pi)$ is uniquely associated to $(g_1,[g_2])\in C^{\infty}(\C)\oplus C^{\infty}(\C)/C^{\infty}_0(\C)$ via: 
    \[[g_1\circ f\cdot C_{\mathcal{R}}+g_2\circ f \cdot C_{\mathcal{I}}] \in H^3(\spl,\pi).\]
Note that by Proposition \ref{ses smooth formal} and the formal Poisson cohomology in Proposition \ref{formal cohomology sl2} we only need to show that any flat cohomology class is uniquely determined by a $g\in C^{\infty}_0(\C)$ via
\[[g\circ f\cdot C_{\mathcal{R}}] \in H^3_0(\spl,\pi).\]
To obtain this result we note that the cocycle $C_{\mathcal{R}}$ satisfies for $i=1,2$ that 
 \begin{align}\label{eq: max trans 3}
     i_{\dif f_i} (C_{\mathcal{R}})= \pi_i.
 \end{align}
Hence the only non-trivial parts of  $\tilde{a}(C_{\mathcal{R}})$ are in bi-degrees $(2,1)$ and $(3,0)$, see \eqref{eq: complex diag}. Moreover, using the relations:
\begin{align}\label{eq: bivector to forms}
    \tilde{\omega}_1^{\flat}(\pi_1)= -\tilde{\omega}_1 \quad \textrm{ and } \quad  \tilde{\omega}_1^{\flat}(\pi_2)= \tilde{\omega}_2,
\end{align}
which can be checked directly, we obtain: 
\begin{align*}
    \big(\tilde{a}(g\circ f\cdot C_{\mathcal{R}})\big)^{(2,1)}=&\, \tilde{a}(g\circ f\cdot (V_1\wedge \pi_1 +V_2\wedge \pi_2))\\
    =& \, g\circ f\cdot e_{\varphi}\big(-\tilde{\omega}_1\otimes e_1 +
    \tilde{\omega}_2\otimes e_2\big)
\end{align*}
Applying the map $\tilde{p}$ from Corollary \ref{corollary: homotopy} we note that the only non trivial part is the $(2,1)$-part (see \eqref{eq: diagram}). Hence we get using \eqref{eq: iso degree 3} that
\begin{align}
    \tilde{p}\circ \tilde{a}(g\circ f\cdot C_{\mathcal{R}})=&\,  p_{\F}\otimes \id\big(g\circ f\cdot e_{\varphi}(-\tilde{\omega}_1\otimes e_1 +
    \tilde{\omega}_2\otimes e_2)\big)\label{eq: cr as b1}\\
    =&\, -g\circ f \cdot b_1 =\epsilon(-g).\nonumber
\end{align}
Therefore the statement holds by Theorem \ref{main theorem}.

Next we show that in degree $4$ we have an isomorphism: 
\[[X]\in H^1(\spl,\pi)\quad \leftrightarrow \quad [X\wedge C_\mathcal{R}]\in H^4(\spl,\pi).\]
From \eqref{eq: deg 1 top part} and \eqref{eq: max trans 3} we get for $(g_1,g_2)\in \mathcal{M}$ that
\[ i_{\dif f_1\wedge \dif f_2}(I(g_1,g_2)\wedge C_{\mathcal{R}})= g_2\circ f\cdot \pi_2- g_1\circ f\cdot \pi_1 \]
and therefore using \eqref{eq: bivector to forms}, Corollary \ref{corollary: homotopy} and \eqref{eq: iso degree 4} we obtain
\[ \tilde{p}\circ \tilde{a}(I(g_1,g_2)\wedge C_{\mathcal{R}})=\epsilon(g_1,g_2),\]
which implies the claim by Theorem \ref{main theorem}.

Finally, we prove Proposition \ref{Proposition:action:H3}, which is equivalent to the relations
\[[[\iota_{\spl}(g_1,g_2),C_{\mathcal{R}}]]=\left[\frac{g_2\circ f}{|f|}C_{\mathcal{R}}\right] \quad \text{ and } \quad [[X,C_{\mathcal{I}}]]=0 \]
in $H^3(\spl,\pi)$ for any $[\iota_{\spl}(g_1,g_2)]\in H^1(\spl,\pi)$.

First, using that $\pi_{\C}$ is linear and holomorphic, we obtain for the complex Euler vector field $E_{\C}=E_1+iE_2$ on $\spl$ the relation
\begin{equation*}
[\pi_1,E_{\C}]=2[\pi_{\C}+\overline{\pi_{\C}},E_{\C}]=2\pi_{\C},
\end{equation*}
which implies that
\[\pi_2=2[\pi_1,E_2].\]
Hence for a Poisson vector field $X=\iota_{\spl}(g_1,g_2)$, where $(g_1,g_2)\in \mathcal{M}$, a direct computation yields
\begin{align}
    [X,4V_{\C}\wedge \pi_{\C}]=&\, 4[X,V_{\C}]\wedge \pi_{\C}+ V_{\C}\wedge\big([X, \pi_1]+i[X,\pi_2]\big) \nonumber \\
    =&\,4[X,V_{\C}]\wedge \pi_{\C}+2iV_{\C}\wedge[X, [\pi_1,E_2]]\nonumber\\
    =&\,4[X,V_{\C}]\wedge \pi_{\C}+2iV_{\C}\wedge[\pi_1,[X ,E_2]]\label{eq: cartan in cohomology}\\
    =&\, 4[X,V_{\C}]\wedge \pi_{\C}+2i[\pi_1,V_{\C}]\wedge[X ,E_2]-2i[\pi_1,V_{\C}\wedge[X ,E_2]] \nonumber
\end{align}
We consider the real and imaginary part separately. The last term is trivial in cohomology in both cases. For the first term we use
\begin{align*}
    \iota_{\dif f_1}([X,V_{\C}])=&\, ((-\partial_x+i\partial_y)g_2)\circ f\\
    \iota_{\dif f_2}([X,V_{\C}])=&\, ((-\partial_x+i\partial_y)g_1)\circ f
\end{align*}
which gives the part transverse to the foliation, i.e., it implies that 
\[4[X,V_{\C}]\wedge \pi_{\C}+
4\big(((\partial_x-i\partial_y)g_2)\circ f\cdot  V_1+
((\partial_x-i\partial_y)g_1)\circ f\cdot  V_2\Big) \wedge \pi_{\C}\in \wedge^3 T\F.\]
This together with \eqref{eq: bivector to forms} imply for the real part that
\begin{align*}
    \tilde{p}\circ \tilde{a} (\Re (&\,4[X,V_{\C}]\wedge \pi_{\C}))= \\
     &\, p_{\F}(e_{\varphi}(\partial_xg_2\circ f)\wedge \tilde{\omega}_1) \otimes e_1-p_{\F}(e_{\varphi}(\partial_yg_2\circ f)\wedge \tilde{\omega}_2) \otimes e_1\\
    &\, +p_{\F}(e_{\varphi}(\partial_xg_1\circ f)\wedge \tilde{\omega}_1)\otimes e_2-p_{\F}(e_{\varphi}(\partial_yg_1\circ f)\wedge \tilde{\omega}_2) \otimes e_2
\end{align*}
In order to obtain the corresponding coefficient for $b_1$ in the decomposition \eqref{eq: trivialization in (2,1)} we follow the sequence of identifications in \eqref{eq:composition of isomorphisms}, i.e. we have
\begin{align*}
    p_{\mathcal{M}}(\partial_xg_i,-\partial_yg_i)=&\, \frac{1}{2|z|}((|z|+x)\partial_xg_i+y\partial_yg_i,-y\partial_xg_i-(|z|-x)\partial_yg_i)
\end{align*}
for $i=1,2$ and hence the coefficient for $b_1$ is given by
\begin{equation*}
    \frac{1}{2|f|}((|z|+x)\partial_xg_2+y\partial_yg_2+y\partial_xg_1+(|z|-x)\partial_yg_1)\circ f= -\frac{1}{2|f|}g_2\circ f
\end{equation*}
Here we used derivatives of the defining equation for $(g_1,g_2)\in \mathcal{M}$, i.e.
\begin{equation}\label{eq: set1}
    \partial_x(yg_1)=-\partial_x((|z|+x)g_2) \quad \text{ and }\quad \partial_y((|z|-x)g_1)=-\partial_y(yg_2).
\end{equation}
For the second term in \eqref{eq: cartan in cohomology} we first calculate the transverse parts of $[X,E_2]$:
\begin{align*}
   h_1\circ f:=\iota_{\dif f_1}([X ,E_2])=((-y\partial_x+x\partial_y)g_2 +g_1)\circ f\\
    h_2\circ f:=\iota_{\dif f_2}([X ,E_2])=((-y\partial_x+x\partial_y)g_1-g_2)\circ f
\end{align*}
For the real part, we have that
\begin{align*}
    \iota_{\id \otimes (1\otimes \epsilon^1)}\circ \tilde{p}\circ \tilde{a} (\Re(&\,2i[\pi_1,V_{\C}]\wedge[X ,E_2]))\\
    &\, = \tilde{p}\circ \iota_{\id \otimes (1\otimes \epsilon^1)}\circ \tilde{a} (2[\pi_1,V_2]\wedge[X ,E_2]))\\
    &\, = \tilde{p}\circ \tilde{a} (2\iota_{\dif f_1}([X ,E_2])[\pi_1,V_2])\\
    &\, = \tilde{p}\circ \tilde{a} ([\pi_1,2h_1\circ f \cdot V_2])\\
    &\, = \tilde{p}\circ  \tilde{a} \circ \dif_{\pi_1}\circ \tilde{a}^{-1}(e_{\varphi}(2h_1\circ f) \otimes e_2)\\
    &\, = \tilde{p}\circ  (\dif _{\F} +\delta)(e_{\varphi}(2h_1\circ f )\otimes e_2)\\
    &\, = \tilde{\delta}(e_{\varphi}(2h_1\circ f) \otimes e_2)\\
    &\, =p_{\F}(e_{\varphi}(\frac{1}{|f|}((y\partial_x-x\partial_y)g_2-g_1)\circ f)\wedge \tilde{\omega}_2) \otimes 1
\end{align*} 
where we used Lemma \ref{lemma: second differential degree one} in the last step. Similarly, we have
\begin{align*}
   \iota_{\id \otimes (1\otimes \epsilon^2)}\circ \tilde{p}\circ \tilde{a} (\Re(&\,2i[\pi_1,V_{\C}]\wedge[X ,E_2]))\\
   &\, = p_{\F}(e_{\varphi}(\frac{1}{|f|}((y\partial_x-x\partial_y)g_1+g_2))\circ f)\wedge \tilde{\omega}_2) \otimes 1.
\end{align*}
Hence we obtain
\begin{align*}
   \tilde{p}\circ \tilde{a} (\Re(&\,2i[\pi_1,V_{\C}]\wedge[X ,E_2]))\\
   &\, =p_{\F}(e_{\varphi}(\frac{1}{|f|}((y\partial_x-x\partial_y)g_2-g_1)\circ f)\wedge \tilde{\omega}_2) \otimes e_1\\
   &\ \quad + p_{\F}(e_{\varphi}(\frac{1}{|f|}((y\partial_x-x\partial_y)g_1+g_2))\circ f)\wedge \tilde{\omega}_2) \otimes e_2.
\end{align*}
By applying the projection onto the $b_1$-component in \eqref{eq: trivialization in (2,1)} we get
\begin{align*}
    \frac{1}{2|f|^2}(-y(y\partial_xg_2+(|z|-x)\partial_xg_1)+x(y\partial_y g_2+&\, (|z|-x)\partial_y g_1))\circ f \\
    +\frac{1}{2|f|^2}(yg_1-(|z|-x)g_2)\circ f&\, =
    -\frac{1}{2|f|}g_2\circ f
\end{align*}
where we used additionally the equations
\begin{align}
    \partial_y(yg_1)=-\partial_y((|z|+x)g_2) \quad &\, \text{ and }\quad \partial_x((|z|-x)g_1)=-\partial_x(yg_2),\label{eq: set2}\\
    yg_1=-(|z|+x)g_2 \quad &\, \text{ and }\quad (|z|-x)g_1=-yg_2.\label{eq: set3}
\end{align}
Using that $\tilde{p}\circ \tilde{a}$ maps exact multivector fields to multiples of $b_2$ we obtain as a result of the calculations that
\begin{align*}
    \tilde{p}\circ \tilde{a}([X,C_{\mathcal{R}}])=-\frac{1}{|f|}g_2\circ f \cdot b_1 \mod C^\infty_0(\C)\circ f\cdot b_2.
\end{align*}
This implies the first statement about $C_\mathcal{R}$ by \eqref{eq: cr as b1}. 

The result for $C_{\mathcal{I}}$ follows from a similar computation of the imaginary part in \eqref{eq: cartan in cohomology}, i.e. the first part yields
\begin{align*}
    \tilde{p}\circ \tilde{a} (\Im(&\,4[X,V_{\C}]\wedge \pi_{\C}))= \\
     &\, -p_{\F}(e_{\varphi}(\partial_yg_2\circ f)\wedge \tilde{\omega}_1) \otimes e_1-p_{\F}(e_{\varphi}(\partial_xg_2\circ f)\wedge \tilde{\omega}_2) \otimes e_1\\
    &\, -p_{\F}(e_{\varphi}(\partial_yg_1\circ f)\wedge \tilde{\omega}_1)\otimes e_2-p_{\F}(e_{\varphi}(\partial_xg_1\circ f)\wedge \tilde{\omega}_2) \otimes e_2
\end{align*}
and hence using that
\begin{align*}
    p_{\mathcal{M}}(-\partial_yg_i,-\partial_xg_i)=&\, \frac{1}{2|z|}(-(|z|+x)\partial_y g_i+y\partial_xg_i,y\partial_yg_i-(|z|-x)\partial_xg_i)
\end{align*}
for $i=1,2$, the coefficient for $b_1$ is given by
\begin{equation*}
    \frac{1}{2|f|}(-(|z|+x)\partial_yg_2+y\partial_xg_2-y\partial_yg_1+(|z|-x)\partial_xg_1)\circ f= \frac{1}{2|f|}g_1\circ f
\end{equation*}
where we used \eqref{eq: set2}. For the second term in \eqref{eq: cartan in cohomology} we obtain
\begin{align*}
    \tilde{p}\circ \tilde{a} (\Im(&\,2i[\pi_1,V_{\C}]\wedge[X ,E_2]))= \\
     &\, p_{\F}(e_{\varphi}(\frac{1}{|f|}((y\partial_x-x\partial_y)g_2-g_1)\circ f)\wedge \tilde{\omega}_1) \otimes e_1\\
    &\, +p_{\F}(e_{\varphi}(\frac{1}{|f|}((y\partial_x-x\partial_y)g_1+g_2))\circ f)\wedge \tilde{\omega}_1) \otimes e_2
\end{align*}
Using $p_{\mathcal{M}}$ and the identifications in \eqref{eq:composition of isomorphisms}  we get for the coefficient of $b_1$ with respect to the decomposition \eqref{eq: trivialization in (2,1)} the term
\begin{align*}
    \frac{1}{2|f|^2}(y((|z|+x)\partial_xg_2+y\partial_xg_1)+x((|z|+x)\partial_y g_2+&\,y\partial_y g_1))\circ f \\
    +\frac{1}{2|f|^2}(yg_2-(|z|+x)g_1)\circ f&\, =
    -\frac{1}{2|f|}g_1\circ f
\end{align*}
where we used \eqref{eq: set1} and \eqref{eq: set3}. This implies that:
\begin{align*}
    \tilde{p}\circ \tilde{a}([X,C_{\mathcal{R}}])\in C^\infty_0(\C)\circ f\cdot b_2,
\end{align*}
which yields the result for $C_{\mathcal{I}}$.

\appendix
 \section{Some decompositions of smooth functions on $\C$}\label{eigenspaces}

In this section, we will prove Lemma \ref{lemma:pullbacks and invariant functions}. We will use the notation from Subsection \ref{subsection:desing} for the eigenspaces of the pullback along the maps $\sigma=-\mathrm{id}_{\C}$ and $\tau=$ conjugation. As the operators $\sigma^*$ and $\tau^*$ commute they preserve their respective eigenspaces. We use similar notation for their common eigenspaces, for example:
\[C^{\infty}_{\sigma,-\tau}(\C):=\{g\in C^{\infty}(\C)\,|\, g=\sigma^*(g)=-\tau^*(g)\},\]
and similarly for functions flat at the origin.

We have the following decomposition of smooth functions: 
\begin{lemma}\label{lemma:eigenspaces}
Any function $g\in C^{\infty}(\C)$ can be written uniquely as: 
\[ g= g_0 + x\cdot g_x +y\cdot g_y +xy\cdot g_{xy}\]
where $g_0,g_x,g_y,g_{xy}\in C^{\infty}_{\sigma,\tau}(\C)$ and similarly for functions flat at the origin.
\end{lemma}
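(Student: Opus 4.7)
The plan is to identify $C^{\infty}_{\sigma,\tau}(\C)$ concretely and then reduce the statement to a standard parity decomposition together with Whitney's theorem on even smooth functions.

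First I would note that $g \in C^{\infty}_{\sigma,\tau}(\C)$ means $g(x,y) = g(-x,-y) = g(x,-y)$, which combined gives $g(-x,y) = g(x,y) = g(x,-y)$, i.e., $g$ is separately even in $x$ and in $y$. By Whitney's theorem applied in each variable (with parameter), such a function is precisely of the form $g(x,y) = H(x^2,y^2)$ for some smooth $H$ on $\R^2$ (and if $g$ is flat at $0$, then so is $H$, since all odd Taylor coefficients vanish automatically and the remaining even ones transfer to coefficients of $H$).

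Next, for any $g \in C^{\infty}(\C)$ I would perform the standard parity decomposition $g = g_{++} + g_{-+} + g_{+-} + g_{--}$ where, for instance,
\[
g_{\epsilon_1\epsilon_2}(x,y) = \tfrac{1}{4}\sum_{s_1,s_2 \in \{\pm 1\}} \epsilon_1^{(1-s_1)/2}\epsilon_2^{(1-s_2)/2}\, g(s_1 x, s_2 y),
\]
with the convention that a $-$ in the subscript signals oddness in the corresponding variable. Clearly $g_{++} \in C^{\infty}_{\sigma,\tau}(\C)$; set $g_0 := g_{++}$. For the remaining pieces I would apply Whitney's theorem: $g_{-+}$ is odd in $x$ and even in $y$, so $g_{-+}(x,y) = x \cdot g_x(x,y)$ with $g_x$ even in $x$ (Whitney) and still even in $y$, hence $g_x \in C^{\infty}_{\sigma,\tau}(\C)$. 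Similarly $g_{+-} = y\cdot g_y$ and $g_{--} = xy\cdot g_{xy}$ with $g_y, g_{xy} \in C^{\infty}_{\sigma,\tau}(\C)$.

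For uniqueness, suppose $g_0 + x g_x + y g_y + xy g_{xy} = 0$ with each summand in $C^{\infty}_{\sigma,\tau}(\C)$. Evaluating this identity at $(\pm x, \pm y)$ and exploiting that each $g_i$ is even in both variables yields a $4 \times 4$ linear system (with determinant $\pm 16 xy$) whose solution away from the coordinate axes forces each $g_i$ to vanish; continuity extends this to all of $\C$. Finally, the flat version follows immediately: if $g$ is flat at $0$, then each parity component $g_{\epsilon_1\epsilon_2}$ is flat at $0$ as a linear combination of $g$ evaluated at $(\pm x,\pm y)$, and division by $x$, $y$, $xy$ preserves flatness (as Whitney's even-function construction preserves flatness, by the Taylor-series argument noted above). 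The only subtle point is this last preservation-of-flatness claim under Whitney, but it is standard and brief.
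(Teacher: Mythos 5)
Your proof is correct and follows essentially the same route as the paper's: decompose $g$ into its four parity components via the projectors onto the common eigenspaces of $\sigma^*$ and $\tau^*$, then divide out the appropriate monomial to land in $C^{\infty}_{\sigma,\tau}(\C)$. The paper invokes Hadamard's lemma where you cite Whitney's theorem on even/odd smooth functions (and it leaves uniqueness and the flat case implicit), but these are cosmetic differences.
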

\begin{proof}
The four functions are constructed by the usual projections onto the common eigenspaces of $\sigma^*$ and $\tau^*$, namely: 
\begin{align*}
g_{0}:=&\frac{1}{4}(\mathrm{id}+\sigma^*)(\mathrm{id}+\tau^*)(g)\\
x\cdot g_{x}:=&\frac{1}{4}(\mathrm{id}-\sigma^*)(\mathrm{id}+\tau^*)(g)\\
y\cdot g_{y}:=&\frac{1}{4}(\mathrm{id}-\sigma^*)(\mathrm{id}-\tau^*)(g)\\
xy\cdot g_{xy}:=&\frac{1}{4}(\mathrm{id}+\sigma^*)(\mathrm{id}-\tau^*)(g).
\end{align*}
That these functions are indeed smooth follows from Hadamard's lemma. For example, in the first case, the function 
\[h:=\frac{1}{4}(\mathrm{id}-\sigma^*)(\mathrm{id}+\tau^*)(g)\]
satisfies $h(0,y)=0$, and therefore, by Hadamard's lemma, $h=xg_x$, with $g_x\in C^{\infty}(\C)$. 
\end{proof}

Flat functions invariant under $\sigma$ and $\tau$ can be described more explicitly: 
\begin{lemma}\label{lemma: lambda casimir}
The map defined by
\begin{equation*}
    \begin{array}{ccc}
        C^{\infty}(\R^2_{\ge 0} ) &\to &C_{\sigma,\tau}^{\infty}(\C), \\
         g &\mapsto & \widetilde{g}(x,y):= g(x^2 ,y^2)  
    \end{array}
\end{equation*}
is an isomorphism of rings, and similarly for functions flat at the origin.
\end{lemma}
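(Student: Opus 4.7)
The plan is to verify directly that the map $g \mapsto \widetilde{g}$ is a well-defined ring homomorphism, obtain injectivity immediately, and handle surjectivity via Whitney's classical factorization theorem for even smooth functions. Well-definedness of the image inside $C^\infty_{\sigma,\tau}(\C)$ is immediate from $(\pm x)^2 = x^2$ and $(\pm y)^2 = y^2$, and the ring-homomorphism property is clear since the map is simply pullback along $(x,y) \mapsto (x^2, y^2)$. Injectivity is also immediate: if $g(x^2, y^2) = 0$ for all $(x, y) \in \C$, then $g$ vanishes on $\R^2_{\geq 0}$.

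The substantive step is surjectivity. The key tool I will invoke is Whitney's classical theorem: an even function $h \in C^\infty(\R)$ factors uniquely as $h(x) = g(x^2)$ with $g \in C^\infty(\R_{\geq 0})$. The parametric version states that a function $h \in C^\infty(\R^2)$ which is even in the first variable can be written as $h(x, y) = h_1(x^2, y)$ with $h_1 \in C^\infty(\R_{\geq 0} \times \R)$, and similarly for the second variable. Now, given $h \in C^\infty_{\sigma, \tau}(\C)$, observe that $h$ is even in $y$ by $\tau$-invariance and even in $x$ by $\sigma\tau$-invariance (since $\sigma\tau(x,y) = (-x, y)$). Applying the parametric Whitney theorem successively in each variable produces $g \in C^\infty(\R^2_{\geq 0})$ with $h(x, y) = g(x^2, y^2)$, as required.

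For the flat version, I will verify that Whitney's factorization preserves flatness at the origin. If $h$ is even and flat at $0$, its Taylor series vanishes identically; since only even powers of $x$ appear in this series, and the coefficient of $x^{2n}$ is precisely the $n$-th Taylor coefficient of the factorization $g$, $g$ must also have vanishing Taylor series at $0$, hence be flat. The parametric version preserves flatness by the same argument applied jet-wise in the transverse parameter. Consequently, the restriction of our map to flat functions gives an isomorphism $C^\infty_0(\R^2_{\geq 0}) \diffto C^\infty_{0,\sigma,\tau}(\C)$.

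The main (and only nontrivial) obstacle is the appeal to Whitney's factorization theorem and its parametric refinement; the preservation of flatness is then essentially automatic from the Taylor-series interpretation of the correspondence, and could alternatively be established directly via Taylor's formula with integral remainder.
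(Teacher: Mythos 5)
Your proof is correct, but it takes a genuinely different route from the paper's. You invoke Whitney's even-function factorization theorem (and its parametric refinement) as a black box, applying it successively in each variable, and then handle flatness by a clean Taylor-coefficient correspondence. The paper instead proves the needed factorization from scratch: it defines $g(p,q):=\widetilde g(\sqrt p,\sqrt q)$ on the open quadrant, observes that the partial derivatives of $g$ are given by iterating the operators $\tfrac{\partial_x}{2x}$ and $\tfrac{\partial_y}{2y}$ on $\widetilde g$, and shows by an inductive application of Hadamard's lemma — using that $\sigma^*,\tau^*$-invariance forces the odd partial derivatives of $\widetilde g$ to vanish along the coordinate axes — that each of these iterated quotients extends smoothly and remains $\sigma,\tau$-invariant, so that $g$ is smooth up to the boundary. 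In effect the paper re-proves the two-variable Whitney theorem inline. Your approach is shorter and delegates the analytic core to a standard citation; the paper's is self-contained and gives an explicit recipe for the derivatives of $g$ (which is also what makes the flat case transparent in their setup). Your argument that the parametric Whitney theorem propagates flatness via Taylor jets is sound, though it leans implicitly on the fact that once $g$ is known to be smooth, vanishing of all Taylor coefficients at the origin is equivalent to flatness — which is true, but worth stating when you use Taylor-series reasoning as the bridge.
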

\begin{proof}
Clearly, we only need to check that this map is surjective. For this, consider $\widetilde{g}\in C_{\sigma,\tau}^{\infty}(\C)$. We need to show that the function:
\[g:\R^2_{\geq 0}\to \R, \quad g(p,q)=\widetilde{g}(\sqrt{p},\sqrt{q}),\]
which is smooth on $\R^2_{>0}$, is smooth also on $\big(\{0\}\times \R_{\geq 0}\big)\cup \big(\R_{\geq 0}\times\{0\}\big)$. For this it suffices to check that all its partial derivatives extend continuously. Note that, for $p,q>0$, we have that 
\[    \partial_p^i\partial_q^j g(p,q)=
\Big(\frac{\partial_x}{2x}\Big)^{i}\Big(\frac{\partial_y}{2y}\Big)^{j}\widetilde{g}(x,y)\big|_{x=\sqrt{p},y=\sqrt{q}}.\]
Therefore, the proof will be complete once we show that the functions  
\[\Big(\frac{\partial_x}{2x}\Big)^{i}\Big(\frac{\partial_y}{2y}\Big)^{j}\widetilde{g}(x,y)\]
extend smoothly to $\R^2$. First, because $\widetilde{g}$ is invariant under $\sigma^*$ and $\tau^*$, we have that $\partial_xg(0,y)=0$ and $\partial_yg(x,0)=0$. Therefore, by Hadamard's lemma,
\[\frac{\partial_x\widetilde{g}}{2x}, \frac{\partial_y\widetilde{g}}{2y}\in  C^{\infty}(\C).\]
Moreover, note that these functions are also invariant under $\sigma^*$ and $\tau^*$, and so we can iterate this observation, and obtain that, for all $i,j\geq 0$,
\begin{align*}
\Big(\frac{\partial_x}{2x}\Big)^{i}\Big(\frac{\partial_y}{2y}\Big)^{j}\widetilde{g}\in C^{\infty}_{\sigma,\tau}(\C).  
\end{align*}
This concludes the proof. 
\end{proof}

\begin{proof}[Proof of Lemma \ref{lemma:pullbacks and invariant functions}]
First, we show that the map from (1) is an isomorphism. 
Since $\mathrm{sq}:\C\to \C$ is surjective, the map is clearly injective. For surjectivity, consider first $h\in C^{\infty}_{0,\sigma,\tau}(\C)$. By Lemma \ref{lemma: lambda casimir}, we can write 
\[h(\lambda_1,\lambda_2)=k(2\lambda_1^2,2\lambda_2^2),\]
for some $k\in C^{\infty}_0(\R^2_{\geq 0})$. Define:
\[g:\C\to \R,\quad  g(z)=k(|z|+x,|z|-x), \quad z=x+yi\in \C.\]
Clearly, $g$ is smooth on $\C^{\times}$. Because $k$ is flat at $0$, it is easy to see that all partial derivatives of $g$ extend continuously as $0$ at $z=0$. Therefore $g\in C^{\infty}_0(\C)$. Clearly, $g$ is also $\tau^*$-invariant. Finally, note that
\[g(\lambda^2)=k(2\lambda_1^2,2\lambda_2^2)=h(\lambda).
\]
For a general element $h\in C^{\infty}_{0,\sigma}(\C)$, by Lemma \ref{lemma:eigenspaces}, we can find $h_0,h_1\in C^{\infty}_{0,\sigma,\tau}(\C)$ such that
\[h(\lambda)=h_0(\lambda)+2\lambda_1\lambda_2\cdot h_1(\lambda).\]
By the first part, we can write $h_i(\lambda)=g_i(\lambda^2)$, with $g_i\in C^{\infty}_{0,\sigma,\tau}(\C)$. Thus
\[h(\lambda)=g(\lambda^2),\quad \textrm{with}\quad g\in C^{\infty}_0(\C),\]
where $g(z)=g_1(z)+y\cdot g_2(z)$, for $z=x+iy$. This proves (1).

Next, we show that the map from (2) is surjective. Let $h\in C^{\infty}_{0,-\sigma}(\C)$. By Lemma \ref{lemma:eigenspaces}, we can write:
\[h(\lambda)=-\lambda_1\cdot h_1(\lambda)+\lambda_2\cdot h_2(\lambda),\]
with $h_i\in C^{\infty}_{0,\sigma,\tau}(\C)$. Again by the first part, there are $g_i\in C^{\infty}_{0,\tau}(\C)$ such that $h_i(\lambda)=g_i(\lambda^2)$, and so
\[h(\lambda)=-\lambda_1\cdot g_1(\lambda^2)+\lambda_2\cdot g_2(\lambda^2).\]
This proves surjectivity. 

To show understand the kernel, let $g_i\in C^{\infty}_{0}(\C)$ such that 
\[-\lambda_1\cdot g_1(\lambda^2)+\lambda_2\cdot g_2(\lambda^2)=0.\]
Note that this is equivalent to: 
\[2\lambda_1\lambda_2\cdot g_1(\lambda^2)=2\lambda_2^2\cdot g_2(\lambda^2).\]
If we denote $z=\lambda^2$, then $2\lambda_1\lambda_2=y$ and $2\lambda_2^2=|z|-x$. Thus the above identity is equivalent to
\begin{equation}\label{eq:relation:for:kernel}
y\cdot g_1(z)=(|z|-x)\cdot g_2(z), \quad \forall z\in \C.
\end{equation}
This concludes the proof of (2).

Item (3) follows from Proposition \ref{proposition: twisted module}. 
\end{proof}

\section{Fr\'echet spaces of smooth functions}

\subsection{Partial derivatives, Leibniz rule, chain rule}\label{subsection: derivatives}

Here we recall some well-known facts from calculus, which are frequently used in the proofs, and we fix the notation. 

We denote the partial derivative corresponding to a multi-index \[a=(a_1,\ldots,a_m)\in \N_0^m \ \ \textrm{by}\ \  D^{a}: = \frac{1}{a_1!}\partial^{a_1}_{x_1}\ldots \frac{1}{a_m!}\partial^{a_m}_{x_m}.\] 
We will often use the general Leibniz rule:
\[D^a(g_1\cdot \ldots\cdot g_k)=\sum_{a^1+\ldots+a^k=a}D^{a^1}(g_1)\cdot\ldots \cdot D^{a^k}(g_k),\]
for $g_1,\ldots,g_k\in C^{\infty}(\R^m)$, and the general chain rule:
\[D^a(h(g_1,\ldots,g_k))=\sum_{1\leq |b|\leq |a|}D^b(h)(g_1,\ldots,g_k){\sum}' \prod_{i=1}^k\prod_{j=1}^{b_i} D^{a^{ij}}(g_i),\]
where $b=(b_1,\ldots,b_k)$ and $\sum'$ is the sum over all non-trivial decompositions  \[a=\sum_{i=1}^k\sum_{j=1}^{b_i}a^{ij}, \ \ \  a, a^{ij}\in \N^m_0\backslash\{0\}.\]

\subsection{Limits of families of smooth functions}\label{subsubsection_limits}

We give here a useful criterion for proving the existence of limits of smooth families of functions with respect to the compact open $C^{\infty}$-topology.

Let $U\subset \R^m$ be an open set. For a compact subset $K\subset U$, we define the corresponding $C^n$-seminorm on $C^{\infty}(U,\R^l)$ as:
\[\norm{f}_{n,K}:=\sup_{x\in K}\sup_{a\in \N^m_0\, :\, |a|\leq n}\left|D^af(x)\right|,\]
where $|a|=a_1+\ldots+a_m$. These seminorms yield the Fr\'echet space:
\[\big(C^{\infty}(U,\R^l), \{\norm{\cdot}_{n,K}\, |\, n\in \N_0,\,   K\subset U\}\big)\]
whose topology is called the compact-open $C^{\infty}$-topology. 

Consider a family $f_t\in C^{\infty}(U,\R^l)$, defined for $t\geq t_0$. Assume that, for each compact $K\subset U$ and each $n\geq 0$, we find a function 
\[l_{n,K}:[t_0,\infty)\to [0,\infty),\]
such that
\[\forall \  s\geq t\ :\ \ \norm{f_s-f_t}_{n,K}\leq l_{n,K}(t) \ \ \ \ \textrm{and} \ \ \ \ \lim_{t\to \infty}l_{n,K}(t)=0.\]
Then, since $C^{\infty}(U,\R^l)$ is a Fr\'echet space, the limit $t\to \infty$ exists: \[f_{\infty}:=\lim_{t\to \infty}f_{t}\in C^{\infty}(U,\R^l).\]
So all partial derivatives of $f_t$ converge uniformly on all compact subsets to those of $f_{\infty}$.

Assume further that $f_t$ is smooth also in $t$. Writing  
\[D^a(f_t-f_s)=\int_s^tD^a(f'_h)\dif h,\]
we obtain that 
\[\norm{f_s-f_t}_{n,K}\leq \int_s^t\norm{f'_h}_{n,K}\dif h.\]
So, if we find a function 
\[b_{n,K}:[t_0,\infty)\to [0,\infty),\]
such that
\begin{equation}\label{conditions for limit}
\forall \  t\geq t_0\ :\ \ \norm{f'_t}_{n,K}\leq b_{n,K}(t)\ \ \ \ \textrm{and} \ \ \ \ \int_{t_0}^{\infty}b_{n,K}(t)\dif t<\infty,
\end{equation}
then we can also conclude that $\lim_{t\to\infty} f_t$ exists.

\bibliographystyle{alpha}
\bibliography{Bibtex}

\begin{thebibliography}{CFMT19}

\bibitem[CE48]{Chev1948}
Claude Chevalley and Samuel Eilenberg.
\newblock Cohomology theory of {L}ie groups and {L}ie algebras.
\newblock {\em Trans. Amer. Math. Soc.}, 63:85--124, 1948.

\bibitem[CF11]{CF11}
Marius Crainic and Rui~Loja Fernandes.
\newblock A geometric approach to {C}onn's linearization theorem.
\newblock {\em Ann. of Math. (2)}, 173(2):1121--1139, 2011.

\bibitem[CFMT19]{PMCT}
Marius Crainic, Rui~Loja Fernandes, and David Mart\'{i}nez~Torres.
\newblock Poisson manifolds of compact types ({PMCT} 1).
\newblock {\em J. Reine Angew. Math.}, 756:101--149, 2019.

\bibitem[Che52]{Chev50}
Claude Chevalley.
\newblock The {B}etti numbers of the exceptional simple {L}ie groups.
\newblock In {\em Proceedings of the {I}nternational {C}ongress of
  {M}athematicians, {C}ambridge, {M}ass., 1950, vol. 2}, pages 21--24. Amer.
  Math. Soc., Providence, R. I., 1952.

\bibitem[Con84]{Conn84}
Jack~F. Conn.
\newblock Normal forms for analytic {P}oisson structures.
\newblock {\em Ann. of Math. (2)}, 119(3):577--601, 1984.

\bibitem[Con85]{Conn85}
Jack~F. Conn.
\newblock Normal forms for smooth {P}oisson structures.
\newblock {\em Ann. of Math. (2)}, 121(3):565--593, 1985.

\bibitem[Cra04]{Cr04}
Marius Crainic.
\newblock On the perturbation lemma, and deformations.
\newblock {\em preprint}, arXiv:0403266, 2004.

\bibitem[Daz85]{Daz}
Pierre Dazord.
\newblock Stabilit\'{e} et lin\'{e}arisation dans les vari\'{e}t\'{e}s de
  {P}oisson.
\newblock In {\em Symplectic geometry and mechanics ({B}alaruc, 1983)}, Travaux
  en Cours, pages 59--75. Hermann, Paris, 1985.

\bibitem[DK00]{DK}
Johannes~J. Duistermaat and Johann A.~C. Kolk.
\newblock {\em Lie groups}.
\newblock Universitext. Springer-Verlag, Berlin, 2000.

\bibitem[DZ05]{DZ}
Jean-Paul Dufour and Nguyen~Tien Zung.
\newblock {\em Poisson structures and their normal forms}, volume 242 of {\em
  Progress in Mathematics}.
\newblock Birkh\"{a}user Verlag, Basel, 2005.

\bibitem[FM04]{FM04}
Rui~Loja Fernandes and Philippe Monnier.
\newblock Linearization of {P}oisson brackets.
\newblock {\em Lett. Math. Phys.}, 69:89--114, 2004.

\bibitem[GW92]{GW92}
Viktor~L. Ginzburg and Alan Weinstein.
\newblock Lie-{P}oisson structure on some {P}oisson {L}ie groups.
\newblock {\em J. Amer. Math. Soc.}, 5(2):445--453, 1992.

\bibitem[HS53]{Hoch1953}
Gerhard~P. Hochschild and Jean-Pierre Serre.
\newblock Cohomology of {L}ie algebras.
\newblock {\em Ann. of Math. (2)}, 57:591--603, 1953.

\bibitem[Kna02]{Knapp13}
Anthony~W. Knapp.
\newblock {\em Lie groups beyond an introduction}, volume 140 of {\em Progress
  in Mathematics}.
\newblock Birkh\"{a}user Boston, Inc., Boston, MA, second edition, 2002.

\bibitem[LGPV13]{Laurent2013}
Camille Laurent-Gengoux, Anne Pichereau, and Pol Vanhaecke.
\newblock {\em Poisson structures}, volume 347 of {\em Grundlehren der
  Mathematischen Wissenschaften [Fundamental Principles of Mathematical
  Sciences]}.
\newblock Springer, Heidelberg, 2013.

\bibitem[LGSX08]{LGStXu}
Camille Laurent-Gengoux, Mathieu Sti\'{e}non, and Ping Xu.
\newblock Holomorphic {P}oisson manifolds and holomorphic {L}ie algebroids.
\newblock {\em Int. Math. Res. Not. IMRN}, pages Art. ID rnn 088, 46, 2008.

\bibitem[Lic77]{Lich77}
Andr\'{e} Lichnerowicz.
\newblock Les vari\'{e}t\'{e}s de {P}oisson et leurs alg\`ebres de {L}ie
  associ\'{e}es.
\newblock {\em J. Differential Geometry}, 12(2):253--300, 1977.

\bibitem[LS87]{LS87}
Larry Lambe and Jim Stasheff.
\newblock Applications of perturbation theory to iterated fibrations.
\newblock {\em Manuscripta Math.}, 58(3):363--376, 1987.

\bibitem[Mac05]{mack05}
Kirill C.~H. Mackenzie.
\newblock {\em General theory of {L}ie groupoids and {L}ie algebroids}, volume
  213 of {\em London Mathematical Society Lecture Note Series}.
\newblock Cambridge University Press, Cambridge, 2005.

\bibitem[M\u14]{Marcut}
Ioan M\u{a}rcu\c{t}.
\newblock Rigidity around {P}oisson submanifolds.
\newblock {\em Acta Math.}, 213(1):137--198, 2014.

\bibitem[MZ23]{MZ}
Ioan M\u{a}rcu\cb{t} and Florian Zeiser.
\newblock The {P}oisson cohomology of $\mathfrak{sl}_2^*(\mathbb{R})$.
\newblock {\em preprint arXiv:1911.11732}, (to appear in) \emph{Journal of
  Symplectic Geometry}, 2023.

\bibitem[Ree52]{Reeb}
Georges Reeb.
\newblock {\em Sur certaines propri\'{e}t\'{e}s topologiques des
  vari\'{e}t\'{e}s feuillet\'{e}es}.
\newblock Actualit\'{e}s Sci. Ind., no. 1183. Hermann \& Cie., Paris, 1952.
\newblock Publ. Inst. Math. Univ. Strasbourg 11, pp. 5--89, 155--156.

\bibitem[Sol02]{Sol02}
Maarten Solleveld.
\newblock {Lie algebra cohomology and Macdonald’s conjectures}.
\newblock Master's thesis, University of Amsterdam, 2002.

\bibitem[Vai90]{Vaisman}
Izu Vaisman.
\newblock Remarks on the {L}ichnerowicz-{P}oisson cohomology.
\newblock {\em Ann. Inst. Fourier (Grenoble)}, 40(4):951--963 (1991), 1990.

\bibitem[VK90]{KV1}
Yury~M. Vorobiev and Mikhail~V. Karas\"{e}v.
\newblock Deformation and cohomologies of {P}oisson brackets [in {\it
  {t}opological and geometric methods of analysis} ({R}ussian), 75--89,
  {V}oronezh. {G}os. {U}niv., {V}oronezh, 1989; {MR}1047671 (91g:58098)].
\newblock In {\em Global analysis---studies and applications, {IV}}, volume
  1453 of {\em Lecture Notes in Math.}, pages 271--289. Springer, Berlin, 1990.

\bibitem[VV14]{VV14}
Jos\'{e}~A. Vallejo and Yury Vorobiev.
\newblock Invariant {P}oisson realizations and the averaging of {D}irac
  structures.
\newblock {\em SIGMA Symmetry Integrability Geom. Methods Appl.}, 10:Paper 096,
  20, 2014.

\bibitem[Wei83]{Wein83}
Alan Weinstein.
\newblock The local structure of {P}oisson manifolds.
\newblock {\em J. Differential Geom.}, 18(3):523--557, 1983.

\bibitem[Wei87]{Wein87}
Alan Weinstein.
\newblock Poisson geometry of the principal series and nonlinearizable
  structures.
\newblock {\em J. Differential Geom.}, 25(1):55--73, 1987.

\bibitem[WX91]{Wein}
Alan Weinstein and Ping Xu.
\newblock Extensions of symplectic groupoids and quantization.
\newblock {\em J. Reine Angew. Math.}, 417:159--189, 1991.

\bibitem[Xu92]{Xu92}
Ping Xu.
\newblock Poisson cohomology of regular {P}oisson manifolds.
\newblock {\em Ann. Inst. Fourier (Grenoble)}, 42(4):967--988, 1992.

\bibitem[Zei21]{thesis_Florian}
Florian Zeiser.
\newblock Poisson cohomology and linearization for
  $\mathfrak{sl}_2(\mathbb{R})$ and $\spl$.
\newblock {\em PhD thesis, Radboud University Nijmegen}, 2021.

\end{thebibliography}

\Addresses
\end{document}